\documentclass[10pt,a4paper]{amsart}

\RequirePackage{doi}
\usepackage{hyperref}

\usepackage[marginpar=2cm,ignoremp,margin=3cm]{geometry}
\usepackage{changepage}

\usepackage{tikz}
\usepackage{tikz}
\usetikzlibrary{matrix}
\usetikzlibrary{calc}
\usetikzlibrary{decorations.pathreplacing}

\tikzstyle{overbrace text style}=[font=\tiny, above, pos=.5, yshift=3mm]
\tikzstyle{overbrace style}=[decorate,decoration={brace,raise=2mm,amplitude=3pt}]
\tikzstyle{underbrace style}=[decorate,decoration={brace,raise=2mm,amplitude=5pt,mirror},color=gray]
\tikzstyle{underbrace text style}=[font=\tiny, below, pos=.5, yshift=-3mm]

\tikzset{
	bracket/.default=0.1cm,
	bracket/.style={
		to path={
			(\tikztostart) -- ([yshift=#1]\tikztostart) -- ([yshift=#1]\tikztotarget) \tikztonodes -- (\tikztotarget)
		}
	},
	ubracket/.default=0.1cm,
	ubracket/.style={
		to path={
			(\tikztostart) -- ([yshift=-#1]\tikztostart) -- ([yshift=-#1]\tikztotarget) \tikztonodes -- (\tikztotarget)
		}
	}
}

\newcommand\lowt{t}

\DeclareMathOperator{\id}{id}
\DeclareMathOperator{\gr}{gr}

\newcommand{\mot}{\mathfrak{m}}
\newcommand{\umot}{\mathfrak{u}}
\newcommand{\lmot}{\mathfrak{l}}

\usepackage[normalem]{ulem}
\renewcommand{\vec}[1]{\uline{\boldsymbol{\mathbf{#1}}}}

\newcommand{\ttw}{\widetilde{\,t\,}}
\DeclareMathOperator{\Li}{Li}
\DeclareMathOperator{\Ti}{Ti}
\DeclareMathOperator{\LewinTi}{\widetilde{Ti}}

\usepackage{textcomp}
\usepackage{shuffle}

\usepackage{amssymb}
\allowdisplaybreaks

\usepackage{bbm}
\usepackage{mathrsfs}

\usepackage{mathtools}
\usepackage{thmtools}

\declaretheorem[
style=plain,
name=Theorem,
numberwithin=section,
refname={Theorem,Theorems},
Refname={Theorem,Theorems}
]{Thm}
\declaretheorem[
style=plain,
name=Proposition,
numberlike=Thm,
refname={Proposition,Propositions},
Refname={Proposition,Propositions}
]{Prop}
\declaretheorem[
style=plain,
name=Lemma,
numberlike=Thm,
refname={Lemma,Lemmas},
Refname={Lemma,Lemmas}
]{Lem}
\declaretheorem[
style=plain,
name=Corollary,
numberlike=Thm,
refname={Corollary,Corollaries},
Refname={Corollary,Corollaries}
]{Cor}
\declaretheorem[
style=definition,
name=Definition,
numberlike=Thm,
refname={Definition,Definitions},
Refname={Definition,Definitions},
]{Def}
\declaretheorem[
style=definition,
name=Example,
numberlike=Thm,
refname={Example,Examples},
Refname={Example,Examples},
]{Eg}
\declaretheorem[
style=plain,
name=Conjecture,
numberlike=Thm,
refname={Conjecture,Conjectures},
Refname={Conjecture,Conjectures},
]{Conj}
\declaretheorem[
style=definition,
name=Remark,
numberlike=Thm,
refname={Remark,Remarks},
Refname={Remark,Remarks},
]{Rem}

\newcommand{\Z}{\mathbb{Z}}
\newcommand{\Q}{\mathbb{Q}}
\newcommand{\C}{\mathbb{C}}

\newcommand{\ii}{\mathrm{i}}

\newcommand{\abs}[1]{\left\lvert #1 \right\rvert}

\renewcommand{\Re}{\operatorname{Re}}
\renewcommand{\Im}{\operatorname{Im}}

\newcommand\poch[2]{\left\{ #1 \right\}_{#2}}

\newmuskip\pFqmuskip
\newcommand*\pFq[6][8]{%
	\begingroup 
	\pFqmuskip=#1mu\relax
	\mathchardef\normalcomma=\mathcode`,
	\mathcode`\,=\string"8000
	\begingroup\lccode`\~=`\,
	\lowercase{\endgroup\let~}\pFqcomma
	{}_{#2}F_{#3}{\left[\genfrac..{0pt}{}{#4}{#5};#6\right]}%
	\endgroup
}
\newcommand{\pFqcomma}{{\normalcomma}\mskip\pFqmuskip}

\renewcommand{\epsilon}{\varepsilon}

\newcommand{\sgnarg}[2]{
	\left( \genfrac{}{}{0pt}{0}{#1}{#2} \right)
}
\newcommand{\sgnargsm}[2]{
	\left( \textstyle\genfrac{}{}{0pt}{1}{#1}{#2} \right)
}

\DeclareMathOperator{\reg}{reg}
\DeclareMathOperator{\per}{per}

\let\overlineO\overline
\renewcommand{\overline}[1]{\overlineO{\mathclap{\phantom{I}}#1}}

\makeatletter
\let\@@pmod\pmod
\DeclareRobustCommand{\pmod}{\@ifstar\@pmods\@@pmod}
\def\@pmods#1{\mkern4mu({\operator@font mod}\mkern 6mu#1)}
\makeatother

\makeatletter
\@namedef{subjclassname@2020}{%
	\textup{2020} Mathematics Subject Classification}
\makeatother

\begin{document}
	
	\title[On motivic M{\protect\lowt}V's, Saha's basis conjecture, and generators of alternating MZV's]{On motivic multiple \( t \) values, Saha's basis conjecture, \\ and generators of alternating MZV's}
	\author{Steven Charlton}
	\date{29 December 2021}
	\keywords{Multiple zeta values, multiple $t$ values, alternating MZV's, motivic MZV's, iterated integrals, special values, hypergeometric functions}
	\subjclass[2020]{Primary 11M32; Secondary 33C20}
	
	\address{Fachbereich Mathematik (AZ), Universit\"at Hamburg, Bundesstra\textup{\ss}e 55, 20146 Hamburg, Germany}
	\email{steven.charlton@uni-hamburg.de}
	
	\begin{abstract}
		We give an evaluation for the stuffle-regularised \( t^{\ast,V}(\{2\}^a,1,\{2\}^b) \) as a polynomial in single-zeta values, \( \log(2) \) and \( V \).  We then apply this to establish some linear independence results of certain sets of motivic multiple \( t \) values.  In particular, we prove the elements of Saha's conjectural basis are linearly independent, on the motivic level, and that the (suitably regularised) elements \( t^\mot(\{1,2\}^\times) \) form a basis for both the (extended) motivic MtV's and the alternating MZV's.
	\end{abstract}

	\maketitle

	\setcounter{tocdepth}{1}
	\tableofcontents
	\setcounter{tocdepth}{2}
	
	\section{Introduction}
	
	The \emph{multiple zeta value} (MZV) with indices \( k_1,\ldots,k_d \in \Z_{\geq1} \), and \( k_d \geq 2 \) for convergence reasons, is defined by
	\[
		\zeta(k_1,\ldots,k_d) \coloneqq \sum_{0 < n_1 < \cdots < n_d} \frac{1}{n_1^{k_1} \cdots n_d^{k_d}} \,.
	\]
	As is common, we call \( d \) the \emph{depth} of the MZV and \( k_1 + \cdots + k_d \) the \emph{weight}.  Although the cases  of depth \( d = 1, 2 \) were already studied by Euler, the research into the case of general depth \( d \) only started in the early 1990's with work of Hoffman \cite{hoffman92} and Zagier \cite{zagier94}, with many theorems and identities being proven and many conjectures formulated since then.  These values have also earned a prominent place in high energy physics, as part of the calculation (in special cases) of Feynman integrals and scattering amplitudes (see \cite{broadhurstKreimer97} as a starting point).
	
	In \cite{hoffman19}, Hoffman studied the \emph{multiple \( t \) values} (MtV's) defined by restricting to MZV-like sums with odd denominators,
	\[
		t(k_1,\ldots,k_d) = \sum_{0 < n_1 < \cdots < n_d} \frac{1}{(2n_1-1)^{k_1} \cdots (2n_d-1)^{k_d}} \,,
	\]
	with again \( k_i \in \Z_{\geq1} \) and \( k_d > 1 \) for convergence, and the same notion of weight and depth.  (Be aware that Hoffman uses the other convention on MZV's, with summation indices given by \( n_1 > \cdots > n_d > 0 \), which has the effect of reversing argument strings.)  Therein Hoffman compared and contrasted the algebraic and combinatorial properties of MtV's and MZV's, establishing that MtV's have many similarities with MZV's, but  some distinct differences of there own.  (In particular, both MtV's and MZV's have a stuffle-product, and symmetric sum formulae \cite[Section 3]{hoffman19}.  MZV's admit a duality relation such as \( \zeta(1,2) = \zeta(3) \), but MtV's appear to have no such identities: already \cite[Appendix A]{hoffman19} shows that \( t(3) \) and \( t(1,2) \) are unrelated.  However MtV's conjecturally admit a derivation \cite[Conjecture 2.1]{hoffman19} which is realised in Appendix A therein as a formal differentiation with respect to \( \log(2) \).  We refer to \autoref{rem:diff:t} below for an interpretation and explanation of this derivation as the action of \( D_1 \) on the motivic level.)
	
	By writing
	\[
		t(k_1,\ldots,k_d) \coloneqq  \sum_{0 < n_1 < \cdots < n_d} \frac{(1 - (-1)^{n_1})}{2 \, n_1^{k_1}} \cdots \frac{(1 - (-1)^{n_d})}{2 \, n_d^{k_d}} \,,
	\]
	one obtains a formula (cf. \cite[Corollary 4.1]{hoffman19}) for the MtV's in terms of so-called alternating MZV's.  Namely
	\begin{equation}\label{eqn:tasz}
		t(k_1,\ldots,k_d) = \frac{1}{2^d} \sum_{\epsilon_i \in \{ \pm 1 \}} \epsilon_1 \cdots \epsilon_d \zeta\sgnarg{\epsilon_1,\ldots,\epsilon_d}{k_1,\ldots,k_d} \,,
	\end{equation}
	where
	\[
		\zeta\sgnarg{\epsilon_1,\ldots,\epsilon_d}{k_1,\ldots,k_d} \coloneqq \sum_{0 < n_1 < \cdots < n_d} \frac{\epsilon_1^{n_1} \cdots \epsilon_d^{n_d}}{n_1^{k_1} \cdots n_d^{k_d}}
	\]
	is the \emph{alternating MZV} with signs \( \epsilon_i \) and arguments \( k_i \) (also called the coloured MZV of level \( N = 2 \); level here referring to the order of the roots of unity involved).  Often, if all \( \epsilon_i \in \{ \pm 1 \} \), one denotes arguments \( k_i \) which have associated sign \( \epsilon_i = -1 \) by \( \overline{k_i} \).  There are notions of \emph{regularisation} which we cover more fully in \autoref{sec:reg}, which allow the divergent MZV's and MtV's to be assigned consistent finite values. \medskip
	
	Our first main result, in \autoref{sec:t2212ev}, gives an evaluation for the stuffle-regularised multiple \( t \) value \( t^{\ast,V}(\{2\}^a,1,\{2\}^b) \), analogous to the evaluation for \( \zeta(\{2\}^a, 3, \{2\}^b) \) established by Zagier \cite{zagier2232}, and the evaluation for \( t(\{2\}^a, 3, \{2\}^b) \) established by Murakami \cite{murakami21}, both of which were used to establish the linear independence and basis properties of certain motivic MZV's and MtV's respectively (\cite{brown12} in the zeta-value case, and \cite{murakami21} for the $t$-value case).	
	\begin{Thm}[\autoref{thm:t2212ev} below]\label{thm:intro:t2212ev}
		The following evaluation holds for any \( a, b \in \Z_{\geq0} \), for \( t^{\ast,V} \) the stuffle-regularised MtV's with \( t^{\ast,V}(1) = V \).
		\begin{align*}
			t^{\ast,V}(\{2\}^a,1,\{2\}^b) = {} & -\sum_{r=1}^{a+b} (-1)^r 2^{-2r} \bigg[ \binom{2r}{2a} + \frac{2^{2r}}{2^{2r} - 1} \binom{2r}{2b} \bigg] \zeta(\overline{2r+1}) t(\{2\}^{a + b - r}) \\
		& + \delta_{a=0} \log(2) t(\{2\}^b) + \delta_{b=0} (V - \log(2)) t(\{2\}^a) \,,
		\end{align*}
		where we write \( \{k\}^n = \overbrace{k, \ldots, k}^n \) for the argument \( k \) repeated \( n \) times, and \( \delta_\bullet \) is the Kronecker delta symbol, equal to 1 if the condition \( \bullet \) holds, and 0 otherwise.
	\end{Thm}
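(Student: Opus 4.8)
The plan is to pass to a bivariate generating function and reduce the statement to an evaluation of hypergeometric type, in the spirit of Zagier's proof \cite{zagier2232} of the formula for \( \zeta(\{2\}^a,3,\{2\}^b) \) and Murakami's \cite{murakami21} for \( t(\{2\}^a,3,\{2\}^b) \). Put \( \Phi(x,y)\coloneqq\sum_{a,b\geq0}t^{\ast,V}(\{2\}^a,1,\{2\}^b)\,x^{2a}y^{2b} \). Using \( \sum_m t(\{2\}^m)y^{2m}=\prod_{k\geq1}\bigl(1+\tfrac{y^2}{(2k-1)^2}\bigr)=\cosh\tfrac{\pi y}{2} \), the elementary identities \( \sum_{a=0}^{r}\binom{2r}{2a}x^{2a}y^{2(r-a)}=\tfrac12\bigl[(x+y)^{2r}+(x-y)^{2r}\bigr] \) and \( \tfrac{2^{2r}}{2^{2r}-1}(2^{-2r}-1)=-1 \), and \( \zeta(\overline{2r+1})=(2^{-2r}-1)\zeta(2r+1) \), one checks that the theorem is \emph{equivalent} to the generating-function identity
\[
\Phi(x,y)=-\tfrac12\Bigl(\bigl[A(x+y)+A(x-y)\bigr]\cosh\tfrac{\pi y}{2}+\bigl[B(x+y)+B(x-y)\bigr]\cosh\tfrac{\pi x}{2}\Bigr)+\log 2\cosh\tfrac{\pi y}{2}+(V-\log 2)\cosh\tfrac{\pi x}{2},
\]
where \( A(z)\coloneqq\sum_{r\geq1}(-1)^r2^{-2r}\zeta(\overline{2r+1})z^{2r} \) and \( B(z)\coloneqq\sum_{r\geq1}(-1)^r(2^{2r}-1)^{-1}\zeta(\overline{2r+1})z^{2r} \) are digamma-type odd-zeta generating functions. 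The whole problem is then to prove this closed form.

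To compute \( \Phi \) I would use the iterated-integral representation of multiple \( t \) values. Combining \eqref{eqn:tasz} with the iterated-integral description of alternating MZV's and summing over the signs (whose net weight survives only on the leading block) gives, with \( \omega_0=\tfrac{\mathrm{d}t}{t} \), \( \beta=\tfrac{\mathrm{d}t}{1-t}+\tfrac{\mathrm{d}t}{1+t} \), \( \gamma=\tfrac{\mathrm{d}t}{1-t}-\tfrac{\mathrm{d}t}{1+t} \),
\[
t(k_1,\ldots,k_d)=\tfrac1{2^d}\int_{0<t_1<\cdots<t_w<1}\bigl(\beta\,\omega_0^{k_1-1}\bigr)\bigl(\gamma\,\omega_0^{k_2-1}\bigr)\cdots\bigl(\gamma\,\omega_0^{k_d-1}\bigr)
\]
for convergent arguments, so \( t(\{2\}^a,1,\{2\}^b) \) is a constant times the integral of the word \( (\beta\omega_0)(\gamma\omega_0)^{a-1}\gamma(\gamma\omega_0)^b \) when \( a\geq1 \) (and of \( \beta(\gamma\omega_0)^b \) when \( a=0 \)). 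The terms with \( b\geq1 \) are honestly convergent; the column \( b=0 \) is treated separately. Summing the two geometric families of blocks — \( \beta\omega_0 \) near \( t=0 \) and \( \gamma\omega_0 \) near \( t=1 \) — turns \( \Phi \) (up to the \( b=0 \) correction) into a single integral, over the position of the central \( \gamma \), of a product of two special functions; each is a solution of a second-order hypergeometric-type ODE obtained by differentiating a block generating function under the integral sign, whose value at the endpoint \( 1 \) is an odd-sum analogue of the classical \( \tfrac{\sin\pi y}{\pi y} \) (one gets \( \tfrac{2\sinh(\pi y/2)}{\pi y} \) from the \( \gamma \)-blocks and \( \cosh\tfrac{\pi y}{2}=\sum_m t(\{2\}^m)y^{2m} \) from the \( \beta \)-then-\( \gamma \) blocks). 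Equivalently \( \Phi \) is a \( {}_3F_2 \)-type double series.

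The \( V \)- and bare-\( \log 2 \)-terms are fixed by the stuffle regularisation of the \( b=0 \) column: expanding the quasi-shuffle product \( t^{\ast,V}(1)\ast t(\{2\}^n)=V\,t(\{2\}^n) \) yields
\[
V\,t(\{2\}^n)=\sum_{i=0}^{n}t^{\ast,V}(\{2\}^i,1,\{2\}^{n-i})+\sum_{i=0}^{n-1}t(\{2\}^i,3,\{2\}^{n-1-i}),
\]
which together with Murakami's evaluation of \( t(\{2\}^\bullet,3,\{2\}^\bullet) \) both determines the \( b=0 \) entries and furnishes an independent check of the antidiagonal sums of the claimed formula. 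Finally one reads off the coefficient of \( x^{2a}y^{2b} \) in the closed form of \( \Phi \) above, using the same binomial identity and \( \cosh\tfrac{\pi x}{2}=\sum_m t(\{2\}^m)x^{2m} \): the \( A \)- and \( B \)-parts produce the two binomial terms \( \binom{2r}{2a} \) and \( \tfrac{2^{2r}}{2^{2r}-1}\binom{2r}{2b} \) multiplying \( \zeta(\overline{2r+1})\,t(\{2\}^{a+b-r}) \), while the \( \log 2 \)- and \( V \)-terms give exactly the two Kronecker-delta boundary contributions.

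I expect the crux — and the main obstacle — to be the closed-form evaluation of the \( {}_3F_2 \)-type integral: correctly identifying the block special functions and then pushing the classical digamma reflection/duplication identities (equivalently a Gauss/Dixon-type hypergeometric summation at argument \( 1 \), the same device Zagier uses) through to the displayed formula for \( \Phi \), all the while keeping track of every elementary contribution — in particular the stray \( \log 2 \)'s that enter through the boundary behaviour of \( \beta \) and \( \gamma \) near \( t=1 \). Everything else is the bookkeeping sketched above.
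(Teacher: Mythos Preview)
Your overall strategy---pass to a bivariate generating series, express it as a hypergeometric object, then evaluate---is exactly the paper's approach; the stuffle identity you write for the $b=0$ column and the final coefficient extraction are also correct and match the paper.

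The genuine gap is at the step you yourself flag as the crux. You expect that ``a Gauss/Dixon-type hypergeometric summation at argument~$1$, the same device Zagier uses'' will close the evaluation. It will not, because the relevant ${}_3F_2$ here is \emph{zero-balanced} (the upper parameters sum to the lower ones), so it diverges logarithmically at argument~$1$, and no Dixon/Gauss-type closed form applies. In Zagier's $\zeta(\{2\}^a,3,\{2\}^b)$ and Murakami's $t(\{2\}^a,3,\{2\}^b)$ the middle ``$3$'' raises the balance by one and the series converges; your middle ``$1$'' drops it exactly onto the divergent boundary. Separating off the $b=0$ column does not cure this: the convergent $b\ge 1$ part is the difference of two objects that each diverge at the endpoint, so you still have to understand the asymptotic.

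Concretely, the paper introduces a deformation parameter $z$ (via the $t$-polylog $\mathrm{Ti}$), writes the generating series as
\(\frac{z\cos(\pi y)}{1-4y^2}\,{}_4F_3\!\left[\begin{smallmatrix}1,\,3/2,\,1/2-x,\,1/2+x\\ 1/2,\,3/2-y,\,3/2+y\end{smallmatrix};z^2\right]\),
and then needs three nontrivial hypergeometric inputs to evaluate the limit $z\to1^-$ after subtracting $\tanh^{-1}(z)\cos(\pi x)$:
(i) a contiguous relation splitting the ${}_4F_3$ into two ${}_3F_2$'s;
(ii) Whipple's theorem for the $1$-balanced piece (convergent at $1$);
(iii) the Evans--Stanton/Ramanujan asymptotic for the $0$-balanced piece, which extracts the $-\log(1-z)$ singularity and leaves a finite remainder of the form $2\gamma+\psi(\tfrac12-x)+\psi(\tfrac12+x)+\sum_{k\ge1}\frac{(-\frac12-y)_k(-\frac12+y)_k}{k\,(\frac12-x)_k(\frac12+x)_k}$.
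That remainder Pochhammer sum is then, via a second contiguous relation, reduced to exactly the $\tfrac{\mathrm d}{\mathrm dZ}\big|_{Z=0}{}_3F_2$ that Zagier evaluated. So the ingredients your sketch is missing are precisely (iii) and the two contiguous reductions; ``digamma duplication plus Dixon'' will not get you there on its own.
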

	Because a subset of the MtV's which appear in this evaluation series are divergent, we must understand the asymptotics of certain \( {}_3F_2 \) hypergeometric series  (and thus the \( {}_4F_3 \) series from which they originate), via results from the Evans-Stanton/Ramanujan asymptotic \cite{evans84}, in order to extract the evaluation with a generating series approach.  By extracting the special case \( a = 0, b = n \) of \autoref{thm:intro:t2212ev} in \autoref{sec:t1222}, we also answer a question posed in \cite{chavan21}.  \medskip

	We then utilise the arithmetic properties of the coefficients to establish some linear independence properties of certain sets of motivic MtV's. In \autoref{sec:mot} we state the main definitions, properties and theorems pertaining to the framework of motivic MZV's.    One can often think of the motivic MZV's and MtV's as some algebraically defined `formal' analogue to their analytic counterparts, which have more rigid structure and better properties.  Conjecturally the motivic versions reflect all of the relations between the real-valued versions, but they certainly do not introduce new relations.  In \autoref{sec:dt} we discuss the regularised distribution relations, and extend the formulae given by Murakami \cite{murakami21} for \( D_{r} \), to all motivic MtV's.  In \autoref{sec:mott2212} we use this to lift \autoref{thm:intro:t2212ev} to a motivic version.\medskip
	
	Then in \autoref{sec:saha}, we establish that the elements conjectured by Saha \cite{saha17} to be a basis of (convergent) MtV's are, at least, linearly independent.  This establishes a lower bound of \( F_N \) on the dimension of the space of (convergent) motivic MtV's of weight \( N \), where \( F_n = F_{n-1} + F_{n-2} \) is the \( n \)-th Fibonacci number, with \( F_1 = 1, F_2 = 1 \).
	\begin{Thm}[\autoref{cor:saha:indep} below]
		Let
		\[
			S = \{ t^\mot(k_1, \ldots, k_{r-1}, k_r + 1) \mid k_i \in \{ 1,2 \} \, \}
		\]
		be the set of elements in Saha's basis conjecture.  Then the elements of \( S \) are linearly independent.
	\end{Thm}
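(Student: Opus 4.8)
The plan is to run Brown's coaction induction on the weight (as in \cite{brown12} for MZV's, and \cite{murakami21} for MtV's), using \autoref{thm:intro:t2212ev} --- in the motivic form lifted in \autoref{sec:mott2212} --- in the role played there by Zagier's evaluation of \( \zeta(\{2\}^a,3,\{2\}^b) \) \cite{zagier2232} and Murakami's evaluation of \( t(\{2\}^a,3,\{2\}^b) \). Work inside the coalgebra \( \mathcal{H} \) of motivic MtV's, as a sub\-coalgebra of the motivic alternating MZV's of \autoref{sec:mot}, with its infinitesimal coactions \( D_s\colon\mathcal{H}_N\to\mathcal{L}_s\otimes\mathcal{H}_{N-s} \); in this level-\(2\) setting \( \mathcal{L}_s=0 \) for even \( s \), \( \mathcal{L}_1=\Q\,\log^\lmot(2) \), and \( \mathcal{L}_{2r+1}=\Q\,\zeta^\lmot(\overline{2r+1}) \) for \( r\ge1 \), so after fixing these generators each \( D_s \) becomes a single map \( \rho_s\colon\mathcal{H}_N\to\mathcal{H}_{N-s} \). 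Write a member of \( S \) as \( t^\mot(\phi(\vec k)) \) with \( \vec k=(k_1,\dots,k_r)\in\{1,2\}^r \) and \( \phi(\vec k)=(k_1,\dots,k_{r-1},k_r{+}1) \); these are genuinely convergent, so no regularisation appears on the left-hand side of a putative relation. Note \( \{2\}^{N/2}=\phi(\{2\}^{N/2-1},1) \), so for \( N \) even \( t^\mot(\{2\}^{N/2})\in S \), a rational multiple of \( \zeta^\mot(2)^{N/2} \), is the unique member of \( S_N \) with \( D_s\big(t^\mot(\{2\}^{N/2})\big)=0 \) for all \( s \).

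Proceed by induction on \( N \), the hypothesis being that the weight-\({<}N\) elements of \( S \) are linearly independent and that each \( \rho_s \) maps the \( \Q \)-span of \( S_m \) into that of \( S_{m-s} \) for \( m<N \). Given a relation \( \sum_{\vec k}c_{\vec k}\,t^\mot(\phi(\vec k))=0 \) in weight \( N \), apply \( D_s \) for \( 1\le s<N \): each yields \( \sum_{\vec k}c_{\vec k}\,\rho_s\big(t^\mot(\phi(\vec k))\big)=0 \), and re-expanding each \( \rho_s(t^\mot(\phi(\vec k))) \) over the inductively independent set \( S_{N-s} \) turns this into a homogeneous system \( M\vec c=0 \), where \( M \) has rows indexed by \( S_N \) and columns by pairs \( (s,\vec k') \) with \( \vec k'\in S_{N-s} \). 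To conclude \( \vec c=0 \) one must exhibit \( |S_N|=F_N \) rows of \( M \) that are \( \Q \)-linearly independent; since \( \zeta^\mot(N) \) (equivalently a multiple of \( t^\mot(N) \), and for \( N \) even of \( t^\mot(\{2\}^{N/2}) \)) lies in \( \ker\bigoplus_sD_s \), one row is necessarily dependent, and this single ``trivial'' direction is handled separately --- for \( N \) even by excluding \( t^\mot(\{2\}^{N/2})\in S_N \) and noting (apply \( \bigoplus D_s \) again) it cannot lie in the span of the others, and for \( N \) odd via an auxiliary period/grading functional detecting \( \zeta^\mot(N) \), exactly as in \cite{brown12, murakami21}.

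The substance is computing \( \rho_s(t^\mot(\phi(\vec k))) \) and showing that, in a suitable order, \( M \) is block-triangular with invertible diagonal. Filter \( S \) by \emph{level} \( \ell(\vec k):=\#\{\,i:\phi(\vec k)_i\ne2\,\} \), i.e.\ the number of interior \( 1 \)'s plus a final \( 3 \) if present; \( t^\mot(\{2\}^{N/2}) \) is the only level-\(0\) element of \( S_N \). By the coaction formulae of \autoref{sec:dt} (extending Murakami's \( D_r \)) and \autoref{rem:diff:t} for \( D_1 \), \( \rho_s(t^\mot(\phi(\vec k))) \) is a sum over deletions of one consecutive block from \( \phi(\vec k) \), and its top-level part (level \( \ell(\vec k)-1 \)) arises exactly from deleting a block \( \{2\}^a\,1\,\{2\}^b \) or \( \{2\}^a\,3 \). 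In the first case the accompanying scalar is (up to sign) the coefficient of \( \zeta^\lmot(\overline{2r+1})\,t^\mot(\{2\}^{a+b-r}) \) in \( t^{\lmot,V}(\{2\}^a,1,\{2\}^b) \) given by \autoref{thm:intro:t2212ev}, namely \( -(-1)^{r}2^{-2r}\big[\binom{2r}{2a}+\tfrac{2^{2r}}{2^{2r}-1}\binom{2r}{2b}\big] \) with \( s=2r+1 \) (together with the \( \log^\lmot(2) \)- and \( V \)-terms when \( s=1 \)); in the second it is the analogous scalar from the \( \{2\}^a\,3\,\{2\}^b \) evaluations. So on the associated graded of the level filtration \( M \) is block-diagonal, and its invertibility reduces to invertibility of the transition matrices assembled from these explicit coefficients.

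The main obstacle is precisely this invertibility. Nonvanishing of the coefficients is easy but --- as in Murakami's argument --- does not suffice; the point is that the two competing contributions \( 2^{-2r}\binom{2r}{2a} \) and \( \tfrac{1}{2^{2r}-1}\binom{2r}{2b} \) must be separated, and the key is that they have different \( 2 \)-adic valuations (the factor \( (2^{2r}-1)^{-1} \) is a \( 2 \)-adic unit, whereas \( 2^{-2r} \) contributes valuation \( -2r \)). Ordering the Saha words by level, then by the positions and block-lengths around the distinguished \( 1 \) (or \( 3 \)), makes each diagonal block unitriangular over \( \Z_{(2)} \), hence invertible. Running this through the descending induction on level at fixed weight, and then on weight, forces \( \vec c=0 \), proving linear independence of \( S \). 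The remaining steps are bookkeeping: tracking the \( \log^\lmot(2) \) and \( V \) contributions of \( D_1 \); checking that the shorter words produced by the coaction, after stuffle-regularisation of any ending in \( 1 \), lie in the \( \Q \)-span of the relevant \( S_m \), which is where the basis statements of \autoref{sec:mot} (beyond mere independence) enter; and the separate treatment of the single \( \zeta^\mot(N) \) direction noted above.
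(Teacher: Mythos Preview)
Your core strategy---filter Saha words by level $\ell=\deg_1 w+\deg_3 w$, compute the level-graded $D_{2r+1}$, and prove the resulting transition matrix is invertible by a $2$-adic argument---is exactly the paper's. Two points in your framing should be corrected, and one of them closes what you flag as a remaining step.

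First, the clean induction is on \emph{level}, not on weight. The paper shows directly (\autoref{lem:sahamotivic}) that $D_{2r+1}$ on a Saha element produces Saha elements of strictly smaller level in the right-hand factor; no external spanning or basis result is needed, and no word ending in $1$ ever appears (Saha words end in $2$ or $3$). Packaging all $D_{2r+1}$ into one injective map $\partial^S_{N,\ell}\colon\gr^S_\ell\to\bigoplus_r\gr^S_{\ell-1}$ then lets you induct purely on $\ell$, with base case $\ell=0$ the family $\ttw^\mot(\{2\}^n)$, independent by the weight grading. In particular there is \emph{no} separate ``$\zeta^\mot(N)$ direction'' to handle: the one-dimensionality of $\ker D_{<N}$ is used in \autoref{sec:mott2212} to lift the evaluation identity, not in the independence argument. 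Second, on the matrix entries: only the deconcatenation term \eqref{eqn:dr:deconcat} produces a $\ttw^\lmot(\{2\}^a,1,\{2\}^b)$ factor governed by \autoref{thm:mott2212}; the terms \eqref{eqn:dr:0eps}, \eqref{eqn:dr:eps0} give $\zeta^\lmot$ factors controlled by Brown's \autoref{thm:brown:z2232}. After rescaling to $\ttw$ and projecting $\zeta^\lmot(2r+1)\mapsto 2^{2r-1}$, the $\zeta^\lmot$-contributions become even integers while the diagonal deconcatenation entry $2^{2a-1}d_{2^a1}=(-1)^a(2^{2a+1}-1)$ is odd, giving upper-triangularity $\pmod 2$---this is the precise form of your unitriangularity claim, with the caveat that at $\ell=1$ the single row coming from $\ttw^\lmot(\{2\}^a,3)$ must be handled separately (\autoref{thm:matS:inj}).
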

	For example, this establishes that \( t^\mot(1,2) \) and \( t^\mot(3) \) are linearly independent in weight 3, as are \( t^\mot(1,1,2), t^\mot(2,2) \) and \( t^\mot(1,3) \) in weight 4. \medskip

	Finally in \autoref{sec:hoff} we introduce the Hoffman \( t \) one-two elements -- those MtV's with arguments exactly 1 or 2 -- in analogy with the Hoffman elements \( \zeta(k_1,\ldots,k_r), k_i \in \{ 2,3 \} \) for classical MZV's.  We show the one-two elements form a basis of both (extended) motivic MtV's and alternating motivic MZV's, under a certain shuffle regularisation and certain stuffle regularisation.
	
	\begin{Thm}[\autoref{cor:hoff:indep}, and \autoref{cor:hoff:st:indep}, below]
		Let
		\[
			H^\bullet = \{ t^{\bullet,\mot}(k_1,\ldots,k_r) \mid k_i \in \{ 1,2 \} \, \}
		\]
		be the Hoffman one-two elements, for \( \bullet = \ast \) or \( \bullet = \shuffle \), where the shuffle regularisation arises from \( \zeta^{\shuffle,\mot}(1) = 0 \) and the stuffle regularisation has \( t^{\mot,\ast}(1) = \lambda \log^\mot(2) \), \( \lambda \) of the form \( \frac{2a+1}{b}\in \Q \) with \( a, b \in \Z \).  (In particular, \( \lambda = \frac{1}{2}, 1 \) are allowed.) Then the elements in \( H \) are linearly independent, and span the space of both (extended) motivic MtV's and motivic alternating MZV's.
	\end{Thm}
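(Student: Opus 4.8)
The plan is to reduce the double assertion (``basis of both spaces'') to a single linear‑independence statement, and then prove that by a Brown--Deligne style induction on the weight using the infinitesimal coaction, with the motivic lift of \autoref{thm:intro:t2212ev} playing the role that Zagier's evaluation of \(\zeta(\{2\}^a,3,\{2\}^b)\) plays in Brown's theorem for the Hoffman \(2,3\) elements.

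\textbf{Reduction.} In weight \(N\) the set \(H^\bullet\) has exactly \(F_{N+1}\) elements, since that is the number of compositions of \(N\) into parts \(1\) and \(2\). By the structure recalled in \autoref{sec:mot} (Deligne's description of the mixed Tate motives over \(\Z[\tfrac12]\)), \(\dim_\Q\) of the weight-\(N\) part of the motivic alternating MZV's is also \(F_{N+1}\), and every \(t^{\bullet,\mot}(k_1,\dots,k_r)\) lies in that space: the convergent ones by the motivic lift of \eqref{eqn:tasz}, the divergent ones because \(\shuffle\)- or \(\ast\)-regularisation stays inside the same Hopf algebra, and \(\log^\mot(2)=-\zeta^\mot(\overline 1)\) lies there too. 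Hence in each weight one has \(\operatorname{span}_\Q H^\bullet \subseteq (\text{extended motivic MtV's}) \subseteq (\text{motivic alternating MZV's})\), so it suffices to prove \(H^\bullet\) linearly independent: the dimension count then forces the three spaces to coincide and \(H^\bullet\) to be a basis of each.

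\textbf{Coaction setup.} The weight-\(r\) piece \(\mathcal L^{(2)}_r\) of the level-\(2\) motivic Lie coalgebra is one-dimensional for odd \(r\ge 1\) --- spanned by the class of \(\log^\mot(2)\) for \(r=1\), of \(\zeta^\mot(r)\) for \(r\ge 3\) --- and vanishes for even \(r\). So the only relevant derivations are \(D_1\) and \(D_{2s+1}\) (\(s\ge 1\)), and an element of \(\mathcal H^{(2)}_N\) killed by all of them is a rational multiple of \(\zeta^\mot(2)^{N/2}\), hence \(0\) for \(N\) odd. I induct on \(N\): in weights \(<N\) the Hoffman elements are a basis by the inductive hypothesis and the Reduction, so everything of lower weight expands in them. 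Given \(\sum_w c_w\,t^{\bullet,\mot}(w)=0\) in weight \(N\), peel off the unique level-\(0\) word \(\{2\}^{N/2}\) (present only for \(N\) even, with value a known nonzero multiple of \(\zeta^\mot(2)^{N/2}\)), and apply each \(D_r\). Using the \(D_{2s+1}\)-formulae for motivic MtV's from \autoref{sec:dt}, together with the motivic \autoref{thm:intro:t2212ev} (and, where parts \(\ge3\) arise, Murakami's evaluation of \(t(\{2\}^a,3,\{2\}^b)\)) to re-expand the resulting weight-\((N-2s-1)\) MtV's into the Hoffman basis, one gets that \(D_{2s+1}t^{\bullet,\mot}(w)\) is the class of \(\zeta^\mot(2s+1)\) tensored with an explicit \(\Q\)-combination of Hoffman elements of strictly smaller level, with coefficients built from the quantities \(2^{-2s}\big[\binom{2s}{2a}+\tfrac{2^{2s}}{2^{2s}-1}\binom{2s}{2b}\big]\) of \autoref{thm:intro:t2212ev}. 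Likewise \(D_1 t^{\bullet,\mot}(w)\) is the class of \(\log^\mot(2)\) tensored with \(\partial\,t^{\bullet,\mot}(w)\), where \(\partial\) is Hoffman's derivation of \autoref{rem:diff:t} (formal differentiation with respect to \(\log^\mot(2)\), with \(t^{\bullet,\mot}(1)=\lambda\log^\mot(2)\); here \(\lambda=\tfrac12\) in the \(\shuffle\)-case), so this term is governed by the boundary coefficients \(\delta_{a=0}\), \(\delta_{b=0}(\lambda-1)\) of the same evaluation.

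\textbf{Triangularity, and the main obstacle.} Order the weight-\(N\) words by level (number of \(1\)'s), decreasingly. The word \(\{1\}^N\) is the only one whose value involves \((\log^\mot 2)^N\), with nonzero leading coefficient since \(\lambda\ne 0\); so the \((\log^\mot 2)^{N-1}\)-part of \(D_1\) applied to the relation yields \(c_{\{1\}^N}=0\), and iterating the same device on descending powers of \(\log^\mot 2\) eliminates the higher-level coefficients one level at a time --- provided that within each level the residual system, which couples the \(D_{2s+1}\)-entries (the binomial combinations above) with the \(D_1\)-entries (depending on \(\lambda\)), is nondegenerate. Establishing this is the heart of the matter: one must refine the order within each level and show that a distinguished component of some \(D_r t^{\bullet,\mot}(w)\) is nonzero and absent from \(D_r t^{\bullet,\mot}(v)\) for all later \(v\). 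This is the analogue --- for the \(1,2\) MtV's, and now with the extra \(D_1\)-row --- of Brown's unitriangularity lemma for the \(2,3\) MZV's; it relies on the arithmetic of the coefficients (\(2\)-adic valuations of binomials, and nonvanishing of \(\binom{2s}{2a}+\tfrac{2^{2s}}{2^{2s}-1}\binom{2s}{2b}\)) and, for the \(D_1\)-entries, on \(\lambda\) having the form \(\tfrac{2a+1}{b}\): the finitely many \(\lambda\) that would make a leading minor vanish are precisely rationals of even numerator --- already \(\lambda\in\{0,2\}\) fail in weight \(\le 3\) --- and exactly these are excluded. With all level-\(\ge 1\) coefficients killed, the relation becomes \(c_{\{2\}^{N/2}}\zeta^\mot(2)^{N/2}=0\), so \(c_{\{2\}^{N/2}}=0\), closing the induction. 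The \(\ast\)-case for admissible \(\lambda\) and the \(\shuffle\)-case are handled by the same induction, differing only through the value of \(\lambda\) and an explicit, \(\lambda\)-dependent (unipotent) \(\shuffle/\ast\) regularisation comparison.
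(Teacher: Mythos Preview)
Your overall architecture matches the paper's: reduce to linear independence by the dimension count against $F_{N+1}$, filter by level (number of 1's), and prove injectivity of the level-graded piece of $\bigoplus D_{2r+1}$ by analysing a matrix whose entries are built from the coefficients in the motivic evaluation of $t(\{2\}^a,1,\{2\}^b)$.

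However, you explicitly flag the nondegeneracy of the ``residual system within each level'' as ``the heart of the matter'' and do not prove it. This is where essentially all of the work lies, and the device you sketch (peeling off descending powers of $\log^\mot(2)$, then invoking $2$-adic valuations) is not the paper's route and is not obviously sufficient. The paper's key structural observation, which you are missing, is a further partition of the Hoffman words by the number of \emph{trailing} $1$'s: one checks that no term of $D_r\,\ttw^\mot(w)$ can increase this count, so the matrix of $\partial^H_{N,\ell}$ is block lower-triangular with respect to this partition. Each diagonal block except the last is then shown --- via the $2$-adic behaviour of the $c_{2^a32^b}$, $c_{2^a1}$, $d_{2^a12^b}$ coefficients --- to be integral and upper-triangular modulo $2$ with $1$'s on the diagonal. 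The last block (words with $\ell-1$ trailing $1$'s, together with the exceptional word $2^{(N-\ell)/2}1^\ell$) acquires a single half-integer entry from the $D_1$-deconcatenation of a trailing $1$, and requires a separate cofactor expansion to show its determinant lies in $\tfrac12+\Z$. Without the trailing-$1$'s partition it is not clear how to organise the triangularity, and your ``descending powers of $\log^\mot(2)$'' heuristic does not produce it.

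Two smaller inaccuracies. First, your description of $D_1$ is imprecise: on $\ttw^\mot$ it deconcatenates a leading $1$ with coefficient $2\log^\lmot(2)$ \emph{and} a trailing $1$ with coefficient $-\log^\lmot(2)$; Hoffman's derivation sees only the former, and the asymmetric $2$ versus $-1$ is exactly what creates the half-integer entry in the last block. Second, your claim that the bad $\lambda$ are ``finitely many'' and ``precisely rationals of even numerator'' overstates what is proved: the paper shows only the \emph{sufficient} condition that for $\lambda=\frac{2a+1}{b}$ the last-block determinant has the form $2(x-y)+\frac{(2a+1)(2y+1)}{b}\ne 0$. The singular $\lambda$'s in fact form an infinite sequence (one new value per odd weight), and their precise characterisation is left open.
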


	As a corollary, we see the space of alternating MZV's of weight \( N \) (under shuffle-regularisation with \( \zeta^{\shuffle,0}(1) = 0 \) or stuffle-regularisation with \( \zeta^{\ast,0}(1) = 0 \) and extended MtV's (under shuffle-regularisation induced by \( \zeta^{\shuffle,0}(1) = 0 \), or stuffle-regularisation with \( t^{\ast,V}(1) = V \) for \( V = \lambda \log(2) \), \( \lambda = \frac{2a+1}{b} \in \Q \), \( a, b \in \Z \)) coincide.  In particular they have dimension \( F_{N+1} \). We also indicate how badly this can fail for certain `singular' regularisation parameters in \autoref{rem:singular:reg}.\medskip
	
	We also include some further examples of the motivic Galois descent (to motivic MZV's) for \( t^\mot(k_1,\dots,k_d) \) for particular families which include arguments \( k_i = 1 \).  For example, in \autoref{prop:t21232} we show
	\[
		t^\mot(\{2\}^a, 1, \{2\}^b, 3, \{2\}^c) \,,
	\]
	is a linear combination of motivic MZV's whenever \( a \geq 1 \).  This shows that Murakami's motivic Galois descent of \( t^\mot(k_1, \ldots, k_d) \), all \( k_i \geq 2 \) \cite[Theorem 8]{murakami21} is not exhaustive, and raises the question of more generally characterising when such a motivic MtV can be written as a linear combination of motivic MZV's.
	
	\subsection*{Acknowledgements} I am grateful to Michael Hoffman for frequent discussions on MtV's, MZV's and various related subjects during his extended stay at the Max-Planck Institute f\"ur Mathematik in Bonn, through spring and summer 2020.  I am also grateful to the MPIM for extended support, which facilitated these discussions with Michael Hoffman and precipitated the start of this work.  I am grateful to both Adam Keilthy and Danylo Radchenko for some discussions and suggestions on how to tackle the hypergeometric series which arise during the evaluation of the \( t(\{2\}^a,1,\{2\}^b) \) generating series.  I was supported by DFG Eigene Stelle grant CH 2561/1-1, for Projektnummer 442093436.

	\section{\texorpdfstring{Relating regularisations of multiple zeta values and multiple \( t \) values}
	{Relating regularisations of multiple zeta values and multiple t values}}
	\label{sec:reg}
	
	In this section we recall, compare and contrast the different notions of regularisation which apply to MtV's.  In particular, we need to understand how the stuffle regularisation of \( t \) values with \( t^{\ast,V}(1) = V \) relates to the regularisation of \( t \) values induced by the stuffle regularisation of the underlying zeta values at \( \zeta^{\ast,U}(1) = U \), or the shuffle regularisation of the zeta values with \( \zeta^{\shuffle,W}(1) = W \).
	
	\subsection{Stuffle and shuffle regularisation for MZV's and MtV's}\label{sec:mzvreg}
	
	As already noted in the introduction, MZV's and MtV's are only defined when the last argument \( k_d \geq 2 \), otherwise the series is divergent.  However, one can use the stuffle product structure to give a consistent definition to any MZV or MtV with trailing 1's, in terms of a single parameter assigned to \( \zeta(1) \coloneqq \zeta^{\ast,U}(1) = U \) or \( t(1) \coloneqq t^{\ast,V}(1) = V \) respectively.  Alternatively one can utilise the iterated integral representation to define a(nother) regularisation.  We briefly recall the details here, see \cite{ikz06} for full details. \medskip
	
	\paragraph{\bf Stuffle-regularisation of MZV's:} One considers the truncated MZV
	\[
		\zeta_M(k_1,\ldots,k_d) = \sum_{0 < n_1 < \cdots < n_d \leq M} \frac{1}{n_1^{s_1} \cdots n_d^{s_d}} \,.
	\]
	It is well-known that as \( M \to \infty \)
	\[
		\zeta_M(1) = \sum_{n=1}^M \frac{1}{n} = \log(M) + \gamma + O\bigg( \frac{1}{M} \bigg) \,,
	\]
	where \( \gamma = 0.577\ldots \) is the Euler-Mascheroni constant.  The stuffle-product formula (which both truncated and non-truncated MZV's satisfy, including alternating versions by multiplying the signs, as well as MtV's and truncated versions thereof) corresponds to interleaving the summation indices with equality allowed, for example
	\begin{align*}
		\zeta_M(k_1,k_2) \zeta_M(\ell_1) &= \sum_{0 < n_1 < n_2 < M} \sum_{0 < m_1 < M} \frac{1}{n_1^{k_1} n_2^{k_2}} \cdot \frac{1}{m_1^{\ell_1}} \\
		&= \begin{aligned}[t] 
			\bigg( \sum_{0 < n_1 < n_2 < m_1 < M} \! + \! & \sum_{0 < n_1 < m_1 < n_2 < M} \! + \!  \sum_{0 < m_1 < n_1 < n_2 < M} \!  \\[-0.5ex]
			& {} + \! \sum_{0 < n_1 < n_2 = m_1 < M} \! + \!  \sum_{0 < n_1 = m_1 < n_2 < M}\bigg)\frac{1}{n_1^{k_1} n_2^{k_2}} \cdot \frac{1}{m_1^{\ell_1}} \end{aligned} \\
		&= \zeta(k_1,k_2,\ell_1) + \zeta(k_1,\ell_1,k_2) + \zeta(\ell_1,k_1,k_2) + \zeta(k_1,k_2+\ell_1) + \zeta(k_1 + \ell_1, k_2) \,.
	\end{align*}  By applying this stuffle-product one sees by induction that for any indices \( \vec{k} = (k_1,\ldots,k_d) \)
	\[
		\zeta_M(\vec{k}) = Z^{\ast}(\vec{k}; \log(M) + \gamma) + O\bigg(\frac{\log^J{M}}{M}\bigg)
	\]
	for some polynomial \( Z^{\ast}(\vec{k}; T) \) with convergent MZV coefficients, and some \( J \).  Then the stuffle-regularised MZV with parameter \( \zeta^{\ast,U}(1) = U \) is defined to be this polynomial
	\[
		\zeta^{\ast,U}(\vec{k}) = Z^\ast(\vec{k}; U) \,.
	\]
	More precisely, once recursively computes this polynomial \( Z^\ast(\vec{k}; T) \) by extending the stuffle product formally to allow trailing 1's, and considering
	\[
		\zeta^{\ast,U}(k_1,\ldots,k_d, \{1\}^\alpha) - \tfrac{1}{\alpha} \zeta^{\ast,U}(k_1,\ldots,k_d,\{1\}^{\alpha-1}) \zeta^{\ast,U}(1)
	\]
	which has strictly fewer trailing 1's after expanding out.  This recursively writes every MZV as a polynomial in \( \zeta^{\ast,U}(1) = U \) with convergent MZV coefficients, with stuffle-regularised MZV's still obeying the stuffle-product formula. 
	
	For example, for \( a \neq 1 \):
	\begin{equation}\label{eqn:reg:za1}
	\begin{aligned}
		\zeta^{\ast,U}(a, 1) & = \zeta^{\ast,U}(a) \zeta^{\ast,U}(1) - \zeta^{\ast,U}(1,a) - \zeta^{\ast,U}(a+1) \\
		 & = \zeta(a) U - \zeta(1,a) - \zeta(a+1)
	\end{aligned}
	\end{equation}
	Then
	\begin{equation}\label{eqn:reg:za11}
	\begin{aligned}
	\zeta^{\ast,U}(a, 1, 1) & = \tfrac{1}{2} \zeta^{\ast,U}(a,1) \zeta^{\ast,U}(1) - \zeta^{\ast,U}(1,a,1) - \zeta^{\ast,U}(a,2) - \zeta^{\ast,U}(a+1,1) \\
		&= \begin{aligned}[t]
			\tfrac{1}{2} U^2 \zeta(a) - U \zeta(a+1)  - U \zeta(1,a) + \frac{1}{2} \zeta(a+2) \\
			+ \zeta(1, a+1) + \tfrac{1}{2} \zeta(2,a) - \tfrac{1}{2} \zeta(a,2) + \zeta(1,1,a) \,.
		\end{aligned}
	\end{aligned}
	\end{equation}
	\medskip
	
	\paragraph{\bf Stuffle-regularisation of MtV's:} The same stuffle-product structure is be used to define the stuffle-regularised MtV's \( t^{\ast,V}(\vec{k}) \) with regularisation parameter \( t^{\ast,V}(1) = V \).  That is to say, both \eqref{eqn:reg:za1} and \eqref{eqn:reg:za11} hold for \( \zeta \) replaced everywhere by \( t \).
	
	Moreover, one can also argue for the regularisation \( t^{\ast,T}(1)  = \log(2) \) via the asymptotic behaviour of the truncated
	\[
	t_M(1) = \sum_{0 \leq k < M} \frac{1}{2k+1} \,.
	\]
	The known asymptotics of
	\[
	\zeta_M(1) = \sum_{1 \leq k < M} \frac{1}{k} = \log(M) + \gamma + O\Big(\frac{1}{M}\Big)
	\]
	lead one to define a natural regularisation of \( \zeta^{\ast,T}(1) \) as the constant term in the above, when viewed as a polynomial in \( \log(M) + \gamma \).  I.e. the regularised version of \( \zeta^{\ast,T}(1) \) is most naturally given via \( \zeta^{\ast,T}(1) = 0 \).  Applying the same to \( t_M(1) \) gives
	\begin{align*}
	t_M(1) = \sum_{0 \leq k < M} \frac{1}{2k+1} &= \sum_{1 \leq k < 2M} \frac{1}{k} - \sum_{1 \leq k < M} \frac{1}{2k} \\
	&= \Big(\log(2M) + \gamma + O\Big(\frac{1}{2M}\Big)\Big) - \frac{1}{2} \Big( \log(M) + \gamma + O\Big(\frac{1}{M}\Big) \Big) \\
	&= \log(2) + \frac{1}{2} \big( \log(M) + \gamma \big) + O\Big( \frac{1}{M} \Big)
	\end{align*}
	So one can naturally define 
	\begin{equation}
	\label{eqn:t1reg}
	 t^{\ast,T}(1) = \log(2) 
	 \end{equation}
	 via the constant term of the above, viewed as a polynomial in \( \log(M) + \gamma \).
	
	Since the space of weight 1 MtV's \( t(k_1,\ldots,k_d) \) with \( k_1 + \ldots + k_d = 1 \) and \( k_d > 1 \) is 0 dimensional, we already see that defining \( t^{V,\ast}(1) = \log(2) \) extends the space of MtV's.  We make the following definition, for clarity, with regard to convergent and regularised MtV's.
	\begin{Def}[Convergent and extended MtV's]\label{def:mtvconv}
		We call a multiple \( t \) value \( t(k_1,\ldots,k_d) \)
		\begin{itemize}
			\item[i)] \emph{convergent} if \( k_d \geq 2 \), and
			\item[ii)] \emph{extended} if \( k_d \geq 1 \)
		\end{itemize}
		The \emph{space of convergent MtV's} refers the space generated by all MtV's \( t(k_1,\ldots,k_d) \) which have \( k_d \geq 1 \), whereas the \emph{space of extended MtV's} denotes that generated by all MtV's \( t(k_1,\ldots,k_d) \) allowing \( k_d > 1 \) or \( k_d = 1 \), under some particular (specified) regularisation.
	\end{Def}

	\paragraph{\bf Shuffle regularisation of MZV's:}
	
	Likewise, it is well know how to write MZV's (or more generally multiple polylogarithms) as iterated integrals.  Namely
	\begin{equation}
	\label{eqn:zasi}
		\zeta\sgnarg{\epsilon_1,\ldots,\epsilon_d}{k_1,\ldots,k_d} = 
		 (-1)^d I(0; \eta_1, \{0\}^{k_1-1}, \eta_2, \{0\}^{k_2-1}, \ldots, \eta_d, \{0\}^{k_d-1}; 1)
	\end{equation}
	where the notation \( \{k\}^n = \overbrace{k, \ldots, k}^n \) denotes \( k \) repeated \( n \) times, \( \eta_i = \prod_{j=1}^d \epsilon_j \), and
	\begin{equation}\label{eqn:itint}
		I(x_0; x_1,\ldots,x_N; x_{N+1}) \coloneqq \int_{x_0 < s_1 < \cdots < s_N < x_{n+1}} \omega(x_1; s_1) \wedge \cdots \wedge \omega(x_N; s_N) \,,
	\end{equation}
	is the iterated integral of the family of differential forms \( \omega(x; s) = \frac{\mathrm{d}s}{s - x} \).
	
	Integrals of this form are multiplied by the shuffle product, corresponding to interleaving the integration indices, although here equality of indices gives sets of measure zero, and so no contribution to the result.  For example
	\begin{align*}
		\zeta(2) \zeta(2) & = I(0; 1, 0; 1) I(0; 1, 0; 1) \\
		&= \int_{0 < s_1 < s_2 < 1} \frac{\mathrm{d}s_1}{s_1 - 1} \wedge  \frac{\mathrm{d}s_2}{s_2} \int_{0 < s_3 < s_4 < 1} \frac{\mathrm{d}s_3}{s_3-1} \wedge  \frac{\mathrm{d}s_4}{s_4} \\
		&= \begin{aligned}[t] 
			& \int_{0 < s_1 < s_2 < r_1 < r_2} \frac{\mathrm{d}s_1}{s_1 - 1} \wedge  \frac{\mathrm{d}s_2}{s_2} \wedge  \frac{\mathrm{d}r_1}{r_1 - 1} \wedge  \frac{\mathrm{d}r_2}{r_2} \\
			& \quad + \int_{0 < s_1  < r_1 < s_2< r_2} \frac{\mathrm{d}s_1}{s_1 - 1} \wedge  \frac{\mathrm{d}r_1}{r_1 - 1} \wedge  \frac{\mathrm{d}s_2}{s_2} \wedge  \frac{\mathrm{d}r_2}{r_2} + \text{4 more terms} \end{aligned} \\
		&= \zeta(2,2) + \zeta(1,3) + \text{4 more terms} \\
		&= 2\zeta(2,2) + 4\zeta(1,3)
	\end{align*}
	
	The integral representation only converges when \( x_0 \neq x_1 \) and \( x_n \neq x_{n+1} \), however by formally extending the shuffle product to these cases, one can write any \( I(0; x_1, \ldots, x_n; 1) \) as a polynomial in \( I(0; 0; 1) \) and \( I(0; 1; 1) \), with convergent MZV coefficients.  Recursively, one has that if \( x_i \neq 1 \), then
	\[
		I(0; x_1,\ldots,x_i, \{1\}^\alpha; 1) - \tfrac{1}{\alpha} I(0; x_1,\ldots,x_i, \{1\}^{\alpha-1}; 1) I(0; 1; 1) 
	\]
	ends in fewer trailing 1's, so eventually such an expression with no trailing 1's is obtained.  Likewise, if \( x_i \neq 0 \), then
	\[
		I(0; \{0\}^\beta, x_1,\ldots,x_i; 1) - \tfrac{1}{\beta} I(0; \{0\}^{\beta-1}, x_1,\ldots,x_i; 1) I(0; 0; 1) 
	\]
	ends in fewer starting 0's, so eventually one obtains an expression with no starting 0's, and also no trailing 1's.  On account of the duality relation, setting \( t_i \mapsto 1-t_i \) in the integrated integral expression, one typically imposes that \( I(0; 0; 1) = I(0; 1;1) \) in the regularisation procedure.  One then converts this expression back to MZV's, in order to obtain the shuffle regularisation polynomial for \( \zeta^{\shuffle,W}(\vec{k}) \), with parameter \( \zeta^{\shuffle,W}(1) = W = -I(0; 0; 1) = -I(0; 1; 1) \)
	
	One also extends the notion of MZV to allow initial 0's in the integral with the notation
	\[
		\zeta_\ell\sgnarg{\epsilon_1,\ldots,\epsilon_d}{k_1,\ldots,k_d} = (-1)^d I(0; \{0\}^{\ell}, \eta_1, \{0\}^{k_1-1}, \eta_2, \{0\}^{k_2-1}, \ldots, \eta_d, \{0\}^{k_d-1}; 1) \,,
	\]
	which can be simplified to \( \zeta_\ell(k_1,\ldots,k_d) \) if all \( \epsilon_i = 1 \), or one writes \( \overline{k_i} \) to denote the signs \( \epsilon_i = -1 \) in position \( i \).
	
	For example, we compute the regularisation polynomial of the following MZV's to be as given
	\begin{align*}
		\zeta^{\shuffle,W}(2,1,1) &= \frac{1}{2} W^2\zeta(2) - 2 W \zeta(1,2)  + 3 \zeta(1,1,2) \,, \\
		\zeta_1^{\shuffle,W}(2,1,1) &= \begin{aligned}[t] 
		& -\frac{1}{2} W^3 \zeta(2) - W^2 \zeta(3) + 2 W^2 \zeta(1,2)  + 4 W \zeta(1,3) + W \zeta(2,2) - 3 W \zeta(1,1,2) \\
		& - 6 \zeta(1,1,3) - 2 \zeta(1,2,2)- \zeta(2, 1, 2) - \zeta(2, 1, 2)
		\end{aligned}
	\end{align*}
	We already remark here that \( \zeta^{\shuffle,W}(\vec{k}) \) and \( \zeta^{\ast,T}(\vec{k}) \) are not the same even for \( W = T \), but they are closely related, as we recall in \autoref{lem:regshtost} below.
	
	In the case \( W = 0 \), one has the following explicit formula to unshuffle and regularise the initial 0's.  (Compare property I2 in Section 2.4 of \cite{brown12}) 
	\begin{Lem}[Unshuffling of initial 0's]\label{lem:unshufflestart0}
		For any \( \ell \geq 0 \), any \( \vec{k} = (k_1,\ldots,k_d) \in \Z_{\geq1} \) and any signs \( \epsilon_i \), we have
		\begin{equation}\label{eqn:unshufflestart0}
			\zeta_\ell^{\shuffle,0}\sgnarg{\vec{\epsilon}}{\vec{k}} = (-1)^\ell \sum_{i_1 + \cdots + i_d = \ell} \binom{k_1 + i_1 - 1}{i_1} \cdots \binom{k_d + i_d - 1}{i_d} \zeta^{\shuffle,0}\sgnarg{\vec{\epsilon}}{k_1 + i_1, \ldots, k_d + i_d} \,.
		\end{equation}		
		
		\begin{proof}
			This is a straightforward proof by induction on \( \ell \), the case \( \ell = 0 \) is clearly true.  Since \( \zeta^{\shuffle,0}(1) = \zeta_1^{\shuffle,0}(\emptyset) = I(0; 0; 1) = I(0; 1; 1) = 0 \), the product terms can be neglected after expanding out 
			\[
				I(0; \{0\}^\ell, x_1,\ldots,x_i; 1) - \tfrac{1}{\ell} I(0; \{0\}^{\ell-1}, x_1,\ldots,x_i; 1) I(0; 0; 1) \,.
			\]
			to reduce to fewer initial 0's.
		\end{proof}
	\end{Lem}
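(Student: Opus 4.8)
The plan is to argue by induction on \( \ell \). The case \( \ell = 0 \) is immediate, since the right-hand side of \eqref{eqn:unshufflestart0} then collapses to \( \zeta^{\shuffle,0}\sgnarg{\vec\epsilon}{\vec k} \) itself. For the inductive step the only tool needed is the shuffle product together with the normalisation \( \zeta^{\shuffle,0}(1) = I(0;0;1) = 0 \) of this regularisation. Writing \( w \) for the word \( \eta_1, \{0\}^{k_1-1}, \ldots, \eta_d, \{0\}^{k_d-1} \) attached to \( \vec k \) as in \eqref{eqn:zasi}, I would expand the shuffle product \( I(0;0;1) \cdot I(0; \{0\}^{\ell-1}, w; 1) \) in two ways. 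On the one hand, after applying \( I(0;0;1) = 0 \) in the regularisation, it vanishes; on the other hand it is the sum over all ways of inserting one extra letter \( 0 \) into the word \( \{0\}^{\ell-1}, w \). The bookkeeping observation is that inserting the new \( 0 \) among the \( \ell-1 \) leading zeros, or prepending it, each reproduces the word \( \{0\}^{\ell}, w \) (that is \( (\ell-1) + 1 = \ell \) positions in all), whereas inserting it into the \( 0 \)-block following \( \eta_j \) — which can be done in exactly \( k_j \) ways, and is wholly insensitive to the signs \( \epsilon_i \) — produces the word attached to \( (k_1,\ldots,k_j+1,\ldots,k_d) \) with \( \ell-1 \) leading zeros. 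Solving for the \( \{0\}^\ell, w \) term and translating back via \eqref{eqn:zasi} yields the one-step recursion
\begin{equation}\label{eqn:unshuffle:onestep}
	\ell \, \zeta_{\ell}^{\shuffle,0}\sgnarg{\vec\epsilon}{\vec k} = - \sum_{j=1}^d k_j \, \zeta_{\ell-1}^{\shuffle,0}\sgnarg{\vec\epsilon}{k_1,\ldots,k_j+1,\ldots,k_d} \,.
\end{equation}

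From here there are two equally short finishes. One may simply iterate \eqref{eqn:unshuffle:onestep} down to \( \ell = 0 \): bumping the entry \( k_j \) a total of \( i_j \) times, in some order, contributes the product \( k_j(k_j+1)\cdots(k_j+i_j-1) = (k_j+i_j-1)!/(k_j-1)! \); the number of orders realising a prescribed \( \vec i = (i_1,\ldots,i_d) \) is the multinomial coefficient \( \ell!/\prod_j i_j! \); and dividing by the accumulated \( \ell! \) turns each factor into \( \binom{k_j+i_j-1}{i_j} \), with the sign \( (-1)^\ell \) coming from the \( \ell \) minus signs. Alternatively, substitute the inductive hypothesis for \( \zeta_{\ell-1}^{\shuffle,0} \) into the right-hand side of \eqref{eqn:unshuffle:onestep}, reindex the resulting double sum by \( \vec I = \vec i + \vec e_j \) (with \( \vec e_j \) the \( j \)-th standard basis vector, so that \( |\vec I| = \ell \)), and read off the coefficient of \( \zeta^{\shuffle,0}\sgnarg{\vec\epsilon}{\vec k + \vec I} \); what remains is the elementary identity
\[
	\sum_{j \,:\, I_j \geq 1} k_j \binom{k_j+I_j-1}{I_j-1} \prod_{m \neq j} \binom{k_m+I_m-1}{I_m} = \ell \prod_{m=1}^d \binom{k_m+I_m-1}{I_m} \,,
\]
which follows upon dividing through by \( \prod_m \binom{k_m+I_m-1}{I_m} \) and using \( \binom{n}{r-1}/\binom{n}{r} = r/(n-r+1) \) with \( n = k_j+I_j-1 \), \( r = I_j \): the \( j \)-th summand is then simply \( I_j \), and the sum is \( \sum_j I_j = \ell \).

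The whole argument is formal; the only genuine content is the insertion-position count behind \eqref{eqn:unshuffle:onestep} together with the little binomial identity, and I expect that to be the single place where care is needed. In particular the computation never refers to the values of the signs \( \epsilon_i \), so it applies verbatim to ordinary and to alternating (level \( 2 \)) multiple zeta values, and indeed at any level. I do not foresee a serious obstacle: this is exactly the kind of statement that becomes "clear" once the shuffle regularisation of leading zeros is set up, and the displayed formula is just the closed form of that procedure. One should note only that \( W = 0 \) is essential here — for general \( W \) the dropped product terms contribute powers of \( W \) and the right-hand side picks up lower-weight corrections.
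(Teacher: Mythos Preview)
Your proof is correct and follows essentially the same approach as the paper: both argue by induction on \( \ell \), using that in the \( W = 0 \) regularisation the shuffle product \( I(0;0;1) \cdot I(0;\{0\}^{\ell-1},w;1) \) vanishes, so that expanding it yields the one-step recursion reducing \( \ell \) leading zeros to \( \ell-1 \). The paper's proof is merely a terse sketch of this same idea; you have filled in the insertion-position count and the binomial bookkeeping that verifies the closed form.
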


	\paragraph{\bf Shuffle-regularisation of MtV's:} In contrast to MZV's, MtV's have no good integral representation, at least not one which endows them with a nice shuffle-product structure.	For example, one sees that in depth 1, we have
	\[
		t(2) = - \int_{0 < s_1 < s_2 < 1} \frac{\mathrm{d}s_1}{s_1^2 - 1} \wedge \frac{\mathrm{d}s_2}{s_2} \,,
	\]
	whereas in depth 2
	\[
		t(2,2) = \int_{0 < s_1 < s_2 < s_3 < s_4 < 1} \frac{\mathrm{d}s_1}{s_1^2 - 1} \wedge \frac{\mathrm{d}s_2}{s_2} \wedge \frac{s_3 \mathrm{d}s_3}{s_3^2 - 1} \wedge \frac{\mathrm{d}s_4}{s_4} \,.
	\]
	In particular, the first differential form is of a unique type, and any further forms corresponding to higher depth and more arguments have an extra \( s_i \) in the numerator
	\[
		\frac{s_i \mathrm{d}s_i}{s_i^2 - 1}  \text{ verses } \frac{\mathrm{d}s_1}{s_1^2 - 1} \,.
	\]
	Therefore, after taking the shuffle product of two copies this representation of \( t(2) \), one has to disentangle cases the forms when
	\[
		\frac{\mathrm{d}s_j}{s_j^2 - 1} 
	\]
	appears after the first position, and when
	\[
		\frac{s_1 \mathrm{d}s_1}{s_1^2 - 1} 
	\]
	appears in the first position.
	
	To cut directly to the point, we take the expression for \( t \) in terms of \( \zeta \) from \eqref{eqn:tasz}, and use this as the basis for defining the shuffle regularised version of \( t \) by shuffle regularising the MZV's therein
	\begin{equation}\label{eqn:tshuffle}
	t^{\shuffle,W}(k_1,\ldots,k_d) = \frac{1}{2^d} \sum_{\epsilon_i \in \{ \pm 1 \}} \epsilon_1 \cdots \epsilon_d \zeta^{\shuffle,W}\sgnarg{\epsilon_1,\ldots,\epsilon_d}{k_1,\ldots,k_d} \,,
	\end{equation}
	
	\subsection{\texorpdfstring{Compatibility of the \( t \) and zeta stuffle-regularisations}
	{Compatibility of the t and zeta stuffle regularisations}}
	
	Firstly, we note that the stuffle product of \( t \) values and of the underlying zeta values is compatible in the following sense.
	
	\begin{Lem}
		Let \( \vec{r} , \vec{s} \) be two sets of indices.  Then
		\[
			t(\vec{r} \ast_t \vec{s}) = t(\vec{r}) \ast_\zeta t(\vec{s}) \,,
		\]
		where \( \ast_t \) on the left denotes the stuffle product of MtV's, and \( \ast_\zeta \) on the right denotes the stuffle product of the MZV's in the MZV expression for \( t(\vec{r}) \) and \( t(\vec{s}) \).
		
		\begin{proof}
			This is essentially a tautology on the level of the series definition of \( t(\vec{r}) \), after inserting the factors \( \frac{1}{2} (1 - (-1)^{n_i}) \), \( i = 1, \ldots, d \) into the numerator, so that the summation range can be extended to all integers.  Stuffle product on the level of series corresponds to interleaving the two sets of summation induces \( n_1 < \cdots < n_d \) and \( m_1 < \cdots < m_{d'} \) in compatible ways.  The conversion from MtV to MZV form, and vice versa, corresponds by taking \( n_i, m_j \) odd positive integers or \( n_i, m_j \) any positive integers, which does not affect the form of any of the compatible interleavings. We also refer to \cite[Section 4]{hoffman19}, for a more precise formulation.  \medskip
			
			Nevertheless, writing out the case \( t((a) \ast_t (b,c)) \) verses \( t(a) \ast_\zeta t(b,c) \) is illuminating.  Directly from the ($t$-)stuffle product of \( t \) values, we have
			\[
				t((a) \ast_t (b,c)) = t(a,b,c) + t(b,a,c) + t(b,c,a) + t(a+b,c) + t(b,a+c) \,.
			\]
			
			On the other hand, by the expansion of \( t \) in terms of \( \zeta \) from \eqref{eqn:tasz}, we have
			\begin{align*}
				t(a) &= \frac{1}{2} \big(\zeta(a) - \zeta(\overline{a}) \big)  \\
				t(b,c) &= \frac{1}{4} \big(\zeta(b,c) - \zeta(\overline{b},c) - \zeta(b,\overline{c}) + \zeta(\overline{b},\overline{c}) \big) 
			\end{align*}
			Computing the (zeta-)stuffle product of these combinations gives
			\begin{align*}
				& \frac{1}{2} \big(\zeta(a) - \zeta(\overline{a}) \big) \ast \frac{1}{4} \big(\zeta(b,c) - \zeta(\overline{b},c) - \zeta(b,\overline{c}) + \zeta(\overline{b},\overline{c}) \big)  \\
				& = \frac{1}{8} \big(
					\begin{aligned}[t]
						& (\zeta(a,b,c) + \zeta(b,a,c) + \zeta(b,c,a) + \zeta(a+b,c) + \zeta(b,a+c)) \\
						& - (\zeta(a,\overline{b},c) + \zeta(\overline{b},a,c) + \zeta(\overline{b},c,a) + \zeta(\overline{a+b},c) + \zeta(\overline{b},a+c)) \\
						& - (\zeta(a,b,\overline{c}) + \zeta(b,a,\overline{c}) + \zeta(b,\overline{c},a) + \zeta(a+b,\overline{c}) + \zeta(b,\overline{a+c})) \\
						& + (\zeta(a,\overline{b},\overline{c}) + \zeta(\overline{b},a,\overline{c}) + \zeta(\overline{b},\overline{c},a) + \zeta(\overline{a+b},\overline{c}) + \zeta(\overline{b},\overline{a+c})) \\[1ex]
						& - (\zeta(\overline{a},b,c) + \zeta(b,\overline{a},c) + \zeta(b,c,\overline{a}) + \zeta(\overline{a+b},c) + \zeta(b,\overline{a+c})) \\
						& + (\zeta(\overline{a},\overline{b},c) + \zeta(\overline{b},\overline{a},c) + \zeta(\overline{b},c,\overline{a}) + \zeta(a+b,c) + \zeta(\overline{b},\overline{a+c})) \\
						& + (\zeta(\overline{a},b,\overline{c}) + \zeta(b,\overline{a},\overline{c}) + \zeta(b,\overline{c},\overline{a}) + \zeta(\overline{a+b},\overline{c}) + \zeta(b,a+c)) \\
						& - (\zeta(\overline{a},\overline{b},\overline{c}) + \zeta(\overline{b},\overline{a},\overline{c}) + \zeta(\overline{b},\overline{c},\overline{a}) + \zeta(a+b,\overline{c}) + \zeta(\overline{b},a+c))
				\big)
			\end{aligned}
			\end{align*}
			One sees immediately that each depth 3 MZV occurs with all 8 choices of bars, a different one in each line, whereas each depth 2 MZV occurs with all 4 choices of bars, but each is repeated twice.  In particular, we find this combination is equal to (reading down the columns)
			\[
				= t(a,b,c) + t(b,a,c) + t(b,c,a) + t(a+b,c) + t(b,a+c) \,,
			\]
			as expected.  (The multiplicity of 2 in depth 2 terms cancels with a left over \( \frac{1}{2} \) from the leading \( \frac{1}{8} = 2^{-3} \) coefficient, since only \( \frac{1}{4} = 2^{-2}\) is used in the conversion of depth 2 MtV's to MZV's.)
		\end{proof}
	\end{Lem}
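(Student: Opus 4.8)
The plan is to obtain the identity as two different readings of one and the same multiplication of (formal, or \( M \)-truncated) series, so that the only real content is the combinatorics of the terms in which an index contributed by \( \vec{r} \) collides with one contributed by \( \vec{s} \). First I would rewrite each of \( t(\vec{r}) \) and \( t(\vec{s}) \) by inserting the parity detectors \( \tfrac12\bigl(1-(-1)^{n}\bigr) \) and extending the summations over all positive integers; equivalently, expand each as the finite \( \Q \)-linear combination of alternating MZV's \( \zeta\sgnarg{\vec{\epsilon}}{\cdot} \) prescribed by \eqref{eqn:tasz}. Both the MtV stuffle product \( \ast_t \) and the alternating-MZV stuffle product \( \ast_\zeta \) are, at the level of these series, nothing but the operation of interleaving the two increasing index strings \( n_1 < \cdots < n_d \) and \( m_1 < \cdots < m_{d'} \) in all order-compatible ways, with equalities \( n_i = m_j \) allowed (and, in the alternating case, the signs multiplied on any merged index). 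Hence the product series \( t(\vec{r})\cdot t(\vec{s}) \) may be evaluated two ways: read as a product of MtV series it is \( t(\vec{r}\ast_t\vec{s}) \), while read through \eqref{eqn:tasz} as a product of alternating-MZV series it is \( t(\vec{r})\ast_\zeta t(\vec{s}) \). Since both evaluations produce the same object, the two sides are equal.

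The single point that needs care is the interleavings in which some \( n_i = m_j =: n \). On the MtV side such a collision multiplies two parity detectors, and since \( \bigl(\tfrac12(1-(-1)^n)\bigr)^2 = \tfrac12(1-(-1)^n) \) is idempotent the merged term is again an honest MtV term with the single argument \( k_i + \ell_j \); this is exactly why \( \ast_t \) is well defined. On the alternating-MZV side the same collision yields the sign \( (\epsilon_i\epsilon'_j)^{n} \) at argument \( k_i+\ell_j \), and one must check that summing over the two now-free signs, weighted by the leading signs \( \prod_i\epsilon_i \) and \( \prod_j\epsilon'_j \), reproduces the weight of the corresponding term of \eqref{eqn:tasz} for the merged MtV. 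Writing \( \delta = \epsilon_i\epsilon'_j \), the two choices of \( (\epsilon_i,\epsilon'_j) \) give \( \sum_{\epsilon_i,\epsilon'_j}\epsilon_i\epsilon'_j(\epsilon_i\epsilon'_j)^{n} = 2\sum_{\delta = \pm 1}\delta^{\,n+1} = 2\bigl(1-(-1)^n\bigr) \), so each collision contributes an extra factor \( 2 \); this absorbs precisely the discrepancy between the normalising constant \( \tfrac1{2^{d+d'}} \) carried by the product of the two expansions \eqref{eqn:tasz} and the constant \( \tfrac1{2^{d+d'-1}} \) of the merged MtV of depth \( d+d'-1 \), while for every uncollided index the signs stay separate and the constants already agree.

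I expect this normalisation-versus-multiplicity bookkeeping on the collided indices to be the only genuinely fiddly step; everything else is formal manipulation of series. To exhibit the mechanism I would then spell out the smallest non-trivial case, comparing \( t\bigl((a)\ast_t(b,c)\bigr) \) with \( t(a)\ast_\zeta t(b,c) \): each depth \( 3 \) MZV occurs with all \( 8 \) choices of signs, each with coefficient \( \tfrac18 \), while the depth \( 2 \) merged terms \( t(a+b,c) \) and \( t(b,a+c) \) occur with all \( 4 \) choices of signs but each twice, the doubling cancelling the leftover \( \tfrac12 \) between the \( 2^{-3} \) of the product and the \( 2^{-2} \) used when converting a depth \( 2 \) MtV to MZV's, so that the whole sum collapses to \( t(a,b,c)+t(b,a,c)+t(b,c,a)+t(a+b,c)+t(b,a+c) \), as required. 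For a fully general and more conceptual account one can instead appeal to the series-level discussion in \cite[Section 4]{hoffman19}.
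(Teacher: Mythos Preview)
Your proposal is correct and follows essentially the same approach as the paper: both argue that the identity is a tautology at the level of series after inserting the parity detectors, with the stuffle products on either side being the same interleaving operation, and both illustrate with the example \( t((a)\ast_t(b,c)) \) versus \( t(a)\ast_\zeta t(b,c) \) and defer to \cite[Section~4]{hoffman19} for a formal treatment. Your write-up is in fact slightly more explicit than the paper's, in that you isolate and verify the collision bookkeeping (idempotence of the parity detector on the MtV side, and the factor-of-\(2\) count on the alternating-MZV side) in general, whereas the paper leaves this to be read off from the worked example.
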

	
	\begin{Cor}\label{cor:tregtozreg}
		Stuffle regularisation with parameter \( t^{\ast,V}(1) = V \) on the level of MtV's corresponds to stuffle regularisation of the underlying MZV's with parameter \( \zeta^{\ast,U}(1) = U \), for \( U = 2V - \log(2) \).
		
		\begin{proof}
			If the MZV's are regularised with parameter \( \zeta^{\ast,U}(1) = U \), then for agreement with the MtV regularisation with parameter \( t^{\ast,V}(1) = V \) we must have
			\[
				V = t^{\ast,V}(1) = \frac{1}{2} ( \zeta^{\ast,U}(1) - \zeta(\overline{1}) ) = \frac{1}{2} ( U + \log(2) )  \,.
			\]
			Hence the relation \( U = 2V - \log(2) \) follows.
		\end{proof}
	\end{Cor}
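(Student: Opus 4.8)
The plan is to combine the compatibility of the two stuffle products proved in the preceding Lemma with the fact that a stuffle-regularised family is completely determined by its structural constraints together with its value at the single argument \( 1 \). First I would recall from \autoref{sec:reg} that the recursive ``peel off one trailing \( 1 \) at a time'' procedure expresses every \( t^{\ast,V}(\vec{k}) \) as a polynomial in \( V = t^{\ast,V}(1) \) with convergent MtV coefficients, and that this procedure never uses anything about the regularisation beyond the stuffle product and the assignment \( t^{\ast,V}(1) = V \); the same applies verbatim to \( \zeta^{\ast,U} \) with parameter \( U \). Consequently, to show that regularising the MZV's in \eqref{eqn:tasz} with parameter \( \zeta^{\ast,U}(1) = U \) reproduces \( t^{\ast,V}(\vec{k}) \) for \emph{all} \( \vec{k} \), it suffices to verify agreement at \( \vec{k} = (1) \).

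Next I would introduce the candidate family
\[
	\widetilde{t}(k_1,\ldots,k_d) \coloneqq \frac{1}{2^d} \sum_{\epsilon_i \in \{ \pm 1 \}} \epsilon_1 \cdots \epsilon_d \, \zeta^{\ast,U}\sgnarg{\epsilon_1,\ldots,\epsilon_d}{k_1,\ldots,k_d} \,,
\]
where the alternating MZV's on the right are stuffle-regularised with \( \zeta^{\ast,U}(1) = U \), all convergent ones (in particular every alternating MZV with a trailing \( \overline{1} \)) retaining their ordinary values, so that the only divergent building block is \( \zeta^{\ast,U}(1) = U \) itself. By the preceding Lemma the \( \zeta \)-stuffle products occurring on the right-hand sides assemble into the \( t \)-stuffle product, so \( \widetilde{t} \) obeys the MtV stuffle relations; and for convergent \( \vec{k} \) the regularisation is inert, so \( \widetilde{t} \) coincides with the genuine convergent MtV's. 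Thus \( \widetilde{t} \) satisfies exactly the structural constraints that, by the first paragraph, characterise the stuffle regularisation once its value at \( 1 \) is fixed, whence \( \widetilde{t} = t^{\ast,V} \) as soon as \( \widetilde{t}(1) = V \).

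Finally I would compute \( \widetilde{t}(1) \). Specialising the regularised version of \eqref{eqn:tasz} to depth \( 1 \) gives \( \widetilde{t}(1) = \tfrac{1}{2}\big(\zeta^{\ast,U}(1) - \zeta(\overline{1})\big) \), and since \( \zeta(\overline{1}) = \sum_{n \geq 1} \tfrac{(-1)^n}{n} = -\log(2) \) is (conditionally) convergent and hence unaffected by regularisation, this equals \( \tfrac{1}{2}(U + \log(2)) \). Setting \( \widetilde{t}(1) = V \) yields \( U = 2V - \log(2) \), as claimed.

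The only point requiring genuine care is the characterisation invoked in the first paragraph: that matching the two regularisations solely at \( t(1) \) really forces them to agree as polynomials in the parameter. This is not a formal consequence of the stuffle product alone, but it does follow from the explicit recursion of \autoref{sec:reg}, since that recursion introduces no quantity depending on the regularisation other than \( t^{\ast,V}(1) \) (respectively \( \zeta^{\ast,U}(1) \)) itself, every other ingredient being an honestly convergent (alternating) MZV or MtV. Granting this, the remainder of the argument is the short computation above.
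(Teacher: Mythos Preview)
Your argument is correct and follows the same approach as the paper: the preceding Lemma supplies the compatibility of the two stuffle products, and then one only needs to match the single divergent building block \( t(1) \), which is the short depth-\(1\) computation you carry out. The paper's proof is simply the terse version of yours, recording only that final computation and leaving the uniqueness-by-recursion implicit.
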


	In some sense, this means the most natural regularisation for multiple \( t \) values, when defined formally as a sum of alternating MZV's via \eqref{eqn:tasz}, has \( t^{\ast,T}(1) = \frac{1}{2} \log(2) \).  We already saw in \eqref{eqn:t1reg} above that \( t^{\ast,T}(1) = \log(2) \) is another very natural regularisation for MtV's, and so these will be the two cases of most interest.

	\subsection{Relations between regularisations of alternating MZV's}
	
	We establish (or recall) some relationships between regularisations with different parameters, and between the shuffle and stuffle regularisations of \emph{alternating} MZV's, which will be useful when applied to MtV's.  In the Lemmas the follow, let \( \vec{k} = (k_1,\ldots,k_d) \) be an index (possibly with barred entries), such that \( k_d \neq 1 \).
	
	\begin{Lem}\label{lem:regastTtoastS}
		The stuffle regularisations with parameter \( \zeta^{\ast,T}(1) = T \) and \( \zeta^{\ast,S}(1) = S \) are related as follows, 
		\[
			\zeta^{\ast,T}(\vec{k}, \{1\}^\alpha) = \sum_{i=0}^\alpha \zeta^{\ast,S}(\vec{k}, \{1\}^{\alpha - i}) \frac{(T-S)^i}{i!} \,.
		\]
		
		\begin{proof}
			We actually prove a stronger statement, which claims that this regularisation formula holds for MZV's at arbitrary roots of unity.  Consider the alphabet  \( Z = \{ z_{\theta,n} \mid n \geq 1 \in \Z, \theta \in \C, \abs{\theta} = 1 \} \), with letter product \( z_{\theta,n} \diamond z_{\phi,m} = z_{\theta\phi,n+m} \) on \( \Q Z \), and the induced stuffle product on \( \Q\langle Z \rangle \) given by 
			\[
			(z_1 w) \ast (z_2 v) = (z_1 \diamond z_2)(w \ast v) + z_1 (w \ast z_2 v) + z_2 (z_1 w \ast v) \,.
			\]
			Then \( \ast \) describes the product of multiple zeta values (at arbitrary roots of unity) under the map \( z_{\theta_1,n_1}\cdots z_{\theta_d,n_d} \mapsto \zeta\sgnargsm{\theta_1,\ldots,\theta_d}{n_1,\ldots,n_d} \).
			By expanding out with the stuffle product we see the following.  For any convergent word \( w_0 = w_0' z_{\theta,n} \) with \( (\theta,n) \neq (1,1) \),
			\[
			\sum_{i=0}^\alpha \frac{(-1)^i}{i!} z_{1,1}^{\ast i} \ast (w_0 z_{1,1}^{\alpha-i}) .
			\]
			is a sum of purely convergent words; there is a pairwise cancellation of any words ending in \( z_{1,1} \).  On the other hand, this expression is a stuffle-polynomial in \( z_{1,1} \), whose constant term is the word \( w_0 z_{1,1}^\alpha \).  In the regularisation where \( z_{1,1} \mapsto 0 \), only the constant term of this polynomial is left, and we see
			\[
			\reg_{0}^\ast (w_0 z_{1,1}^\alpha) = \sum_{i=0}^\alpha \frac{(-1)^i}{i!} z_{1,1}^{\ast i} \ast w_0 z_{1,1}^{\alpha-i} \,.
			\]
			By substituting the above expression for the case \( \reg_{0}^\ast w_0 z_{1,1}^{\alpha-i} \) into the following, and switching the order of summation, we see
			\[
			\sum_{i=0}^\alpha \frac{1}{i!} \reg_0^\ast( w_0 z_{1,1}^{\alpha-i} ) \ast z_{1,1}^{\ast i} = w_0 z_{1,1}^\alpha \,.
			\]
			Since the left hand side is now a polynomial in \( z_{1,1} \) with convergent coefficients (by virtue of being a regularised expression already), we can apply the regularisation map with \( z_{1,1} \mapsto T \) to obtain (in zeta notation already)
			\[
			\zeta^{\ast,T}\sgnarg{\vec{\phi},\{1\}^\alpha}{\vec{n},\{1\}^\alpha} = \sum_{i=0}^\alpha \zeta^{\ast,0}\sgnarg{\vec{\phi},\{1\}^{\alpha-i}}{\vec{n},\{1\}^{\alpha-i}} \frac{T^i}{i!} \,,
			\]
			where the letters in \( w_0 = z_{\phi_1,n_1} \cdots z_{\phi_d,n_d} \) induce the arguments \( \vec{n} \) with signs \( \vec{\phi} \) in the MZV's.  (Note that this formula is already established for classical MZV's in \cite[Proposition 10, Equation (5.10), and Corollary 5]{ikz06}.)  
			
			Now multiply both sides of the preceding equation by \( u^\alpha \) and sum on \( \alpha \) to obtain
			\begin{align*}
			\sum_{\alpha=0}^\infty \zeta^{\ast,T}\sgnarg{\vec{\phi},\{1\}^\alpha}{\vec{n},\{1\}^\alpha} \, u^\alpha
			&= \sum_{\alpha=0}^\infty \sum_{i=0}^\alpha \zeta^{\ast,0}\sgnarg{\vec{\phi},\{1\}^{\alpha-i}}{\vec{n},\{1\}^{\alpha-i}} \frac{u^\alpha T^i}{i!} \\
			&= \sum_{\alpha=0}^\infty \zeta^{\ast,0}\sgnarg{\vec{\phi},\{1\}^{\alpha}}{\vec{n},\{1\}^{\alpha}} u^\alpha \cdot \sum_{\alpha=0}^\infty \frac{u^i T^i}{i!}\\
			&= \sum_{\alpha=0}^\infty \zeta^{\ast,0}\sgnarg{\vec{\phi},\{1\}^{\alpha}}{\vec{n},\{1\}^{\alpha}} u^\alpha \cdot \exp(T u) .
			\end{align*}
			From this we see 
			\[
			\sum_{\alpha=0}^\infty \zeta^{\ast,T}\sgnarg{\vec{\phi},\{1\}^\alpha}{\vec{n},\{1\}^\alpha} \, u^\alpha \cdot \exp(-T u)
			\]
			is independent of \( T \), so by equating the \( T \) and the \( S \) regularisation we obtain
			\[
			\sum_{\alpha=0}^\infty \zeta^{\ast,T}\sgnarg{\vec{\phi},\{1\}^\alpha}{\vec{n},\{1\}^\alpha} \, u^\alpha
			= 	\sum_{\alpha=0}^\infty \zeta^{\ast,S}\sgnarg{\vec{\phi},\{1\}^\alpha}{\vec{n},\{1\}^\alpha} \, u^\alpha \cdot \exp((T-S) u) \,.
			\]
			Comparing the coefficient of \( u^\alpha \) establishes the claim for MZV's at arbitrary roots of unit; when \( \vec{\phi} \in \{ \pm 1 \} \), one reduces to the case of alternating MZV's as stated in the lemma.
		\end{proof}
	\end{Lem}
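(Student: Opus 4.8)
The plan is to reduce first to the case $S=0$ by a generating-function manipulation, and then to settle that case by working inside the quasi-shuffle (stuffle) algebra, where it becomes a formal consequence of the structure of the regularisation polynomial. For the reduction, suppose one has shown $\zeta^{\ast,T}(\vec k,\{1\}^\alpha)=\sum_{i=0}^\alpha \zeta^{\ast,0}(\vec k,\{1\}^{\alpha-i})\,T^i/i!$ for every parameter $T$ and every $\alpha\geq0$. Multiplying by $u^\alpha$ and summing over $\alpha$ turns the right-hand side into a Cauchy product, giving
\[
\sum_{\alpha\geq0}\zeta^{\ast,T}(\vec k,\{1\}^\alpha)\,u^\alpha=\Bigl(\sum_{\alpha\geq0}\zeta^{\ast,0}(\vec k,\{1\}^\alpha)\,u^\alpha\Bigr)\exp(Tu)\,,
\]
so that $\bigl(\sum_\alpha\zeta^{\ast,T}(\vec k,\{1\}^\alpha)u^\alpha\bigr)\exp(-Tu)$ is independent of $T$. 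Equating its values at the parameters $T$ and $S$ yields $\sum_\alpha\zeta^{\ast,T}(\vec k,\{1\}^\alpha)u^\alpha=\bigl(\sum_\alpha\zeta^{\ast,S}(\vec k,\{1\}^\alpha)u^\alpha\bigr)\exp((T-S)u)$, and the coefficient of $u^\alpha$ is exactly the claim of the lemma.

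For the case $S=0$ I would work in the algebra of words on the alphabet $\{z_{\theta,n}\}$ with letter product $z_{\theta,n}\diamond z_{\phi,m}=z_{\theta\phi,n+m}$ and the induced quasi-shuffle $\ast$, so that the statement is obtained simultaneously for alternating MZV's (and indeed MZV's at arbitrary roots of unity). Let $w_0$ be the convergent word attached to $\vec k$ — convergent since $k_d\neq1$ — so that $w_0 z_{1,1}^\alpha$ encodes the $\alpha$ trailing $1$'s, with $z_{1,1}$ the unique letter needing regularisation. Every such word decomposes uniquely, via $\ast$, as a polynomial in $z_{1,1}$ with convergent-word coefficients, and $\zeta^{\ast,T}$ (resp.\ $\zeta^{\ast,0}$) amounts to applying $\zeta$ to those coefficients and substituting $z_{1,1}\mapsto T$ (resp.\ $z_{1,1}\mapsto0$). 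The engine of the argument is the identity
\[
\reg_0^{\ast}\bigl(w_0 z_{1,1}^\alpha\bigr)=\sum_{i=0}^\alpha\frac{(-1)^i}{i!}\,z_{1,1}^{\ast i}\ast\bigl(w_0 z_{1,1}^{\alpha-i}\bigr)\,,
\]
whose right-hand side is a sum of convergent words: in the quasi-shuffle expansion the words still ending in $z_{1,1}$ cancel pairwise, while the absorptions $z_{1,1}\diamond z_{\phi,m}=z_{\phi,m+1}$ only create words ending in weight $\geq2$, hence convergent ones that do not disturb the cancellation. Since this expression has $z_{1,1}$-polynomial constant term $w_0 z_{1,1}^\alpha$, inverting the triangular system recursively gives $\sum_{i=0}^\alpha\tfrac1{i!}\reg_0^\ast(w_0 z_{1,1}^{\alpha-i})\ast z_{1,1}^{\ast i}=w_0 z_{1,1}^\alpha$; both sides being $z_{1,1}$-polynomials with convergent coefficients, applying $\zeta$ and setting $z_{1,1}\mapsto T$ converts this into the $S=0$ case $\zeta^{\ast,T}(\vec k,\{1\}^\alpha)=\sum_{i=0}^\alpha\zeta^{\ast,0}(\vec k,\{1\}^{\alpha-i})T^i/i!$. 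For classical MZV's this is \cite[Proposition~10, Corollary~5]{ikz06}.

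The one genuinely delicate point is the identity $\reg_0^\ast(w_0 z_{1,1}^\alpha)=\sum_i\frac{(-1)^i}{i!}z_{1,1}^{\ast i}\ast(w_0 z_{1,1}^{\alpha-i})$: one must check, by induction on $\alpha$ and careful bookkeeping of the quasi-shuffle expansion, that all the trailing-$z_{1,1}$ (divergent) words cancel in pairs and that precisely $w_0 z_{1,1}^\alpha$ survives as the constant term of the $z_{1,1}$-decomposition. Everything after that — inverting the recursion and the two generating-function steps — is purely formal. A more computational alternative would be to induct directly on the number of trailing $1$'s via the stuffle recursion $\zeta^{\ast,U}(\vec k,\{1\}^\alpha)-\tfrac1\alpha\zeta^{\ast,U}(\vec k,\{1\}^{\alpha-1})\zeta^{\ast,U}(1)$, but this forces one to carry the general (non-convergent-prefix) form of the relation through the induction and is harder to organise cleanly.
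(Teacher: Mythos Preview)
Your proof is correct and follows essentially the same route as the paper: both establish the $S=0$ case via the quasi-shuffle identity $\reg_0^\ast(w_0 z_{1,1}^\alpha)=\sum_i\frac{(-1)^i}{i!}z_{1,1}^{\ast i}\ast(w_0 z_{1,1}^{\alpha-i})$ and its inversion, then pass to general $S$ by the generating-series trick showing $\bigl(\sum_\alpha\zeta^{\ast,T}(\vec k,\{1\}^\alpha)u^\alpha\bigr)\exp(-Tu)$ is independent of $T$. The only cosmetic difference is that you front-load the reduction to $S=0$ while the paper presents it afterwards, but the mathematical content is identical, down to the citation of \cite[Proposition~10, Corollary~5]{ikz06}.
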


	\begin{Lem}\label{lem:regshTtoshS}
	The shuffle regularisations with parameter \( \zeta^{\ast,T}(1) = T \) and \( \zeta^{\ast,S}(1) = S \) are related as follows, 
	\[
	\zeta^{\shuffle,T}(\vec{k}, \{1\}^\alpha) = \sum_{i=0}^\alpha \zeta^{\shuffle,S}(\vec{k}, \{1\}^{\alpha - i}) \frac{(T-S)^i}{i!} \,.
	\]
	
		\begin{proof}
			The proof of this is analogous to the above proof for the stuffle regularisation (and is also shown in the case of MZV's at arbitrary roots of unity).  Namely consider the alphabet \( Y = \{ e_0 \} \cup \{ e_\eta \mid \eta \in \C, \abs{\eta} = 1 \} \).  Then under the induced shuffle product
			\[
				(e_a w) \shuffle (e_b v) = e_a (w \shuffle e_b v) + e_a (e_b w \shuffle v)
			\]
			the algebra \( \Q\langle Y \rangle \) encodes the shuffle product of MZV's (at arbitrary roots of unity) under the map
			\begin{align*}
				e_0^{k}e_{\eta_1}e_0^{n_1-1} \cdots e_{\eta_d}e_0^{n_d-1} \mapsto & (-1)^d I(0; \{e_0\}^{k}, e_{\eta_1}, \{e_0\}^{n_1-1}, \ldots, e_{\eta_d}, \{0\}^{n_d-1}; 1) \\
				& = \zeta_k\sgnarg{\eta_2/\eta_1, \eta_3/\eta_2, \ldots, 1/\eta_d}{\mathrlap{\,n_1}\phantom{\eta_2/\eta_1},\mathrlap{\,n_2}\phantom{\eta_3/\eta_2},\ldots,\mathrlap{\,n_d}\phantom{1/\eta_d}} \,,
			\end{align*}
			(where the 1 in \( 1/\eta_d \) in the last sign comes from the upper bound of the integral).
			
			For any convergent word \( w_0 = e_a w_0' e_b \) with \( e_a \neq e_0 \) and \( e_b \neq e_1 \),
			\[
			\sum_{i=0}^\alpha \frac{(-1)^i}{i!} e_1^{\shuffle i} \shuffle (w_0 e_1^{\alpha-i}) .
			\]
			is a sum of purely convergent words; there is a pairwise cancellation of any words ending in \( e_1 \).  On the other hand, this expression is a shuffle-polynomial in \( e_1 \), whose constant term is the word \( w_0 e_1^\alpha \).  In the regularisation where \( e_1 \mapsto 0 \), only the constant term of this polynomial is left, and we see
			\[
			\reg_{0}^\shuffle (w_0 e_!^\alpha) = \sum_{i=0}^\alpha \frac{(-1)^i}{i!} e_1^{\shuffle i} \shuffle w_0 e_1^{\alpha-i} \,.
			\]
			By substituting the above expression for the case \( \reg_{0}^\shuffle w_0 e_1^{\alpha-i} \) into the following, and switching the order of summation, we see
			\[
			\sum_{i=0}^\alpha \frac{1}{i!} \reg_0^\shuffle( w_0 e_1^{\alpha-i} ) \shuffle e_1^{\shuffle i} = w_0 e_1^\alpha \,.
			\]
			Since the left hand side is now a polynomial in \( e_1 \) with convergent coefficients (by virtue of being a regularised expression already), we can apply the regularisation map with \( e_1 \mapsto T \) to obtain (in zeta notation already)
			\[
			\zeta^{\shuffle,T}\sgnarg{\vec{\phi},\{1\}^\alpha}{\vec{n},\{1\}^\alpha} = \sum_{i=0}^\alpha \zeta^{\shuffle,0}\sgnarg{\vec{\phi},\{1\}^{\alpha-i}}{\vec{n},\{1\}^{\alpha-i}} \frac{T^i}{i!} \,,
			\]
			where the letters in \( w_0 = e_{\eta_1}e_0^{n_1-1} \cdots e_{\eta_d}e_0^{n_d-1}  \) induce the arguments \( \vec{n} \) with signs \( \vec{\phi} \) in the MZV's.  (Note that this formula is already established for classical MZV's in \cite[Proposition 10, Equation (5.9), and Corollary 5]{ikz06}.)  
			
			Now multiplying both sides of the preceding equation by \( u^\alpha \), and summing on \( \alpha \) to form the generating series shows that
			\[
			\sum_{\alpha=0}^\infty \zeta^{\shuffle,T}\sgnarg{\vec{\phi},\{1\}^\alpha}{\vec{n},\{1\}^\alpha} \, u^\alpha \cdot \exp(-T u)
			\]
			is independent of \( T \), so by equating the \( T \) and the \( S \) regularisation we obtain the claim.
		\end{proof}
	\end{Lem}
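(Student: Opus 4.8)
The plan is to run the argument in direct parallel with the proof of \autoref{lem:regastTtoastS}, replacing the stuffle (quasi-shuffle) algebra by the shuffle algebra adapted to the iterated-integral representation, and again working at arbitrary roots of unity so that the alternating case follows by specialisation. Concretely, I would work in \( \Q\langle Y \rangle \) with \( Y = \{ e_0 \} \cup \{ e_\eta \mid \eta \in \C,\ \abs{\eta} = 1 \} \) and the shuffle product \( (e_a w) \shuffle (e_b v) = e_a(w \shuffle e_b v) + e_b(e_a w \shuffle v) \), under the standard identification \( e_0^k e_{\eta_1} e_0^{n_1 - 1} \cdots e_{\eta_d} e_0^{n_d - 1} \mapsto (-1)^d I(0; \{e_0\}^k, e_{\eta_1}, \{e_0\}^{n_1-1}, \ldots, e_{\eta_d}, \{e_0\}^{n_d-1}; 1) \), which turns \( \shuffle \) into the product of multiple polylogarithm values at roots of unity (and, after restricting consecutive sign-ratios to \( \{\pm1\} \), into alternating MZV's).

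First I would record the one piece of algebraic input: the shuffle algebra \( \Q\langle Y \rangle \) is polynomial over the subalgebra spanned by convergent words — those not beginning with \( e_0 \) and not ending with \( e_1 \) — with \( e_0 \) and \( e_1 \) as the two extra generators (the shuffle analogue of the classical Radford/\cite{ikz06} statement; the \( e_0 \)-direction is exactly \autoref{lem:unshufflestart0}, but only the \( e_1 \)-generator is needed here since we are regularising trailing \( 1 \)'s). Then, exactly as in the stuffle proof, for any convergent word \( w_0 = e_a w_0' e_b \) with \( e_a \neq e_0 \), \( e_b \neq e_1 \), the combination \( \sum_{i=0}^\alpha \frac{(-1)^i}{i!} e_1^{\shuffle i} \shuffle (w_0 e_1^{\alpha-i}) \) is a sum of convergent words (words ending in \( e_1 \) cancel pairwise), while as a shuffle-polynomial in \( e_1 \) it has constant term \( w_0 e_1^\alpha \); hence it equals \( \reg_0^\shuffle(w_0 e_1^\alpha) \). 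Substituting this back into \( \sum_{i=0}^\alpha \frac{1}{i!} \reg_0^\shuffle(w_0 e_1^{\alpha-i}) \shuffle e_1^{\shuffle i} \) and reversing the order of summation collapses the double sum to \( w_0 e_1^\alpha \).

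Next I would apply the algebra homomorphism \( e_1 \mapsto T \) onto the convergent-word subalgebra, which in \( \zeta^{\shuffle,T} \)-notation reads \( \zeta^{\shuffle,T}(\vec{k},\{1\}^\alpha) = \sum_{i=0}^\alpha \zeta^{\shuffle,0}(\vec{k},\{1\}^{\alpha-i}) \, T^i / i! \) (already in \cite{ikz06} for classical MZV's, with nothing new happening at roots of unity because the shuffle product ignores the letters' values). Forming the generating series \( \sum_\alpha (\cdots) u^\alpha \), the right-hand side factors as \( \big( \sum_\alpha \zeta^{\shuffle,0}(\vec{k},\{1\}^\alpha) u^\alpha \big) \exp(Tu) \), so \( \exp(-Tu) \sum_\alpha \zeta^{\shuffle,T}(\vec{k},\{1\}^\alpha) u^\alpha \) is independent of \( T \); equating the \( T \)- and \( S \)-expressions and comparing the coefficient of \( u^\alpha \) yields the stated relation, and restricting to sign-ratios in \( \{\pm1\} \) gives the alternating-MZV form in the lemma.

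The one place warranting care, rather than a genuine obstacle, is the word-to-alternating-MZV dictionary: under the iterated-integral convention the sign of argument \( k_i \) is the ratio \( \eta_{i+1}/\eta_i \) (with \( \eta_{d+1} = 1 \) coming from the upper endpoint), so one must check that appending letters \( e_1 \) indeed appends trailing arguments \( 1 \) with sign \( +1 \), and that the conditions "\( e_a \neq e_0 \) at the start, \( e_b \neq e_1 \) at the end" really match "\( k_d \geq 2 \)". Since this bookkeeping is identical to the classical case verified in \cite{ikz06} and all the shuffle identities above are formal in the alphabet, the argument goes through verbatim; the substance is entirely in the shuffle-freeness and the generating-series manipulation already rehearsed in \autoref{lem:regastTtoastS}.
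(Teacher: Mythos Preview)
Your proposal is correct and follows essentially the same approach as the paper's own proof: both set up the shuffle algebra on the alphabet \( Y = \{e_0\} \cup \{e_\eta\} \), establish the formula \( \reg_0^\shuffle(w_0 e_1^\alpha) = \sum_i \frac{(-1)^i}{i!} e_1^{\shuffle i} \shuffle (w_0 e_1^{\alpha-i}) \) via pairwise cancellation, invert it to recover \( w_0 e_1^\alpha \), apply \( e_1 \mapsto T \), and then use the generating-series trick to pass from parameter \( 0 \) to arbitrary \( T \) and \( S \). The only cosmetic difference is that you explicitly invoke the polynomial freeness of the shuffle algebra over convergent words, which the paper uses implicitly.
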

		
		The following is first proven in  \cite{ikz06} for classical MZV's.  It is convenient however to recall the details for application to later lemmas; moreover we need a version which holds also for alternating MZV's \cite[Theorem 13.3.9]{zhaoBook}.  
	
	\begin{Def}[Linear map \( \rho \)]\label{def:rho}
		Define an \( \mathbb{R} \) linear map \( \rho \colon \mathbb{R}[T] \to \mathbb{R}[T] \)  by
	\[
		\rho(e^{Tu}) = \exp\Big( \sum_{n=2}^\infty \frac{(-1)^n}{n} \zeta(n) u^n \Big) e^{Tu} \,, \abs{u} < 1 \,.
	\]
	So \( \rho(1) = 1, \rho(T) = T, \rho(T^2) = T^2 + \zeta(2) \) and \( \rho(T^3) = T^3 + 3 \zeta(2) T - 2 \zeta(3) \) are the initial few values.
	\end{Def}

	Then the map \( \rho \) gives us the translation between shuffle and stuffle regularisation, as follows.
	
	\begin{Lem}[Theorem 1, \cite{ikz06}, generalised in Theorem 13.3.9, \cite{zhaoBook}]\label{lem:regshtost}
		The shuffle regularisation with parameter \( \zeta^{\shuffle,T}(1) = T \) and the stuffle regularisation with the same parameter \( \zeta^{\ast,T}(1) = T \) are related as follows.
		\[
			\zeta^{\shuffle,T}(\vec{k}) = \rho\big( \zeta^{\ast,T}(\vec{k}) \big) \,.
		\]
	\end{Lem}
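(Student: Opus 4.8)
The plan is to reproduce the argument of \cite[Theorem 1]{ikz06} (respectively \cite[Theorem 13.3.9]{zhaoBook} in the alternating generality), organising it around the two preceding lemmas. Write a general index as \( (\vec k, \{1\}^\alpha) \), where \( \vec k = (k_1,\dots,k_d) \) is (possibly barred, but) convergent, i.e.\ \( k_d \neq 1 \). First I would recall that, for each of the shuffle and stuffle products, the algebra of formal extended alternating MZV's is a polynomial algebra in the class of \( \zeta(1) \) over the subalgebra of convergent ones; this makes \( \zeta^{\shuffle,T} \) and \( \zeta^{\ast,T} \) well defined and uniquely determined as the respective algebra homomorphisms into \( \R[T] \) extending the genuine MZV map and sending \( \zeta(1) \mapsto T \). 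Since \( \rho(1) = 1 \) and \( \rho \) fixes every convergent value, the case \( \alpha = 0 \) is immediate, so the content lies entirely in the trailing \( 1 \)'s.

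Next I would peel off the \( T \)-dependence. By \autoref{lem:regastTtoastS} with \( S = 0 \) and the \( \R \)-linearity of \( \rho \),
\[
\sum_{\alpha\geq0}\rho\bigl(\zeta^{\ast,T}(\vec k,\{1\}^\alpha)\bigr)u^\alpha
= \Bigl(\sum_{\alpha\geq0}\zeta^{\ast,0}(\vec k,\{1\}^\alpha)u^\alpha\Bigr)\rho\bigl(e^{Tu}\bigr),
\]
and by \autoref{lem:regshTtoshS} with \( S = 0 \),
\[
\sum_{\alpha\geq0}\zeta^{\shuffle,T}(\vec k,\{1\}^\alpha)u^\alpha
= \Bigl(\sum_{\alpha\geq0}\zeta^{\shuffle,0}(\vec k,\{1\}^\alpha)u^\alpha\Bigr)e^{Tu}.
\]
Setting \( A(u) \coloneqq \exp\bigl(\sum_{n\geq2}\tfrac{(-1)^n}{n}\zeta(n)u^n\bigr) \), so that \( \rho(e^{Tu}) = A(u)e^{Tu} \) by \autoref{def:rho}, and comparing coefficients of \( u^\alpha \), the lemma becomes equivalent to the \( T \)-free identity
\[
A(u)\sum_{\alpha\geq0}\zeta^{\ast,0}(\vec k,\{1\}^\alpha)u^\alpha = \sum_{\alpha\geq0}\zeta^{\shuffle,0}(\vec k,\{1\}^\alpha)u^\alpha
\]
for every convergent \( \vec k \). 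A sanity check is the empty word, where the right-hand side is \( 1 \) and the left reduces to \( \sum_{\alpha\geq0}\zeta^{\ast,0}(\{1\}^\alpha)u^\alpha = A(u)^{-1} \), a standard Newton-identity-type evaluation of the stuffle powers \( z_{1,1}^{\ast\alpha} \).

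Finally I would establish this last identity via the explicit comparison of the unstuffling and unshuffling recursions for trailing \( 1 \)'s that is the heart of \cite{ikz06}: one checks that the discrepancy between the two \( 0 \)-regularisations of \( (\vec k,\{1\}^\alpha) \), as \( \alpha \) varies, is always mediated by the single universal series \( A(u) \), which is built only from the \( \zeta(n) \), \( n\geq2 \). In the alternating case the same bookkeeping applies essentially verbatim, since the letter whose powers are being regularised is the unbarred \( \zeta(1) = z_{1,1} \), so shuffling or stuffling it past the remaining (possibly coloured) letters produces exactly the combinatorial terms that occur for ordinary MZV's, the coloured letters acting as spectators; here I would simply invoke \cite[Theorem 13.3.9]{zhaoBook} rather than redo this. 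The only step needing genuine work is this final combinatorial identity — everything preceding it is formal generating-series manipulation — and it is the part I expect to be the main obstacle.
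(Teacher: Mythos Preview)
The paper does not supply its own proof of this lemma: it is stated as a citation to \cite[Theorem~1]{ikz06} and \cite[Theorem~13.3.9]{zhaoBook}, with no argument given. Your sketch is a faithful outline of the standard IKZ approach, and the reduction via Lemmas~\ref{lem:regastTtoastS} and~\ref{lem:regshTtoshS} to the \(T\)-free generating-series identity is correct and cleanly organised; since you ultimately invoke the cited references for the core combinatorial comparison, your proposal is effectively equivalent to what the paper does.
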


	At this point it is instructive to notice that
	\[
		\exp\Big(\sum_{n=2}^\infty \frac{(-1)^n}{n} \zeta(n) u^n\Big) = \Big( 1 + \sum_{n=1}^\infty \zeta^{\ast,0}(\{1\}^n) u^n \Big)^{-1} \,.
	\]
	This can be seen via Corollary 2 in \cite{ikz06}, or rather via Corollary 1 upon applying the regularisation-evaluation map \( Z \circ \reg^\ast_{0} \).  Directly, one sees that the regularisation has parameter \( \zeta^{\ast,0}(1) = 0 \) since the \( \zeta(1) \) term on the left hand side is not present, and so has been regularised to 0.  More generally, applying \( Z \circ \reg^\ast_{T} \) one has:
	\[
		\exp\Big({-}Tu + \sum_{n=2}^\infty \frac{(-1)^n}{n} \zeta(n) u^n\Big) = \Big( 1 + \sum_{n=1}^\infty \zeta^{\ast,T}(\{1\}^n) u^n \Big)^{-1} \,.
	\]

	\begin{Lem}\label{lem:sttosh111}
		The stuffle regularisation with parameter \( \zeta^{\ast,T}(1) = T \) may be expressed via the shuffle regularisation with parameter \( \zeta^{\shuffle,0}(1) = 0 \) and the `periodic' MZV \( \zeta^{\ast,T}(\{1\}^i) \), as follows.
		\[
			\zeta^{\ast,T}(\vec{k}, \{1\}^\alpha) = \sum_{i=0}^\alpha \zeta^{\shuffle,0}(\vec{k},\{1\}^{\alpha-i}) \zeta^{\ast,T}(\{1\}^{i})
		\]
		
		\begin{proof}
			We apply \autoref{lem:regastTtoastS} in the case \( S = 0 \) to write
			\[
				\zeta^{\ast,T}(\vec{k}, \{1\}^\alpha) = \sum_{i=0}^\alpha \zeta^{\ast,0}(\vec{k}, \{1\}^{\alpha-i}) \frac{T^i}{i!} \,.
			\]
			Note now \( \zeta^{\ast,0}(\vec{k}, \{1\}^{\alpha-i}) \) is a combination of convergent MZV's (after being regularised with parameter \( \zeta^{\ast,0}(1) = 0\)).  Since \( \rho \) is \( \mathbb{R} \)-linear, application of \( \rho \) to convert to the shuffle product only applies to the \( \frac{T^i}{i!} \) part of the summand.  That is to say, we have
			\[
				\zeta^{\shuffle,T}(\vec{k}, \{1\}^\alpha) = \rho(\zeta^{\ast,T}(\vec{k}, \{1\}^\alpha) = \sum_{i=0}^\alpha \zeta^{\ast,0}(\vec{k}, \{1\}^{\alpha-i}) \rho\Big( \frac{T^i}{i!} \Big)
			\]
			Multiply both sides by \( u^\alpha \), and sum on \( \alpha \) to form the generating series
			\[
				\sum_{\alpha=0}^\infty \zeta^{\shuffle,T}(\vec{k}, \{1\}^\alpha) u^\alpha = \sum_{\alpha=0}^\infty \sum_{i=0}^\alpha \zeta^{\ast,0}(\vec{k}, \{1\}^{\alpha-i}) \rho\Big( \frac{T^i}{i!} \Big) u^\alpha \,.
			\]
			Interchange the summation order, then set \( \alpha \to \alpha + i \)
			\begin{align*}
			 = \sum_{i=0}^\infty \sum_{\alpha=i}^\infty \zeta^{\ast,0}(\vec{k}, \{1\}^{\alpha-i}) \rho\Big( \frac{T^i}{i!} \Big) u^\alpha 
 			 & = \sum_{i=0}^\infty \sum_{\alpha=0}^\infty \zeta^{\ast,0}(\vec{k}, \{1\}^{\alpha}) \rho\Big( \frac{T^i}{i!} \Big) u^{i+\alpha}\\
 			 & = \sum_{\alpha=0}^\infty \zeta^{\ast,0}(\vec{k}, \{1\}^{\alpha}) u^\alpha \cdot \sum_{i=0}^\infty \rho\Big( \frac{T^i}{i!} \Big) u^{i} \,.
			\end{align*}
			One sees that the sum involving \( \rho \) is simply \( \rho(e^{Tu}) \), which may be replaced by the expression in \autoref{def:rho}, to give
			\begin{align*}
 			 & = \sum_{\alpha=0}^\infty \zeta^{\ast,0}(\vec{k}, \{1\}^{\alpha}) u^\alpha \cdot \exp\Big(\sum_{n=2}^\infty \frac{(-1)^n}{n} \zeta(n) u^n \Big) e^{Tu}
			\end{align*}
			We therefore have
			\[
				\sum_{\alpha=0}^\infty \zeta^{\shuffle,T}(\vec{k},\{1\}^\alpha) u^\alpha \cdot e^{-Tu} \cdot \exp\Big(Tu-\sum_{n=2}^\infty \frac{(-1)^n}{n} \zeta(n) u^n\Big) = \sum_{\alpha=0}^\infty \zeta^{\ast,0}(\vec{k},\{1\}^\alpha) u^\alpha \cdot e^{Tu} \,.
			\]
			By \autoref{lem:regshTtoshS} and \autoref{lem:regastTtoastS}, respectively
			\begin{align*}
				\sum_{\alpha=0}^\infty \zeta^{\shuffle,T}(\vec{k},\{1\}^\alpha) u^\alpha \cdot e^{-Tu} &= \sum_{\alpha=0}^\infty \zeta^{\shuffle,0}(\vec{k},\{1\}^\alpha) \\
				\sum_{\alpha=0}^\infty \zeta^{\ast,0}(\vec{k},\{1\}^\alpha) u^\alpha \cdot e^{Tu} &= \sum_{\alpha=0}^\infty \zeta^{\ast,T}(\vec{k},\{1\}^\alpha) u^\alpha \,.
			\end{align*}
			Finally, it follows from the observation above that
			\[
				\exp\Big(Tu-\sum_{n=2}^\infty \frac{(-1)^n}{n} \zeta(n) u^n\Big) = 1 + \sum_{n=1}^\infty \zeta^{\ast,T}(\{1\}^n) u^n \,.
			\]
			Making these substitutions, and extracting the coefficient of \( u^\alpha \) establishes the claim.
		\end{proof}
	\end{Lem}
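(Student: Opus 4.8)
The plan is to reduce the identity to a manipulation of generating series in an auxiliary variable $u$, built out of the three regularisation comparisons already established and the explicit formula for $\rho$. First I would apply \autoref{lem:regastTtoastS} in the special case $S = 0$ to expand the left-hand side as
\[
	\zeta^{\ast,T}(\vec{k}, \{1\}^\alpha) = \sum_{i=0}^\alpha \zeta^{\ast,0}(\vec{k}, \{1\}^{\alpha-i}) \frac{T^i}{i!} \,,
\]
the point being that each coefficient $\zeta^{\ast,0}(\vec{k}, \{1\}^{\alpha-i})$ is now a genuine convergent MZV, carrying no residual $T$-dependence. Consequently, when we pass to the shuffle regularisation by applying $\rho$ (\autoref{lem:regshtost}), the $\mathbb{R}$-linearity of $\rho$ means it acts only on the monomials $T^i/i!$, giving $\zeta^{\shuffle,T}(\vec{k},\{1\}^\alpha) = \sum_{i=0}^\alpha \zeta^{\ast,0}(\vec{k},\{1\}^{\alpha-i}) \, \rho(T^i/i!)$.

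Next I would multiply by $u^\alpha$, sum over $\alpha \ge 0$, interchange the order of summation, and shift $\alpha \mapsto \alpha + i$, which factors the double sum as the product of $\sum_{\alpha} \zeta^{\ast,0}(\vec{k},\{1\}^\alpha) u^\alpha$ with $\sum_i \rho(T^i/i!)\, u^i = \rho(e^{Tu})$. Substituting the expression from \autoref{def:rho} for $\rho(e^{Tu})$ turns the second factor into $\exp\big(\sum_{n \ge 2} \tfrac{(-1)^n}{n}\zeta(n) u^n\big) e^{Tu}$, valid for $\abs{u} < 1$.

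It then remains to rearrange and identify the three known generating series: on the left, \autoref{lem:regshTtoshS} with $S = 0$ gives $\sum_\alpha \zeta^{\shuffle,T}(\vec{k},\{1\}^\alpha) u^\alpha \cdot e^{-Tu} = \sum_\alpha \zeta^{\shuffle,0}(\vec{k},\{1\}^\alpha) u^\alpha$; on the right, \autoref{lem:regastTtoastS} with $S = 0$ gives $\sum_\alpha \zeta^{\ast,0}(\vec{k},\{1\}^\alpha) u^\alpha \cdot e^{Tu} = \sum_\alpha \zeta^{\ast,T}(\vec{k},\{1\}^\alpha) u^\alpha$; and the leftover exponential $\exp\big(Tu - \sum_{n\ge2}\tfrac{(-1)^n}{n}\zeta(n)u^n\big)$ equals $1 + \sum_{n\ge1}\zeta^{\ast,T}(\{1\}^n)u^n$ by the observation (obtained via $Z \circ \reg^\ast_T$ applied to Corollary 1 of \cite{ikz06}) recorded just before the lemma. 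Making these substitutions yields
\[
	\sum_{\alpha=0}^\infty \zeta^{\ast,T}(\vec{k},\{1\}^\alpha) u^\alpha = \Big(\sum_{\alpha=0}^\infty \zeta^{\shuffle,0}(\vec{k},\{1\}^\alpha) u^\alpha\Big)\Big(1 + \sum_{n=1}^\infty \zeta^{\ast,T}(\{1\}^n) u^n\Big) \,,
\]
and comparing the coefficient of $u^\alpha$ on both sides gives the claimed formula.

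The only real subtlety I anticipate is the bookkeeping: one must genuinely justify that $\rho$ commutes past the convergent coefficients — which is precisely why the $S=0$ reduction is performed first — and one must track the cancelling $e^{\pm Tu}$ factors carefully when the three generating-series identities are substituted in. Everything else is formal (and convergent, for $\abs{u}<1$) power-series algebra.
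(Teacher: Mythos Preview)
Your proposal is correct and follows essentially the same argument as the paper's own proof: reduce to $S=0$ via \autoref{lem:regastTtoastS}, apply $\rho$ using its $\mathbb{R}$-linearity, pass to generating series and factor, then identify the three factors via \autoref{lem:regshTtoshS}, \autoref{lem:regastTtoastS}, and the observation on $\exp\big(Tu - \sum_{n\ge2}\tfrac{(-1)^n}{n}\zeta(n)u^n\big)$. The bookkeeping concerns you flag are exactly the ones the paper handles in the same way.
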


	\subsection{Relating stuffle and shuffle regularised MtV's}  Finally, we give a concrete and explicit relationship between the stuffle and shuffle regularised MtV's for an arbitrary choice of parameters.
	
	\begin{Prop}\label{prop:tstVtotshandz111}
		Let \( \vec{k} = (k_1,\ldots,k_d) \), such that \( k_d \neq 1 \).  Then the stuffle regularisation of MtV's at parameter \( t^{\ast,V} \) and the shuffle regularisation \( t^{\shuffle,0} \) induced by the representation of MtV's as alternating MZV's, with \( \zeta^{\shuffle,0}(1) = 0 \), are related as follows.
		\begin{equation}
			t^{\ast,V}(\vec{k}, \{1\}^\alpha) = \sum_{i=0}^\alpha t^{\shuffle,0}(\vec{k},\{1\}^{\alpha-i}) \cdot \frac{1}{2^{i}} \zeta^{\ast,2V-\log(2)}(\{1\}^{i}) \,.
		\end{equation}
		
		\begin{proof}
			This is clearly true when \( \alpha = 0 \), and no regularisation is necessary, so we assume \( \alpha > 0 \).  Now apply the expression for \( t \) in terms of alternating MZV's in \eqref{eqn:tasz}, and \autoref{cor:tregtozreg} to write
			\[
				t^{\ast,V}(\vec{k},\{1\}^\alpha) = \frac{1}{2^{d+\alpha}} \sum_{\substack{\vec{\epsilon} = (\epsilon_1,\ldots,\epsilon_d), \\ \epsilon_k, \delta_1,\ldots,\delta_\alpha \in \{\pm1\}}} \epsilon_1\cdots\epsilon_d \cdot \delta_1\ldots\delta_\alpha \zeta^{\ast,2V-\log(2)}\sgnarg{\vec{\epsilon},\delta_1,\ldots,\delta_\alpha}{\vec{k},1,\ldots,1}
			\]
			For notation simplicity, we shall always write \( \vec{\epsilon} = (\epsilon_1,\ldots,\epsilon_d) \), and drop the explicit reference to \( \in \{ \pm 1 \} \) from the summation; this should be taken as implied for whatever selection of signs we specify in the sum.  Now gather the terms in this sum by the number of trailing \( \delta_i = 1 \) signs.  One has
			\[
				= \frac{1}{2^{d+\alpha}} \sum_{j=0}^\alpha \sum_{\substack{ \vec{\epsilon}, \\ \delta_1,\ldots,\delta_{\alpha-1-j}, \\ \delta_{\alpha-j}=-1}} \epsilon_1\cdots\epsilon_d \cdot \delta_1\ldots\delta_{\alpha-j} \zeta^{\ast,2V-\log(2)}\sgnarg{\vec{\epsilon},\delta_1,\ldots,\delta_{\alpha-j},\{1\}^j}{\vec{k},1,\ldots,1,\{1\}^j}
			\]
			Application of \autoref{lem:sttosh111} allows us to convert the \( \zeta^{\ast,T} \) regularisation to \( \zeta^{\shuffle,0} \) corrected by \( \zeta^{\ast,T}(\{1\}^n) \), which gives
			\[
				= \frac{1}{2^{d+\alpha}} \sum_{j=0}^\alpha \sum_{\substack{ \vec{\epsilon}, \\ \delta_1,\ldots,\delta_{\alpha-1-j}, \\ \delta_{\alpha-j}=-1}} \epsilon_1\cdots\epsilon_d \cdot \delta_1\ldots\delta_{\alpha-j} \sum_{i=0}^j \zeta^{\shuffle,0}\sgnarg{\vec{\epsilon},\delta_1,\ldots,\delta_{\alpha-j},\{1\}^{j-i}}{\vec{k},1,\ldots,1,\{1\}^{j-i}} \zeta^{\ast,2V-\log(2)}(\{1\}^{i})
			\]
			Moving the sum over \( i \) outside the sum over signs (of which it is independent), and then interchanging the \( j \) and \( i \) summation order gives
			\begin{align*}
			& = \frac{1}{2^{d+\alpha}} \sum_{i=0}^\alpha \sum_{j=i}^\alpha \sum_{\substack{ \vec{\epsilon}, \\ \delta_1,\ldots,\delta_{\alpha-1-j}, \\ \delta_{\alpha-j}=-1}} \!\!\!  \epsilon_1\cdots\epsilon_d \cdot \delta_1\ldots\delta_{\alpha-j} \zeta^{\shuffle,0}\sgnarg{\vec{\epsilon},\delta_1,\ldots,\delta_{\alpha-j},\{1\}^{j-i}}{\vec{k},1,\ldots,1,\{1\}^{j-i}} \zeta^{\ast,2V-\log(2)}(\{1\}^{i}) \\
			& = \frac{1}{2^{d+\alpha}} \sum_{i=0}^\alpha \sum_{j=0}^{\alpha-i} \sum_{\substack{ \vec{\epsilon}, \\ \delta_1,\ldots,\delta_{\alpha-i-1-j}, \\ \delta_{\alpha-i-j}=-1}} \!\!\! \epsilon_1\cdots\epsilon_d \cdot \delta_1\ldots\delta_{\alpha-i-j} \zeta^{\shuffle,0}\sgnarg{\vec{\epsilon},\delta_1,\ldots,\delta_{\alpha-i-j},\{1\}^j}{\vec{k},1,\ldots,1,\{1\}^j} \zeta^{\ast,2V-\log(2)}(\{1\}^{i})
			\end{align*}
			One now recognises that the sum over \( j \) and the sum over signs with \( \delta_{\alpha-i-j} = -1 \) is just the expression for the sum over all signs, gathered by the number of trailing 1's.  So we can rewrite this to be
			\[
			= \frac{1}{2^{d+\alpha}} \sum_{i=0}^\alpha \sum_{\substack{ \vec{\epsilon}, \\ \delta_1,\ldots,\delta_{\alpha-i}}} \epsilon_1\cdots\epsilon_d \cdot \delta_1\ldots\delta_{\alpha-i} \zeta^{\shuffle,0}\sgnarg{\vec{\epsilon},\delta_1,\ldots,\delta_{\alpha-i}}{\vec{k},1,\ldots,1} \zeta^{\ast,2V-\log(2)}(\{1\}^{i})
			\]
			Lastly, we recognise the sum over signs to be \( 2^{d+\alpha-i} t^{\shuffle,0}(\vec{k},\{1\}^{\alpha-i}) \), so after making this replacement, and cancelling the powers of 2, we obtain the claim in the proposition.
		\end{proof}
	\end{Prop}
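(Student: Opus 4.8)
The plan is to reduce the statement to the regularisation compatibilities already established for alternating MZV's, via the expression \eqref{eqn:tasz} of MtV's as signed averages of alternating MZV's; no iterated-integral structure for MtV's is needed. The case \( \alpha = 0 \) requires no regularisation on either side (here \( k_d \neq 1 \)), so assume \( \alpha > 0 \). The first step is to expand \( t^{\ast,V}(\vec{k},\{1\}^\alpha) \) using \eqref{eqn:tasz}, converting the MtV stuffle parameter to the alternating-MZV stuffle parameter by \autoref{cor:tregtozreg}; this yields
\[
t^{\ast,V}(\vec{k},\{1\}^\alpha) = \frac{1}{2^{d+\alpha}} \sum_{\vec{\epsilon},\, \delta_1,\ldots,\delta_\alpha} \epsilon_1 \cdots \epsilon_d \cdot \delta_1 \cdots \delta_\alpha \, \zeta^{\ast,2V-\log(2)}\sgnarg{\vec{\epsilon},\delta_1,\ldots,\delta_\alpha}{\vec{k},1,\ldots,1} \,,
\]
with all signs ranging over \( \{\pm1\} \).

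Next I would gather the terms by the number \( j \) of trailing signs \( \delta_i = 1 \), so that the last genuinely signed position is \( \delta_{\alpha-j} = -1 \). On each such block the argument string looks like \( (\vec{k},1,\ldots,1,\{1\}^j) \) with a \( -1 \)-signed entry immediately preceding the trailing block of \( j \) ones, so \autoref{lem:sttosh111} (with \( T = 2V-\log(2) \)) applies and rewrites \( \zeta^{\ast,T} \) of that string as \( \sum_{i=0}^{j} \zeta^{\shuffle,0}(\cdots,\{1\}^{j-i}) \, \zeta^{\ast,T}(\{1\}^i) \). Since the periodic factor \( \zeta^{\ast,T}(\{1\}^i) \) does not depend on the signs, I would pull the \( i \)-sum to the outside, interchange the \( i \)- and \( j \)-summations, and reindex so that \( j \) runs over \( 0 \le j \le \alpha-i \).

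The key recognition is that after this reindexing the inner double sum, over \( j \) and over the signs \( \vec{\epsilon},\delta_1,\ldots,\delta_{\alpha-i-j} \) with \( \delta_{\alpha-i-j}=-1 \), is precisely the decomposition-by-number-of-trailing-ones of the \emph{full} sign sum over \( \vec{\epsilon},\delta_1,\ldots,\delta_{\alpha-i} \) applied to \( \zeta^{\shuffle,0}\sgnarg{\vec{\epsilon},\delta_1,\ldots,\delta_{\alpha-i}}{\vec{k},1,\ldots,1} \). Re-collapsing it and invoking \eqref{eqn:tasz} once more identifies this sum with \( 2^{d+\alpha-i}\, t^{\shuffle,0}(\vec{k},\{1\}^{\alpha-i}) \). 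Substituting back and cancelling powers of \( 2 \) against the prefactor \( \frac{1}{2^{d+\alpha}} \) leaves exactly \( \sum_{i=0}^{\alpha} t^{\shuffle,0}(\vec{k},\{1\}^{\alpha-i}) \cdot \frac{1}{2^i}\zeta^{\ast,2V-\log(2)}(\{1\}^i) \), which is the claim.

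The main obstacle is entirely bookkeeping: one must check that the reindexing genuinely reconstitutes the full sign sum with no double counting, and that the power of \( 2 \) produced by that sign sum is \( 2^{d+\alpha-i} \) rather than \( 2^{d+\alpha} \) — the deficit \( 2^{-i} \) is exactly what supplies the \( \frac{1}{2^i} \) normalisation in front of \( \zeta^{\ast,2V-\log(2)}(\{1\}^i) \). No analytic input is needed beyond \autoref{cor:tregtozreg} and \autoref{lem:sttosh111}, both already available.
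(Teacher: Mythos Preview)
Your proposal is correct and follows essentially the same approach as the paper's proof: expand via \eqref{eqn:tasz} and \autoref{cor:tregtozreg}, stratify by the number of trailing \( +1 \) signs, apply \autoref{lem:sttosh111}, interchange and reindex, then re-collapse the sign sum into \( 2^{d+\alpha-i}\, t^{\shuffle,0}(\vec{k},\{1\}^{\alpha-i}) \). Your remark about the bookkeeping of the power of \( 2 \) is exactly the point the paper leaves implicit in its final step.
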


	\section{\texorpdfstring{Evaluation of the stuffle-regualrised \( t^{\ast,V}(\{2\}^a, 1, \{2\}^b) \)}{
	Evaluation of the stuffle-regularised t\textasciicircum{}\{*,V\}(\{2\}\textasciicircum{}a, 1, \{2\}\textasciicircum{}b)
}}
	\label{sec:t2212ev}
	
	In this section we prove the following evaluation for the stuffle-regularised \( t^{\ast,V}(\{2\}^a, 1, \{2\}^b) \), with \( t^{\ast,V}(1) = V \).  Namely
	\begin{equation}
	\label{eqn:t2212ev}
	\begin{aligned}
	t^{\ast,V}(\{2\}^a,1,\{2\}^b) = {} & -\sum_{r=1}^{a+b} (-1)^r 2^{-2r} \bigg[ \binom{2r}{2a} + \frac{2^{2r}}{2^{2r} - 1} \binom{2r}{2b} \bigg] \zeta(\overline{2r+1}) t(\{2\}^{a + b - r}) \\
	& {} + \delta_{a=0} \log(2) t(\{2\}^b) + \delta_{b=0} (V - \log(2)) t(\{2\}^a) \,,
	\end{aligned}
	\end{equation}
	where \( \delta_{\bullet} \) is the Kronecker delta symbol, equal to 1 if the condition \( \bullet \) holds, and 0 otherwise.  One can write this very explicitly, if desired, as a polynomial in single zeta values, and powers of \( \pi \), using the following evaluation from \cite{hoffman19}:
	\begin{align}
	t(\{2\}^a) &= \frac{\pi^{2a}}{2^{2a} (2a)!} \,, \label{eqn:t222} 
	\end{align}
	and the evaluation \( \zeta(\overline{2r+1}) = -(1 - 2^{-2r}) \zeta(2r+1) \), for \( r > 0 \).  (Note \( \zeta(\overline{1}) = \log(2) \), while \( \zeta(1) \) is divergent and must be regularised to make sense.) \medskip
	
	In order to prove this identity, we first convert it to a generating series identity.  For this purpose introduce the following functions.
	\begin{Def}[Functions \( A(z) \), \( B(z) \)]
		For \( \abs{z} < 1 \), define the \( A(z) \) and \( B(z) \) via the following power series
		\begin{align*}
			A(z) &\coloneqq \sum_{r=1}^\infty \zeta(2r+1) z^{2r} \,, \\
			B(z) &\coloneqq \sum_{r=1}^\infty (1 - 2^{-2r}) \zeta(2r+1) z^{2r} = \sum_{r=1}^\infty -\zeta(\overline{2r+1}) z^{2r} \,.
		\end{align*}
	\end{Def} 

	\begin{Rem}
		The functions \( A(z) \) and \( B(z) \) are the same as defined in Zagier's evaluation of \( \zeta(\{2\}^a,3,\{2\}^b) \) in  \cite{zagier2232}, and Murakami's evaluation of \( t(\{2\}^a, 3, \{2\}^b) \) in \cite{murakami21}.  It is noted in the proof of Proposition 2 in \cite{zagier2232} that they can be expressed via the digamma function \( \psi(x) = \frac{\mathrm{d}}{\mathrm{d}x} \log \Gamma(x) = \frac{\Gamma'(x)}{\Gamma(x)} \), as follows:
		\begin{align*}
			A(z) = \psi(1) - \frac{1}{2} (\psi(1+z) + \psi(1-z)) \,, \quad B(z) = A(z) - A(\tfrac{z}{2}) \,.
		\end{align*}
		In this form the functions \( A(z) \) and \( B(z) \)  analytically continue to the whole complex plane, with simple poles at \( z \in \Z \setminus \{0\} \).
	\end{Rem} \medskip
	
	It is a routine manner to sum (a tweaked version of) the generating series of the right-hand side to see the claim is equivalent to the following Theorem.  For details of such summation techniques, we refer to the corresponding evaluations in both \cite[proof of Proposition 2]{zagier2232} and \cite[proof of Proposition 13]{murakami21}.
	\begin{Thm}\label{thm:t2212ev}
		The following generating series evaluation holds for the stuffle-regularised \( t^{\ast,V} \), with \( t^{\ast,V}(1) = V \),
		\begin{align*}
		 \sum_{a,b\geq0} (-1)^{a+b} t^{\ast,V}(\{2\}^a,1,\{2\}^b) \cdot (2x)^{2a} (2y)^{2b} = {}
		& \frac{1}{2} \cos(\pi x) (A(x-y) + A(x+y) + 2 (V - \log(2))) \\
		& + \frac{1}{2} \cos(\pi y) (B(x-y) + B(x+y) + 2 \log(2)) \,,
		\end{align*}
		where
		\begin{align*}
		A(z) &= \psi(1) - \frac{1}{2} (\psi(1+z) + \psi(1-z)) = \sum_{r=1}^\infty \zeta(2r+1) z^{2r}  \,, \\
		B(z) &= A(z) - A(\tfrac{z}{2}) = \sum_{r=1}^\infty (1 - 2^{-2r}) \zeta(2r+1) z^{2r} = \sum_{r=1}^\infty -\zeta(\overline{2r+1}) z^{2r} \,.
		\end{align*}
	\end{Thm}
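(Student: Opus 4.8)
The plan is to prove the generating-series identity directly, following the template of Zagier's evaluation of $\zeta(\{2\}^a,3,\{2\}^b)$ \cite{zagier2232} and Murakami's evaluation of $t(\{2\}^a,3,\{2\}^b)$ \cite{murakami21}; the essential new feature is that the inner argument is $1$ rather than $3$, so that the $b=0$ slice $t^{\ast,V}(\{2\}^a,1)$ is divergent and part of the left-hand series is only conditionally summable, its finite part to be extracted by an asymptotic analysis. First I would compute the genuinely convergent part $\sum_{a\geq 0,\,b\geq 1}(-1)^{a+b}t(\{2\}^a,1,\{2\}^b)(2x)^{2a}(2y)^{2b}$ in closed form. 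Writing each $t(\{2\}^a,1,\{2\}^b)$ as the $\pm$-combination of alternating MZV's $\zeta\sgnarg{\vec{\epsilon}}{\{2\}^a,1,\{2\}^b}$ via \eqref{eqn:tasz}, one sums over the two $\{2\}$-blocks by the standard device of recognising a $\{2\}$-string as a Taylor coefficient of a trigonometric/hypergeometric factor (equivalently, by summing the associated generating series of iterated integrals over a simplex in the style of \cite{zagier2232}). The outcome is a ${}_4F_3$ at argument $1$ with parameters built from $x\pm y$; a contiguous relation (or a Thomae-type transformation) collapses it to a combination of two ${}_3F_2(1)$'s, one based at $x-y$ and one at $x+y$, carrying overall factors $\cos(\pi x)$ and $\cos(\pi y)$ arising from $\sum_{a\geq 0}(-1)^a t(\{2\}^a)(2x)^{2a}=\cos(\pi x)$ (cf.\ \eqref{eqn:t222}). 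Each ${}_3F_2(1)$ is then evaluated in terms of the digamma function (by the standard Taylor expansion of $\log\Gamma(1+z)$), and one recognises exactly $A(z)=\psi(1)-\tfrac{1}{2}(\psi(1+z)+\psi(1-z))$ and $B(z)=A(z)-A(\tfrac{z}{2})$, the halving $z\mapsto z/2$ recording the level-$2$, odd-denominator nature of the MtV's (matching $\zeta(\overline{2r+1})=-(1-2^{-2r})\zeta(2r+1)$).

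Second, I would handle the divergent slice $b=0$. There the relevant ${}_3F_2(1)$ sits on the boundary of its domain of convergence, so I would invoke the Evans--Stanton/Ramanujan asymptotic expansion \cite{evans84} of that ${}_3F_2$ (and of the ${}_4F_3$ from which it is obtained) to split it into a divergent piece and a finite piece. The divergent piece is exactly what the stuffle regularisation absorbs: tracking it through \autoref{cor:tregtozreg}, which identifies the parameter $t^{\ast,V}(1)=V$ with the alternating-MZV parameter $\zeta^{\ast,2V-\log(2)}(1)=2V-\log(2)$, one finds that the finite remainder contributes precisely the inhomogeneous terms $\cos(\pi x)\,(V-\log(2))$ and $\cos(\pi y)\,\log(2)$ (after the overall $\tfrac{1}{2}$), with $V$ entering through the regularisation parameter and the $\log(2)$'s being the constants surviving in the asymptotics, consistent with \eqref{eqn:t1reg}.

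Adding the two contributions gives the stated closed form; the displayed identity \eqref{eqn:t2212ev} is then recovered from the theorem by the ``routine'' coefficient extraction alluded to after its statement — expand $A(x\pm y)$ and $B(x\pm y)$ by the binomial theorem applied to $(x\pm y)^{2r}$, multiply by the $\cos$ factors expanded as $\sum_{a\geq 0}(-1)^a\tfrac{(\pi x)^{2a}}{(2a)!}$, and read off the coefficient of $(2x)^{2a}(2y)^{2b}$; the arithmetic factor $2^{2r}/(2^{2r}-1)$ is precisely what turns a $\zeta(\overline{2r+1})$-contribution (coming from a $B$) into the $\zeta(2r+1)$-contribution paired with an $A$. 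I expect the decisive obstacle to be the second step: pinning down the exact finite part of the boundary ${}_3F_2$ — in particular verifying that no spurious $\zeta(2)$ or $\pi^2$ terms survive and that the coefficient of $\log(2)$ is correct — requires the full strength of the Evans--Stanton expansion together with careful regularisation bookkeeping, and this is where an otherwise formal manipulation is most likely to go wrong.
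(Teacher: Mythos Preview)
Your proposal has the right ingredients and correctly identifies both the main difficulty (the divergent $b=0$ slice) and the key analytic tool (the Evans--Stanton/Ramanujan asymptotic for a $0$-balanced ${}_3F_2$), and it matches the paper's overall architecture of summing to a ${}_4F_3$, reducing by a contiguous relation to ${}_3F_2$'s, and then evaluating those in terms of digamma.

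The paper's organisation differs from yours in one substantive way that you should know about. Rather than splitting the generating series into a ``convergent part $b\geq 1$'' and a ``divergent slice $b=0$'', the paper introduces an auxiliary variable $z$ via the multiple $t$-polylogarithm $\Ti_{\{2\}^a,1,\{2\}^b}(\{1\}^a,z,\{1\}^b)$ and sums the \emph{full} generating series (all $a,b\geq 0$) to a ${}_4F_3$ at argument $z^2$. This matters because the full ${}_4F_3$ is $0$-balanced and does not converge at $1$; your claim that the convergent part alone is ``a ${}_4F_3$ at argument $1$'' is not quite right as stated, and would need extra work to formulate cleanly. With the $z$-variable in hand, the paper subtracts the explicitly divergent piece $\tanh^{-1}(z)\cos(\pi x)$ (arising from the stuffle expansion of $\Ti_{\{2\}^a,1}(\{1\}^a,z)$), applies a contiguous relation to split the ${}_4F_3(z^2)$ into two ${}_3F_2(z^2)$'s, evaluates one at $z=1$ by Whipple, and applies Evans--Stanton to the other as $z\to 1^-$; the surviving Pochhammer sum is rewritten as $\frac{d}{dZ}\big|_{Z=0}$ of a ${}_3F_2$, a further contiguous relation brings this to exactly the form Zagier already evaluated in \cite{zagier2232}, and the result follows after some digamma bookkeeping. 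So your instinct about where the obstacle lies is correct, but the device that makes it tractable is the auxiliary $z$ rather than a direct convergent/divergent decomposition.
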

		
	\subsection{Proof of \autoref{thm:t2212ev}}
	Firstly, recall that the \( {}_pF_{p-1} \) hypergeometric function is defined as
	\[
		\pFq{p}{p-1}{a_1,\ldots,a_p}{b_1,\ldots,b_{p-1}}{x} \coloneqq \sum_{m=0}^\infty \frac{\poch{a_1}{m} \cdots \poch{a_p}{m} }{\poch{b_1}{m} \cdots \poch{b_{p-1}}{m} } \frac{x^m}{m!} \,,
	\]
	where \( \poch{a}{m} = a(a+1) \cdots (a+m-1) \) is the ascending Pochhammer symbol.  Asymptotic and transformation properties of the \( {}_4F_3 \) and \( {}_3F_2 \) will play a key role in the proof of our generating series evaluation.

	In order to prove this theorem, we utilize a multiple \( t \)-polylogarithm type function, defined as follows.
	
	\begin{Def}[Multiple \( t \)-polylogarithm]\label{def:ti}
		For a choice of indices \( s_1,\ldots,s_d \in \Z_{\geq0} \), the \( \Ti \) functions is defined by
		\[
			\Ti_{s_1,\ldots,s_d}(x_1,\ldots,x_d) \coloneqq \sum_{0 < n_1 < \cdots < n_d} \frac{x_1^{2n_1 - 1} \cdots x_d^{2n_d-1}}{(2n_1-1)^{s_1} \cdots (2n_d-1)^{s_d}} \,,
		\]
		which converges when \( \abs{x_1\cdots x_i}  < 1 \), for \( i = 1, \ldots, r \).
	\end{Def}

	\begin{Rem}
		Closely related functions, at least for depth \( r = 1 \) and weight 2, are already studied in Lewin's book \cite{LewinBook} under the names `the inverse tangent integral' (Chapter 2 of \cite{LewinBook})
		\[
			\LewinTi_2(x) = \Im \Li_2(i x) = \sum_{n=1}^\infty \frac{(-1)^{n+1} x^{2n-1}}{(2n-1)^2}
		\] and `Legendre's chi-function' (Section 1.8 of \cite{LewinBook})
		\[
			\chi_2(x) = \sum_{n=1}^\infty \frac{x^{2n-1}}{(2n-1)^2} = \frac{1}{2} \Li_2(x) - \frac{1}{2} \Li_2(-x) \,.
		\]
		Lewin actually uses the notation \( \Ti \) for his function, but I write \( \LewinTi \) here to avoid confusion with the function introduced above.  Moreover, the notation \( \chi_2 \) is Lewin's choice, supplanting the too general notation \( \phi \) originally used by Legendre.
	\end{Rem}

	The function \( \Ti \) from \autoref{def:ti} is related to the classical multiple polylogarithm functions \( \Li_{s_1',\ldots,s_d'} \) in an analogous way to how the multiple $t$-value \( t(s_1,\ldots,s_d) \) is related to the classical multiple zeta values \( \zeta(s_1',\ldots,s_d') \) in \eqref{eqn:tasz}.  An explicit formula can be given, exactly as for \( t \) values, by inserting a factor \( \frac{1}{2} (1 - (-1)^{n_i}) \) into the numerator for \( i = 1, \ldots, r \), which allows one to extend the range of summation of the denominators and exponents from just odd integers, to all positive integers.  Namely
	\begin{align*}
		\Ti_{s_1,\ldots,s_d}(x_1,\ldots,x_d) & = \sum_{0 < n_1 < \cdots < n_d} \frac{x_1^{2n_1 - 1} \cdots x_d^{2n_d-1}}{(2n_1-1)^{s_1} \cdots (2n_d-1)^{s_d}} \\ 
		& = \sum_{0 < n_1 < \cdots < n_d} \frac{(1 - (-1)^{n_1}) \cdots ((1 - (-1)^{n_d}))}{2^d} \frac{x_1^{n_1} \cdots x_d^{n_d}}{n_1^{s_1} \cdots n_d^{s_d}} \\
		&= \frac{1}{2^d} \sum_{\epsilon_1, \ldots, \epsilon_d \in \{\pm 1\}} \epsilon_1 \cdots \epsilon_d \Li_{s_1,\ldots,s_d}(\epsilon_1 x_1, \ldots, \epsilon_d x_d) \,.
	\end{align*}
	We note also that when \( s_d > 1 \), the special value \( \Ti_{s_1,\ldots,s_d}(1,\ldots,1) = t(s_1,\ldots,s_d) \) is exactly the multiple \( t \) value of the given indices, as in this case the MtV is convergent.  We find, however, that
	\[
		\Ti_1(z) = \sum_{n_1=1}^\infty \frac{z^{2n_1-1}}{2n_1-1} = \tanh^{-1}(z) \,,
	\]
	so in particular \( \lim_{z\to1^-} \Ti_{1}(z) = \infty \).   \medskip
	
	Now, let us turn out attention to 
	\begin{align*}
	& \Ti_{\{2\}^a,1,\{2\}^b}(\{1\}^a, z, \{1\}^b) \\ 
	& = \sum_{\substack{0 < n_1 < \cdots < n_a < r \\ < m_1 < \cdots < m_b}} \, \frac{1}{(2n_1 - 1)^2 \cdots (2n_a-1)^2} \cdot \frac{z^{2r-1}}{2r-1} \cdot \frac{1}{(2m_1 - 1)^2 \cdots (2m_b-1)^2} \,.
	\end{align*}
	We will establish that a certain limit involving a similar generating series of these \( \Ti_{\{2\}^a,1,\{2\}^b} \)-polylogs can be used to give the desired generating series of \( t^{\ast,V=0} \) values.	We find
	\begin{equation}\label{eqn:Ti2212:gs:as4F3}
	\begin{aligned}
	& \sum_{a,b\geq0} (-1)^{a+b} \Ti_{\{2\}^a,1,\{2\}^b}(\{1\}^a,z,\{1\}^b) \cdot (2x)^{2a} (2y)^{2b} \\
	&= \sum_{r=1}^\infty \prod_{\ell<r} \Big( 1 - \frac{4x^2}{(2\ell-1)^2} \Big) \cdot \frac{z^{2r-1}}{2r-1} \cdot \prod_{k>r} \Big( 1 - \frac{4y^2}{(2k-1)^2} \Big) \\
	&= \cos({\pi y}) \sum_{r=1}^\infty \prod_{\ell<r} \Big( 1 - \frac{4x^2}{(2\ell-1)^2} \Big) \cdot \frac{z^{2r-1}}{2r - 1} \cdot \prod_{k \leq r}\Big( 1 - \frac{4y^2}{(2k-1)^2} \Big)^{-1} \\ 
	&= \frac{z \cos(\pi y)}{1-4y^2}  \cdot \pFq{4}{3}{1,\tfrac{3}{2},\tfrac{1}{2}-x,\tfrac{1}{2}+x}{\tfrac{1}{2},\tfrac{3}{2}-y,\tfrac{3}{2}+y}{z^2}
	\end{aligned}
	\end{equation}
	One checks directly that the summand above is expressible in the required form for the \( {}_4F_3 \) hypergeometric function.
	
	Now the divergent part (as \( z \to 1^- \)) of this generating series arises from 
	\begin{align*}
	\sum_{a\geq0} (-1)^a \Ti_{\{2\}^a,1}(\{1\}^a, z) \cdot (2x)^{2a} \,.
	\end{align*}
	We notice here that by stuffle-regularising,
	\begin{equation}\label{eqn:Ti221:gs:regularised}
	\begin{aligned} \Ti_{\{2\}^a,1}	(\{1\}^a, z) = {} 
		& t(\{2\}^a) \Ti_1(z) - \sum_{i=0}^{a-1} \Ti_{\{2\}^i,1,\{2\}^{a-i}}(\{1\}^i, z, \{1\}^{a-i}) \\[-1ex]
		& - \sum_{i=0}^{a-1} \Ti_{\{2\}^i,3,\{2\}^{a-1-i}}(\{1\}^i, z, \{1\}^{a-1-i}) \,.
	\end{aligned}
	\end{equation}
	So one can write that
	\begin{align*}
	& \sum_{a\geq0} (-1)^a \Ti_{\{2\}^a,1}(\{1\}^a, z) \cdot (2x)^{2a} = \tanh^{-1}(z) \cos(\pi x) + f(x,z)
	\end{align*}
	where
	\[
	f(x,z) = - \sum_{a=0}^\infty (-1)^a \begin{aligned}[t]
		& \Big( \sum_{i=0}^{a-1} \Ti_{\{2\}^i,1,\{2\}^{a-i}}(\{1\}^i,z,\{1\}^{a-i}) \\[-1ex]
		& \hspace{5em} + \sum_{i=0}^{a-1} \Ti_{\{2\}^i,3,\{2\}^{a-1-i}}(\{1\}^i,z,\{1\}^{a-1-i}) \Big) \cdot (2x)^{2a} \,. \end{aligned} 
	\]
	(Note that \( \cos(\pi x) \) arises as the generating series of \( t(\{2\}^a) \), after incorporating the normalisation factors \( (-1)^a \) and \( (2x)^{2a} \) above.  Namely
	\[
		\sum_{a=0}^\infty (-1)^a t(\{2\}^a) \cdot (2x)^{2a} = \sum_{a=0}^\infty (-1)^a \frac{\pi^{2a}}{2^{2a} (2a)!} \cdot (2x)^{2a} = \cos(\pi x) \,,
	\]
	wherein we have substituted the evaluation of \( t(\{2\}^a) \) from \cite{hoffman19}, given in \eqref{eqn:t222} above.)
	We see that at \( z = 1 \), 
	\begin{align*}
	 & \sum_{i=0}^{a-1} \Ti_{\{2\}^i,1,\{2\}^{a-i}}(\{1\}^{a+1}) + \sum_{i=0}^{a-1} \Ti_{\{2\}^i,3,\{2\}^{a-1-i}}(\{1\}^a)  \\
	& = \sum_{i=0}^{a-1} t(\{2\}^i,1,\{2\}^{a-i}) + \sum_{i=1}^{a-0} t(\{2\}^i,3,\{2\}^{a-1-i}) \\
	& = t^{\ast,V=0}(1) t(\{2\}^a) - t^{\ast,V=0}(\{2\}^a,1) \\
	& = - t^{\ast,V=0}(\{2\}^a,1) \,.
	\end{align*}
	So that \( f(x,1) \) (or at least the limit \( \lim_{z\to1^-} \) thereof) satisfies
	\[
		f(x,1) = \sum_{a=0}^\infty (-1)^a t^{\ast,V=0}(\{2\}^a, 1) \cdot (2x)^{2a} \,.
	\]
	Now subtract \eqref{eqn:Ti221:gs:regularised} from \eqref{eqn:Ti2212:gs:as4F3}, and take the limit \( \lim_{z\to1^-} \).  From this we see that the generating series of stuffle-regularised (at \( V = 0 \)) M\(t\)V's is obtained by computation of the following limit
	\begin{equation}\label{eqn:4f3limit}
	\begin{aligned}
	& \sum_{a,b\geq0} (-1)^{a+b} t^{\ast,V=0}(\{2\}^a,1,\{2\}^b) \cdot (2x)^{2a} (2y)^{2b} \\
	& = \lim_{z\to1^-} \frac{z \cos(\pi y)}{1-4y^2}  \cdot \pFq{4}{3}{1,\tfrac{3}{2},\tfrac{1}{2}-x,\tfrac{1}{2}+x}{\tfrac{1}{2},\tfrac{3}{2}-y,\tfrac{3}{2}+y}{z^2} - \tanh^{-1}(z) \cos(\pi x) \,.
	\end{aligned}
	\end{equation}
	
	We now apply some transformation properties of \( {}_4F_3 \) in order to reduce this to a combination of \( {}_3F_2 \) functions, whose asymptotic behaviour is established by \cite{evans84}.  First  make use of the contiguous function relation
	\[
	b \cdot \pFq{4}{3}{a, b+1, c, d}{p,q,r}{z} - a \cdot \pFq{4}{3}{a+1, b, c,d}{p,q,r}{z} + (a-b) \cdot \pFq{4}{3}{a,b,c,d}{p,q,r}{z} = 0
	\]
	in the case \( (a,b,c,d) = (1,\tfrac{1}{2}, \tfrac{1}{2}-x,\tfrac{1}{2}+x), (p,q,r) = (\tfrac{1}{2}, \tfrac{3}{2}-y,\tfrac{3}{2}+y) \), to obtain the following reduction of our \( {}_4F_3 \) to a combination of \( {}_3F_2 \)'s.  We find
	\begin{equation}\label{eqn:4f3ctg}
	\pFq{4}{3}{1,\tfrac{3}{2},\tfrac{1}{2}-x,\tfrac{1}{2}+x}{\tfrac{1}{2},\tfrac{3}{2}-y,\tfrac{3}{2}+y}{z^2} = 2\cdot\pFq{3}{2}{2,\tfrac{1}{2}-x,\tfrac{1}{2}+x}{\tfrac{3}{2}-y,\tfrac{3}{2}+y}{z^2} 
	- \pFq{3}{2}{1, \tfrac{1}{2}-x,\tfrac{1}{2}+x}{\tfrac{3}{2}-y,\tfrac{3}{2}+y}{z^2} 
	\end{equation}
	The second term is convergent at \( z = 1 \), and can be evaluated via Whipple's theorem (see Section 3.4 in \cite{bailey64}) to give (after some simplification with the reflection formula of the \( \Gamma \)-function) that
	\[
	\pFq{3}{2}{1, \tfrac{1}{2}-x,\tfrac{1}{2}+x}{\tfrac{3}{2}-y,\tfrac{3}{2}+y}{1}  = \frac{(1-2y)(1+2y)}{2(x-y)(x+y)} \sec(\pi y) \sin\big( \tfrac{\pi}{2}(x-y) \big) \sin\big( \tfrac{\pi}{2}(x+y) \big) \,.
	\]
	To deal with the first term, we need to recall the Stanton-Evans-Ramanujan asymptotic for 0-balanced \( {}_3F_2 \) hypergeometric functions.
	\begin{Thm}[Evans-Stanton 1984 \cite{evans84}, Ramanujan]
		If \( a + b + c = d + e \), and \( \Re(c) > 0 \), then as \( u \to 1^- \),
		\[
		\frac{\Gamma(a)\Gamma(b)\Gamma(c)}{\Gamma(d)\Gamma(e)} \cdot \pFq{3}{2}{a,b,c}{d,e}{u} = -\log(1-u) + L + O((1-u)\log(1-u)) \,,
		\]
		where
		\[
		L = -2\gamma - \frac{\Gamma'(a)}{\Gamma(a)} - \frac{\Gamma'(b)}{\Gamma(b)} + \sum_{k=1}^\infty \frac{\poch{d-c}{k} \poch{e-c}{k}}{\poch{a}{k} \poch{b}{k} k} \,.
		\]
		Here \( \gamma \approx 0.577\ldots \) is the Euler-Mascheroni constant, and \( \poch{x}{k} = x(x+1) \cdots (x + k-1) \) is the ascending Pochhammer symbol.
	\end{Thm}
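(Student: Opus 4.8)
This is a classical asymptotic (Evans--Stanton, foreshadowed in Ramanujan's notebooks), and the author will presumably just cite it; still, the plan I would follow splits into two stages: first pull the logarithm out of the power series directly, then pin down the constant \( L \) by perturbing the balance condition and invoking a Thomae transformation.

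For the first stage, write \( \pFq{3}{2}{a,b,c}{d,e}{u} = \sum_{n\geq0} t_n u^n \) with \( t_n = \frac{\poch{a}{n}\poch{b}{n}\poch{c}{n}}{\poch{d}{n}\poch{e}{n}\,n!} \), and set \( K = \frac{\Gamma(d)\Gamma(e)}{\Gamma(a)\Gamma(b)\Gamma(c)} \). Expanding the ratio of Gamma functions by Stirling and using \( a+b+c=d+e \) gives \( t_n = \tfrac Kn + O(n^{-2}) \). Subtracting \( K\sum_{n\geq1}\tfrac{u^n}{n} = -K\log(1-u) \) then leaves
\[
  \frac1K\pFq{3}{2}{a,b,c}{d,e}{u} + \log(1-u) = \frac1K + \sum_{n\geq1}\Big(\frac{t_n}{K}-\frac1n\Big)u^n \,,
\]
and since \( \frac{t_n}{K}-\frac1n = O(n^{-2}) \) the right-hand side converges at \( u=1 \); estimating its tail by splitting at \( n\sim(1-u)^{-1} \) shows it equals its limit up to \( O((1-u)\log(1-u)) \). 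This already yields the stated expansion, with \( L = \frac1K + \sum_{n\geq1}\big(\frac{t_n}{K}-\frac1n\big) \).

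For the second stage I would identify that constant in closed form by replacing \( e \) with \( e+s \), so that \( \pFq{3}{2}{a,b,c}{d,e+s}{1} \) is meromorphic in \( s \) near \( 0 \), with a simple pole, and is balanced exactly at \( s=0 \). One Laurent expansion comes from zeta-regularising the tail (whose \( n \)-th term is \( \sim K\tfrac{\Gamma(e+s)}{\Gamma(e)}\,n^{-1-s} \)) against \( \zeta(1+s)=\tfrac1s+\gamma+O(s) \), giving \( \tfrac Ks + K\big(\gamma+\psi(e)+L\big)+O(s) \), where \( L \) is the constant from the first stage and \( \psi=\Gamma'/\Gamma \). The other comes from the Thomae transformation singling out \( c \):
\[
  \pFq{3}{2}{a,b,c}{d,e+s}{1} = \frac{\Gamma(d)\Gamma(e+s)\Gamma(s)}{\Gamma(c)\Gamma(s+a)\Gamma(s+b)}\,\pFq{3}{2}{d-c,e+s-c,s}{s+a,s+b}{1} \,.
\]
Here the factor \( \poch{s}{n} \) annihilates every \( n\geq1 \) term of the right-hand \( {}_3F_2 \) at \( s=0 \), so it equals \( 1 + s\sum_{n\geq1}\frac{\poch{d-c}{n}\poch{e-c}{n}}{\poch{a}{n}\poch{b}{n}\,n} + O(s^2) \), while the prefactor expands via \( \Gamma(s)=\tfrac1s-\gamma+O(s) \) and \( \Gamma(x)/\Gamma(x+s)=1-s\psi(x)+O(s^2) \); multiplying out gives \( \tfrac Ks + K\big(-\gamma+\psi(e)-\psi(a)-\psi(b)+\sum_{n\geq1}\frac{\poch{d-c}{n}\poch{e-c}{n}}{\poch{a}{n}\poch{b}{n}\,n}\big)+O(s) \). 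The \( \psi(e) \) terms cancel on comparison, and equating the remaining finite parts gives exactly \( L = -2\gamma - \psi(a) - \psi(b) + \sum_{n\geq1}\frac{\poch{d-c}{n}\poch{e-c}{n}}{\poch{a}{n}\poch{b}{n}\,n} \).

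The hard part is this second stage: one must arrange the deformation parameter to coincide with the Thomae balance parameter and track every \( \psi \)-term so the spurious \( \psi(e) \) disappears, and one must justify interchanging the \( s\to0 \) limit with the zeta-regularised summation. It is precisely here, together with the convergence of the series defining \( L \) and of the right-hand Thomae \( {}_3F_2 \), that the hypothesis \( \Re(c)>0 \) enters, via the estimate \( \poch{d-c}{n}\poch{e-c}{n}/(\poch{a}{n}\poch{b}{n}) \sim \mathrm{const}\cdot n^{-\Re c} \). If that route proved awkward I would instead use a Mellin--Barnes representation, in which the logarithm emerges from two colliding strings of poles as \( u\to1 \), at the cost of more contour-shifting.
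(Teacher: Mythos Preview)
Your anticipation is correct: the paper does not prove this theorem at all, but simply quotes it from Evans--Stanton \cite{evans84} (as a result also found in Ramanujan's notebooks) and then applies it with \(c=2\). There is therefore no argument in the paper to compare against.

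For what it is worth, your two-stage sketch is sound. The first stage is standard Abelian/Tauberian reasoning: the balance condition \(a+b+c=d+e\) forces \(t_n = K/n + O(n^{-2})\) via the ratio-of-Gammas asymptotic, and subtracting the logarithmic series isolates a remainder with summable coefficients, giving the claimed \(O((1-u)\log(1-u))\) error. The second stage is also correct: with the deformation \(e\mapsto e+s\), the Thomae transformation you wrote is the standard one (singling out the parameter \(c\), so that the balance parameter becomes exactly \(s\)), and the expansion of \(\poch{s}{n}\) at \(s=0\) produces precisely the series \(\sum_{n\ge1}\poch{d-c}{n}\poch{e-c}{n}/(\poch{a}{n}\poch{b}{n}\,n)\). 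Matching the two Laurent expansions of \(\pFq{3}{2}{a,b,c}{d,e+s}{1}\) at \(s=0\) --- one from zeta-regularising the tail, the other from Thomae --- gives the formula for \(L\) after the \(\psi(e)\) terms cancel, exactly as you say. The point you flag about justifying the interchange of \(s\to0\) with the regularised sum is indeed where the genuine analytic work sits; the uniform estimate \(t_n(s)-K\tfrac{\Gamma(e+s)}{\Gamma(e)}n^{-1-s}=O(n^{-2})\) on a half-plane \(\Re s\ge -\tfrac12\) is what one needs, and this (together with the convergence of the Thomae series) is where \(\Re(c)>0\) is used.
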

	
	If we apply this asymptotic (with \( c = 2 \)) to the first term, and recall \( \psi(x) = \frac{\mathrm{d}}{\mathrm{d}x} \log\Gamma(x) = \frac{\Gamma'(x)}{\Gamma(x)}\), we obtain the asymptotic formula
	\begin{align*}
	-\frac{4}{1-4y^2} \frac{\cos(\pi y)}{\cos(\pi x)} \pFq{3}{2}{2,\tfrac{1}{2}-x,\tfrac{1}{2}+x}{\tfrac{3}{2}-y,\tfrac{3}{2}+y}{z^2} {} = {} & \log(1-u) + 2\gamma + \psi(\tfrac{1}{2} - x) + \psi(\tfrac{1}{2}+x) \\ 
	& \hspace{-2em} - \sum_{k=1}^\infty \frac{\poch{-\tfrac{1}{2}-y}{k}\poch{-\tfrac{1}{2}+y}{k}}{k \poch{\tfrac{1}{2}-x}{k}\poch{\tfrac{1}{2}+x}{k}} + O((1-z^2)\log(1-z^2)) \,.
	\end{align*}
	We also note
	\[
	4A(2x) - 2A(x) = 4 \log(2) + 2 \gamma + \psi(\tfrac{1}{2} -x) + \psi(\tfrac{1}{2} + x) \,,
	\]
	so that the digamma combination above can be rewritten via the function \( A \) defined earlier.  Applying these results to \eqref{eqn:4f3limit}, we find
	\begin{align*}
	\text{RHS (\ref{eqn:4f3limit})} = {}  & -B(x) + \frac{\cos(\pi x) - \cos(\pi y)}{4(x^2 - y^2)} - (1 - 2 \cos(\pi x))\log(2) \\ & {}  + \frac{1}{2} \cos(\pi x) \Bigg( 4A(2x) - 4(x)
	+ \sum_{k=1}^\infty \frac{\poch{-\tfrac{1}{2}-y}{k}\poch{-\tfrac{1}{2}+y}{k}}{k \poch{\tfrac{1}{2}-x}{k}\poch{\tfrac{1}{2}+x}{k}} \Bigg)
	\end{align*}
	We note next that
	\[
	\sum_{k=1}^\infty \frac{\poch{-\tfrac{1}{2}-y}{k}\poch{-\tfrac{1}{2}+y}{k}}{k \poch{\tfrac{1}{2}-x}{k}\poch{\tfrac{1}{2}+x}{k}} = \frac{\mathrm{d}}{\mathrm{d}Z}\Big|_{Z=0} \pFq{3}{2}{-\tfrac{1}{2}-y,-\tfrac{1}{2}+y,Z}{\tfrac{1}{2} - x,\tfrac{1}{2} + x}{1} \,.
	\]
	Compare Proposition 1 in \cite{zagier2232} for a similar summation, which we will in fact reduce this to.  Using the contiguous function relation
	\[
	(a-b)p \cdot \pFq{3}{2}{a,b,c}{p,q}{z} - b(a-p) \cdot \pFq{3}{2}{a, 1+b, c}{1+p,q}{z} + a(b-p) \cdot \pFq{3}{2}{1+a,b,c}{1+p,q}{z} = 0
	\]
	in the case \( (a,b,c) = (-\tfrac{1}{2}-y, -\tfrac{1}{2} + y, Z), (p,q) = (\tfrac{1}{2} - x, \tfrac{1}{2} + x) )\), we find (note the sign of \( y \) is different in various places in the coefficient of each \( {}_3F_2 \) on the right hand side) that
	\begin{equation}\label{eqn:3f2:reduced}
	\begin{aligned}
	\pFq{3}{2}{-\tfrac{1}{2}-y,-\tfrac{1}{2}+y,Z}{\tfrac{1}{2} - x,\tfrac{1}{2} + x}{1} = {} & -\frac{(1-x + y)(1-2y)}{2y(1-2x)} \pFq{3}{2}{{-}\big(\tfrac{1}{2}+y\big),\tfrac{1}{2}+y,Z}{1-\big(x-\tfrac{1}{2} \big),1 + \big(x-\tfrac{1}{2}\big)}{1} \\ & + \frac{(1-x-y)(1 + 2y)}{2y(1-2x)}\pFq{3}{2}{-\big({-}\tfrac{1}{2}+y\big),{-}\tfrac{1}{2}+y,Z}{1-\big(x-\tfrac{1}{2} \big),1 + \big(x-\tfrac{1}{2}\big)}{1} \,,
	\end{aligned}
	\end{equation}
	and the same expression upon replacing \( {}_3F_2 \) with \( \frac{\mathrm{d}}{\mathrm{d}Z} \big|_{Z=0} {}_3F_2 \) on both sides.  Both hypergeometric functions derivatives are now of the form
	\begin{align*}
	\frac{\mathrm{d}}{\mathrm{d}Z} \Big|_{Z=0} \pFq{3}{2}{-X,X,Z}{1-Y,1+Y}{1} = {} & \big[ A(X+Y) + A(X-Y) - 2A(Y) \big] \\ 
	& \quad + \frac{\sin(\pi X)}{\sin(\pi Y)} \big[ B(X+Y) - B(X-Y) \big] \,,
	\end{align*}
	the evaluation of which here follows as essentially the punchline to Section 4 of Zagier's evaluation of \( \zeta(\{2\}^a, 3, \{2\}^b) \) in \cite{zagier2232} after combining the results of Sections 2 and 3 therein.  (Namely the equality of \( F(x,y) = \widehat{F}(x,y) \) established in the proof Theorem 1 in \cite{zagier2232}, plus the expressions in Propositions 1 and 2 of \cite{zagier2232}, gives the above evaluation.)
	
	Substituting this evaluation into \( \frac{\mathrm{d}}{\mathrm{d}Z} \big|_{Z=0}\) of \eqref{eqn:3f2:reduced}, and substituting the resulting Poch\-hammer sum evaluation into the \( {}_4F_{3} \) limit produces an elementary expression for the  generating series of \( t^{\ast,V=0}(\{2\}^a,1,\{2\}^b) \) in terms of \( A, B \), sine and cosine.  Some simplification is necessary, using the functional relation of \( \psi \) which implies \( A(x+1) - A(x) = -\frac{1}{2x} - \tfrac{1}{2(1+x)} \), but one readily finds
	\begin{equation}\label{eqn:t2212:gs:reg0}
	\begin{aligned}
	& \sum_{a,b\geq0} (-1)^{a+b} t^{\ast,V=0}(\{2\}^a,1,\{2\}^b) \cdot (2x)^{2a} (2y)^{2b} = \\
	& \frac{1}{2} \cos(\pi x) (A(x-y) + A(x+y) - 2 \log(2)) + \frac{1}{2} \cos(\pi y) (B(x-y) + B(x+y) + 2 \log(2)) \,.
	\end{aligned}
	\end{equation}
	The generating series for the general regularisation is recovered upon noting that
	\[
	t^{\ast,V}(\{2\}^a,1) = V t(\{2\}^a)  + t^{\ast,V=0}(\{2\}^a,1) \,,
	\]
	i.e. the constant term in the regularisation polynomial is the regularisation at parameter \( V = 0 \).  Since 
	\[
	\sum_{a \geq 0} (-1)^a V t(\{2\}^a) \cdot (2x)^{2a} = V \cos(\pi x) \,,
	\]
	as already noted above without the \( V \), this gives the necessary correction term to add to the right hand side of \eqref{eqn:t2212:gs:reg0} to find the generating series for the general regularisation  Doing so gives the equality stated in \autoref{thm:t2212ev}, and so completes the proof. \hfill \qedsymbol
	
	\subsection{\texorpdfstring{Evaluation of shuffle-regularised \( t^{\shuffle,W}(\{2\}^a, 1, \{2\}^b) \)}{
		Evaluation of shuffle-regularised t\textasciicircum{}\{sh,W\}(\{2\}\textasciicircum{}a, 1, \{2\}\textasciicircum{}b)
	}} 

	Although the shuffle regularisation \( t^{\shuffle,0} \), arising from \( \zeta^{\shuffle,0}(1) = 0 \) is most important, we can in fact compute the regularisation for any \( t^{\shuffle,W} \) arising from \( \zeta^{\shuffle,W}(1) = W \) with equal case.  Clearly, if \( b > 0 \)
	\[
		t^{W,\shuffle}(\{2\}^a,1,\{2\}^b) = t(\{2\}^a, 1, \{2\}^b) \,,
	\]
	as no regularisation is necessary.  However when \( b = 0 \) we compute via \eqref{eqn:tasz} -- with the convention that \( \vec{\epsilon} = (\epsilon_1, \ldots, \epsilon_a) \) and the sum over \( \vec{\epsilon}, \delta \) implies all choice of signs in \( \{\pm1\} \) --  that
	\begin{align*}
		t^{W,\shuffle}(\{2\}^a, 1) &= \frac{1}{2^{a+1}} \sum_{\vec{\epsilon}, \delta} \epsilon_1 \cdots \epsilon_a \cdot \delta \zeta^{\shuffle,W}\sgnarg{\vec{\epsilon}, \delta}{\{2\}^a, 1} 
	\end{align*}
	Since the alternating MZV ends with at most a single entry 1 (with sign 1), we know from \autoref{lem:regshtost} that the shuffle and the stuffle regularisation in this case are exactly equal.  This is because the \( \mathbb{R} \)-linear map \( \rho \) from \autoref{def:rho} appearing in \autoref{lem:regshtost} has \( \rho(1) = 1 \) and \( \rho(T) = T \), so leaves a linear regularisation polynomial unchanged.  Hence
	\begin{align*}
		 &= \frac{1}{2^{a+1}} \sum_{\vec{\epsilon}, \delta} \epsilon_1 \cdots \epsilon_a \cdot \delta \zeta^{\ast,W}\sgnarg{\vec{\epsilon}, \delta}{\{2\}^a, 1}  \\
		 &= t^{\ast,\frac{1}{2}(W + \log(2))}(\{2\}^a,1)
	\end{align*}
	Recall from \autoref{cor:tregtozreg}: the stuffle regularisation of \( t^{\ast,V}(1) = V \) corresponds to the stuffle regularisation of \( \zeta^{\ast,U}(1) = U \) where \( U = 2V - \log(2) \), hence the change in regularisation parameter in the last line. \medskip
	
	This amounts to saying the shuffle regularised version \( t^{\shuffle,W} \) of the generating series in \autoref{thm:t2212ev} is obtained simply by changing the regularisation parameter on the RHS to \( \frac{1}{2} (W + \log(2)) \).  Hence we have the following proposition.
	
	\begin{Prop}
		\label{thm:sht2212ev}
			The following generating series evaluation holds for the shuffle-regularised \( t^{\shuffle,W} \), induced by \( \zeta^{\shuffle,W}(1) = W \),
			\begin{align*}
			\sum_{a,b\geq0} (-1)^{a+b} t^{\shuffle,W}(\{2\}^a,1,\{2\}^b) \cdot (2x)^{2a} (2y)^{2b} = {}
			& \frac{1}{2} \cos(\pi x) (A(x-y) + A(x+y) + (W - \log(2))) \\
			& + \frac{1}{2} \cos(\pi y) (B(x-y) + B(x+y) + 2 \log(2)) \,,
			\end{align*}
			where
			\begin{align*}
			A(z) &= \psi(1) - \frac{1}{2} (\psi(1+z) + \psi(1-z)) = \sum_{r=1}^\infty \zeta(2r+1) z^{2r}  \,, \\
			B(z) &= A(z) - A(\tfrac{z}{2}) = \sum_{r=1}^\infty (1 - 2^{-2r}) \zeta(2r+1) z^{2r} = \sum_{r=1}^\infty -\zeta(\overline{2r+1}) z^{2r} \,.
			\end{align*}
	\end{Prop}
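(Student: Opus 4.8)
The plan is to deduce the Proposition directly from \autoref{thm:t2212ev} by recognising $t^{\shuffle,W}$ as a stuffle regularisation $t^{\ast,V}$ on precisely the argument strings that actually need regularising. First I would note that of the values $t^{\shuffle,W}(\{2\}^a,1,\{2\}^b)$ occurring in the generating series, only those with $b=0$ are divergent: when $b\geq1$ the string ends in the argument $2$, so $t^{\shuffle,W}(\{2\}^a,1,\{2\}^b)=t(\{2\}^a,1,\{2\}^b)$ is the honest convergent MtV and carries no dependence on $W$ (and equals $t^{\ast,V}(\{2\}^a,1,\{2\}^b)$ for every $V$).

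For the divergent terms $t^{\shuffle,W}(\{2\}^a,1)$, I would expand via \eqref{eqn:tshuffle} into the signed sum of the alternating MZV's $\zeta^{\shuffle,W}\sgnargsm{\vec{\epsilon},\delta}{\{2\}^a,1}$. Each such word ends in at most a single letter $1$ (only when its final sign $\delta=+1$) and begins with a non-zero letter, so its shuffle regularisation polynomial, and equally its stuffle regularisation polynomial, has degree at most $1$ in the regularisation parameter. Since the map $\rho$ of \autoref{def:rho} fixes $1$ and $T$, the alternating-MZV version of \autoref{lem:regshtost} (which holds by \cite[Theorem 13.3.9]{zhaoBook}) forces $\zeta^{\shuffle,W}\sgnargsm{\vec{\epsilon},\delta}{\{2\}^a,1}=\zeta^{\ast,W}\sgnargsm{\vec{\epsilon},\delta}{\{2\}^a,1}$. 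Feeding this back into the signed sum and comparing with the MZV expansion behind \autoref{cor:tregtozreg}, I obtain $t^{\shuffle,W}(\{2\}^a,1)=t^{\ast,V}(\{2\}^a,1)$ for the value $V$ with $2V-\log(2)=W$, that is $V=\tfrac12\bigl(W+\log(2)\bigr)$.

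To finish, I would assemble the generating series: the $t^{\shuffle,W}$-values and the $t^{\ast,\frac12(W+\log(2))}$-values agree term by term (trivially when $b\geq1$, by the previous paragraph when $b=0$), so the left-hand side of the Proposition coincides with the left-hand side of \autoref{thm:t2212ev} evaluated at $V=\tfrac12\bigl(W+\log(2)\bigr)$. Substituting this $V$ into the right-hand side of \autoref{thm:t2212ev} and using $2(V-\log(2))=W-\log(2)$ produces exactly the stated formula, with the $\cos(\pi y)$-bracket untouched, as it should be. The one genuinely substantive ingredient is the identification of the shuffle and stuffle regularisations on alternating words with a single trailing $1$; everything else is the bookkeeping with $\cos(\pi x)=\sum_{a}(-1)^a t(\{2\}^a)(2x)^{2a}$ that already appears in the discussion preceding the Proposition.
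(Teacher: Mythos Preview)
Your proposal is correct and follows essentially the same approach as the paper: the paper likewise observes that the only divergent cases are $b=0$, expands $t^{\shuffle,W}(\{2\}^a,1)$ as the signed sum of alternating MZV's each ending in at most one trailing $1$, invokes the fact that $\rho$ fixes linear polynomials to equate shuffle and stuffle regularisations termwise, and then applies \autoref{cor:tregtozreg} to identify this with $t^{\ast,V}(\{2\}^a,1)$ for $V=\tfrac12(W+\log(2))$ before substituting into \autoref{thm:t2212ev}. Your argument and the paper's are the same in structure and in the key ingredient.
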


	From this follows an explicit evaluation, analogous to \eqref{eqn:t2212ev}, by replacing \( V \) with \( \frac{1}{2}(W + \log(2)) \) therein:
	\begin{equation}
	\label{eqn:sht2212ev}
	\begin{aligned}
	t^{\shuffle,W}(\{2\}^a,1,\{2\}^b) = {} & -\sum_{r=1}^{a+b} (-1)^r 2^{-2r} \bigg[ \binom{2r}{2a} + \frac{2^{2r}}{2^{2r} - 1} \binom{2r}{2b} \bigg] \zeta(\overline{2r+1}) t(\{2\}^{a + b - r}) \\
	& {} + \delta_{a=0} \cdot \log(2) t(\{2\}^b) + \delta_{b=0} \cdot \frac{1}{2} (W - \log(2)) t(\{2\}^a) \,,
	\end{aligned}
	\end{equation}
	where \( \delta_{\bullet} \) is the Kronecker delta symbol, equal to 1 if the condition \( \bullet \) holds, and 0 otherwise.  
	
	\subsection{\texorpdfstring{Evaluation of \( t(1,\{2\}^n) \)}{
			Evaluation of t(1,\{2\}\textasciicircum{}n)
	}}
	\label{sec:t1222}

	In order to answer a question posed in \cite{chavan21}, we turn to the special case of \( t(1,\{2\}^n) \) , for \( n \geq 1 \). Here we extract from \eqref{eqn:t2212ev}, the following evaluation for \( t(1,\{2\}^n) \), where \( n \geq 1 \)
	\begin{equation*}
	t(1,\{2\}^n) = -\sum_{r=1}^{n} (-1)^r 2^{-2r} \bigg[ 1 + \frac{2^{2r}}{2^{2r} - 1} \binom{2r}{2n} \bigg] \zeta(\overline{2r+1}) t(\{2\}^{n - r}) +  \log(2) t(\{2\}^n) \,.
	\end{equation*}
	Since \( \binom{2r}{2n} = 0 \) for \( r < n \), and \( \binom{2r}{2n} = 1 \) for \( r = n \), this can be written as
	\begin{equation*}
	= \log(2) t(\{2\}^n) -\sum_{r=1}^{n-1} (-1)^r 2^{-2r} \zeta(\overline{2r+1}) t(\{2\}^{n - r}) - (-1)^n 2^{-2n} \bigg[ 1 + \frac{2^{2n}}{2^{2n} - 1} \bigg] \zeta(\overline{2n+1}) \,.
	\end{equation*}
	Now the first term can be incorporated as the \( r = 0 \) term of the sum, giving
	\begin{equation*}
	= -\sum_{r=0}^{n-1} (-1)^r 2^{-2r} \zeta(\overline{2r+1}) t(\{2\}^{n - r}) - (-1)^n 2^{-2n} \bigg[ 1 + \frac{2^{2n}}{2^{2n} - 1} \bigg] \zeta(\overline{2n+1})
	\end{equation*}
	Now substitute in the evaluation of \( t(\{2\}^a) = \frac{\pi^{2a}}{2^{2a} (2a)!} \)from \eqref{eqn:t222}, and convert the last term to a classical (non-alternating) MZV, to obtain
	\begin{equation*}
	= \frac{1}{2^{2n}} \Bigg( \sum_{r=0}^{n-1} (-1)^r (-\zeta(\overline{2r+1})) \frac{\pi^{2(n-r)}}{(2(n-r))!} + (-1)^n 2 \big( 1 - 2^{-2n-1} \big)\zeta(2n+1) \Bigg)
	\end{equation*}
	This confirms Conjecture 4.5 on the evaluation of \( t(1,\{2\}^n) \) stated in \cite{chavan21} (be aware, the opposite MZV/MtV convention is used therein).  The authors of \cite{chavan21} also write the Dirichlet eta function  \( \eta(m) = (1 - 2^{1-m}) \zeta(m) \), with \( \eta(1) = \log(2) \), in place of \( -\zeta(\overline{m}) \) used herein.
	
	\section{Motivic framework} \label{sec:mot}
	
	In this section we briefly recall the setup of motivic iterated integrals framework introduced by Brown \cite{brown12,brown17} (extending that of Goncharov \cite{goncharov01,goncharov05}).  We define the motivic (alternating) MZV's and the motivic MtV's; we introduce the necessary combinatorial operations and fundamental properties of these objects which will play a key role from \autoref{sec:mott2212} onwards. \medskip
	
	\subsection{Goncharov's motivic iterated integrals}
	
	In \cite{goncharov05}, Goncharov upgraded the iterated integrals \( I(x_0; x_1, \ldots, x_N; x_{N+1}) \), \( x_i \in \overline{\Q} \) (see \eqref{eqn:itint} above for the definition), to framed mixed Tate motives, in order to define motivic iterated integrals
	\[
		I^\umot(x_0; x_1, \ldots, x_N; x_{N+1})
	\]
	living in a graded (by the weight \( N \)) connected Hopf algebra \( \mathcal{A}_\bullet = \mathcal{A}_\bullet(\overline{\Q}) \).  The Hopf algebra \( \mathcal{A}_\bullet \) is the ring of regular functions on the unipotent part of the motivic Galois group.  In \cite{goncharov05}, they are denoted by \( I^{\mathscr{M}} \), but when incorporated into Brown's motivic framework, they are better denoted by \( I^\umot \) for the unipotent part.
	
	The motivic iterated integrals satisfy relations of a `geometric' origin, arising from change of variables in an iterated integral, the results of Stoke's theorem, or from the linearity of domain and integrand.  The coproduct \( \Delta \) on this Hopf algebra is computed via Theorem 1.2 in \cite{goncharov05} as
	\begin{align*}
	&  \Delta I^{\umot}(x_0; x_1,\ldots,x_N; x_{N+1}) = {} \\
	& \sum_{\substack{0 = i_0 < i_1 < \cdots \\ 
			< i_{k} < i_{k+1} = N+1}} \hspace{-1em} I^{\umot}(x_0; x_{i_1}, \ldots, x_{i_k}; x_{N+1}) \otimes \prod_{p=0}^{k} I^\umot(x_{i_p}; x_{i_{p}+1}, \ldots, x_{i_{p+1}-1}; x_{i_{p+1}}).
	\end{align*}
	
	In this Hopf algebra, the motivic version of \( \zeta^\umot(2) = -I^\umot(0; 1, 0; 1) = 0 \), or more fundamentally, the Lefschetz motive \( \mathbb{L}^\umot \), a motivic version of \( \ii \pi \), vanishes so that \( (\ii \pi)^\umot = \mathbb{L}^\umot = 0\).
	
	\subsection{\texorpdfstring{Brown's \( \mathcal{A}_\bullet \)-comodule of motivic iterated integrals}{
			Brown's A-comodule of motivic iterated integrals}}
		
		The motivic iterated integrals \( I^\mot(x_0; x_1, \ldots, x_N; x_{N+1}) \) in the sense of Brown \cite{brown12,brown17} are elements of the \( \mathcal{A}_\bullet \)-comodule \( \mathcal{H}_\bullet \) of regular functions on the torsor of tensor isomorphisms between Betti and de Rham realisations.
		
		This comodule is endowed with a coaction \( \Delta \colon \mathcal{H}_\bullet \to \mathcal{A}_\bullet \otimes \mathcal{H}_\bullet \) which, as noted in~\cite{brown17}, is given by the same formula as Goncharov's coproduct, transposed to this setting, i.e.
		\begin{equation}\label{eqn:coaction}
		\begin{aligned}[t]
		&  \Delta I^{\mot}(x_0; x_1,\ldots,x_N; x_{N+1}) = {} \\
		& \prod_{p=0}^{k} I^\umot(x_{i_p}; x_{i_{p}+1}, \ldots, x_{i_{p+1}-1}; x_{i_{p+1}}) \otimes \sum_{\substack{0 = i_0 < i_1 < \cdots \\ 
				< i_{k} < i_{k+1} = N+1}} \hspace{-1em} I^{\mot}(x_0; x_{i_1}, \ldots, x_{i_k}; x_{N+1}).
		\end{aligned}
.		\end{equation}
		(We have switched the order of the factors for later convenience.)  We will mainly use the derivation operations \( D_r \) defined as a linearised, weight-graded part of the coaction (see \autoref{sec:dr} below), but it will be useful to keep in mind from where these operations originate, particularly when considering how they act on primitive elements.
		
		In Brown's setting \( \zeta^\mot(2) = -I^\mot(0; 1, 0; 1) \neq 0 \), and therefore much more information about motivic iterated integrals is retained.  In particular, the coaction can fix identities up to primitive elements (namely motivic MZV's of depth 1, at some root of unity \( \zeta^\mot\sgnargsm{\exp(2 \pi \ii a/b)}{n} \)).  More concretely the coaction can be used to fix the coefficient of product terms involving \( \zeta(2n) \), in contrast to the coproduct above.  (One can think of Goncharov's motivic iterated integrals as \( \mathcal{H} / \zeta^\mot(2) \mathcal{H} \), wherein \( \zeta^\mot(2) \) is killed.)  
		
		One has a surjective \( \Q \)-algebra homomorphism \( \per \), called the period map,
		\begin{align*}
			\per \colon \mathcal{H} &\to \C \\
				I^\mot(x_0; x_1,\ldots,x_N; x_{N+1}) &\mapsto I(x_0; x_1,\ldots,x_N; x_{N+1}) \,,
		\end{align*}
		which means that the classical iterated integrals satisfy all motivically true relations.  Conjecturally, the space of motivic iterated integrals is isomorphic to the space of classical iterated integrals.  This conjecture is a special case of the Grothendeick period conjecture, which posits that the period map \( \per \) (in the most general setting) is injective, so that all relations are motivic (`geometric') in origin, i.e. there are no `spurious' or `coincidental' relations on the level of numbers. \medskip
		
		We briefly recall some main relations satisfied by the motivic iterated integrals.
		\begin{itemize}
			\item[i)] Unit: \( I^\mot(x_0; x_1) = 1 \) in weight 0,
			\item[ii)] Trivial integration: \( I^\mot(x_0; x_1,\ldots, x_N; x_{N+1}) = 0 \) if \( x_0 = x_{N+1} \) and \( N \geq 1 \),
			\item[iii)] Path composition: for any \( y \in \overline{\Q}  \),
			\[
				I^\mot(x_0; x_1,\ldots,x_N; x_{N+1}) = \sum_{i=0}^N I^\mot(x_0; x_1,\ldots,x_i; y) I^\mot(y; x_{i+1}, \ldots, x_N; x_{N+1})
			\]
			\item[iv)] Path reversal: \( I^\mot(x_0; x_1,\ldots,x_N;x_{N+1}) = (-1)^N I^\mot(x_{N+1}; x_N, \ldots, x_1; x_0) \)
			\item[v)] Homothety: if \( x_0 \neq x_1 \), and \( x_N \neq x_{N+1} \), then for any \( \alpha \in \overline{\Q} \),
			\[
				I^\mot(x_0; x_1,\ldots,x_N; x_{N+1}) = I^\mot(\alpha \cdot x_0;\alpha \cdot  x_1,\ldots, \alpha \cdot  x_N;\alpha \cdot  x_{N+1})
			\]
		\end{itemize}
		
		In the cases where \( x_0 = x_1 \) or \( x_N = x_{N+1} \) the motivic iterated integrals are regularised in the same manner as described in \autoref{sec:mzvreg}, via the shuffle product.
				
		\begin{Rem}[Tangential base-points]
			More formally, for the regularisation process, we replace \( 0 \) and \( 1 \) with a tangential base-points \( \overrightarrow{1}_0 \) and \( \overrightarrow{-1}_1 \) which denote the tangent vector \( \overrightarrow{1} \) at the point 0, and \( \overrightarrow{-1} \) at the point \( 1 \), respectively.  For the straight line path \( \mathrm{dch} \colon [0,1] \to [0,1] \), these are indeed the tangent vectors under consideration.  The details of iterated integrals with tangential base points (and the motivic versions thereof) can be found in \cite[Section 3.7, Section 4.5]{fresanBook}.  The notation with tangential base-points is helpful to identify when certain transformations and relations are invalid in the case of regularised integrals.
		\end{Rem}
	
		The homothety property fails if \( x_0 = x_1 \) or if \( x_N = x_{N+1} \), because in this case the integral depends on the vector of the tangential base-points at \( x_0 \) or \( x_{N+1} \), which are changed when we scale by \( x_i \mapsto x_i \alpha \).  This point is glossed over in \cite[Section 2.3]{glanoisTh}, \cite[Section 2.2]{glanoisUnramified} and in \cite[Section 2]{murakami21}.  However, whenever the homothety property is applied in \cite{murakami21}, one only needs it to hold modulo products and \( \zeta^\umot(2) \), i.e. in the Lie coalgebra \( \mathcal{L}_\bullet = \mathcal{A}_\bullet / \mathcal{A}_{>0} \cdot \mathcal{A}_{>0} \) which we introduce momentarily.  This version of homothety does hold in general for \( x_i \in \{ 0, \pm 1 \} \), and so Murakami's conclusions are valid; see \autoref{rem:homothety} below for more explicit details.
		
		\subsection{\texorpdfstring{Motivic multiple zeta values and motivic multiple \( t \) values}{
			Motivic multiple zeta values and motivic multiple t values}}
		
		For \( \ell \in \Z_{\geq0} \), any index \( \vec{k} = (k_1,\ldots,k_d) \) with \( k_i \in \Z_{\geq1} \) and any choice of signs \( \epsilon_i \in \{ \pm 1\} \), we define the \emph{(alternating) motivic multiple zeta values} by
		\[
			\zeta_\ell^\mot\sgnarg{\epsilon_1,\ldots,\epsilon_d}{k_1,\ldots,k_d} \coloneqq (-1)^d I^\mot(0; \{0\}^\ell, \eta_1, \{0\}^{k_1-1}, \eta_2, \{0\}^{k_2-1}, \ldots, \eta_d, \{0\}^{k_d-1}; 1) \,,
		\]
		where \( \{k\}^n = \overbrace{k, \ldots, k}^n \) denotes the argument \( k \) repeated \( n \) times, and \( \eta_i = \prod_{j=1}^d \epsilon_j \).  This arises by transposing \eqref{eqn:zasi} to the motivic world, as a definition, and extending to the case of leading 0's, as already indicated in \autoref{sec:mzvreg}.
		
		When \( \ell > 0 \), this integral is regularised in the same manner as described in \autoref{sec:mzvreg}, in particular via \eqref{eqn:unshufflestart0} in \autoref{lem:unshufflestart0} to express \( \zeta^\mot_\ell \) in terms of \( \zeta^\mot_0 = \zeta^\mot \).  We have a further property
		\begin{itemize}
			\item[vi)] Unshuffling of 0's:
			\[
				\zeta_\ell^{\mot}\sgnarg{\epsilon_1,\ldots,\epsilon_d}{k_1,\ldots,k_d} = (-1)^\ell \sum_{i_1 + \cdots + i_d = \ell} \binom{k_1 + i_1 - 1}{i_1} \cdots \binom{k_d + i_d - 1}{i_d} \, \zeta_0^\mot\sgnarg{\epsilon_1,\ldots,\epsilon_d}{k_1 + i_1, \ldots, k_d + i_d}
			\]
		\end{itemize}
		When \( \ell = 0 \), we can write \( \zeta^\mot \) instead of \( \zeta^\mot_0 \).  When all signs \( \epsilon_i = 1 \), we can write
		\[
			\zeta_\ell^\mot\sgnarg{1,\ldots,1}{k_1,\ldots,k_d} \eqqcolon \zeta_\ell^\mot(k_1,\ldots,k_d) \,,
		\]
		and will refer to this as a \emph{(non-alternating) motivic MZV}.  As shorthand notation, we also write \( \overline{k_i} \) to denote the argument \( k_i \) which has associated sign \( \epsilon_i = -1 \), and write \( \zeta^\mot \) with only one row of arguments.
		
		It is convenient to give notation to the space of all motivic MZV's and all motivic alternating MZV's, within the space of all motivic iterated integrals.
		
		\begin{Def}[Space of (alternating) motivic MZV's]
			Let \( \mathcal{H}^{(1)} \) be the \( \Q \)-vector space generated by all (non-alternating) motivic MZV's.  Likewise, let \( \mathcal{H}^{(2)} \) be the \( \Q \)-vector space generated by all alternating motivic MZV's.  Moreover, write \( \mathcal{H}^{(1)}_N \), or \( \mathcal{H}^{(2)}_N \) for the space of weight \( N \) (non-alternating) motivic MZV's, and weight \( N \) alternating motivic MZV's respectively.
		\end{Def}
		
		We then define the motivic multiple \( t \) values, using \eqref{eqn:tasz} as follows.
		
		\begin{Def}[Motivic multiple \( t \) value]
			For any index \( \vec{k} = (k_1,\ldots,k_d) \), \( k_i \in \Z_{\geq1} \), the \emph{motivic multiple \( t \) value} \( t^\mot(k_1,\ldots,k_d) \) is defined by
			\[
				t^\mot(k_1,\ldots,k_d) \coloneqq \frac{1}{2^d} \sum_{\epsilon_i \in \{ \pm 1 \}} \epsilon_1 \cdots \epsilon_d \, \zeta^\mot\sgnarg{\epsilon_1,\ldots,\epsilon_d}{k_1,\ldots,k_d} \,.
			\]
			It will often be convenient to work with the following rescaled version
			\[
				\ttw^\mot(k_1,\ldots,k_d) \coloneqq 2^{\abs{\vec{k}}-d} \sum_{\epsilon_i \in \{ \pm 1 \}} \epsilon_1 \cdots \epsilon_d \, \zeta^\mot\sgnarg{\epsilon_1,\ldots,\epsilon_d}{k_1,\ldots,k_d} \,,
			\]
			where \( \abs{\vec{k}} = k_1 + \cdots + k_d \) denotes the weight of the index \( \vec{k} \).
		\end{Def}

		We call a motivic MtV \( t^\mot(k_1,\ldots,k_d) \)  or \( \ttw^\mot(k_1,\ldots,k_d) \) 
		\begin{itemize}
			\item[i)] a \emph{convergent motivic MtV} if \( k_d \geq 2 \), and 
			\item[ii)] an \emph{extended motivic MtV} if \( k_d \geq 1 \).
		\end{itemize}
		By view of \eqref{eqn:tasz} and \eqref{eqn:tshuffle}, we see that the image of \( t^\mot(k_1,\ldots,k_d) \) under the period map,
		\[
			\per t^\mot(k_1,\ldots,k_d) = t^{\shuffle,0}(k_1,\ldots,k_d) \,,
		\]
		gives the shuffle-regularised multiple \( t \) value \( t^{\shuffle,0}(k_1,\ldots,k_d) \) arising from the shuffle regularisation with parameter \( \zeta^{\shuffle,0}(1) = 0 \).  Under the period map, the convergent motivic MtV's give convergent MtV's in the sense of \autoref{def:mtvconv}, and in particular correspond to convergent series.  Likewise the extended motivic MtV's correspond to extended MtV's in the sense of \autoref{def:mtvconv}, and require regularisation to be defined.
		
		\subsection{\texorpdfstring{Derivations \( D_r \), and the kernel of \( D_{<N} \)}{
				Derivations D\_r, and the kernel of D\_<N}}
			\label{sec:dr}
		
		Finally, we turn to one of the most useful features of the motivic MZV's, the combinatorial operations \( D_r \) arising from the coaction, which allow us to recursively find and verify identities. \medskip
		
		Recall the coaction \( \Delta \colon \mathcal{H}_\bullet \to \mathcal{A}_\bullet \otimes_\Q \mathcal{H}_\bullet \) defined in \eqref{eqn:coaction}.  We wish to consider a linearised version of this, which is computationally less complex to calculate, but still very rich in information.  By the earlier remark, we have that \( \mathcal{A}_\bullet = \mathcal{H} / \mathcal{H}\zeta^\mot(2) \).  Moreover, introduce the linearised quotient of \( \mathcal{A}_\bullet \) -- which then has the structure of a Lie coalgebra -- defined by
		\[
			\mathcal{L}_\bullet = \mathcal{A} / \mathcal{A}_{>0} \cdot \mathcal{A}_{>0} \,.
		\]
		Denote by \( I^\lmot \) and \( \zeta^\lmot \), the image of \( I^\mot \) and \( \zeta^\mot \) respectively, in \( \mathcal{L} \).
		
		\begin{Def}[Derivation \( D_r \)]
		For any \( r \geq 1 \) odd, define the derivation
		\[
			D_r \colon \mathcal{H} \to \mathcal{L}_r \otimes_\Q \mathcal{H} 
		\]
		as the composition of \( \Delta - (1\otimes\id) \) with \( \pi_r \otimes \id \), where \( \pi_r \) is the projection \( \mathcal{A} \to \mathcal{L} \to \mathcal{L}_r \).
		\end{Def}
	
		Essentially \( D_r \) is given by the terms in \( \Delta \) which have weight \( r \) in the left hand factor, and are irreducible.  Therefore, one has the following explicit and combinatorial formula to compute \( D_r \),
		\begin{align*}
			& D_r\big( I^\mot(x_0; x_1,\ldots,x_N; x_{N+1}) \big) = {} \\
			& \sum_{p=0}^{N-r} I^\lmot(x_p; x_{p+1}, \ldots, x_{p+r}; x_{p+r+1}) \otimes I^\mot(x_0; x_1,\ldots, x_p, x_{p+r+1}, \ldots, x_N; x_{N+1})
		\end{align*}
		Often, the following mnemonic picture is used to describe this formula.
		\begin{center}
			\begin{tikzpicture}[style = {very thick}]
			\draw (3,0) arc [start angle=0, end angle = 180, radius=3]
			node[pos=0.0, name=x10, inner sep=0pt] {}
			node[pos=0.1, name=x9, inner sep=0pt] {}
			node[pos=0.2, name=x8, inner sep=0pt] {}
			node[pos=0.3, name=x7, inner sep=0pt] {}
			node[pos=0.4, name=x6, inner sep=0pt] {}
			node[pos=0.5, name=x5, inner sep=0pt] {}
			node[pos=0.6, name=x4, inner sep=0pt] {}
			node[pos=0.7, name=x3, inner sep=0pt] {}
			node[pos=0.8, name=x2, inner sep=0pt] {}
			node[pos=0.9, name=x1, inner sep=0pt] {}
			node[pos=1.0, name=x0, inner sep=0pt] {};
			\draw (-3.5,0) -- (3.5,0);
			\filldraw
			(x0) circle [radius=0.1, fill=black] node[above left] {$x_0$}
			(x1) circle [radius=0.1, fill=black] node[left] {$x_p = x_1$}
			(x2) circle [radius=0.1, fill=black] node[left] {$ x_2$}
			(x3) circle [radius=0.1, fill=black] node[above left] {$x_3$}
			(x4) circle [radius=0.1, fill=black] node[above left] {$x_4$}
			(x5) circle [radius=0.1, fill=black] node[above] {$x_5$}
			(x6) circle [radius=0.1, fill=black] node[above right] {$x_6$}
			(x7) circle [radius=0.1, fill=black] node[above right] {$x_7 = x_{p+r+1}$}
			(x8) circle [radius=0.1, fill=black] node[right] {$x_8$}
			(x9) circle [radius=0.1, fill=black] node[right] {$x_9$}
			(x10) circle [radius=0.1, fill=black] node[above right] {$x_{10}$};
			\draw [densely dotted] (x1) edge (x7);
			\end{tikzpicture}
		\end{center} 
		The terms in \( D_r \) correspond to segments cut out of the semicircular polygon with vertices labelled by the integral parameters \( x_0, x_1,\ldots, x_N, x_{N+1} \).  (In this picture, \( N = 9 , r = 5, p = 1 \).)  Each term corresponds to a particular segment which cuts off a small polygon with \( r \) interior points.  The small polygon \( (x_1,x_2,\ldots,x_7) \) above gives the left hand factor \( I^\lmot(x_1; x_2,\ldots,x_6;x_7) \) in the formula, while the main polygon, containing the integration endpoints \( x_0 \) and \( x_{N+1} = x_{10} \) gives rise to the right hand factor \( I^\mot(x_0; x_1, x_7, \ldots, x_9; x_{10}) \), by deleting the interior points from the segment. \medskip
		
		The following theorems illustrate the power and information contained in these operations.  For \( N \geq 1 \), write
		\[
			D_{<N} = \oplus_{1 \leq 2r+1 \leq N} D_{2r+1}
		\]
		as the overall combination of all (relevant) derivations in weight \( <N \).
		\begin{Thm}[Brown, Theorem 3.3 \cite{brown12}]
			The kernel of \( D_{<N} \) on motivic MZV's is 1 dimensional in weight \( N \), and spanned by \( \zeta^\mot(N) \),
			\[
				\ker D_{<N} \cap \mathcal{H}^{(1)} = \zeta^\mot(N) \Q \,.
			\]
		\end{Thm}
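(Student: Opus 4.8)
This is Brown's theorem; the plan is to reconstruct his argument. Let \( \mathcal{H}^{MT(\Z)}_\bullet \) denote the Hopf comodule attached to the Tannakian category \( MT(\Z) \) of mixed Tate motives unramified over \( \Z \); every motivic MZV lies in \( \mathcal{H}^{MT(\Z)}_\bullet \) and \( \zeta^\mot(N) \in \mathcal{H}^{(1)} \), so it suffices to prove the cleaner statement \( \ker D_{<N} \cap \mathcal{H}^{MT(\Z)}_N = \Q\,\zeta^\mot(N) \) and then intersect with \( \mathcal{H}^{(1)} \). The inclusion \( \supseteq \) is a direct computation with the combinatorial formula for \( D_r \). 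If \( N \) is odd, \( \zeta^\mot(N) = -I^\mot(0;1,\{0\}^{N-1};1) \), and for \( 1 \le r < N \) every subsegment \( I^\lmot(x_p; x_{p+1},\ldots,x_{p+r}; x_{p+r+1}) \) of length \( r \) either has coinciding endpoints, hence vanishes by trivial integration, or equals, up to a path reversal, \( I^\lmot(0;\{0\}^r;1) \), which is \( 0 \) since \( I^\mot(0;0;1)=0 \); so \( D_r \zeta^\mot(N) = 0 \). If \( N \) is even, \( \zeta^\mot(N) \in \Q\,\zeta^\mot(2)^{N/2} \), and each \( D_r \), being a derivation into \( \mathcal{L}_r \otimes \mathcal{H} \) with \( D_r \zeta^\mot(2) = 0 \) (again an immediate check), annihilates all powers of \( \zeta^\mot(2) \).

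For the inclusion \( \subseteq \) I would invoke the Deligne--Goncharov description of the Tannakian group of \( MT(\Z) \) in two equivalent forms, both recalled by Brown: (a) \( \mathcal{A}^{MT(\Z)}_\bullet \) is, as a graded Hopf algebra, the shuffle algebra \( \Q\langle f_3, f_5, f_7, \dots\rangle \) on one generator in each odd weight \( \ge 3 \) --- equivalently, the Lie coalgebra \( \mathcal{L}^{MT(\Z)}_\bullet \) is cofree on one cogenerator in each odd weight \( \ge 3 \); and (b) \( \mathcal{H}^{MT(\Z)}_\bullet \cong \mathcal{A}^{MT(\Z)}_\bullet \otimes_\Q \Q[\zeta^\mot(2)] \) as \( \mathcal{A}^{MT(\Z)}_\bullet \)-comodules, with \( \Q[\zeta^\mot(2)] \) a trivial (polynomial) sub-comodule. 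Write \( \mathcal{A} \coloneqq \mathcal{A}^{MT(\Z)} \). The first step is a combinatorial claim inside \( \mathcal{A} \): for every weight \( M \), the intersection \( \bigcap_{1 \le r < M,\, r\text{ odd}} \ker D^{\mathcal{A}}_r \) in weight \( M \) equals \( \Q f_M \) when \( M \) is odd \( \ge 3 \), and \( 0 \) otherwise. Indeed, any word \( w \) in the letters \( f_{2k+1} \) of length \( \ge 2 \) begins with a letter \( f_{n_1} \) with \( n_1 \) odd and \( 3 \le n_1 \le M-3 < M \); projecting the equation \( D^{\mathcal{A}}_{n_1}(\cdot) = 0 \) onto the cogenerator line \( \Q\, f_{n_1}^\lmot \subseteq \mathcal{L}_{n_1} \) --- a direct summand, since weight-\( n_1 \) words of length \( 1 \) are never shuffle-products --- isolates precisely the coefficients of the words beginning with \( f_{n_1} \) and forces them to vanish; so only length-\( 1 \) words survive, giving the claim.

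The second step assembles this with the \( \zeta^\mot(2) \)-tower. Write \( \xi \in \mathcal{H}^{MT(\Z)}_N \) as \( \sum_{j \ge 0} a_j \otimes \zeta^\mot(2)^j \) with \( a_j \in \mathcal{A}_{N-2j} \). Because \( \Q[\zeta^\mot(2)] \) is a trivial sub-comodule, \( D_r \xi \) decomposes as a direct sum along the powers of \( \zeta^\mot(2) \), and the vanishing of \( D_r \xi \) for all odd \( r < N \) forces simultaneously that each \( a_j \) lies in \( \bigcap_{r < \mathrm{wt}(a_j)} \ker D^{\mathcal{A}}_r \) --- whence \( a_j \in \Q f_{N-2j} \) or \( a_j = 0 \) by the first step --- and, for each \( j \ge 1 \) with \( N - 2j \ge 1 \), that \( a_j \) is decomposable in \( \mathcal{A} \). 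Since a cogenerator \( f_m \) is never decomposable, these two conditions annihilate every \( a_j \) with \( j \ge 1 \), except that when \( N \) is even we are left with \( a_{N/2} \in \mathcal{A}_0 = \Q \). For \( N \) even this gives \( \xi \in \Q\,\zeta^\mot(2)^{N/2} = \Q\,\zeta^\mot(N) \); for \( N \) odd it gives \( \xi \in \Q\,(f_N \otimes 1) \), and since \( \zeta^\mot(N) \) is a nonzero element of that line --- it lies in \( \ker D_{<N} \) by the first inclusion, and is nonzero because \( \per \zeta^\mot(N) = \zeta(N) \neq 0 \) --- we conclude \( \xi \in \Q\,\zeta^\mot(N) \).

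The point at which arithmetic genuinely enters, and hence the main obstacle, is input (a): that \( \mathcal{A}^{MT(\Z)}_\bullet \) is free/cofree with cogenerators exactly in the odd weights \( \ge 3 \). This is Deligne--Goncharov, and rests on Borel's computation showing \( \mathrm{Ext}^1_{MT(\Z)}(\Q(0), \Q(n)) = K_{2n-1}(\Z) \otimes \Q \) is one-dimensional for odd \( n \ge 3 \) and zero otherwise, together with the vanishing \( \mathrm{Ext}^2_{MT(\Z)} = 0 \), which yields the freeness. Once (a), and the splitting (b) of the \( \zeta^\mot(2) \)-tower off the comodule, are in hand, everything above is a formal manipulation of the coaction.
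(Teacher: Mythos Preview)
The paper does not give a proof of this theorem; it is quoted as a result of Brown \cite{brown12} and used as a black box. Your reconstruction is correct and is essentially Brown's original argument: the two structural inputs are the Deligne--Goncharov description of \( \mathcal{A}^{MT(\Z)} \) as the shuffle Hopf algebra on cogenerators \( f_3, f_5, f_7,\ldots \) (which, as you note, rests on Borel's computation of \( K_{2n-1}(\Z)\otimes\Q \) together with the vanishing of \( \mathrm{Ext}^2 \)), and the comodule splitting \( \mathcal{H}^{MT(\Z)} \cong \mathcal{A}^{MT(\Z)} \otimes_\Q \Q[\zeta^\mot(2)] \) with \( \zeta^\mot(2) \) trivial. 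Your combinatorial step inside the shuffle algebra is also the standard one: the projection \( \mathcal{L}_{n_1} \to \Q\,f_{n_1}^\lmot \) that reads off the coefficient of the length-one word is well defined precisely because shuffle products of nonempty words contain no length-one terms, and applying it to the deconcatenation coproduct isolates exactly the words beginning with \( f_{n_1} \). The decomposability argument for \( a_j \) with \( j\ge 1 \) is only needed when \( N \) is odd (so that \( N-2j \) is an odd index appearing among the \( D_r \)); when \( N \) is even the first step already kills \( a_j \) for \( 0\le j < N/2 \) since the shuffle algebra has no length-one words in even weight, and you handle this correctly.
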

		This (often) allows one to recursively lift identities of real MZV's to motivic MZV's, by recursively verifying \( D_{<N} \) vanishes, and using the numerical identity to fix the final unknown coefficient of \( \zeta^\mot(N) \) via the period map.
		
		This was extended by Glanois \cite{glanoisTh,glanoisUnramified} to the case of alternating motivic MZV's (and motivic MZV's at higher roots of unity).
		\begin{Thm}[Glanois, Corollary 2.4.5 \cite{glanoisTh}, Theorem 2.2 \cite{glanoisUnramified}]\label{thm:glanois:kerDN}
			The kernel of \( D_{<N} \) on alternating motivic MZV's is 1 dimensional in weight \( N \), and spanned by \( \zeta^\mot(\overline{N}) \),
			\[
			\ker D_{<N} \cap \mathcal{H}^{(2)} = \zeta^\mot(\overline{N}) \Q \,.
			\]
		\end{Thm}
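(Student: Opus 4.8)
The plan is to prove this by Brown's strategy for Theorem~3.3, adapted to level $2$; the decisive external input is the Deligne--Goncharov description of the motivic Galois group of $\mathbb{P}^1\setminus\{0,\pm1,\infty\}$: for level $2$ its pro-unipotent radical has a \emph{free} graded Lie algebra with exactly one generator in each odd weight $\geq 1$. Equivalently, the Hopf algebra $\mathcal{A}^{(2)}_\bullet$ relevant to alternating MZV's (the coordinate ring of the level-$2$ motivic Galois group, i.e.\ the image of $\mathcal{H}^{(2)}$ modulo $\zeta^\mot(\overline 2)$) is the shuffle Hopf algebra $\Q\langle x_1,x_3,x_5,\ldots\rangle$ on letters of weights $1,3,5,\ldots$ with deconcatenation coproduct, where $x_j$ corresponds to the weight-$j$ primitive $\zeta^\lmot(\overline{j})$; moreover there is a comodule splitting $\mathcal{H}^{(2)}_\bullet\cong\mathcal{A}^{(2)}_\bullet\otimes_\Q\Q[\zeta^\mot(\overline 2)]$ with $\zeta^\mot(\overline 2)$ in weight $2$ and coaction-trivial, the level-$2$ analogue of Brown's splitting $\mathcal{H}^{(1)}\cong(\text{Hopf part})\otimes_\Q\Q[\zeta^\mot(2)]$. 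Under this identification $D_r$ becomes $(\pi_r\otimes\mathrm{id})$ of the reduced deconcatenation coproduct, $\pi_r$ being projection onto weight-$r$ indecomposables.

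For the inclusion $\supseteq$ I would check $D_r(\zeta^\mot(\overline N))=0$ for every odd $r$ with $1\leq r<N$ directly from the explicit combinatorial formula for $D_r$: writing $\zeta^\mot(\overline N)=-I^\mot(0;-1,\{0\}^{N-1};1)$, each cut-out subintegral $I^\lmot(x_p;x_{p+1},\ldots,x_{p+r};x_{p+r+1})$ has all of $x_{p+1},\ldots,x_{p+r}$ equal to $0$, and one kills such a factor in $\mathcal{L}_\bullet$ using trivial integration, path reversal, and the homothety $x_i\mapsto -x_i$ (valid in $\mathcal{L}_\bullet$ for entries in $\{0,\pm1\}$) --- exactly Brown's computation for $\zeta^\mot(N)$. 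Alternatively, for $N$ even one may instead use $\zeta^\mot(\overline N)\in\Q\,\zeta^\mot(\overline 2)^{N/2}$ together with the coaction-triviality of $\zeta^\mot(\overline 2)$.

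For the inclusion $\subseteq$, let $\xi\in\mathcal{H}^{(2)}_N$ with $D_{<N}\xi=0$ and write $\xi=\sum_{k\geq0}\xi_k\,\zeta^\mot(\overline 2)^k$, $\xi_k\in\mathcal{A}^{(2)}_{N-2k}$. Coaction-triviality of $\zeta^\mot(\overline 2)$ forces $D_r\xi_k=0$ for every odd $r<N$, hence for every odd $r\leq N-2k$. Now ``peel letters'': since $x_1$ is the unique weight-$1$ letter and $\pi_1$ is injective, $D_1\xi_k=x_1\otimes\big(\sum_{w=x_1 v\text{ in }\xi_k}c_w\,v\big)=0$ forces $\xi_k$ to contain no word beginning with $x_1$; then every weight-$3$ prefix of a remaining word must be the single letter $x_3$, so $D_3\xi_k=0$ forces no word beginning with $x_3$; inductively $D_{2j+1}\xi_k=0$ forces no word to begin with a letter of weight $\leq$ the largest odd number in the available range. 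For $k\geq1$ that range reaches $N-2k=\mathrm{wt}(\xi_k)$, so every word of $\xi_k$ would begin with a letter of weight $>N-2k$, impossible: hence $\xi_k=0$ unless $N-2k=0$ (possible only for $N$ even, $k=N/2$), in which case $\xi_k\in\mathcal{A}^{(2)}_0=\Q$. For $k=0$ the range is $<N$, so the first letter of a weight-$N$ word is forced to have weight $N$; if $N$ is odd this means the word is $x_N$, so $\xi_0\in\Q\,x_N$, while if $N$ is even it is impossible (all letters have odd weight), so $\xi_0=0$. Assembling: if $N$ is odd, $\ker D_{<N}\cap\mathcal{H}^{(2)}_N=\Q\,x_N=\Q\,\zeta^\mot(\overline N)$; if $N$ is even it equals $\Q\,\zeta^\mot(\overline 2)^{N/2}=\Q\,\zeta^\mot(\overline N)$, using $\zeta^\mot(\overline{2k})\in\Q\,\zeta^\mot(2)^k\subseteq\Q\,\zeta^\mot(N)$ (Euler, motivically, via Brown's Theorem~3.3) together with $\zeta^\mot(\overline N)\in\Q\,\zeta^\mot(N)$ and $\zeta^\mot(\overline N)\neq0$. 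In both cases the kernel is one-dimensional and spanned by $\zeta^\mot(\overline N)$.

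The main obstacle is entirely the structural input for $\subseteq$: establishing (or correctly invoking, following Deligne--Goncharov and Glanois) the freeness of the level-$2$ motivic Lie algebra on odd-weight generators, and the comodule splitting with $\zeta^\mot(\overline 2)$ coaction-trivial. This is precisely what makes the \emph{odd} derivations $D_r$ alone suffice: the even-weight parts of $\mathcal{L}^{(2)}_\bullet$ are nonzero but carry no Lie generators, so they are recovered from the odd data through the cobracket, and $\ker D_{<N}$ is unchanged by restricting to odd $r$. The base case $N\in\{1,2\}$ (where $\mathcal{H}^{(2)}_N$ is described explicitly and everything is elementary), the $N$-even bookkeeping, and the identification of the depth-one element $\zeta^\mot(\overline N)$ with the weight-$N$ primitive are then routine.
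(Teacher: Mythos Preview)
The paper does not give a proof of this statement; it is cited as a result of Glanois. So there is no in-paper proof to compare against, and I can only assess your argument on its own merits.

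Your approach is the standard one and is essentially what Glanois (following Brown and Deligne) does: invoke Deligne's freeness result for the level-$2$ motivic Lie algebra to identify $\mathcal{A}^{(2)}$ (your notation, not the paper's) with a shuffle Hopf algebra on odd-weight letters, use a comodule splitting to separate off powers of $\zeta^\mot(2)$, and then run a ``peel the first letter'' induction using the odd $D_r$. The argument is correct.

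Two minor imprecisions. First, in the $\supseteq$ direction you say every cut-out subintegral has all interior points equal to $0$; this fails for the cut at $p=0$, where $x_1=-1$ sits in the interior, but that subintegral vanishes anyway because both its endpoints are $0$ (trivial integration). For $p\geq 1$ your description is accurate, and the all-zero-interior integrals $I^\lmot(a;0,\ldots,0;b)$ vanish in $\mathcal{L}$ either as shuffle powers (length $\geq 2$) or, in length $1$, via path reversal and the homothety $x\mapsto -x$ together with $I^\lmot(0;0;1)=0$. Second, the freeness result for level $2$ is usually attributed to Deligne (2010) rather than Deligne--Goncharov; the latter supply the general framework and the upper bound $\dim\mathcal{H}^{(2)}_N\leq F_{N+1}$ that the paper cites, while the matching lower bound (equivalently, freeness) is the content of Deligne's explicit basis construction.

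You are right that the entire difficulty is concentrated in that structural input; once it is granted, the peeling argument is bookkeeping.
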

	
		This again (often) allows one to recursively lift identities of real alternating MZV's to alternating motivic MZV's, by recursively verifying \( D_{<N} \) vanishes, and using the numerical identity to fix the final unknown coefficient of \( \zeta^\mot(\overline{N}) \) via the period map.
	
		For \( N > 1 \), one can take \( \zeta^\mot(N) \) instead as the generator, however, for \( N = 1 \), one must take \( \zeta^\mot(\overline{1}) = \log^\mot(2) \), as \( \zeta^\mot(1) = 0 \).
		
		\begin{Rem}
			The analogous result for higher roots of unity is not true, as further primitive elements come into play.  For example \( \zeta^\mot(N) \) and \( \zeta^\mot\sgnargsm{\exp(2 \pi \ii /3)}{N} \) are both primitive for \( \Delta \), and therefore vanish under all derivations \( D_r \).  However, (after application of the period map, to take real and imaginary parts), one sees they are linearly independent.  Therefore the kernel of \( D_{<N} \) on motivic MZV's at 3rd roots of unity is (at least) two dimensional.  Glanois gives such characterisations in more cases in Corollary 2.4.5  \cite{glanoisTh}.
		\end{Rem}
		
		Finally, Glanois also studied when alternating motivic MZV's Galois descend to be(come) linear combinations of (non-alternating) motivic MZV's.  The following Theorem gives a criterion to check this recursively using \( D_r \).
		\begin{Thm}[Glanois, {\cite[Corollary 5.1.3]{glanoisTh}}, {\cite[Corollary 2.4]{glanoisUnramified}}]\label{thm:glanois:descent}
		Let \( \mathfrak{Z} \in \mathcal{H}^{(2)} \) be a motivic alternating MZV.  Then \( \mathfrak{Z} \in \mathcal{H}^{(1)} \), i.e. \( \mathfrak{Z} \) is a linear combination of (non-alternating) motivic MZV's, if and only if
		\begin{itemize}
			\item[i)] \( D_1(\mathfrak{Z}) = 0 \), and
			\item[ii)] \( D_{2r+1} \mathfrak{Z} \in \mathcal{L}^{(1)}_{2r+1} \otimes \mathcal{H}^{(1)}	\) for all \( r \geq 1 \) \,,
		\end{itemize}
		where \( \mathcal{L}^{(1)}_{2r+1} \) is the subspace of \( \mathcal{L} \) generated by all (non-alternating) motivic MZV's of weigh \(2r+1\).
		\end{Thm}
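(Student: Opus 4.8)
The plan is to prove the two implications separately: the forward direction is essentially formal, while the reverse direction is the substantive one and is proved by induction on the weight. For the forward direction, suppose \( \mathfrak{Z} \in \mathcal{H}^{(1)} \). The first point is that \( \mathcal{H}^{(1)} \) is a sub-comodule of \( \mathcal{H}^{(2)} \) for the coaction \eqref{eqn:coaction}: for a non-alternating motivic MZV all arguments of the defining iterated integral lie in \( \{0,1\} \), and Goncharov's coproduct formula produces left factors that are sub-iterated-integrals on a subset of those arguments, and a right factor obtained by deleting interior points, hence again with arguments in \( \{0,1\} \). Thus \( \Delta(\mathfrak{Z}) \in \mathcal{A}^{(1)} \otimes \mathcal{H}^{(1)} \), and projecting gives \( D_{2r+1}(\mathfrak{Z}) \in \mathcal{L}^{(1)}_{2r+1} \otimes \mathcal{H}^{(1)} \) for all \( r \geq 1 \), i.e.\ condition (ii). Condition (i) follows because \( \mathcal{L}^{(1)}_1 = 0 \): every weight-one motivic iterated integral with arguments in \( \{0,1\} \) vanishes in \( \mathcal{L} \) (it is either a trivial integral with coinciding endpoints, or the shuffle-regularised \( \zeta^\lmot(1) = 0 \)), so \( D_1 \) vanishes on the generators of \( \mathcal{H}^{(1)} \), hence on all of \( \mathcal{H}^{(1)} \) since \( D_1 \) is a derivation.

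For the reverse direction I would induct on the weight \( N \). The base case \( N = 1 \) is immediate: \( \mathcal{H}^{(2)}_1 = \Q \log^\mot(2) \) while \( \mathcal{H}^{(1)}_1 = \Q\zeta^\mot(1) = 0 \), and \( D_1 \) on weight one is the injective projection \( \mathcal{H}_1 \to \mathcal{L}_1 \), so (i) forces \( \mathfrak{Z} = 0 \). For the inductive step, take \( \mathfrak{Z} \in \mathcal{H}^{(2)}_N \) satisfying (i) and (ii) and write, for each \( r \) with \( 2r+1 < N \), \( D_{2r+1}(\mathfrak{Z}) = \sum_i \alpha_{r,i} \otimes \mathfrak{Y}_{r,i} \) with \( \{\alpha_{r,i}\}_i \) linearly independent; by (i) and (ii) we may take \( \alpha_{r,i} \in \mathcal{L}^{(1)}_{2r+1} \) and \( \mathfrak{Y}_{r,i} \in \mathcal{H}^{(1)}_{N - 2r - 1} \). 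The crux is then to produce a single non-alternating element \( \mathfrak{Z}' \in \mathcal{H}^{(1)}_N \) with \( D_{2r+1}(\mathfrak{Z}') = D_{2r+1}(\mathfrak{Z}) \) for all such \( r \). Granting this, \( \mathfrak{Z} - \mathfrak{Z}' \) lies in \( \ker D_{<N} \cap \mathcal{H}^{(2)}_N \), which by \autoref{thm:glanois:kerDN} is spanned by \( \zeta^\mot(\overline{N}) \); since \( N \geq 2 \) one has \( \zeta^\mot(\overline{N}) = (2^{1-N}-1)\zeta^\mot(N) \) by the motivic distribution relations, so this kernel equals \( \Q\zeta^\mot(N) \subseteq \mathcal{H}^{(1)}_N \), whence \( \mathfrak{Z} \in \mathcal{H}^{(1)}_N \), as desired.

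To construct \( \mathfrak{Z}' \) I would invoke the structure of \( \mathcal{H}^{(1)} \) established by Brown \cite{brown12}: the combined infinitesimal coaction \( D_{<N} \) restricted to \( \mathcal{H}^{(1)}_N \) is injective modulo \( \Q\zeta^\mot(N) \) (Brown's analogue of \autoref{thm:glanois:kerDN}), and the image of \( D_{<N} \) on \( \mathcal{H}^{(1)}_N \) is precisely the subspace of \( \bigoplus_{2r+1 < N} \mathcal{L}^{(1)}_{2r+1} \otimes \mathcal{H}^{(1)}_{N-2r-1} \) cut out by the coassociativity (cocycle) constraints. The tuple \( (D_{2r+1}(\mathfrak{Z}))_r \) already satisfies these constraints, being the infinitesimal coaction of the genuine element \( \mathfrak{Z} \in \mathcal{H}^{(2)}_N \); and the constraints are phrased purely via the \( D_{2s+1} \) applied to the right (\( \mathcal{H} \)-)factors — which by (ii) already lie in \( \mathcal{H}^{(1)} \), so the inductive hypothesis applies to them — and the Lie cobracket on the left (\( \mathcal{L} \)-)factors, all of which restrict compatibly along \( \mathcal{H}^{(1)} \hookrightarrow \mathcal{H}^{(2)} \) and \( \mathcal{L}^{(1)} \hookrightarrow \mathcal{L}^{(2)} \). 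Hence the tuple lies in the image of \( D_{<N}\big|_{\mathcal{H}^{(1)}_N} \), producing \( \mathfrak{Z}' \). The main obstacle is exactly this last point: one must verify that the coassociativity description of the image of \( D_{<N} \) on \( \mathcal{H}^{(1)}_N \) is the restriction of that on \( \mathcal{H}^{(2)}_N \), so that a tuple valued in the \( \mathcal{L}^{(1)} \otimes \mathcal{H}^{(1)} \) part and satisfying the \( \mathcal{H}^{(2)} \)-constraints automatically satisfies the \( \mathcal{H}^{(1)} \)-constraints — this is where Brown's and Glanois's structural (co)freeness results are doing the real work, and where one must be careful with the regularisation conventions for leading and trailing \( 1 \)'s; with those inputs in hand, the remaining steps are bookkeeping.
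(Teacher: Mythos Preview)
The paper does not contain its own proof of this statement: \autoref{thm:glanois:descent} is quoted from Glanois's thesis and paper (\cite[Corollary 5.1.3]{glanoisTh}, \cite[Corollary 2.4]{glanoisUnramified}) and used as a black box, so there is no in-paper argument to compare your proposal against.

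That said, your sketch is a faithful outline of how Glanois's argument proceeds. The forward direction is exactly right: the coaction on a non-alternating motivic MZV only produces sub-integrals with arguments in $\{0,1\}$, and $\mathcal{H}^{(1)}_1 = 0$ forces $D_1$ to vanish. For the reverse direction, induction on the weight together with the one-dimensionality of $\ker D_{<N}$ (\autoref{thm:glanois:kerDN}) is indeed the mechanism, and the observation that $\zeta^\mot(\overline{N}) \in \Q\,\zeta^\mot(N)$ for $N \geq 2$ is what closes the argument.

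The point you flag as the ``main obstacle'' --- producing $\mathfrak{Z}' \in \mathcal{H}^{(1)}_N$ with prescribed $D_{2r+1}$ --- is genuinely the crux, and your phrasing in terms of coassociativity constraints is slightly imprecise. The clean way to see it is via the (non-canonical) comodule isomorphisms $\mathcal{H}^{(i)} \cong \Q\langle f_{2r+1} : r \geq r_0^{(i)} \rangle \otimes \Q[f_2]$ with $r_0^{(1)} = 1$, $r_0^{(2)} = 0$, under which the inclusion $\mathcal{H}^{(1)} \hookrightarrow \mathcal{H}^{(2)}$ becomes ``words avoiding $f_1$''. In these coordinates $D_1$ reads off the leading $f_1$, so condition~(i) says no word begins with $f_1$; condition~(ii), by induction on the right-hand factor, then propagates this through the word. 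This is the structural freeness input you allude to, and with it the existence of $\mathfrak{Z}'$ is immediate rather than requiring a separate cocycle-image characterisation. Your caution about regularisation conventions is well placed but ultimately not an issue here, since everything is computed in the shuffle-regularised framework where $\zeta^\mot(1) = 0$.
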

		
	\section{\texorpdfstring{Regularised distribution relations, and the derivations \( D_r \)}
		{Regularised distribution relations, and the derivations D\_r}}
	\label{sec:dt}
	
	In \cite{murakami21}, Murakami calculated the derivations \( D_r \) on motivic MtV's of the form \( t^\mot(k_1,\ldots,k_d) \) with each \( k_i > 1 \).  The case where some \( k_i = 1 \) was not treated, as the distribution relations used in the proof would not hold exactly.  For the purposes of treating the more general case, we need to consider the case of regularised distribution relations, and verify them on the motivic level.
	
	\subsection{Classic and motivic regularised distribution relations}
	
	The (convergent) distribution relation of `level \( N=2 \)' states that if \( k_d > 1 \), the following holds
	\[
		2^{k_1 + \cdots + k_d - d} \sum_{\epsilon_1,\ldots,\epsilon_d \in \{\pm1\}} \zeta\sgnarg{\epsilon_1,\ldots,\epsilon_d}{k_1,\ldots,k_d} = \zeta(k_1,\ldots,k_d) \,,
	\]
	This immediately follows from a corresponding distribution relation for multiple polylogarithms which holds on the power-series level
	\[
		2^{k_1 + \cdots + k_d - d} \sum_{\epsilon_1,\ldots,\epsilon_d \in \{\pm1\}} \Li_{k_1,\ldots,k_d}(\epsilon_1 x_1,\ldots, \epsilon_d x_d)  = \Li_{k_1,\ldots,k_d}(x_1^2,\ldots,x_d^2) \,,
	\]
	by setting \( x_i = 1 \).  The distribution relations are known to be motivic, and Murakami indeed even verified this again to be the case in Proposition 10 of \cite{murakami21}, at least when all \( k_i > 1 \).  Geometrically, they follow by taking the pullback under \( s \mapsto s^2 \) (for level \( N = 2 \)), and analogously in general.  For example
	\[
		\sum_{\epsilon_1,\epsilon_2\in\{\pm1\}} \zeta\sgnarg{\epsilon_1,\epsilon_2}{a,b} = \begin{aligned}[t]
			& I(0; 1, \{0\}^{a-1}, 1, \{0\}^{b-1}; 1) + I(0; 1, \{0\}^{a-1}, -1, \{0\}^{b-1}; 1) \\
			& + I(0; -1, \{0\}^{a-1}, 1, \{0\}^{b-1}; 1) + I(0; -1, \{0\}^{a-1}, -1, \{0\}^{b-1}; 1) \,.
		\end{aligned}
	\]
	where as always \( a_i \) within the bounds of the integral represents the form \( \frac{\mathrm{d}s}{s-a_i} \), as in \eqref{eqn:itint} above.  By linearity of integration, the forms can be combined as
	\[
		\frac{\mathrm{d}s}{s-1} + \frac{\mathrm{d}s}{s+1} = \frac{2s\mathrm{d}s}{s^2-1} \,,
	\]
	and so we can write the combination of integrals as 
	\[
		= \int\limits_{0 < s_1 < \cdots < s_{a+b} < 1} \frac{2s_1 \mathrm{d}s_1}{s_1^2-1} \wedge \overbrace{\frac{\mathrm{d}s_2}{s_2} \wedge \cdots \wedge \frac{\mathrm{d}s_{a}}{s_a}}^{\text{\( a - 1 \) terms}} \wedge \frac{2s_{a+1} \mathrm{d}s_{a+1}}{s_{a+1}^2-1} \wedge \overbrace{\frac{\mathrm{d}s_{a+2}}{s_{a+2}} \wedge \cdots \wedge \frac{\mathrm{d}s_{a+b}}{s_{a+b}}}^{\text{\( b - 1 \) terms}} \,,
	\]
	Now set \( y_i = s_i^2 \), for which the bounds \( 0 < s_1 < \cdots < s_{a+b} < 1 \) become \( 0 < y_1 < \cdots < y_{a+b} < 1 \), and the forms become \[
		\frac{\mathrm{d}y_i}{2y_i} = \frac{\mathrm{d}s_i}{s_i} \, \quad \frac{\mathrm{d}y_i}{y_i - 1} = \frac{2s_i \mathrm{d}s_i}{s_i^2 - 1} \,.
	\]
	This means the integral is equal to
	\begin{align*}
		&= \int\limits_{0 < y_1 < \cdots < y_{a+b} < 1} \frac{\mathrm{d}y_1}{y_1-1} \wedge \overbrace{\frac{\mathrm{d}y_2}{2y_2} \wedge \cdots \wedge \frac{\mathrm{d}y_{a}}{2y_a}}^{\text{\( a - 1 \) terms}} \wedge \frac{\mathrm{d}y_{a+1}}{y_{a+1}-1} \wedge \overbrace{\frac{\mathrm{d}y_{a+2}}{2y_{a+2}} \wedge \cdots \wedge \frac{\mathrm{d}y_{a+b}}{2y_{a+b}}}^{\text{\( b - 1 \) terms}} \\
		&= \frac{1}{2^{a+b-2}} \zeta(a,b) 
	\end{align*}
	Therefore (modulo some formalities to translate this carefully), they indeed have a geometric (`motivic') origin. \medskip
	
	On the level of real numbers we claim the following version of the distribution relations of level \( N = 2 \) holds.  (A regularised version of the distribution relations is discussed in general in \cite[Section 13.3.4]{zhaoBook}.)
	\begin{Prop}\label{prop:regdist}
		For \( \vec{k} = (k_1,\ldots,k_d) \), with \( k_d \neq 1 \) an index, and any \( \alpha \geq 0 \), the following regularised version of the distribution relation holds
		\begin{align*}
			2^{k_1+\cdots+k_d + \alpha - (d+\alpha)} \!\! \sum_{\substack{\vec{\epsilon} = (\epsilon_1,\ldots,\epsilon_{d}) \\ \vec{\delta} = (\delta_1,\ldots,\delta_\alpha) \\ \epsilon_i, \delta_j \in \{ \pm 1\}}}  \!\! \zeta^{\shuffle,W}\sgnarg{\vec{\epsilon}, \mathrlap{\,\vec{\delta}}\phantom{\{1\}^\alpha}}{\vec{k},\{1\}^\alpha} 
			- \zeta^{\shuffle,W}(\vec{k},\{1\}^{\alpha}) = \sum_{i=1}^\alpha \zeta^{\shuffle,W}(\vec{k}, \{1\}^{\alpha-i}) \frac{(-\log(2))^i}{i!}
		\end{align*}
		
		\begin{proof}
			We first of all convert this to a statement about integrals; we also multiply by the global sign \( (-1)^{d + \alpha} \) in the integral representation of these MZV's.  The claim is then for any \( \alpha \geq 0 \)
			\begin{align*}
				& 2^{k_1 + \cdots + k_d - d} \!\! \sum_{\substack{\vec{\epsilon} = (\epsilon_1,\ldots,\epsilon_{d}) \\ \vec{\delta} = (\delta_1,\ldots,\delta_\alpha) \\ \epsilon_i, \delta_j \in \{ \pm 1\}}}  \!\! I^{\shuffle,W}(0; \epsilon_1, \{0\}^{k_1-1}, \ldots, \epsilon_d, \{0\}^{k_d-1}, \delta_1, \ldots, \delta_\alpha; 1) \\
				&= \sum_{i=0}^\alpha I^{\shuffle,W}(0; 1, \{0\}^{k_1-1}, \ldots, 1, \{0\}^{k_d-1}, \{1\}^{\alpha-i}; 1) \frac{I(0; -1; 1)^i}{i!} \,.
			\end{align*}
			The proof proceeds inductively on \( \alpha \).  The case \( \alpha = 0 \) is true by virtue of being the non-regularised distribution relation, so we may take \( \alpha \geq 1 \).  For notational ease, write \( w_0 = 1\{0\}^{k_1-1}\cdots1\{0\}^{k_d-2} \), and \( w = w_00 \) where the last letter is noted (explicitly) to be 0 since \( k_d \geq 2 \).  We then claim in the shuffle algebra (writing \( \oplus \) for concatenation as emphasis, and clarity)
			\begin{equation}\label{eqn:regdist:shuffleid}
			\begin{aligned}
				w_0 0 \oplus \delta_1 \cdots \delta_\alpha = {} &\sum_{i=0}^{\alpha-1} (-1)^{\alpha-i-1} ( w_0 0 \oplus \delta_1 \cdots \delta_{i}) \shuffle \delta_\alpha \delta_{\alpha-1} \cdots \delta_{i+1} \\ 
				& + (-1)^\alpha (w_0 \shuffle \delta_\alpha \delta_{\alpha-1} \cdots \delta_2 \delta_1) \oplus 0
			\end{aligned}
			\end{equation}
			This is verified by a pair-wise cancellation of each term when expanding out via the recursive definition \( (wa) \shuffle (vb) = (w \shuffle vb) a + (wa \shuffle v) b \) of the shuffle product.  Now sum (the integral of) this identity over all choices of signs \( \epsilon_1,\ldots,\epsilon_d \) and \( \delta_1,\ldots,\delta_\alpha \).  Every integral appearing on the right hand side and involving \( w_0 \) has strictly fewer trailing \( \pm 1 \) terms, so we can inductively apply the proposition here.  Moreover since all choices of \( \pm 1 \) appear in every position, we have directly by the shuffle product
			\[
				\sum_{\delta_1,\ldots,\delta_k \in \{\pm1\}} I^{\shuffle,W}(0; \delta_1,\ldots,\delta_k; 1) = \frac{1}{k!} (I^{\shuffle,W}(0; 1; 1) + I(0; -1, 1))^k \,.
			\]
			So after summing over all signs, we can make these substitutions into the following expression
			\begin{align*}
				& \sum_{\substack{\epsilon_1,\ldots,\epsilon_d \\ \delta_1,\ldots, \delta_\alpha \in \{ \pm 1\}}}  \!\! I^{\shuffle,W}(0; \epsilon_1, \{0\}^{k_1-1}, \ldots, \epsilon_d, \{0\}^{k_d-1}, \delta_1, \ldots, \delta_\alpha; 1) \\
				& = \begin{aligned}[t] 
					 \sum_{i=0}^{\alpha-1} (-1)^{\alpha-i-1} \!\!\! \sum_{\substack{\epsilon_1,\ldots,\epsilon_d, \\ \delta_1,\ldots, \delta_{i} \in \{ \pm 1\}}}  \!\!\! I^{\shuffle,W}(0; \epsilon_1, \{0\}^{k_1-1}, \ldots, \epsilon_d, \{0\}^{k_d-1}, \delta_1, \ldots, \delta_{i}; 1) & \\[-2ex]
					 {} \cdot \sum_{\substack{\delta_{i+1},\ldots,\delta_{\alpha} \\ \in \{ \pm 1\}}}  \!\! I^{\shuffle,W}(0; \delta_\alpha, \ldots, \delta_{i+1}; 1) & \end{aligned} \\
				&\hspace{1em} + \sum_{\substack{\epsilon_1,\ldots,\epsilon_d, \\ \delta_1,\ldots, \delta_{\alpha} \in \{ \pm 1\}}}  (-1)^\alpha I^{W,\shuffle}(0; \epsilon_1\{0\}^{k_1-1}\cdots\epsilon_d\{0\}^{k_d-2} \shuffle \delta_\alpha \delta_{\alpha-1} \cdots \delta_2 \delta_1) \oplus 0; 1)
			\end{align*}
			(The last term here is still written as a word in the shuffle algebra, for lack of any significantly better notation.)  Overall we find
			\begin{align*}
				& = \begin{aligned}[t] 
				\sum_{i=0}^{\alpha-1} (-1)^{\alpha-i-1} \cdot 2^{d-k_1 - \cdots - k_d} \, \sum_{j=0}^{i} I^{\shuffle,W}(0; 1, \{0\}^{k_1-1}, \ldots, 1, \{0\}^{k_d-1}, \{1\}^{i-j}; 1) \cdot \frac{1}{j!} I(0; -1; 1)^j & \\[-2ex]
				{} \cdot \frac{1}{(\alpha-i)!} (I^{\shuffle,W}(0; 1; 1) + I(0; -1, 1))^{\alpha-i}  & \\
				 {} + 2^{d - k_1 - \cdots - k_d} (-1)^\alpha I^{W,\shuffle}(0; 1\{0\}^{k_1-1}\cdots1\{0\}^{k_d-2} \shuffle \{1\}^\alpha) \oplus 0; 1) \,.
				\end{aligned}
			\end{align*}
			Reverse the \( j \) sum, then switch the summation \( i \) and \( j \) order, and then set \( i \mapsto i-j \) to re-index the resulting \( i = j,\ldots,\alpha-1 \) as \( i = 0,\ldots,\alpha-1-j \).  This gives
			\begin{align*}
			& = \begin{aligned}[t] 
			2^{d-k_1 - \cdots - k_d} \, \sum_{j=0}^{\alpha-1} \sum_{i=0}^{\alpha-1-j} (-1)^{\alpha-i-j-1} I^{\shuffle,W}(0; 1, \{0\}^{k_1-1}, \ldots, 1, \{0\}^{k_d-1}, \{1\}^{j}; 1) \cdot \frac{1}{i!} I(0; -1; 1)^{i} & \\[-2ex]
			{} \cdot \frac{1}{(\alpha-i-j)!} (I^{\shuffle,W}(0; 1; 1) + I(0; -1, 1))^{\alpha-i-j}  & \\
			{} + 2^{d - k_1 - \cdots - k_d} (-1)^\alpha I^{W,\shuffle}(0; 1\{0\}^{k_1-1}\cdots1\{0\}^{k_d-2} \shuffle \{1\}^\alpha) \oplus 0; 1) \,.
			\end{aligned}
			\end{align*}
			Now recognise the sum over \( i \) as a binomial sum (missing its last term \( i = \alpha -j \)), giving
			\begin{align*}
			& = \begin{aligned}[t] 
			2^{d-k_1 - \cdots - k_d} \, \sum_{j=0}^{\alpha-1} (-1)^{\alpha-j-1} I^{\shuffle,W}(0; 1, \{0\}^{k_1-1}, \ldots, 1, \{0\}^{k_d-1}, \{1\}^{j}; 1)  & \\[-2ex]
			{} \cdot \Big(
				\frac{1}{(\alpha-j)!} I^{\shuffle,W}(0; 1; 1)^{\alpha-j} - \frac{1}{(\alpha-j)!} & (-I(0; -1; 1))^{\alpha-j} 
			\Big)
			\\
			{} + 2^{d - k_1 - \cdots - k_d} (-1)^\alpha I^{W,\shuffle}(0; 1\{0\}^{k_1-1}\cdots1\{0\}^{k_d-2} \shuffle \{1\}^\alpha) \oplus 0; 1) \,.
			\end{aligned}
			\end{align*}
			This simplifies to the following			
			\begin{align*}
			& = 2^{d-k_1 - \cdots - k_d} \,  \bigg( \begin{aligned}[t] 
			&  \sum_{j=0}^{\alpha-1} I^{\shuffle,W}(0; 1, \{0\}^{k_1-1}, \ldots, 1, \{0\}^{k_d-1}, \{1\}^{j}; 1) \frac{I(0; -1; 1)^{\alpha-j}}{(\alpha-j)!}
			\\
			& + \sum_{j=0}^{\alpha-1} (-1)^{\alpha-j} I^{\shuffle,W}(0; 1, \{0\}^{k_1-1}, \ldots, 1, \{0\}^{k_d-1}, \{1\}^{j}; 1) \cdot I^{\shuffle,W}(0; \{1\}^{\alpha-j}; 1) \\
			& {} + (-1)^\alpha I^{W,\shuffle}(0; 1\{0\}^{k_1-1}\cdots1\{0\}^{k_d-2} \shuffle \{1\}^\alpha) \oplus 0; 1) \bigg) \,.
			\end{aligned}
			\end{align*}
			Finally, using \eqref{eqn:regdist:shuffleid} in reverse, the last two lines combine to give
			\[
				I^{\shuffle,W}(0; 1, \{0\}^{k_1-1}, \ldots, 1, \{0\}^{k_d-1}, \{1\}^{\alpha}; 1) \,,
			\]
			which is the \( j = \alpha \) term of the first sum.  Overall we obtain	
			\begin{align*}
			& = 2^{d-k_1 - \cdots - k_d} \, \begin{aligned}[t] 
			&  \bigg( \sum_{j=0}^{\alpha} I^{\shuffle,W}(0; 1, \{0\}^{k_1-1}, \ldots, 1, \{0\}^{k_d-1}, \{1\}^{j}; 1) \frac{I(0; -1; 1)^{\alpha-j}}{(\alpha-j)!} \Bigg) \,,
			\end{aligned}
			\end{align*}
			which is the identity we wanted, up to reversing the sum over \( j \).
		\end{proof}
	\end{Prop}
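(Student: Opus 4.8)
The strategy is to pass to the iterated-integral picture and induct on the number $\alpha$ of trailing $1$'s, the recursion being driven by a single combinatorial shuffle identity; the statement is also a special case of the general regularised distribution relations recalled in \cite[Section 13.3.4]{zhaoBook}, but the direct argument is short enough to record in full.

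First I would rewrite both sides through \eqref{eqn:zasi}, multiplying by the global sign $(-1)^{d+\alpha}$ and moving $\zeta^{\shuffle,W}(\vec k,\{1\}^\alpha)$ to the right-hand side. Since $I(0;-1;1)=\log 2$, the identity to prove becomes
\[
2^{k_1+\cdots+k_d-d}\sum_{\epsilon_i,\delta_j\in\{\pm1\}} I^{\shuffle,W}(0;\epsilon_1,\{0\}^{k_1-1},\dots,\epsilon_d,\{0\}^{k_d-1},\delta_1,\dots,\delta_\alpha;1) = \sum_{j=0}^{\alpha} I^{\shuffle,W}(0;1,\{0\}^{k_1-1},\dots,1,\{0\}^{k_d-1},\{1\}^{j};1)\,\frac{I(0;-1;1)^{\alpha-j}}{(\alpha-j)!},
\]
the power of $2$ being $2^{k_1+\cdots+k_d-d}$ because trailing $1$'s raise the weight and the depth equally. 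The base case $\alpha=0$ is precisely the unregularised level-$2$ distribution relation, which I may take as known (and of geometric origin, via the pullback $s\mapsto s^2$ recalled above).

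For the inductive step, write $w_0 = 1\{0\}^{k_1-1}\cdots1\{0\}^{k_d-2}$, so that $w=w_00$ is the convergent word attached to $\vec k$, and establish in the word algebra the shuffle identity
\[
w_00\oplus\delta_1\cdots\delta_\alpha = \sum_{i=0}^{\alpha-1}(-1)^{\alpha-i-1}(w_00\oplus\delta_1\cdots\delta_i)\shuffle\delta_\alpha\delta_{\alpha-1}\cdots\delta_{i+1} + (-1)^\alpha(w_0\shuffle\delta_\alpha\cdots\delta_1)\oplus0,
\]
which follows by a telescoping pairwise cancellation from the recursion $(ua)\shuffle(vb)=(u\shuffle vb)a+(ua\shuffle v)b$. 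I then apply $I^{\shuffle,W}$ (multiplicative on shuffle products) and sum over all $\epsilon_i,\delta_j\in\{\pm1\}$: every factor built from $w_0$ carries at most $i<\alpha$ trailing signs, so the inductive hypothesis applies to it, while any pure string of $\pm1$'s sums by the shuffle product to $\tfrac1{k!}\bigl(I^{\shuffle,W}(0;1;1)+I(0;-1;1)\bigr)^{k}$. Substituting these, reversing and interchanging the inner summations and re-indexing, the inner sum over $i$ becomes a binomial expansion of $\bigl(I^{\shuffle,W}(0;1;1)+I(0;-1;1)\bigr)^{\alpha-j}$ missing only its top term; re-summing, the leftover is exactly a combination of the form $\sum_{j<\alpha}(\pm1)\,I^{\shuffle,W}(0;1,\{0\}^{k_1-1},\dots,\{1\}^{j};1)\,I^{\shuffle,W}(0;\{1\}^{\alpha-j};1)$ together with the term $(-1)^\alpha I^{\shuffle,W}\bigl(0;(w_0\shuffle\{1\}^\alpha)\oplus0;1\bigr)$, and these collapse — by the displayed shuffle identity read in reverse — into the missing $j=\alpha$ summand. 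Reassembling everything produces the right-hand side of the integral identity, hence the proposition.

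The step I expect to be the main obstacle is discovering and verifying the shuffle identity above and then controlling the bookkeeping after summing over signs: the telescoping has to be arranged so that (a) only strictly shorter trailing-$1$ words survive on the right, so the induction closes, and (b) after the $\pm1$-strings are collapsed the correction terms recombine — via the identity read in reverse — into the single missing binomial term rather than into an uncontrolled remainder. One must also treat $I^{\shuffle,W}(0;1;1)$ as a formal regularisation symbol throughout rather than as the honest divergent integral, which is exactly why the statement is phrased for $\zeta^{\shuffle,W}$ and not for the literal series.
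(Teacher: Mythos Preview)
Your proposal is correct and follows essentially the same approach as the paper's own proof: the same conversion to the integral form, the same induction on $\alpha$, the same key shuffle identity for $w_00\oplus\delta_1\cdots\delta_\alpha$, the same collapse of the sign-sum over a pure $\pm1$-string to $\tfrac{1}{k!}(I^{\shuffle,W}(0;1;1)+I(0;-1;1))^k$, and the same endgame of recognising a binomial sum missing its top term and recovering that term by reading the shuffle identity in reverse. There is nothing materially different between the two arguments.
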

	
	In particular, the weight \( w > 1 \) distribution relations holds modulo products, whether or not regularisation is necessary.  In the case of weight \( 1 \) however, we find
	\[
		2^{0} \bigg( \zeta^{\shuffle,W}\sgnarg{1}{1} + \zeta^{\shuffle,W}\sgnarg{-1}{1} \bigg) - \zeta^{\shuffle,W}(1) = -\log(2) \,,
	\]
	which is non-zero modulo products (at least assuming the usual conjectures), as this is a logarithm (i.e. weight 1).  More precisely, on the motivic level, the weight 1 distribution identity (with \( W = 0 \)) is clearly satisfied, and then \( \log^\lmot(2) \) (the motivic logarithm \( \log^\mot(2) \) modulo products) does not vanish, as it lives in a the weight 1 component.  
	
	We now specialise to the case \( W = 0 \), in line with the usual regularisation prescription via the tangential base-points of the straight line path \( \gamma \colon [0,1] \to [0,1] \), \( \gamma(t) = t \).  In this prescription: \( I(0; 0; 1) = I(0; 1; 1) =  0 \).  Using this, we can extend \autoref{prop:regdist} to the case of \( \zeta_\ell^{\shuffle,0}(\vec{k},\{1\}^\alpha) \), wherein the integral representation starts with a string \( \{0\}^\ell \) of \( \ell \) many 0's.
	
	\begin{Cor}\label{cor:distrega}
		For \( \vec{k} = (k_1,\ldots,k_d) \), with \( k_d \neq 1 \) an index, any \( \alpha \geq 0 \), and any \( \ell \geq 0 \), the following regularised version of the distribution relation holds
		\begin{align*}
		2^{k_1+\cdots+k_d + \ell - d} \!\! \sum_{\substack{\vec{\epsilon} = (\epsilon_1,\ldots,\epsilon_{d}) \\ \vec{\delta} = (\delta_1,\ldots,\delta_\alpha) \\ \epsilon_i, \delta_j \in \{ \pm 1\}}}  \!\! \zeta_\ell^{\shuffle,0}\sgnarg{\vec{\epsilon}, \mathrlap{\,\vec{\delta}}\phantom{\{1\}^\alpha}}{\vec{k},\{1\}^\alpha} 
		- \zeta^{\shuffle,W}(\vec{k},\{1\}^{\alpha}) = \sum_{i=1}^\alpha \zeta_\ell^{\shuffle,0}(\vec{k}, \{1\}^{\alpha-i}) \frac{(-\log(2))^i}{i!}
		\end{align*}
		
		\begin{proof}
			In the case \( \alpha = 0 \), applying the unshuffling of starting 0's from \eqref{eqn:unshufflestart0} in \autoref{lem:unshufflestart0} shows that 
			\[
				\zeta_\ell^{\shuffle,0}\sgnarg{\vec{\epsilon}}{\vec{k}} = (-1)^\ell \sum_{i_1 + \cdots + i_d = \ell} \binom{k_1 + i_1 - 1}{i_1} \cdots \binom{k_d + i_d - 1}{i_d} \zeta\sgnarg{\vec{\epsilon}}{k_1 + i_1, \ldots, k_d + i_d} \,.
			\]
			This reduces \( \zeta_\ell \) to convergent zetas on the right hands side, to which the distribution relation applies exactly.  So after summing over all choices of signs, and applying the usual distribution relation, one finds
			\begin{align*}
				& \sum_{\substack{\vec{\epsilon} = (\epsilon_1,\ldots,\epsilon_d) \\ \epsilon_i \in \{\pm1\}}}\zeta_\ell^{\shuffle,0}\sgnarg{\vec{\epsilon}}{\vec{k}} \\
				& = 2^{k_1 + \cdots + k_d + \ell - d} (-1)^d \sum_{i_1 + \cdots + i_d = \ell} \binom{k_1 + i_1 - 1}{i_1} \cdots \binom{n_d + i_d - 1}{i_d} \zeta\sgnarg{\{1\}^d}{k_1 + i_1, \ldots, k_d + i_d} \\
				&= 2^{k_1 + \cdots + k_d + \ell - d}  \zeta_\ell^{\shuffle,0}(\vec{k}) \,,
			\end{align*}
			where the last equality arises by applying the unshuffling process again.  The case with trailing \( 1 \)'s follows by applying the proof of \autoref{prop:regdist} again, mutatis mutandis.
		\end{proof}
	\end{Cor}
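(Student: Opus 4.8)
The plan is to treat the cases $\alpha = 0$ and $\alpha > 0$ separately, the latter by induction on $\alpha$ with $\ell$ held fixed. First I would dispatch the base case $\alpha = 0$: here there are no trailing $1$'s, so the only regularisation at issue is that of the leading string $\{0\}^\ell$, and since we work with $\zeta^{\shuffle,0}$ this is governed by the explicit unshuffling formula \eqref{eqn:unshufflestart0} of \autoref{lem:unshufflestart0}. Applying it to each $\zeta_\ell^{\shuffle,0}\sgnarg{\vec{\epsilon}}{\vec{k}}$ rewrites it as a binomial-weighted sum of the \emph{convergent} alternating MZV's $\zeta^{\shuffle,0}\sgnarg{\vec{\epsilon}}{k_1+i_1,\ldots,k_d+i_d}$ --- convergence being guaranteed by $k_d \neq 1$, so $k_d+i_d\ge 2$. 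Summing over $\vec{\epsilon}$, the ordinary (non-regularised) distribution relation now applies term by term and collapses the sum to the non-alternating $\zeta(k_1+i_1,\ldots,k_d+i_d)$ with the appropriate power of $2$; running \eqref{eqn:unshufflestart0} backwards repackages this as $\zeta_\ell^{\shuffle,0}(\vec{k})$, which is exactly the $\alpha = 0$ instance of the claim (its right-hand side being an empty sum).

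For the inductive step $\alpha > 0$, I would \emph{not} try to unshuffle the leading $0$'s directly --- distributing them over all $d+\alpha$ arguments turns some of the trailing $1$'s into entries $\ge 2$, destroying the uniform shape needed to invoke \autoref{prop:regdist}. Instead I would re-run the proof of \autoref{prop:regdist} essentially verbatim, the sole change being to take $w_0 = \{0\}^\ell\,1\{0\}^{k_1-1}\cdots1\{0\}^{k_d-2}$ in place of $1\{0\}^{k_1-1}\cdots1\{0\}^{k_d-2}$. The combinatorial shuffle identity \eqref{eqn:regdist:shuffleid} manipulates only the trailing letters (the final $0$ and the $\delta_i$) and is thus insensitive to the leading block $\{0\}^\ell$, which is carried along passively. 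Summing over all sign choices, the sum over the $\delta$-signs still telescopes through the shuffle product to $\tfrac{1}{k!}\bigl(I^{\shuffle,0}(0;1;1)+I(0;-1;1)\bigr)^k$, which for $W=0$ is $\tfrac1{k!}I(0;-1;1)^k$; the sum over the $\epsilon$-signs inside $w_0$ of the shorter words --- now carrying fewer than $\alpha$ trailing $\pm1$'s --- is supplied by the induction hypothesis (same $\ell$, smaller $\alpha$), anchored by the $\alpha=0$ case. Recombining the correction terms via \eqref{eqn:regdist:shuffleid} in reverse yields the stated identity; the power-of-$2$ bookkeeping is unchanged since the weight rises by $\ell$ while the depth is unaffected, so $2^{k_1+\cdots+k_d+\ell+\alpha-(d+\alpha)}=2^{k_1+\cdots+k_d+\ell-d}$.

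The main obstacle is the bookkeeping around the leading zeros rather than any new idea. The restriction to $W=0$ is essential: the block $\{0\}^\ell$ forces leading-$0$ shuffle regularisation, whose clean binomial form \eqref{eqn:unshufflestart0} needs $\zeta^{\shuffle,0}(1)=0$, so there is no general-$W$ analogue. One must also check that each word obtained by shuffling $\{1\}^\alpha$ into the prefixed $w_0$ --- the term $(w_0\shuffle\delta_\alpha\cdots\delta_1)\oplus 0$ --- is a legitimate $I^{\shuffle,0}(0;\,\cdot\,;1)$: it is, since the word begins at $0$ (needing only the leading-$0$ regularisation already present in $\zeta^{\shuffle,0}$) and, thanks to the appended final $0$, ends in a letter different from its upper endpoint $1$, so no trailing regularisation is triggered. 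Lastly one confirms that the reverse application of \eqref{eqn:regdist:shuffleid} recombines the two correction lines into precisely the $\{0\}^\ell$-prefixed target word. Once the $\alpha=0$ base case is in hand, everything else is mechanical.
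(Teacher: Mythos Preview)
Your proposal is correct and follows essentially the same approach as the paper: both handle the base case $\alpha=0$ by unshuffling the leading $0$'s via \autoref{lem:unshufflestart0}, applying the convergent distribution relation termwise, and re-packing, and both reduce the case $\alpha>0$ to the proof of \autoref{prop:regdist} carried through ``mutatis mutandis'' with the leading block $\{0\}^\ell$ prepended to $w_0$. You have simply made explicit the content of the paper's ``mutatis mutandis'' and added useful sanity checks (the role of $W=0$, the well-formedness of the shuffled words, and the power-of-$2$ bookkeeping).
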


	\begin{Rem}
		One can also give a version of the unshuffling identity \eqref{eqn:unshufflestart0} in \autoref{lem:unshufflestart0} which holds for different regularisation parameters, and so one can extend \autoref{cor:distrega} to the general regularisation parameter (even taking different regularisations for \( I(0; 0; 1) = W \) and \( I(0; 1; 1) = W' \)).
	\end{Rem}

	The proofs of \autoref{prop:regdist} and \autoref{cor:distrega} proceeded purely by using the shuffle product of iterated integrals and the non-regularised distribution relations, so the result holds true on the motivic level as well, as both ingredients are already known to be motivic.  So as a result, we obtain the following corollaries.
	
	\begin{Cor}\label{cor:distreg:mot}
		For \( \vec{k} = (k_1,\ldots,k_d) \), with \( k_d \neq 1 \) an index, any \( \alpha \geq 0 \), and any \( \ell \geq 0 \) the following regularised version of the distribution relation holds for motivic multiple zeta values.
		\begin{align*}
		2^{k_1+\cdots+k_d + \ell - d} \!\! \sum_{\substack{\vec{\epsilon} = (\epsilon_1,\ldots,\epsilon_{d}) \\ \vec{\delta} = (\delta_1,\ldots,\delta_\alpha) \\ \epsilon_i, \delta_j \in \{ \pm 1\}}}  \!\! \zeta^{\mot}_\ell\sgnarg{\vec{\epsilon}, \mathrlap{\,\vec{\delta}}\phantom{\{1\}^\alpha}}{\vec{k},\{1\}^\alpha} 
		- \zeta^{\mot}(\vec{k},\{1\}^{\alpha}) = \sum_{i=1}^\alpha \zeta_\ell^{\mot}(\vec{k}, \{1\}^{\alpha-i}) \frac{(-\log^\mot(2))^i}{i!}
		\end{align*}
	\end{Cor}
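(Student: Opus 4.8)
The plan is to observe that the proofs of \autoref{prop:regdist} and \autoref{cor:distrega} were carried out entirely inside the shuffle algebra of iterated integrals, and relied on exactly two ingredients: the shuffle product itself --- concretely, the recursive identity \eqref{eqn:regdist:shuffleid} together with its elementary consequence $\sum_{\delta_i \in \{\pm1\}} I^{\shuffle,W}(0;\delta_1,\ldots,\delta_k;1) = \tfrac{1}{k!}\big(I^{\shuffle,W}(0;1;1) + I(0;-1;1)\big)^k$ --- and the \emph{convergent} distribution relation of level $N=2$, i.e. the case $\alpha = 0$. Both of these are available motivically: the shuffle product is built into the $\mathcal{A}_\bullet$-comodule $\mathcal{H}_\bullet$ recalled in \autoref{sec:mot}, where the regularisation in the cases $x_0 = x_1$ or $x_N = x_{N+1}$ is precisely the shuffle regularisation with $I^\mot(0;0;1) = I^\mot(0;1;1) = 0$ (the $W=0$ normalisation appearing in \autoref{cor:distrega}); and the convergent distribution relations are motivic, as discussed in \autoref{sec:dt}, following from the geometric pullback along $s \mapsto s^2$ made explicit there (and verified once more in Proposition 10 of \cite{murakami21} when all $k_i > 1$).

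Granting this, I would simply rerun the inductive arguments of \autoref{prop:regdist} and \autoref{cor:distrega} verbatim, replacing every $I^{\shuffle,0}$ by $I^\mot$. For \autoref{prop:regdist} this means inducting on $\alpha$: the base case $\alpha = 0$ is the motivic convergent distribution relation, and for $\alpha \geq 1$ one applies the motivic form of \eqref{eqn:regdist:shuffleid}, sums over all sign choices $\vec\epsilon$ and $\vec\delta$, uses the motivic shuffle product to collapse the blocks of $\pm1$'s into powers of $I^\mot(0;1;1) + I^\mot(0;-1;1)$ (the sign $-\log^\mot(2)$ in the statement reappearing from the $(-1)^d$ normalisations, exactly as in the real case), invokes the inductive hypothesis on the terms involving the word $w_0$ --- which carry strictly fewer trailing $\pm1$'s --- reorganises the binomial sums as in that proof, and recombines using \eqref{eqn:regdist:shuffleid} in reverse. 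For \autoref{cor:distrega} one handles the leading string $\{0\}^\ell$ exactly as there: apply the motivic version of the unshuffling identity \eqref{eqn:unshufflestart0}, recalled as property vi) in \autoref{sec:mot}, to reduce $\zeta^\mot_\ell$ to convergent motivic MZV's in the $\alpha = 0$ case, then repeat the $\alpha \geq 1$ argument mutatis mutandis. Since nowhere does the argument invoke the period map or a numerical identity, the resulting equality is an exact identity in $\mathcal{H}$.

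The one point that genuinely needs care --- and which I expect to be the main obstacle --- is the validity of the \emph{convergent} distribution relation at the motivic level for indices $\vec{k}$ with $k_d \geq 2$ but possibly $k_i = 1$ for some $i < d$, since \cite[Proposition 10]{murakami21} is stated only for all $k_i > 1$. I would address this by noting that the geometric derivation reproduced in \autoref{sec:dt} is insensitive to such entries: an argument $k_i = 1$ contributes only the single letter $\epsilon_i$, the identity $\frac{\mathrm{d}s}{s-1} + \frac{\mathrm{d}s}{s+1} = \frac{2s\,\mathrm{d}s}{s^2-1}$ still applies at that position, and the substitution $y = s^2$ goes through unchanged; moreover the integral in question is genuinely convergent ($x_0 = 0 \neq \pm1$, and $x_N = 0 \neq 1$ because $k_d \geq 2$), so the change of variables lifts to $\mathcal{H}$ with no tangential-base-point subtlety. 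With the $\alpha = 0$ base case thus available in the required generality, the corollary follows.
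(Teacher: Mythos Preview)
Your proposal is correct and takes essentially the same approach as the paper: the paper's entire argument is the single observation that the proofs of \autoref{prop:regdist} and \autoref{cor:distrega} used only the shuffle product and the non-regularised distribution relations, both of which are motivic, so the identity transfers to $\mathcal{H}$. Your additional paragraph justifying the convergent distribution relation when some interior $k_i = 1$ is a point the paper glosses over, and your treatment of it via the geometric pullback is exactly right.
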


	\begin{Cor}\label{cor:distreg:lmot}
	For \( \vec{k} = (k_1,\ldots,k_d) \), with \( k_d \neq 1 \) an index, any \( \alpha \geq 0 \), and any \( \ell \geq 0 \), the following regularised version of the distribution relation holds for motivic multiple zeta values of weight \( w > 1 \) or \( \ell > 0 \) modulo products.
	\begin{align*}
	2^{k_1+\cdots+k_d + \ell - d} \!\! \sum_{\substack{\vec{\epsilon} = (\epsilon_1,\ldots,\epsilon_{d}) \\ \vec{\delta} = (\delta_1,\ldots,\delta_\alpha) \\ \epsilon_i, \delta_j \in \{ \pm 1\}}}  \!\! \zeta_\ell^{\lmot}\sgnarg{\vec{\epsilon}, \mathrlap{\,\vec{\delta}}\phantom{\{1\}^\alpha}}{\vec{k},\{1\}^\alpha} 
	= \zeta_\ell^{\lmot}(\vec{k},\{1\}^{\alpha}) \,.
	\end{align*}
	In the case of weight 1 and \( \ell = 0 \), the distribution relation modulo products has an extra \( -\log^\lmot(2) \) correction, namely
	\[
		\zeta^{\lmot}(1) + \zeta^{\lmot}(\overline{1}) = \zeta^\lmot(1)  -\log^\lmot(2) \,.
	\]
\end{Cor}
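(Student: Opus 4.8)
The plan is to obtain \autoref{cor:distreg:lmot} at once from \autoref{cor:distreg:mot} by passing to the Lie coalgebra $\mathcal{L}_\bullet = \mathcal{A}/\mathcal{A}_{>0}\cdot\mathcal{A}_{>0}$. Recall that the graded $\Q$-linear composite $\pi\colon \mathcal{H} \twoheadrightarrow \mathcal{A} \twoheadrightarrow \mathcal{L}$ sends $\zeta^\mot_\ell(\cdots) \mapsto \zeta^\lmot_\ell(\cdots)$ and $\log^\mot(2) \mapsto \log^\lmot(2)$, and that, by the very construction of $\mathcal{L}$, it annihilates every product of two elements of strictly positive weight. I would apply $\pi$ to both sides of the identity of \autoref{cor:distreg:mot}. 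The left-hand side becomes $2^{k_1+\cdots+k_d+\ell-d}\sum_{\vec\epsilon,\vec\delta}\zeta^\lmot_\ell\sgnarg{\vec\epsilon,\vec\delta}{\vec k,\{1\}^\alpha} - \zeta^\lmot_\ell(\vec k,\{1\}^\alpha)$, so the corollary becomes equivalent to the assertion that $\pi$ kills the correction sum $\sum_{i=1}^\alpha \zeta^\mot_\ell(\vec{k}, \{1\}^{\alpha-i}) \frac{(-\log^\mot(2))^i}{i!}$, in all cases other than weight $1$ with $\ell = 0$.

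For the generic case I would argue summand by summand. If $i \geq 2$, then $(\log^\mot(2))^i = \log^\mot(2)\cdot(\log^\mot(2))^{i-1}$ is a product of two positive-weight elements, so the $i$-th summand maps into $\mathcal{A}_{>0}\cdot\mathcal{A}_{>0}$ and is killed by $\pi$. If $i = 1$, the summand is a rational multiple of $\zeta^\mot_\ell(\vec{k}, \{1\}^{\alpha-1}) \cdot \log^\mot(2)$, a product of $\log^\mot(2)$ (weight $1$) with $\zeta^\mot_\ell(\vec{k}, \{1\}^{\alpha-1})$, whose weight $\ell + k_1 + \cdots + k_d + \alpha - 1$ is non-negative and vanishes exactly when $\vec{k} = \emptyset$, $\ell = 0$ and $\alpha = 1$ — that is, precisely the excluded weight-$1$, $\ell = 0$ case. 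Hence under the hypothesis of the corollary this weight is $\geq 1$, the $i=1$ summand again lies in $\mathcal{A}_{>0}\cdot\mathcal{A}_{>0}$, and $\pi$ annihilates it. So the correction sum maps to $0$ in $\mathcal{L}$, which yields the claimed distribution relation modulo products.

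It remains to treat the excluded weight-$1$, $\ell=0$ case, where necessarily $\vec{k} = \emptyset$ and $\alpha = 1$: the correction sum degenerates to the single term $\zeta^\mot_0(\emptyset)\cdot(-\log^\mot(2)) = -\log^\mot(2)$, whose prefactor $\zeta^\mot_0(\emptyset) = 1$ has weight $0$, so this is not a product of two positive-weight elements and survives under $\pi$ as the nonzero primitive $-\log^\lmot(2) \in \mathcal{L}_1$. Equivalently, the relevant identity here is the motivic weight-$1$ distribution relation $\zeta^\mot(1) + \zeta^\mot(\overline{1}) = \zeta^\mot(1) - \log^\mot(2)$ recalled just before \autoref{cor:distrega}, which projects to $\zeta^\lmot(1) + \zeta^\lmot(\overline{1}) = \zeta^\lmot(1) - \log^\lmot(2)$, as asserted. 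There is no real obstacle in this argument; the only point requiring care is the weight bookkeeping in the correction sum — checking that each correction term is genuinely a product of two \emph{strictly} positive-weight factors, which is exactly what fails for the $i=1$ term in the weight-$1$, $\ell=0$ case and accounts for the surviving $-\log^\lmot(2)$.
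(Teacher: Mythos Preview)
Your argument is correct and is exactly the route the paper intends: the corollary is stated without proof immediately after \autoref{cor:distreg:mot}, and the passage from $\mathcal{H}$ to $\mathcal{L}$ kills the product correction $\sum_{i\geq 1}\zeta_\ell^\mot(\vec{k},\{1\}^{\alpha-i})\frac{(-\log^\mot(2))^i}{i!}$ for precisely the weight reasons you give, leaving only the surviving $-\log^\lmot(2)$ in the excluded weight-$1$, $\ell=0$ case.
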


	\subsection{\texorpdfstring{Derivations on \( \ttw^\mot(k_1,\ldots,k_d) \)}
		{Derivations on t\textasciitilde\textasciicircum{}m(k\_1, ..., k\_d)}}

	Now that we have the motivic version of the distribution relations for arbitrary arguments, we may directly generalise Murakami's computation of \( D_r \) given in Proposition 11 of \cite{murakami21}.
	
	\begin{Prop}[Generalisation of Proposition 11, \cite{murakami21}]\label{prop:dkt}
		Let \( \vec{k} = (k_1,\ldots,k_d) \in (\Z_{\geq1})^d \) be an index.  Write \( \vec{k}_{i,j} = (k_i, \ldots, k_j) \) for a subindex of \( \vec{k} \) and \( \abs{(a_1,\ldots,a_r)} = a_1 + \cdots + a_r \) for the total (weight) of an index.  Then the derivation \( D_r \), \( r \) odd, is computed as follows
		\begin{align}
		& D_r \big( \ttw^\mot(k_1,\ldots,k_d) \big) = \notag \\
		& \label{eqn:dr:deconcat} \sum_{1 \leq j \leq d} \delta_{\abs{\vec{k}_{1,j}}=r} \ttw^\lmot(k_1,\ldots,k_j) \otimes \ttw^\mot(k_{j+1},\ldots,k_d) \\[1ex]
		& \label{eqn:dr:0eps} + \!\! \sum_{1 \leq i < j \leq d} \begin{aligned}[t] 
			\delta_{\abs{\vec{k}_{i+1,j}} \leq r < \abs{\vec{k}_{i,j}}-1} \Big( \zeta^\lmot_{r-\abs{\vec{k}_{i+1,j}}}&(k_{i+1},\ldots,k_j) - \delta_{r=1} \log^\lmot(2) \Big) \\
		&  {} \otimes \ttw^\mot(k_1,\ldots,k_{i-1},\abs{\vec{k}_{i,j}} - r, k_{j+1},\ldots,k_d) \end{aligned} \\[1ex]
		& \label{eqn:dr:eps0} - \!\! \sum_{1 \leq i < j \leq d} \begin{aligned}[t]
			\delta_{\abs{\vec{k}_{i,j-1}} \leq r < \abs{\vec{k}_{i,j}}-1} \Big( \zeta^\lmot_{r-\abs{\vec{k}_{i,j-1}}}&(k_{j-1},\ldots,k_i) - \delta_{r=1} \log^\lmot(2) \Big) \\
		& {} \otimes \ttw^\mot(k_1,\ldots,k_{i-1},\abs{\vec{k}_{i,j}} - r, k_{j+1},\ldots,k_d) \end{aligned}
		\end{align}		
	\end{Prop}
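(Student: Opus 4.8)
The plan is to follow, and extend to arbitrary arguments, Murakami's computation of $D_r$ on convergent motivic MtV's in Proposition~11 of \cite{murakami21}. The one genuinely new feature is that when some $k_i=1$ the inner iterated integrals which arise may require shuffle-regularisation, so the \emph{regularised} distribution relations of \autoref{cor:distreg:lmot} (and \autoref{cor:distrega}) must be fed in where Murakami could use the plain ones; this is precisely what produces the $\delta_{r=1}\log^\lmot(2)$ corrections.

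First I would expand $\ttw^\mot(k_1,\ldots,k_d) = 2^{\abs{\vec k}-d}\sum_{\vec\epsilon}\epsilon_1\cdots\epsilon_d\,\zeta^\mot\sgnarg{\epsilon_1,\ldots,\epsilon_d}{k_1,\ldots,k_d}$ into motivic iterated integrals, each equal to $(-1)^d I^\mot(0;\eta_1,\{0\}^{k_1-1},\ldots,\eta_d,\{0\}^{k_d-1};1)$ with $\eta_i=\epsilon_i\epsilon_{i+1}\cdots\epsilon_d$, so that every letter is $0$ except at the $d$ ``$\eta$-positions'', where it is $\pm1$. Applying the explicit combinatorial formula for $D_r$ recalled in \autoref{sec:dr} termwise, each summand is indexed by a window of $r$ consecutive interior letters $x_{p+1},\ldots,x_{p+r}$ and contributes $I^\lmot(x_p;x_{p+1},\ldots,x_{p+r};x_{p+r+1})\otimes I^\mot(0;x_1,\ldots,x_p,x_{p+r+1},\ldots,x_N;1)$. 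Since $\mathcal L_0=0$, any window whose flanking letters $x_p,x_{p+r+1}$ coincide — in particular any window flanked by two $0$'s, or by two equal signs — contributes nothing, which already removes the vast majority of terms.

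Next I would classify the surviving windows and carry out the sum over the $2^d$ sign assignments, using three ingredients, each already known to be motivically valid: the homothety relation modulo products for arguments in $\{0,\pm1\}$ (valid in $\mathcal L$, cf.\ \autoref{rem:homothety}), the path-reversal relation, and the regularised distribution relations (\autoref{cor:distreg:lmot}, and \autoref{cor:distrega} when leading $0$'s occur). A surviving \emph{interior} window either (a)~begins strictly inside the zero-run of some argument-block $i$ and ends at the letter $\eta_{j+1}$ opening block $j+1$, or (b)~begins at $\eta_i$ and ends strictly inside the zero-run of block $j$; these two families are exchanged by path reversal, which contributes a sign $(-1)^r=-1$ (as $r$ is odd) and reverses the inner argument string — the source of the opposite overall signs and of the strings $(k_{i+1},\ldots,k_j)$ versus $(k_{j-1},\ldots,k_i)$ in \eqref{eqn:dr:0eps}--\eqref{eqn:dr:eps0}. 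In case~(a), rescaling by $\eta_{j+1}$ turns the left factor into $\pm\zeta^\lmot_\ell\sgnarg{\cdots}{k_{i+1},\ldots,k_j}$ with $\ell=r-\abs{\vec k_{i+1,j}}$, while the right factor becomes $\pm\zeta^\mot$ of the index $(k_1,\ldots,k_{i-1},\abs{\vec k_{i,j}}-r,k_{j+1},\ldots,k_d)$ (the leftover block $\eta_i\{0\}^{s}$ having length $s+1=\abs{\vec k_{i,j}}-r$); the constraints $\ell\geq0$ and $s\geq1$ translate into exactly $\abs{\vec k_{i+1,j}}\leq r<\abs{\vec k_{i,j}}-1$. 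Summing over signs, the ``$\eta$-position'' bookkeeping shows that the total sign $\epsilon_1\cdots\epsilon_d$ of the $\ttw$-expansion equals the $\ttw$-weight of the right factor's arguments, so the right sum assembles into $\ttw^\mot(k_1,\ldots,k_{i-1},\abs{\vec k_{i,j}}-r,k_{j+1},\ldots,k_d)$; the residual sum over the ``middle'' signs $\epsilon_{i+1},\ldots,\epsilon_j$ is a distribution-type sum for the left factor, which by \autoref{cor:distreg:lmot} collapses to $\zeta^\lmot_{r-\abs{\vec k_{i+1,j}}}(k_{i+1},\ldots,k_j)$ when $r>1$ and acquires the extra $-\log^\lmot(2)$ (the anomaly $\zeta^\lmot(1)+\zeta^\lmot(\overline 1)=\zeta^\lmot(1)-\log^\lmot(2)$) when $r=1$, yielding the bracket of \eqref{eqn:dr:0eps}. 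The \emph{boundary} windows ($p=0$ or $p=N-r$), which must consist of complete argument-blocks, collapse under the same distribution relation into the deconcatenation sum \eqref{eqn:dr:deconcat}; and the normalising factor $2^{\abs{\vec k}-d}$ built into $\ttw$ is exactly what cancels the powers of $2$ thrown off by the distribution relations.

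The step I expect to be the main obstacle is this last, most delicate layer of the sign-sum bookkeeping in the new case where some $k_i=1$: the inner integrals $I^\lmot(0;\ldots,\eta_d;1)$ and their reversals can then themselves need shuffle-regularisation, so one must check that feeding in the \emph{regularised} distribution relations of \autoref{cor:distreg:lmot} and \autoref{cor:distrega} — in place of the plain ones used by Murakami — introduces nothing beyond the stated $\delta_{r=1}\log^\lmot(2)$ corrections, and in particular that the ``middle-sign'' sums arising from families (a) and (b) reorganise cleanly against each other so that no unwanted $\ttw^\lmot$-type left factors survive. Once this verification is in place, collecting \eqref{eqn:dr:deconcat}, \eqref{eqn:dr:0eps} and \eqref{eqn:dr:eps0} completes the proof.
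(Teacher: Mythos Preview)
Your proposal is correct and follows precisely the paper's approach: the paper's proof is in fact just a one-paragraph pointer to Murakami's Proposition~11, together with the observation that the only new feature is the use of the regularised distribution relations (\autoref{cor:distreg:lmot}) when simplifying the left-hand factors, which in weight~$1$ introduces the extra $-\log^\lmot(2)$. Your worry about the ``most delicate layer'' is resolved exactly as you anticipate --- the regularised distribution relation modulo products coincides with the ordinary one for $r>1$, so the only correction appears at $r=1$; you might also note explicitly that the fourth family of cuts (both flanking letters equal to some $\eta_i,\eta_{j+1}$ of opposite sign) sums to zero over the sign choices, which is the $(\sum=0)$ case in the paper's schematic.
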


	\begin{proof}
		The proof of this proposition works in precisely the same way as Murakami's proof of the special case, where each \( k_i \geq 2 \), given in \cite[Proposition 11]{murakami21}.  In particular, the terms arise from the following cuts on the integral defining \( \ttw^\mot \)
		\begin{align*}
			(-1)^d 2^{d - k_1 - \cdots - k_d} & \ttw^\mot(k_1,\ldots,k_p) = \\
			& \sum_{\eta_i \in \{\pm1\}} \eta_1 I^\mot(0; \eta_1, \{0\}^{k_1-1}, \eta_2, \{0\}^{k_2-1}, \ldots, \eta_d, \{0\}^{k_d-1}; 1)
		\end{align*}
		The correspondence in particular is as follows
			\begin{center}
			\begin{tikzpicture}[baseline]
			\matrix[name=M1, matrix of nodes, inner sep=1pt, column sep=0pt]{
				\node () [] {$I^\mot($};
				& \node (s1) [] {${\phantom{\mathllap{\fbox{$,0$}}}} 0$}; 
				& \node () [] {$;$}; 
				& \node () [] {${\phantom{\mathllap{\fbox{$,0$}}}} \eta_1$}; 
				& \node () [] {$,$};
				& \node () [] {$0, \ldots, 0,$};
				& \node (xie) [] {${\phantom{\mathllap{\fbox{$,0$}}}}  \eta_i$};  			& \node () [] {$,$};
				& \node (xi2) [] {${\phantom{\mathllap{\fbox{$,0$}}}}  0$};  			& \node () [] {$,$};
				& \node (xi3) [] {${\phantom{\mathllap{\fbox{$,0$}}}}  \ldots$};  			& \node () [] {$,$};
				& \node (xiz) [] {${\phantom{\mathllap{\fbox{$,0$}}}}  0$};  & \node () [] {$,$};
				& \node (xi5) [] {${\phantom{\mathllap{\fbox{$,0$}}}}  \ldots$};  			& \node () [] {$,$};
				& \node (xi6) [] {${\phantom{\mathllap{\fbox{$,0$}}}}  0$};  			& \node () [] {$,$};
				& \node () [] {$\eta_{i+1},\ldots,$};
				& \node (yie) [] {${\phantom{\mathllap{\fbox{$,0$}}}}  \eta_j$};  			& \node () [] {$,$};
				& \node (yi2) [] {${\phantom{\mathllap{\fbox{$,0$}}}}  0$};  			& \node () [] {$,$};
				& \node (yi3) [] {${\phantom{\mathllap{\fbox{$,0$}}}}  \ldots$};  			& \node () [] {$,$};
				& \node (yiz) [] {${\phantom{\mathllap{\fbox{$,0$}}}}  0$};  & \node () [] {$,$};
				& \node (yi5) [] {${\phantom{\mathllap{\fbox{$,0$}}}}  \ldots$};  			& \node () [] {$,$};
				& \node (yi6) [] {${\phantom{\mathllap{\fbox{$,0$}}}}  0$};  			& \node () [] {$,$};
				& \node (zie) [] {${\phantom{\mathllap{\fbox{$,0$}}}} \eta_{j+1}$};
				& \node () [] {${\phantom{\mathllap{\fbox{$,0$}}}} \ldots$};
				& \node () [] {$) \,.$};
				\\
			};
			\draw[thick,black] (xiz.north) to[bracket=12pt] node[above=-4mm] {$_\eqref{eqn:dr:0eps}$} (zie.north);
			\draw[thick,black] (s1.north) to[bracket=17pt] node[below=-4mm] {$_{\eqref{eqn:dr:deconcat}}$} (zie.north);
			\draw[thick,black] (xie.south) to[ubracket=12pt] node[below=-4mm] {$_{\eqref{eqn:dr:eps0}}$} (yiz.south);
			\draw[thick,black] (xie.south) to[ubracket=17pt] node[above=-5mm] {$_{(\sum=0)}$} (zie.south);
			\draw[black,thin,solid] ($(xie.north west)+(0,0)$)  rectangle ($(xi6.south east)+(0,0)$);
			\draw[black,thin,solid] ($(yie.north west)+(0,0)$)  rectangle ($(yi6.south east)+(0,0)$);
			\draw[black,thin,solid] ($(s1.north west)+(0,0)$)  rectangle ($(s1.south east)+(0,0)$);
			\end{tikzpicture}
		\end{center}
		The only difference from Murakami's proof \cite[Proposition 11, proof]{murakami21} comes when computing \( D_1 \), wherein terms \eqref{eqn:dr:0eps} and \eqref{eqn:dr:eps0} are simplified using the regularised distribution relation, and so pick up an extra \( {-}\log^\lmot(2) \) in weight 1. 
	\end{proof}
	
	\begin{Rem}\label{rem:homothety}
		We should note here, again, that the iterated integrals \( I^\mot(a_0; a_1,\ldots,a_n; a_{n+1}) \) satisfy the homothety
	\[
	I^\mot(\lambda a_0; \lambda a_1, \ldots, \lambda a_n; \lambda a_{n+1}) = I^\mot(a_0; a_1,\ldots,a_n; a_{n+1}) \,,
	\]
	if \( a_0 \neq a_1 \) and \( a_n \neq a_{n+1} \).  However, if one of these is actually an equality, the integral must be regularised, and the homothety can  actually change the tangential base-point, so the equality does not in general hold.  Viz:
	\begin{align*}
	I^\mot(\overrightarrow{1}_{0}; 0, 1; -1) &= I^\mot(\overrightarrow{1}_{0}; 0; -1) I^\mot(\overrightarrow{1}_{0}; 1; -1) - I^\mot(\overrightarrow{1}_{0}; 1, 0; -1) \\
	&= (\ii \pi)^\mot \log^\mot(2) + \zeta^\mot(\overline{2}) \\
	I^\mot(\overrightarrow{1}_{0}; 0, -1; 1) &= I^\mot(\overrightarrow{1}_{0}; 0; 1) I^\mot(\overrightarrow{1}_{0}; -1; 1) - I^\mot(\overrightarrow{1}_{0}; -1, 0; 1) \\
	&= 0 \cdot \log^\mot(2) + \zeta^\mot(\overline{2}) \,,
	\end{align*}
	where \( \overrightarrow{1}_0 \) denotes the tangential base-point at 0 with tangent vector in the direction \( \overrightarrow{1} \).  So there is a difference of \( (\ii \pi)^\mot \log^\mot(2) \) between the homotheties.  However, one can say that in general (for \( a_i \in \{0,\pm1\} \)), with weight \( w > 1 \), that 
	\[
	I^\lmot(\lambda a_0; \lambda a_1, \ldots, \lambda a_n; \lambda a_{n+1}) = I^\lmot(a_0; a_1,\ldots,a_n; a_{n+1}) \,.
	\]
	In weight 1, homotheties by \( \lambda \) (with \( \abs{\lambda} = 1 \)) will rotate the tangential base-point, and so contribute some rational times \( (\ii \pi)^\lmot = 0 \) (already \( (\ii\pi)^\umot = 0 \), even before killing products) by considering the decomposition of paths \( I^\lmot(\overrightarrow{\lambda}_{0}, 0, a) = I^\lmot(\overrightarrow{\lambda}_{0}, 0, \overrightarrow{1}_{0}) + I^\lmot(\overrightarrow{1}_{0}, 0, a)\).  So the homothety property still holds. \medskip
	
	This property is applied in various places in Murakami's proof of Proposition 11 (and some earlier results upon which it is dependent).  But in every case, it is applied to the \( I^\lmot \) part of the coaction, and so is valid.
	\end{Rem}
	
	We now make some observations which simplify the calculation of \( D_{r} \) when \( r = 1 \).  Note that \( D_{r} \), for \( r > 1 \) is computed with exactly the same formula as Murakami \cite[Proposition 11]{murakami21}, we have merely extended the range of validity to all (shuffle regularised) multiple \( t \) values.  So let us focus on the case \( D_1 \).
	
	\begin{Prop}[Calculation of \( D_1 \)]\label{prop:d1}
		Let \( \vec{k} = (k_1,\ldots,k_d) \in (\Z_{\geq1})^d\) be an index.  Then
		\[
			D_1 \ttw^\mot(k_1,\ldots,k_d) = \begin{aligned}[t]
				&\delta_{k_1 = 1} \cdot 2 \log^\lmot(2) \otimes \ttw^\mot(k_2,\ldots,k_d) \\
				& - \delta_{k_d = 1} \log^\lmot(2) \otimes \ttw^\mot(k_1,\ldots,k_{d-1}) \,. \end{aligned}
		\]
		That is \( D_1 \) acts by deconcatenation of trailing 1's and of leading 1's (with coefficient 2).
		
		\begin{proof}
			Assuming \( r = 1 \), we consider how terms \eqref{eqn:dr:0eps} and \eqref{eqn:dr:eps0} can contribute.  For \( \eqref{eqn:dr:0eps} \), the delta condition requires \( \abs{\vec{k}_{i+1,j}} \leq 1 < \abs{\vec{k}_{i,j}} - 1 \).  The first condition forces \( i+1 = j \), and \( \vec{k}_{i+1} = 1 \), so \( \vec{k}_{i,j} = (\alpha, 1) \), for some \( \alpha > 1 \).  One has that \( \abs{\vec{k}_{i,j}} - r = \alpha + 1 - 1 = \alpha \).  In this case, we contribute
			\[
				(\zeta_0(1) - \log^\lmot(2)) \otimes \ttw^2(\vec{k}_{1,i-1}, \alpha, \vec{k}_{j+1,d})  \,,
			\]
			which can be seen as deleting the 1 following \( \alpha = k_i \) in \( \vec{k}_{i,j} = (\alpha, 1) \).
			
			Likewise, for \eqref{eqn:dr:eps0}, the delta condition requires \( \abs{\vec{k}_{i,j-1}} \leq 1 < \abs{\vec{k}_{i,j}} - 1 \).  The first condition forces \( i = j-1 \), and \( \vec{k}_{i} = 1 \), so \( \vec{k}_{i,j} = (1,\alpha) \), for some \( \alpha > 1 \).  One has that \( \abs{\vec{k}_{i,j}} - r = \alpha + 1 - 1 = \alpha \).  In this case, we contribute
			\[
			-(\zeta_0(1) - \log^\lmot(2)) \otimes \ttw^2(\vec{k}_{1,i-1}, \alpha, \vec{k}_{j+1,d})  \,,
			\]
			which can be seen as deleting the 1 proceeding \( \alpha = k_{i+1} \) in \( \vec{k}_{i,j} = (1,\alpha) \).
			
			This means that for any subindex \( (\alpha, \{1\}^n, \beta) \), \( \alpha,\beta> 1\) appearing in \( \vec{k} \), the term from deleting the 1 after \( \alpha \) cancels with the term from deleting the \( 1 \) before \( \beta \).  The only terms which can survive this process are of the form \( (\{1\}^n, \beta) \) at the start of \( \vec{k} \), and \( (\alpha,\{1\}) \) at the end of \( \vec{k} \).  Combined with the pre-existing deconcatenation term \eqref{eqn:dr:deconcat}, we obtain the claimed expression.
		\end{proof}
	\end{Prop}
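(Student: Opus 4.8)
The plan is to start from the general formula for $D_r\bigl(\ttw^\mot(k_1,\ldots,k_d)\bigr)$ established in \autoref{prop:dkt} and simply specialise to $r=1$, tracking which of the three families of terms \eqref{eqn:dr:deconcat}, \eqref{eqn:dr:0eps}, \eqref{eqn:dr:eps0} can survive; the point is that for $r=1$ every Kronecker delta appearing there is extremely restrictive. In \eqref{eqn:dr:deconcat} the condition $\abs{\vec{k}_{1,j}}=1$ together with $k_i\geq1$ forces $j=1$ and $k_1=1$, so this family collapses to the single contribution $\delta_{k_1=1}\,\ttw^\lmot(1)\otimes\ttw^\mot(k_2,\ldots,k_d)$. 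In \eqref{eqn:dr:0eps} the condition $\abs{\vec{k}_{i+1,j}}\leq1<\abs{\vec{k}_{i,j}}-1$ forces $j=i+1$, $k_{i+1}=1$ and $k_i\geq2$; since $\zeta^\lmot(1)=0$, the weight-$1$ correction factor there is $\zeta^\lmot_0(1)-\log^\lmot(2)=-\log^\lmot(2)$, so this family contributes, for each $i$ with $k_i\geq2$ and $k_{i+1}=1$, the term $-\log^\lmot(2)$ tensored with the MtV of the index obtained from $\vec{k}$ by deleting $k_{i+1}$ — that is, it deletes the $1$ immediately following an entry $\geq2$. Symmetrically, \eqref{eqn:dr:eps0} forces $i=j-1$, $k_i=1$, $k_j\geq2$, and contributes, for each $i$ with $k_i=1$ and $k_{i+1}\geq2$, the term $+\log^\lmot(2)$ tensored with the MtV of the index obtained by deleting $k_i$ — that is, it deletes the $1$ immediately preceding an entry $\geq2$.

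Next I would carry out the bookkeeping by writing $\vec{k}$ as an alternating concatenation of blocks of entries $\geq2$ and maximal runs of $1$'s. A run of $1$'s that is flanked on both sides by entries $\geq2$ receives exactly two contributions: \eqref{eqn:dr:0eps} deletes its first $1$ and \eqref{eqn:dr:eps0} deletes its last $1$; since all entries of the run equal $1$, both deletions return the \emph{same} index, and because the two signs are opposite these contributions cancel. Hence the only runs of $1$'s that yield surviving terms are an initial run or a terminal run. A terminal run $(\ldots,\alpha,\{1\}^n)$ with $\alpha\geq2$, $n\geq1$ receives only the \eqref{eqn:dr:0eps}-term with $i=d-n$, equal to $-\log^\lmot(2)\otimes\ttw^\mot(k_1,\ldots,k_{d-1})$ (there is no entry $\geq2$ to its right, so \eqref{eqn:dr:eps0} contributes nothing); this is precisely the trailing-$1$ term $-\delta_{k_d=1}\log^\lmot(2)\otimes\ttw^\mot(k_1,\ldots,k_{d-1})$. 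An initial run $(\{1\}^n,\beta,\ldots)$ with $\beta\geq2$, $n\geq1$ receives the \eqref{eqn:dr:eps0}-term with $i=n$, equal to $+\log^\lmot(2)\otimes\ttw^\mot(k_2,\ldots,k_d)$, together with the surviving \eqref{eqn:dr:deconcat}-term $\ttw^\lmot(1)\otimes\ttw^\mot(k_2,\ldots,k_d)$ (and \eqref{eqn:dr:0eps} contributes nothing, there being no entry $\geq2$ to its left). Since $\ttw^\lmot(1)=\zeta^\lmot(1)-\zeta^\lmot(\overline{1})=-\zeta^\lmot(\overline{1})=\log^\lmot(2)$, where $\zeta^\lmot(\overline{1})=-\log^\lmot(2)$ is the weight-$1$ instance of \autoref{cor:distreg:lmot}, these two add up to $2\log^\lmot(2)\otimes\ttw^\mot(k_2,\ldots,k_d)$, the leading-$1$ term $\delta_{k_1=1}\cdot2\log^\lmot(2)\otimes\ttw^\mot(k_2,\ldots,k_d)$.

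Finally I would check the degenerate inputs for consistency. If $\vec{k}=(\{1\}^d)$ then \eqref{eqn:dr:0eps} and \eqref{eqn:dr:eps0} are empty and \autoref{prop:dkt} gives directly $D_1\ttw^\mot(\{1\}^d)=\ttw^\lmot(1)\otimes\ttw^\mot(\{1\}^{d-1})=\log^\lmot(2)\otimes\ttw^\mot(\{1\}^{d-1})$, which agrees with $(2-1)\log^\lmot(2)\otimes\ttw^\mot(\{1\}^{d-1})$; and if $k_d\geq2$ and $\vec{k}$ contains no entry $1$ then all three families are empty, so $D_1\ttw^\mot(\vec{k})=0$, as claimed. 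Given \autoref{prop:dkt}, no real obstacle remains: the only steps that require attention are the sign-and-position bookkeeping in the cancellation of the interior $1$-runs (matching the \emph{delete-first-$1$} term of \eqref{eqn:dr:0eps} against the \emph{delete-last-$1$} term of \eqref{eqn:dr:eps0}), and the identification $\ttw^\lmot(1)=\log^\lmot(2)$, which is exactly what upgrades the coefficient of a leading $1$ from $1$ to $2$.
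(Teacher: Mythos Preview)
Your proposal is correct and follows essentially the same approach as the paper's proof: specialise \autoref{prop:dkt} to $r=1$, identify that \eqref{eqn:dr:0eps} and \eqref{eqn:dr:eps0} each delete a single $1$ adjacent to an entry $\geq2$ with opposite signs, observe the resulting cancellation for interior runs of $1$'s, and combine the surviving boundary terms with the deconcatenation piece \eqref{eqn:dr:deconcat} using $\ttw^\lmot(1)=\log^\lmot(2)$. Your write-up is in fact more explicit than the paper's in spelling out why deleting the first versus the last $1$ of a run yields the same index, and in checking the degenerate cases $\vec{k}=(\{1\}^d)$ and $\vec{k}$ containing no $1$'s.
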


	\begin{Rem}[Hoffman's derivation with respect to \( \log(2) \)]\label{rem:diff:t}
		In \cite[Conjecture 2.1]{hoffman19}, Hoffman conjectures that the algebra of MtV's admits a derivation \( \mathrm{d} \) which acts on \( t(k_1, \ldots, k_d) \) by \[
		 \mathrm{d} t(k_1,\ldots,k_d) = \begin{cases}
		 	t(k_2,\ldots,k_d) & \text{if \( k_1 = 1 \)} \\ 
		 	0 & \text{otherwise\,.}
		 \end{cases}
		\]
		
		If \( \vec{k} = (k_1,\ldots, k_d) \), with \( k_d \neq 1 \), so that \( \ttw^\mot(\vec{k}) \) is a convergent motivic MtV, one obtains the formula
		\[
			D_1 \ttw^\mot(\vec{k}) = \delta_{k_1 = 1} \log^\lmot(2) \otimes \ttw^\mot(k_2, \ldots, k_d) \,.
		\]
		Since \( D_1 \) acts as derivation in the sense
		\[
			D_1 (X Y) = (1 \otimes X) D_1 Y + (1 \otimes Y) D_1 X \,,
		\]
		after projecting \( \log^\lmot(2) \mapsto 1 \) we see that Hoffman's conjectural derivation is nothing but the action of \( D_1 \) on the motivic MtV's, in the convergent case. \medskip
		
		Moreover, one also notes that for \( \ell_1,\ldots\ell_f, n \in \Z \), \( n \geq 2 \), we have
		\[
			D_1 \zeta^\mot(\ell_1,\ldots,\ell_f, \overline{n}) = 0 \,.
		\]
		This is because the strings \( \{0, 1, -1\}, \{0, -1, 1\}, \{-1, 1, 0\}, \{ 1, -1, 0\} \) do not occur in the integral representation of the MZV.  These strings lead to \( \log^\lmot(2) \) factors, whereas \( I^\lmot(1; 0; -1) = I^\lmot(-1; 0; 1) = 0 \).  The algebra basis of the MZV Data Mine \cite{mzvDM} exclusively invokes alternating MZV's of the form
		\[
			\zeta(\ell_1,\ldots,\ell_f, \overline{n}) \,,
		\]
		with \( \ell_i, n \) odd.  So one sees that Hoffman's claim, with regard to the action of \( d \) as differentiation wrt \( \log(2) \) on the formulae in Appendix A of \cite{hoffman19} is generally valid, for this specific choice of basis.
		
		For example, the following identity is verified by the Data Mine, and so actually holds on the motivic level
		\begin{align*}
			t^\mot(1,3,2) = {} &-\frac{2}{21} t^\mot(6) - \frac{3}{196} t^\mot(3)^3 - \frac{1}{2} t^\mot(2) \zeta^\mot(1, \overline{3}) + \frac{1}{4} \zeta^\mot(1, \overline{5}) \\
			& - \frac{1}{2} t^\mot(5) \log^\mot(2) + \frac{4}{7} t^\mot(2)t^\mot(3) \log^\mot(2) \,.
		\end{align*}
		Application of \( D_1 \) (after scaling to write it via \( \ttw\), so \autoref{prop:d1} can be applied, and rescaling afterwards) leads to
		\[
			\log^\lmot(2) \otimes t^\mot(3,2) = \log^\lmot(2) \otimes \bigg( -\frac{1}{2} t^\mot(5) + \frac{4}{7} t^\mot(2) t^\mot(3) \bigg) \,,
		\]
		or equivalently
		\[
			t^\mot(3,2) = -\frac{1}{2} t^\mot(5) + \frac{4}{7} t^\mot(2) t^\mot(3) \,,
		\]
		as expected from Hoffman's claim.
	\end{Rem}

	\begin{Rem}
	The formula for \( D_1 \ttw^\mot \) in \autoref{prop:d1} shows immediately that the convergent \( \ttw^\mot(1, \vec{k}) \) cannot be a motivic MZV, as \( D_1 \ttw^\mot(1,\vec{k}) = \log^\lmot(2) \otimes \ttw^\mot(\vec{k}) \neq 0 \).  On the other hand, this gives us a place and means to search for other Galois descent candidates.
\end{Rem}

	\begin{Prop}\label{prop:t21232}
		Let \( a,b,c,n \in \Z_{\geq0} \), such that \( a \geq 1 \) and \( n \geq 1 \).  Then the motivic multiple \( t \) value
		\[
			\tau = \ttw^\mot(\{2\}^a, 1, \{2\}^b, 2n+1, \{2\}^c)
		\]
		is always a (linear combination of) motivic MZV's.
		
		\begin{proof}
			From the above remark, we know \( D_1 \tau = 0 \).  We must only check the second part of Glanois's motivic Galois descent criterion from \autoref{thm:glanois:descent}, namely that \( D_{2r+1} \tau \in \mathcal{L}^{(1)}_{2r+1} \otimes \mathcal{H}^{(1)} \), i.e. the parts of the coaction are already motivic MZV's.
			
			We first note that the deconcatenation term \eqref{eqn:dr:deconcat} takes the form
			\[
				\ttw^\lmot(\{2\}^a, 1, \{2\}^{r - a}) \otimes \ttw^\mot(\{2\}^{b-(r-a)}, 2n+1, \{2\}^c)
			\]
			The left hand factor is (modulo products!) a motivic MZV by \autoref{thm:mott2212} below.  (The terms \( \log^\mot(2) \) only appear as products in weight \( > 1 \), so vanish when we project to \( \mathcal{L} \).)  The right hand factor is a motivic MZV by Theorem 8 in \cite{murakami21}; therein Murakami showed that whenever \( \vec{k} \in (\Z_{\geq2})^d \) is an index with all entries \( \geq 2 \), then \( \ttw^\mot(\vec{k}) \in \mathcal{H}^{(1)} \) is a motivic MZV.
			
			Then for the terms \eqref{eqn:dr:0eps} and \eqref{eqn:dr:eps0}, one only needs to consider the right hand factor, as the left hand one is already an MZV.  One can also assume \( \vec{k}_{i,j} \) does not contain \( 1 \), for if it does contain 1, then the condition \( r < \abs{\vec{k}_{i,j}} - 1 \) means that \( \abs{\vec{k}_{i,j}} - r > 1 \), so that the subindex we removed is replaced with \( \geq 2 \).  Hence by Theorem 8 \cite{murakami21} is already a motivic MZV.  More generally, we note that in \eqref{eqn:dr:0eps} by subtracting the delta condition from \( \abs{\vec{k}_{i,j}} \), one has
			\[
				k_i \geq \abs{\vec{k}_{i,j}} - r > 1
			\]
			So the replacement value \( \abs{\vec{k}_{i,j}} - r \) for the entire subindex \( \vec{k}_{i,j} \) is between \( 2 \) and \( k_i \), the left endpoint.  Likewise in \eqref{eqn:dr:eps0}, the replacement is between \( 2 \) and \( k_j \), the right endpoint.
			
			We have the following subindices which exhaust all remaining possible cases.   The subindex \( D \) here may start or end at \( 2n+1,  \).
			\begin{center}
				\begin{tikzpicture}[baseline]
				\matrix[name=M1, matrix of nodes, inner sep=1pt, column sep=0pt]{
					\node () [] {$\ttw^\mot($};
					& \node (a1) [] {${\phantom{\mathllap{\fbox{$,0$}}}} 2$}; & \node () [] {$,$};
					& \node (a2) [] {${\phantom{\mathllap{\fbox{$,0$}}}}\ldots$};& \node () [] {$,$};
					& \node (a3) [] {${\phantom{\mathllap{\fbox{$,0$}}}} 2$}; & \node () [] {$,$};
					& \node () [] {${\phantom{\mathllap{\fbox{$,0$}}}}  1$};  			& \node () [] {$,$};
					& \node (b1) [] {${\phantom{\mathllap{\fbox{$,0$}}}}  2$};  			& \node () [] {$,$};
					& \node (b2) [] {${\phantom{\mathllap{\fbox{$,0$}}}}  \ldots$};  			& \node () [] {$,$};
					& \node (b3) [] {${\phantom{\mathllap{\fbox{$,0$}}}}  2$};  & \node () [] {$,$};
					& \node (x) [] {${\phantom{\mathllap{\fbox{$,0$}}}}  2n+1$};  			& \node () [] {$,$};
					& \node (c1) [] {${\phantom{\mathllap{\fbox{$,0$}}}}  2$};  			& \node () [] {$,$};
					& \node (c2) [] {${\phantom{\mathllap{\fbox{$,0$}}}}  \ldots$};  			& \node () [] {$,$};
					& \node (c3) [] {${\phantom{\mathllap{\fbox{$,0$}}}}  2$};  
					& \node () [] {$) \,.$};
					\\
				};
				\draw[thick,black] (a1.north) to[bracket=12pt] node[above=-4mm] {$_A$} (a3.north);
				\draw[thick,black] (b1.north) to[bracket=12pt] node[above=-4mm] {$_B$} (b3.north);
				\draw[thick,black] (c1.north) to[bracket=12pt] node[above=-4mm] {$_C$} (c3.north);
				\draw[thick,black] (b2.south) to[ubracket=12pt] node[above=-4mm] {$_D$} (c2.south);
				\end{tikzpicture}
			\end{center}
			We already note though that \( A, B  \) and \( C \) cannot in fact contribute.  The replacement value \( k_i = k_j = 2 = \abs{\vec{k}_{i,j}} - r > 1 \) must be 2.  But this implies \( \vec{k}_{i,j} = r + 2 \) is odd.  So we are left with the case \( D \), and for the same reason the replacement must be even in this case, namely:
			\begin{center}
			\begin{tabular}{c|c|c|c}
				Subindex & $\vec{k}_{i,j}$ & $\abs{\vec{k}_{i,j}}-r$ & $\ttw^\mot(\vec{k}_{1,i-1}, \abs{\vec{k}}_{i,j} - r, \vec{k}_{j+1,d}) $ \\ \hline
				D & $(\{2\}^\alpha,2n+1,\{2\}^\beta) $ & 2 & $\ttw^\mot(\{2\}^a, 1, \{2\}^\gamma, 2, \{2\}^\delta)$ 
			\end{tabular}
		\end{center}
		Since this MtV is of the form \( \ttw^\mot(\{2\}^a, 1, \{2\}^b) \) with \(a, b > 0 \), it is a motivic multiple zeta value via \autoref{thm:mott2212} below.
		\end{proof}
	\end{Prop}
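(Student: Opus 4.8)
The plan is to verify Glanois's motivic Galois descent criterion (\autoref{thm:glanois:descent}) for $\tau$: we must show (i) $D_1\tau = 0$, and (ii) $D_{2r+1}\tau \in \mathcal{L}^{(1)}_{2r+1}\otimes\mathcal{H}^{(1)}$ for every $r\geq 1$. Condition (i) is immediate from \autoref{prop:d1}, which says $D_1$ acts purely by deconcatenation of leading and trailing $1$'s: since $a\geq 1$ the index of $\tau$ begins with $2$, and since it ends in $2n+1$ (when $c=0$) or in $2$ (when $c>0$) it has no trailing $1$, so $D_1\tau = 0$. For (ii) I would substitute the index $(\{2\}^a,1,\{2\}^b,2n+1,\{2\}^c)$ into the explicit formula of \autoref{prop:dkt} and argue term by term. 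The left-hand tensor factors cause no trouble: for the two interior-cut families \eqref{eqn:dr:0eps} and \eqref{eqn:dr:eps0} the left factor is a \emph{non-alternating} $\zeta^\lmot_\ell$ of weight $2r+1$, hence lies in $\mathcal{L}^{(1)}_{2r+1}$ by definition, while for the deconcatenation family \eqref{eqn:dr:deconcat} a parity count forces the left factor to be $\ttw^\lmot(\{2\}^a,1,\{2\}^{b'})$, which by the motivic evaluation \autoref{thm:mott2212} is a rational multiple of $\zeta^\lmot(2r+1)$ (all other terms being products, hence zero in $\mathcal{L}$) and so again lies in $\mathcal{L}^{(1)}_{2r+1}$. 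Everything therefore reduces to checking that each right-hand tensor factor lies in $\mathcal{H}^{(1)}$.

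This is a finite case analysis over the sub-indices $\vec{k}_{i,j}$ admitted by the Kronecker-delta conditions of \autoref{prop:dkt}. The governing observation is a parity one: a sub-index lying entirely inside one of the runs of $2$'s has even weight, and then the delta conditions force the replacement entry $\abs{\vec{k}_{i,j}}-r$ to be $2$, whereas $\abs{\vec{k}_{i,j}} = r+2$ is then odd, a contradiction; so such sub-indices contribute nothing. The surviving sub-indices either contain the lone $1$ or contain the entry $2n+1$. If $\vec{k}_{i,j}$ contains the $1$, then $r<\abs{\vec{k}_{i,j}}-1$ forces $\abs{\vec{k}_{i,j}}-r\geq 2$, so the $1$ is absorbed into a single entry $\geq 2$ and the resulting index has all entries $\geq 2$; it is a motivic MZV by Murakami's descent theorem \cite[Theorem 8]{murakami21}. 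The right factor of the deconcatenation term, $\ttw^\mot(\{2\}^{b''},2n+1,\{2\}^c)$, likewise has all entries $\geq 2$ and is covered by the same theorem. For a sub-index $(\{2\}^\alpha,2n+1,\{2\}^\beta)$ not containing the $1$, the weight $\abs{\vec{k}_{i,j}}$ is odd, so $\abs{\vec{k}_{i,j}}-r$ is \emph{even} and $\geq 2$, and the surviving term has the shape $\ttw^\mot(\{2\}^a,1,\{2\}^b,2m,\{2\}^c)$ for some $m\geq 1$; when $m=1$ it collapses to $\ttw^\mot(\{2\}^a,1,\{2\}^{b+1+c})$ and we are done by \autoref{thm:mott2212}.

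The main obstacle is exactly the leftover case $m\geq 2$, an even entry $>2$ sitting after the $1$, which is covered by neither \autoref{thm:mott2212} nor Murakami's theorem. The natural remedy is to run the whole argument by induction on the weight while proving \emph{simultaneously} the companion statement that $\ttw^\mot(\{2\}^a,1,\{2\}^b,2m,\{2\}^c)$, $a\geq 1$, $m\geq 1$, is a linear combination of motivic MZV's. Re-running the case analysis on this even family the parity flips: a sub-index $(\{2\}^\alpha,2m,\{2\}^\beta)$ now has even weight, so its replacement is \emph{odd} and $\geq 3$, producing a member $\ttw^\mot(\{2\}^a,1,\{2\}^b,2n'+1,\{2\}^c)$ of the odd family of strictly smaller weight; sub-indices through the $1$ and the deconcatenation terms again land in the all-$\geq 2$ case (Murakami) or produce $\ttw^\lmot(\{2\}^a,1,\{2\}^{b'})$ left factors (\autoref{thm:mott2212}). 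Thus the odd and even families close up under $D_{<N}$, with base case ($m=1$, or all entries $\geq 2$) supplied by \autoref{thm:mott2212}; the induction then goes through and \autoref{thm:glanois:descent} yields $\tau\in\mathcal{H}^{(1)}$, i.e.\ $\tau$ is a linear combination of motivic MZV's.
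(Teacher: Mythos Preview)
Your overall strategy matches the paper's: verify Glanois's descent criterion (\autoref{thm:glanois:descent}) by computing $D_{2r+1}\tau$ via \autoref{prop:dkt} and showing every tensor factor lands in $\mathcal{L}^{(1)}$ or $\mathcal{H}^{(1)}$. You recover exactly the paper's case split (deconcatenation; interior cuts containing the $1$; interior cuts inside a run of $2$'s, which die by parity; interior cuts through $2n+1$).

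Where you diverge is significant. The paper asserts that in the last case $D$ the replacement value $\abs{\vec{k}_{i,j}}-r$ is always $2$, yielding a right factor of the shape $\ttw^\mot(\{2\}^a,1,\{2\}^{b'})$ handled by \autoref{thm:mott2212}. You correctly observe that this is not forced: when the subindex begins at $2n+1$ (so $\alpha=0$ in \eqref{eqn:dr:0eps}) or ends at $2n+1$ (so $\beta=0$ in \eqref{eqn:dr:eps0}), the bound $\abs{\vec{k}_{i,j}}-r\leq k_i$ (resp.\ $k_j$) allows any even value in $\{2,4,\ldots,2n\}$. For $n\geq 2$ this produces right factors $\ttw^\mot(\{2\}^a,1,\{2\}^{b'},2m,\{2\}^{c'})$ with $m\geq 2$, which are covered neither by \autoref{thm:mott2212} nor by Murakami's theorem. (A concrete instance: $D_3$ on $\ttw^\mot(2,1,2,5)$ via \eqref{eqn:dr:eps0} with $\vec{k}_{i,j}=(2,5)$ gives the right factor $\ttw^\mot(2,1,4)$.) For $n=1$ the paper's argument is complete, since then the only even value available is $2$. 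Your simultaneous induction on weight for the odd and even families closes this gap cleanly: applying $D_{2r+1}$ to a member of the even family, the parity flips back and the only surviving interior cuts through $2m$ produce right factors in the odd family at strictly smaller weight, while cuts through the $1$ still land in the all-entries-$\geq 2$ case.

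One small addition your sketch needs: for the even family, the deconcatenation term can also cut \emph{past} the entry $2m$, giving a left factor $\ttw^\lmot(\{2\}^a,1,\{2\}^b,2m,\{2\}^{c'})$ (the prefix weight $2a+1+2b+2m+2c'$ is odd, so this does occur). This is not of the $\ttw^\lmot(\{2\}^a,1,\{2\}^{b'})$ shape, but it is itself a member of the even family at weight $2r+1<N$, so the inductive hypothesis already places it in $\mathcal{H}^{(1)}$ and hence in $\mathcal{L}^{(1)}$. With that amendment your argument is complete and in fact repairs a genuine gap in the paper's proof as written.
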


	It would be interesting to see how far this proof can be generalised, and whether one can give some complete combinatorial criterion for when \( \ttw^\mot(\vec{k}) \) descends to a motivic MZV.  Certainly other families of motivic MtV's which descend seem to exist.  A promising candidate is as follows: let \( a, n \in \Z_{\geq1} \), and \( \vec{k},\vec{\ell} \) be indices containing only even entries.  Then it appears that the following MtV, a generalisation of the above, is also a motivic multiple zeta value.
	\[
		\ttw^\mot(\{2\}^a, 1, \vec{k}, 2n+1, \vec{\ell}) \overset{?}{\in} \mathcal{H}^{(1)} \,.
	\]
	There are also indices with multiple 1's that Galois descend, such as
	\[
		\ttw^\mot(\{2,1,3\}^2) \in \mathcal{H}^{(1)} \,,
	\]
	although this pattern does not seem to continue.  Once can check (via the MZV Data Mine \cite{mzvDM}) that \( D_7 \ttw^\mot(\{2,1,3\}^3) \notin \mathcal{L}_{7}^{(1)} \otimes \mathcal{H}_{11}^{(1)} \).

	\section{\texorpdfstring{Lift to a motivic \( \ttw^{\mot}(\{2\}^a, 1, \{2\}^b) \) evaluation}
		{Lift to a motivic t\textasciitilde\textasciicircum{}m(\{2\}\textasciicircum{}a, 1, \{2\}\textasciicircum{}b) evaluation}}
	\label{sec:mott2212}
	
	The aim of this section is to first lift the evaluation for \( t^{\shuffle,W=0}(\{2\}^a,1,\{2\}^b) \) given in \autoref{thm:sht2212ev} (more precisely, the explicit version given in \eqref{eqn:sht2212ev} thereafter) to an identity amongst motivic multiple \( t \) values.
	
	The shuffle version, with regularisation parameter \( W = 0 \) is the key identity for the rest of this work, since the motivic MZV's are naturally and almost-always regularised in this manner.  Moreover, via \autoref{prop:tstVtotshandz111}, we can express the stuffle regularisation at arbitrary parameters \( t^{\ast,V}(1) = V \) via the shuffle regularised version at \( t^{\shuffle,W=0} \).  This will be used to sidestep later the issue of how to take the motivic stuffle regularisation.
	
	\begin{Thm}\label{thm:mott2212}
		The following motivic identity holds for all \( a, b \geq 0 \)
		\begin{equation}
		\label{eqn:mott2212ev}
		\begin{aligned}
		\ttw^{\mot}(\{2\}^a,1,\{2\}^b) = {} & \sum_{r=1}^{a+b} (-1)^{r+1} \cdot 2 \bigg[ \binom{2r}{2a} + \frac{2^{2r}}{2^{2r} - 1} \binom{2r}{2b} \bigg] \zeta^\mot(\overline{2r+1}) \ttw^\mot(\{2\}^{a + b - r}) \\
		& {} + \delta_{a=0} 2 \cdot \log^\mot(2) \ttw^\mot(\{2\}^b) - \delta_{b=0} \log^\mot(2) \ttw^\mot(\{2\}^a) \,,
		\end{aligned}
		\end{equation}
	\end{Thm}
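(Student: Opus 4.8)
The plan is to lift \eqref{eqn:sht2212ev} (at $W=0$), which by the definition of $\ttw^\mot$ is exactly the period of \eqref{eqn:mott2212ev}, by induction on the weight $N = 2a+2b+1$, using the derivations $D_r$ together with Glanois's kernel theorem \autoref{thm:glanois:kerDN}. The base case $a=b=0$ is a direct check: $\ttw^\mot(1) = \zeta^\mot(1) - \zeta^\mot(\overline 1) = -\zeta^\mot(\overline1) = \log^\mot(2)$, while the right-hand side of \eqref{eqn:mott2212ev} is $2\log^\mot(2) - \log^\mot(2) = \log^\mot(2)$. For the inductive step, both sides of \eqref{eqn:mott2212ev} lie in $\mathcal H^{(2)}_N$, so by \autoref{thm:glanois:kerDN} it suffices to show that $D_r$ applied to their difference vanishes for every odd $r$ with $1 \le r < N$; the difference is then a rational multiple of $\zeta^\mot(\overline N)$, and applying $\per$ — which sends the two sides of \eqref{eqn:mott2212ev} to $2^N$ times the two sides of \eqref{eqn:sht2212ev} at $W=0$, hence to equal values by \autoref{thm:sht2212ev} — forces that multiple to be $0$ since $\per(\zeta^\mot(\overline N)) = \zeta(\overline N) \neq 0$.

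Computing $D_r$ of the right-hand side is easy: $D_r$ is a derivation, $D_r\ttw^\mot(\{2\}^m) = 0$ for all odd $r$ (the $\delta$-conditions in \autoref{prop:dkt} cannot be met for odd $r$ when every $k_i = 2$), and $\zeta^\mot(\overline m)$ is primitive, with $D_r\zeta^\mot(\overline m) = 0$ for $r < m$ and $D_m\zeta^\mot(\overline m) = \zeta^\lmot(\overline m)\otimes 1$. Hence the $D_1$ of the right-hand side is $\delta_{a=0}2\log^\lmot(2)\otimes\ttw^\mot(\{2\}^b) - \delta_{b=0}\log^\lmot(2)\otimes\ttw^\mot(\{2\}^a)$, and for $r = 2\rho+1$ with $1 \le \rho \le a+b-1$ it equals $(-1)^{\rho+1}2\big[\binom{2\rho}{2a} + \tfrac{2^{2\rho}}{2^{2\rho}-1}\binom{2\rho}{2b}\big]\zeta^\lmot(\overline{2\rho+1})\otimes\ttw^\mot(\{2\}^{a+b-\rho})$.

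For the left-hand side, the case $r=1$ is exactly \autoref{prop:d1}, and matches the above. For $r = 2\rho+1 \ge 3$ we apply \autoref{prop:dkt} to the index $(\{2\}^a,1,\{2\}^b)$. The deconcatenation term \eqref{eqn:dr:deconcat} contributes only when $\rho \ge a$, where it is $\ttw^\lmot(\{2\}^a,1,\{2\}^{\rho-a})\otimes\ttw^\mot(\{2\}^{a+b-\rho})$; by the inductive hypothesis $\ttw^\mot(\{2\}^a,1,\{2\}^{\rho-a})$ equals the right-hand side of \eqref{eqn:mott2212ev} at weight $2\rho+1<N$, whose image in $\mathcal L$ is $(-1)^{\rho+1}2\binom{2\rho}{2a}\big(1+\tfrac{2^{2\rho}}{2^{2\rho}-1}\big)\zeta^\lmot(\overline{2\rho+1})$. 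In the cutting terms \eqref{eqn:dr:0eps} and \eqref{eqn:dr:eps0}, since all arguments are $1$ or $2$ the $\delta$-conditions force the excised subindex $\vec k_{i,j}$ to have weight $2\rho+1$ — so to contain the unique $1$ — and to be replaced by a single $2$; a short computation shows the right-hand tensor factor is then always $\ttw^\mot(\{2\}^{a+b-\rho})$ and the left-hand factors run over $\zeta^\lmot(\{2\}^p,1,\{2\}^q)$ with $p+q=\rho$, where $p$ ranges over $[\max(0,\rho-b),\min(a-1,\rho)]$ from \eqref{eqn:dr:0eps} and, with a sign, over $[\max(0,\rho-a),\min(b-1,\rho)]$ from \eqref{eqn:dr:eps0}, so that the two blocks cancel except over a boundary range. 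For $q \ge 1$ the motivic duality gives $\zeta^\lmot(\{2\}^p,1,\{2\}^q) = \zeta^\lmot(\{2\}^{q-1},3,\{2\}^p)$, and Brown's motivic version of Zagier's evaluation of $\zeta(\{2\}^\bullet,3,\{2\}^\bullet)$ makes this, modulo products, an explicit rational multiple of $\zeta^\lmot(\overline{2\rho+1})$; the $q=0$ terms are handled by the ($W=0$) shuffle-regularisation of $\zeta^\mot(\{2\}^p,1)$ and in part cancel against their counterparts in the other block. Assembling everything, the required equality $D_r(\mathrm{LHS}) = D_r(\mathrm{RHS})$ becomes a binomial sum identity in $p,\rho,a,b$.

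The hard part is precisely this final reduction: tracking which cuts occur and over which ranges, substituting the explicit $\zeta^\lmot(\{2\}^p,1,\{2\}^q)$ evaluations (including the regularised $q=0$ case), and verifying the resulting binomial identity — where the interplay of the two cancelling blocks, the deconcatenation contribution for $\rho \ge a$, and the $\tfrac{2^{2\rho}}{2^{2\rho}-1}$ weightings requires a careful case split on the relative sizes of $\rho$, $a$ and $b$.
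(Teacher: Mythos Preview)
Your proposal is correct and follows essentially the same route as the paper: induction on weight, direct check of $D_1$ via \autoref{prop:d1}, computation of $D_{2r+1}$ for $r\ge 1$ from \autoref{prop:dkt} with the deconcatenation term handled by the inductive hypothesis and the cut terms by \eqref{eqn:coeffzl2232} (Brown's motivic Zagier evaluation plus the regularised $\zeta^\lmot(\{2\}^p,1)$), and then Glanois's kernel theorem together with the period map and \autoref{thm:sht2212ev}. The paper organises the ``hard part'' you identify by writing the coefficients as $A^r_{\bullet,\bullet}$, $B^r_{\bullet,\bullet}$ and invoking Brown's Lemma~4.2 from \cite{brown12}, which is exactly the binomial/telescoping identity you anticipate; one small caution is that the two cut-blocks do not literally cancel termwise (the $\zeta^\lmot$ arguments are reversed between them), and the simplification goes through only after substituting the explicit $A,B$ coefficients and applying that lemma.
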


	Before we begin, it will be useful for later purposes to recall the motivic identity proven in \cite{murakami21} for \( \ttw^\mot(\{2\}^a, 3, \{2\}^b) \).  This also gives us an opportunity to compare and contrast the two evaluations, which in the MZV case would be equal by duality.

	\begin{Thm}[Murakami, {\cite[Theorem 22]{murakami21}}]\label{thm:mott2232}
		The following motivic identity holds for all \( a,b \geq 0 \)
		\begin{equation}
		\label{eqn:mott2232ev}
		\begin{aligned}
		\ttw^{\mot}(\{2\}^a,3,\{2\}^b) = {} & \!\! \sum_{r=1}^{a+b+1} \! (-1)^{r+1} \cdot 2 \bigg[ \binom{2r}{2a+1} + (1 - 2^{-2r}) \binom{2r}{2b+1} \bigg] \begin{aligned}[t] & \zeta^\mot({2r+1}) \\
		& \quad \cdot \ttw^\mot(\{2\}^{a + b + 1 - r}) \hspace{-1em}
		\end{aligned}
		\end{aligned}
		\end{equation}
	\end{Thm}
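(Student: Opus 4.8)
\emph{Strategy.} The statement is proved in two stages, following the template of Zagier's evaluation of \(\zeta(\{2\}^a,3,\{2\}^b)\) and Brown's motivic lift of it. \emph{Stage 1} produces the numerical identity, i.e.\ \eqref{eqn:mott2232ev} with each \(\ttw^\mot\) replaced by its period; this is Proposition 13 of \cite{murakami21} and is established by the generating-function/hypergeometric method of Zagier: one forms \(\sum_{a,b\ge0}(-1)^{a+b}t(\{2\}^a,3,\{2\}^b)(2x)^{2a}(2y)^{2b}\), writes it via the multiple \(t\)-polylogarithms \(\Ti_{\{2\}^a,3,\{2\}^b}\) as a specialisation of a \({}_4F_3\), reduces that to \({}_3F_2\)'s by contiguous relations, and evaluates the resulting pieces with Whipple's theorem and the Pochhammer-sum identities of \cite{zagier2232}; extracting coefficients from the resulting expression in \(\cos(\pi x),\cos(\pi y),A,B\) gives \eqref{eqn:mott2232ev} numerically. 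Since \(t(\{2\}^a,3,\{2\}^b)\) is convergent, no regularisation intervenes.

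\emph{Stage 2: motivic lift.} Let \(F_{a,b}^\mot\) denote the difference of the two sides of \eqref{eqn:mott2232ev}, homogeneous of weight \(N=2a+2b+3\). By Theorem 8 of \cite{murakami21} (equivalently, by a weight induction checking the criterion of \autoref{thm:glanois:descent}) one has \(\ttw^\mot(\{2\}^a,3,\{2\}^b)\in\mathcal H^{(1)}\), hence \(F_{a,b}^\mot\in\mathcal H^{(1)}\), so Brown's theorem \(\ker D_{<N}\cap\mathcal H^{(1)}=\zeta^\mot(N)\,\Q\) (Theorem 3.3 of \cite{brown12}, recalled above) applies. I would prove \(F_{a,b}^\mot=0\) by induction on \(N\). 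The base case \(a=b=0\) is \(\ttw^\mot(3)=7\zeta^\mot(3)\), immediate from \(\zeta^\mot(\overline{3})=-\tfrac34\zeta^\mot(3)\). In the inductive step it is enough to show \(D_s F^\mot_{a,b}=0\) for every odd \(1\le s<N\); Brown's theorem then gives \(F^\mot_{a,b}=\lambda\,\zeta^\mot(N)\) for some \(\lambda\in\Q\), and applying \(\per\) together with Stage 1 yields \(\lambda\,\zeta(N)=\per F^\mot_{a,b}=0\), so \(\lambda=0\).

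\emph{The derivation calculation.} For \(s=1\) there is nothing to do: the index \((\{2\}^a,3,\{2\}^b)\) has no entry equal to \(1\), so \(D_1\ttw^\mot(\{2\}^a,3,\{2\}^b)=0\) by \autoref{prop:d1}, and \(D_1\) of the right-hand side is \(0\) because the term \(2r+1=1\) lies outside the summation range. For odd \(s=2\rho+1\ge3\) I would apply \autoref{prop:dkt}. On the right-hand side, using that \(\ttw^\mot(\{2\}^m)\) and \(\zeta^\mot(2)^m\) are killed by all \(D_r\) and that \(D_s\zeta^\mot(2r+1)=\delta_{s=2r+1}\,\zeta^\lmot(2r+1)\otimes1\), one finds
\[
 D_{2\rho+1}(\text{RHS})=(-1)^{\rho+1}\cdot2\Big[\binom{2\rho}{2a+1}+(1-2^{-2\rho})\binom{2\rho}{2b+1}\Big]\,\zeta^\lmot(2\rho+1)\otimes\ttw^\mot(\{2\}^{a+b+1-\rho}).
\]
On the left-hand side one runs through the three families of terms in \autoref{prop:dkt}. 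The deconcatenation terms \eqref{eqn:dr:deconcat} survive only when \(\rho\ge a+1\), contributing \(\ttw^\lmot(\{2\}^a,3,\{2\}^{\rho-a-1})\otimes\ttw^\mot(\{2\}^{a+b+1-\rho})\), whose left factor reduces modulo products to a rational multiple of \(\zeta^\lmot(2\rho+1)\) by the inductive hypothesis. In \eqref{eqn:dr:0eps} and \eqref{eqn:dr:eps0} the \(\delta\)-conditions force the excised block \(\vec{k}_{i,j}\) to contain the central \(3\) and force \(\abs{\vec{k}_{i,j}}-s=2\), so the right factor is again \(\ttw^\mot(\{2\}^{a+b+1-\rho})\); some of these terms cancel in pairs because of the opposite signs in \autoref{prop:dkt}, and the survivors, whose left factors have the form \(\zeta^\lmot_\ell(\{2\}^u,3,\{2\}^v)\) (or the reverse) with \(u+v=\rho\), are collapsed to multiples of \(\zeta^\lmot(2\rho+1)\) by first unshuffling the leading zeros via \eqref{eqn:unshufflestart0} and then discarding products via Zagier's evaluation of \(\zeta^\mot(\{2\}^p,3,\{2\}^q)\) \cite{zagier2232}, known motivically by \cite{brown12}. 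After all of this \(D_{2\rho+1}(\text{LHS})\) becomes a single tensor \(\bigl(\text{rational}\bigr)\,\zeta^\lmot(2\rho+1)\otimes\ttw^\mot(\{2\}^{a+b+1-\rho})\), and equality with \(D_{2\rho+1}(\text{RHS})\) reduces to one binomial identity for each \(\rho\), provable by Vandermonde-type manipulation or generating functions.

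\emph{Main obstacle.} Stage 1 is long but mechanical, reusing the \({}_4F_3\)/\({}_3F_2\) toolkit of \autoref{sec:t2212ev}. The genuine difficulty sits in the \(D_{2\rho+1}\) computation of Stage 2: enumerating precisely which cuts contribute for each \(\rho\), organising the cancellations between \eqref{eqn:dr:0eps} and \eqref{eqn:dr:eps0}, reducing each regularised or higher-depth left factor correctly (the step that invokes \eqref{eqn:unshufflestart0} and Zagier's formula), and finally verifying that the accumulated coefficient collapses exactly to \((-1)^{\rho+1}2\bigl[\binom{2\rho}{2a+1}+(1-2^{-2\rho})\binom{2\rho}{2b+1}\bigr]\), with separate attention to the edge cases \(a=0\), \(b=0\), and \(\rho=a+b+1\). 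One simplification known in advance: the \(-\log^\lmot(2)\) corrections in \autoref{prop:dkt} occur only at \(r=1\), where both sides of \eqref{eqn:mott2232ev} already vanish, so they never enter this identity — the structural reason the \(t\)-evaluation with a central \(3\) lifts to the motivic level as cleanly as its MZV analogue.
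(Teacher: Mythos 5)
You have reconstructed a proof of a statement that the paper itself does not prove: it is quoted directly from Murakami \cite[Theorem 22]{murakami21}, whose argument runs exactly along the lines you sketch — a Zagier-style hypergeometric/generating-series evaluation of \( t(\{2\}^a,3,\{2\}^b) \) for the numerical identity, followed by a motivic lift obtained by computing \( D_{2\rho+1} \) of both sides (with the cancellation pattern and binomial collapse governed by Brown's Lemma 4.2) and concluding via the kernel-of-\( D_{<N} \) theorem and the period map. Your proposal is therefore consistent with the cited proof and with the strategy the paper itself uses for the analogous \( \ttw^\mot(\{2\}^a,1,\{2\}^b) \) evaluation; I see no gap.
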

	
	\subsection{Proof of \autoref{thm:mott2212}}
	
	After application of the period map, the identity in the theorem reduces to (\( 2^{2a+2b+1} \) times) the identity in  \eqref{eqn:sht2212ev}, with \( W = 0 \).  Therefore we only need to verify that 
	\[
	 D_{2r+1} \big( \text{LHS } \eqref{eqn:mott2212ev} \big)
	 - D_{2r+1} \big( \text{RHS }  \eqref{eqn:mott2212ev} \big) = 0
	\]
	This will show that the purported identity lies in the kernel of \( D_{<N} \), \( N = 2a + 2b + 1 \).  Hence by Glanois's Theorem (\autoref{thm:glanois:kerDN}) it holds up to an additive constant \( c \zeta^\mot(\overline{N}) \), and application of the period map shows that \( c = 0 \).  This will verify that the identity holds on the motivic level, as claimed. \medskip
	
	Write \begin{align*}
		L^{a,b} = {} & \ttw^{\mot}(\{2\}^a,1,\{2\}^b) \\[1ex]
		R^{a,b} = {} & -\sum_{r=1}^{a+b} (-1)^r \cdot 2 \bigg[ \binom{2r}{2a} + \frac{2^{2r}}{2^{2r} - 1} \binom{2r}{2b} \bigg] \zeta^\mot(\overline{2r+1}) \ttw^\mot(\{2\}^{a + b - r}) \\
		& {} + \delta_{a=0} 2 \cdot \log^\mot(2) \ttw^\mot(\{2\}^b) - \delta_{b=0} \log^\mot(2) \ttw^\mot(\{2\}^a) \,,
	\end{align*}
	for the left and right hand side of the purported identity.  In order to check 
	\[
		D_{2r+1} \big( L^{a,b} \big)
		- D_{2r+1} \big( R^{a,b} \big) = 0 \,,
	\]
	we proceed inductively on \( a + b \).  
	
	In the case \( a = b = 0 \), we immediately find
	\[
		L^{0,0} = \ttw^\mot(1) = \zeta^\mot(1) - \zeta^\mot(\overline{1}) = \log^\mot(2) = R^{0,0}
	\]
	So we may take \( a+b > 0 \).
	
	\begin{Lem}
		The following expression for \( D_{2r+1} L^{a,b} \) holds for any \( a, b \geq 0 \), and \( 0 \leq r \leq a + b \),
		\[
			D_{2r+1} L^{a,b} = D_{2r+1} \ttw^\mot(\{2\}^a,1,\{2\}^b) = \pi(\widehat{\xi}_{a,b}^r) \otimes \ttw^\mot(\{2\}^{a+b-r}) \,,
		\]
		where \( \pi \colon \mathcal{H}^2 \to \mathcal{L}^2 \) denotes the projection, and \( \widehat{\xi}_{a,b}^r \) is given by (the sums running over all indices \( \alpha,\beta \geq 0 \) satisfying \( \alpha + \beta = r \))
		\begin{align*}
			\widehat{\xi}_{a,b}^r = {} 	& \delta_{r=0} \delta_{a=0} \log^\mot(2) - \delta_{r=0} \delta_{b=0} \log^\mot(2)  + \delta_{a\leq r}  \ttw^\mot(\{2\}^a, 1, \{2\}^{r-a}) \\
			& + \sum_{\substack{\alpha \leq a-1 \\ \beta \leq b}} \zeta_0^\mot(\{2\}^\alpha, 1, \{2\}^\beta) - \sum_{\substack{\alpha \leq a \\ \beta \leq b-1}} \zeta_0^\mot(\{2\}^\beta, 1, \{2\}^\alpha)
		\,,
		\end{align*}
		
		\begin{proof}
			This is a direct, if somewhat tedious, calculation which follows from \autoref{prop:dkt}
		\end{proof}
	\end{Lem}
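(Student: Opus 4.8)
The plan is to apply \autoref{prop:dkt} directly to the index $\vec{k} = (\{2\}^a,1,\{2\}^b)$, which has length $d = a+b+1$ and exactly one entry equal to $1$, namely $k_{a+1}=1$, all other entries being $2$. Substituting ``$r$'' by ``$2r+1$'' throughout that proposition, I would sort the resulting terms into the three families \eqref{eqn:dr:deconcat}, \eqref{eqn:dr:0eps}, \eqref{eqn:dr:eps0}, show that every surviving term has right-hand tensor factor $\ttw^\mot(\{2\}^{a+b-r})$, and check that the left-hand factors assemble (before applying $\pi$) into $\widehat{\xi}_{a,b}^r$.

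For the deconcatenation family \eqref{eqn:dr:deconcat}: a prefix $\vec{k}_{1,j}$ with $j \le a$ has even weight $2j \neq 2r+1$, while for $j = a+1+m$ with $0 \le m \le b$ its weight is $2a+1+2m$, which equals $2r+1$ exactly when $m = r-a$. Since $r \le a+b$ is assumed, this is possible iff $a \le r$, and it contributes the single term $\delta_{a\le r}\,\ttw^\lmot(\{2\}^a,1,\{2\}^{r-a}) \otimes \ttw^\mot(\{2\}^{b-(r-a)})$; noting $b-(r-a) = a+b-r$ and that $\pi$ sends $\ttw^\mot$ to $\ttw^\lmot$, this matches the projection of the $\delta_{a\le r}\ttw^\mot(\{2\}^a,1,\{2\}^{r-a})$ summand of $\widehat{\xi}_{a,b}^r$.

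The crux is the two ``sub-arc'' families \eqref{eqn:dr:0eps} and \eqref{eqn:dr:eps0}. The key observation is a parity obstruction: a subindex $\vec{k}_{i,j}$ with $i<j$ that does \emph{not} contain the entry $1$ equals $\{2\}^m$ with $m \ge 2$, so $\abs{\vec{k}_{i+1,j}} = 2m-2$ and $\abs{\vec{k}_{i,j}}-1 = 2m-1$, and the defining inequality $2m-2 \le 2r+1 < 2m-1$ of \eqref{eqn:dr:0eps} is impossible (the first half forces $2r+1 \ge 2m-2$, the second forces $2r+1 \le 2m-3$); the same holds for \eqref{eqn:dr:eps0}. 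Hence only subindices $\vec{k}_{i,j} = (\{2\}^\alpha,1,\{2\}^\beta)$ straddling position $a+1$ contribute, and for these $\abs{\vec{k}_{i,j}} = 2(\alpha+\beta)+1$ is odd, the replacement value $\abs{\vec{k}_{i,j}}-(2r+1) = 2(\alpha+\beta-r)$ is even, and -- using $\abs{\vec{k}_{i+1,j}} = 2(\alpha+\beta)-1$ when $\alpha \ge 1$, the boundary case $\alpha = 0$ being again killed by parity -- the inequality pins down $\alpha+\beta = r+1$, so the replacement equals exactly $2$ and the deformed index is $(\{2\}^{a-\alpha},2,\{2\}^{b-\beta}) = \{2\}^{a+b-r}$ as required. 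The subscript $(2r+1) - \abs{\vec{k}_{i+1,j}}$ then collapses to $0$, leaving left factor $\zeta_0^\lmot(\{2\}^{\alpha-1},1,\{2\}^\beta)$; summing over $1\le\alpha\le a$, $0\le\beta\le b$ with $\alpha+\beta = r+1$ and re-indexing $\alpha\mapsto\alpha+1$ recovers $\sum_{\alpha\le a-1,\,\beta\le b}\zeta_0^\lmot(\{2\}^\alpha,1,\{2\}^\beta)$, and the identical analysis of \eqref{eqn:dr:eps0} -- where the $\zeta_0^\lmot$-factor reads the reversed subindex, $\beta=0$ is the degenerate case, and there is an overall minus sign -- yields $-\sum_{\alpha\le a,\,\beta\le b-1}\zeta_0^\lmot(\{2\}^\beta,1,\{2\}^\alpha)$ after re-indexing $\beta\mapsto\beta+1$.

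Finally the case $r=0$ (that is $D_1$, where $2r+1 = 1$) must be handled with care, since the $-\delta_{2r+1=1}\log^\lmot(2)$ corrections in \eqref{eqn:dr:0eps} and \eqref{eqn:dr:eps0} are active and some of the sub-arcs become degenerate; tracking them, using $\zeta^\lmot(1) = 0$ and $\ttw^\mot(1) = \log^\mot(2)$, produces exactly the $\delta_{r=0}\delta_{a=0}\log^\mot(2) - \delta_{r=0}\delta_{b=0}\log^\mot(2)$ contributions to $\widehat{\xi}_{a,b}^r$ and can be cross-checked against \autoref{prop:d1}. Collecting the three families and pulling the common $\ttw^\mot(\{2\}^{a+b-r})$ out of the right-hand tensor slot then gives the asserted formula. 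I expect the main obstacle to be purely bookkeeping: carrying the substitution $r\mapsto 2r+1$ correctly through all the delta-conditions of \autoref{prop:dkt}, pinning down exactly which boundary terms ($\alpha=0$, $\beta=0$, and non-straddling subindices) drop out, and getting the summation ranges right after re-indexing -- with the weight-$1$ $\log^\lmot(2)$ corrections being the easiest place to slip a sign.
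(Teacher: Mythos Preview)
Your proposal is correct and is precisely the approach the paper intends: the paper's own proof is the single sentence ``This is a direct, if somewhat tedious, calculation which follows from \autoref{prop:dkt}'', and you have faithfully carried out that calculation --- the parity obstruction for non-straddling subindices, the determination $\alpha+\beta=r+1$ forcing replacement value $2$ and subscript $0$, the re-indexing, and the $r=0$ bookkeeping with the $-\delta_{r=0}\log^\lmot(2)$ corrections all check out against both \autoref{prop:dkt} and the cross-check in \autoref{prop:d1}.
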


	Now introduce the following notation from \cite{brown12}
	\[
		A_{a,b}^r = \binom{2r}{2a+2} \,, B_{a,b}^r = (1 - 2^{-2r}) \binom{2r}{2b+1} \,,
	\]
	and recall one of the main results proved therein.
	
	\begin{Thm}[Brown {\cite[Theorem 4.3]{brown12}}]\label{thm:brown:z2232}
		For all \( a, b \geq \), the following identity amongst motivic MZV's holds
		\[
			\zeta^\mot(\{2\}^a, 3, \{2\}^b) = \sum_{r=1}^{a+b+1} (-1)^{r-1} \cdot \big( {-}A_{a,b}^r + B_{a,b}^r \big) \zeta^\mot(2r+1) \zeta^\mot(\{2\}^{a+b+1-r}) \,.
		\]
	\end{Thm}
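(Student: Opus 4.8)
The plan is to run Brown's coaction argument. Both sides of the asserted identity lie in \( \mathcal{H}^{(1)}_N \) with \( N = 2a+2b+3 \) odd, so by Brown's theorem that \( \ker D_{<N} \cap \mathcal{H}^{(1)} = \Q\,\zeta^\mot(N) \) it suffices to show that the difference of the two sides is annihilated by every \( D_{2r+1} \) with \( 2r+1 < N \); then that difference is a rational multiple of \( \zeta^\mot(N) \), and applying the period map \( \per \) together with Zagier's analytic evaluation of \( \zeta(\{2\}^a,3,\{2\}^b) \) in \cite{zagier2232} forces the multiple to vanish. I would organise the verification of the \( D_{2r+1} \)-identities as an induction on \( a+b \) (equivalently on the weight), so that when evaluating the coaction of the left-hand side one may already invoke the identity for strictly smaller indices.

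The right-hand side is handled in one line. Writing \( R = \sum_{s}(-1)^{s-1}({-}A_{a,b}^s + B_{a,b}^s)\,\zeta^\mot(2s+1)\,\zeta^\mot(\{2\}^{a+b+1-s}) \), we use that \( D_{2r+1} \) is a derivation, that \( \zeta^\mot(\{2\}^m) \) is a polynomial in \( \zeta^\mot(2) \) (the motivic Euler evaluation) and hence lies in \( \ker D_{2r+1} \) since \( \zeta^\lmot(2)=0 \), and that \( \zeta^\mot(2s+1) \) is primitive so that \( D_{2r+1}\zeta^\mot(2s+1) = \delta_{r=s}\,\zeta^\lmot(2r+1)\otimes 1 \). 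This yields
\[
D_{2r+1} R = (-1)^{r-1}\big({-}A_{a,b}^r + B_{a,b}^r\big)\,\zeta^\lmot(2r+1)\otimes\zeta^\mot(\{2\}^{a+b+1-r}) \,,
\]
so the theorem reduces to establishing exactly this formula for \( D_{2r+1}\zeta^\mot(\{2\}^a,3,\{2\}^b) \).

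That last computation is the substance. Write \( \zeta^\mot(\{2\}^a,3,\{2\}^b) = (-1)^{a+b+1}I^\mot(0;\rho;1) \) with \( \rho \) the word \( (1,0)^a(1,0,0)(1,0)^b \), and feed this into the explicit cut formula for \( D_{2r+1} \): each term is \( I^\lmot(x_p; x_{p+1},\ldots,x_{p+2r+1}; x_{p+2r+2}) \otimes I^\mot(0; x_1,\ldots,x_p,x_{p+2r+2},\ldots,x_N; 1) \) as the length-\( (2r+1) \) window slides along \( \rho \). The decisive structural facts are: because \( \rho \) is concatenated only from the blocks \( 10 \) and \( 100 \), every substring occurring as a left factor is, after the leading-zero unshuffling property (vi) recalled in \autoref{sec:mot}, a \( \Q \)-linear combination of integrals of the shape \( \zeta^\lmot(\{2\}^\alpha,3,\{2\}^\beta) \) and \( \zeta^\lmot(\{2\}^m) \) of weight \( 2r+1 \) with \( \alpha+\beta < a+b \), and by the inductive hypothesis, together with \( \zeta^\lmot(2)=0 \) and the vanishing of products in \( \mathcal{L} \), each is a known rational multiple of \( \zeta^\lmot(2r+1) \); likewise each right factor is, up to sign, either a pure-twos value \( \zeta^\mot(\{2\}^{a+b+1-r}) \) (when the \( 100 \)-block is entirely inside the window, or when exactly one of its two zeros is removed so that the \( 3 \) collapses to a \( 2 \)) or else a motivic MZV still carrying a single \( 3 \) or an internal \( 1 \). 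I would then split the windows into those lying strictly left of the \( 3 \), those meeting it, and those strictly right of it, and perform the binomial bookkeeping: the contributions whose right factor is not of pure-twos type must cancel in pairs, and the survivors assemble into the coefficients \( \binom{2r}{2a+2} \) and \( (1-2^{-2r})\binom{2r}{2b+1} \), the factor \( 1-2^{-2r} \) entering precisely through the inductive evaluation of those left factors that straddle the \( 3 \)-block.

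The main obstacle is exactly this final bookkeeping: tracking the signs produced by path reversal and duality inside the left factors, and exhibiting the pairwise cancellation of every term whose right factor fails to be a pure-twos value, so that the raw output of the cut formula collapses to \( (-1)^{r-1}({-}A_{a,b}^r+B_{a,b}^r)\,\zeta^\lmot(2r+1)\otimes\zeta^\mot(\{2\}^{a+b+1-r}) \). Once the \( D_{2r+1} \) of the two sides agree for all \( 2r+1 < N \), Brown's kernel theorem together with the period map closes the induction and completes the proof.
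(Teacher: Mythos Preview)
The paper does not give its own proof of this statement: \autoref{thm:brown:z2232} is quoted verbatim from Brown \cite[Theorem~4.3]{brown12} and used as a black box (together with the companion \cite[Lemma~4.2]{brown12}) in the proof of \autoref{thm:mott2212}. There is therefore nothing in the paper to compare against.

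That said, your sketch is a faithful outline of Brown's original argument: induction on the weight, computing \(D_{2r+1}\) on both sides, using primitivity of \(\zeta^\mot(2s+1)\) and the vanishing of \(D_{2r+1}\) on \(\zeta^\mot(\{2\}^m)\) for the right-hand side, reducing the left-hand side via the cut formula to lower-weight instances plus the binomial identities of \cite[Lemma~4.2]{brown12}, and closing with the kernel theorem and Zagier's analytic evaluation under the period map. The part you flag as ``the main obstacle'' --- the pairwise cancellation of all terms whose right factor is not \(\zeta^\mot(\{2\}^{a+b+1-r})\) and the assembly of the survivors into \(-A_{a,b}^r+B_{a,b}^r\) --- is exactly what \cite[Lemma~4.2]{brown12} packages, and is the genuine content; your description of how the \(1-2^{-2r}\) arises is correct in spirit.
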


	We therefore have for \( \alpha \geq 0, \beta > 0 \) that
	\begin{equation}\label{eqn:coeffzl2232}
	\begin{aligned}
		\zeta_0^\lmot(\{2\}^\alpha, 1, \{2\}^\beta) & {} = \zeta_0^\lmot(\{2\}^{\beta-1}, 3, \{2\}^\alpha) = 2(-1)^{\alpha+\beta} \big( A_{\beta-1,\alpha}^{\alpha+\beta} - B_{\beta-1,\alpha}^{\alpha+\beta} \big) \zeta^\lmot(2\alpha + 2\beta + 1) \,, \\
		\zeta_0^\lmot(\{2\}^\alpha, 1) & {} = \zeta_1^\lmot(\{2\}^\alpha) = 2 (-1)^\alpha \zeta^\lmot(2\alpha+1) \,.
	\end{aligned}
	\end{equation}
	The first follows by duality and  extracting the coefficient of \( \zeta^\mot(2a+2b+3) \) in \autoref{thm:brown:z2232}.  The second follows by shuffle regularising, or from the stuffle product, as shown in Lemma 3.8 \cite{brown12}.
	
	By the induction assumption, we are also granted \( \ttw^\lmot(1) = \log^\lmot(2) \), and that for \( 0 < a' + b' < a + b \) we have
	\begin{equation}\label{eqn:coefftl2212}
		\ttw^\lmot(\{2\}^{a'}, 1, \{2\}^{b'}) = 2(-1)^{1+a' + b'} \bigg[ \binom{2a'+2b'}{2a'} + \frac{2^{2(a'+b')}}{2^{2(a'+b')}-1} \binom{2a'+2b'}{2b'} \bigg] \zeta^\lmot(\overline{2a'+2b'+1}) \,.
	\end{equation}
	
	\paragraph{\bf Case \( D_1 \):} We check explicitly and directly the case \( r = 0 \), because it can have a distinctly different form, on account of the \( \delta_{r=0} \log^\lmot(2) \) terms.  Explicit we find (also directly from \autoref{prop:d1})
	\begin{align*}
		\widehat{\xi}_{a,b}^0 = {} & \delta_{a=0} \log^\mot(2) - \delta_{b=0} \log^\mot(2)  + \delta_{a\leq 0}  \ttw^\mot(\{2\}^a, 1, \{2\}^{-a}) \\
		& + \sum_{\substack{\alpha \leq a-1 \\ \beta \leq b \\ \alpha + \beta = 0}} \zeta_0^\mot(\{2\}^\alpha, 1, \{2\}^\beta) - \sum_{\substack{\alpha \leq a \\ \beta \leq b-1 \\ \alpha + \beta = 0}} \zeta_0^\mot(\{2\}^\beta, 1, \{2\}^\alpha) \\[1ex]
		= {}  & \delta_{a=0} \log^\mot(2) - \delta_{b=0} \log^\mot(2)  + \delta_{a\leq 0} \ttw^\mot(\{2\}^0, 1, \{2\}^0) \\
		& +  \zeta_0^\mot(\{2\}^0, 1, \{2\}^0) - \zeta_0^\mot(\{2\}^0, 1, \{2\}^0) \\[1ex]
		= {} & \delta_{a=0} 2 \cdot \log^\mot(2) - \delta_{b=0} \log^\mot(2) 
	\end{align*}
	So that since \( D_1 = D_{2\cdot0+1} \) with \( r = 0 \), we have
	\[
		D_{1} L^{a,b} = \big( \delta_{a=0} 2 \cdot \log^\mot(2) - \delta_{b=0} \log^\mot(2) \big) \otimes \ttw^\mot(\{2\}^{a+b})
	\]
	
	Whereas, directly from \( R^{a,b} \), we can compute the following.  We make use of some simple properties of \( D_{2r+1} \), such as the derivation and that \( \zeta^\mot(N) \) and \( \zeta^\mot(\overline{N}) \) are primitive for the coaction, viz.: \( \Delta \zeta^\mot(N) = 1 \otimes \zeta^\mot(N) + \zeta^\lmot(N) \otimes 1 \).  Overall this means
	\begin{align*}
		 & D_{2r+1} XY = (1\otimes Y)D_{2r+1} X + (1\otimes X) D_{2r+1} Y \,,  \\
		 & D_{2r+1} \zeta^\mot(N) = \begin{cases} 0 \,, & \text{if \( 2r + 1 \neq N \)} \\
		 		\zeta^\lmot(N) \,, & \text{if \( 2r + 1 = N \)} \,,
			\end{cases}
	\end{align*}
	the latter also for \( N \) replaced by \( \overline{N} \), in particular also for \( \zeta^\mot(\overline{1}) = -\log^\mot(2) \).  Applying these to the computation of \( D_{1} R^{a,b} \) gives the following
	\begin{align*}
	D_{1} R^{a,b} = {} & -\sum_{r'=1}^{a+b} (-1)^{r'} \cdot 2 \bigg[ \binom{2r'}{2a} + \frac{2^{2r'}}{2^{2r'} - 1} \binom{2r'}{2b} \bigg] \begin{aligned}[t]
	\Big( & (1 \otimes \zeta^\mot(\overline{2r'+1})) \overbrace{D_{1} \ttw^\mot(\{2\}^{a + b - r'})}^{=0}\\
	& + \underbrace{D_{1} \zeta^\mot(\overline{2r'+1})}_{=\delta_{2r'+1=1}} (1 \otimes \ttw^\mot(\{2\}^{a + b - r'})) \end{aligned} \\
& {} + \delta_{a=0} 2 \cdot \Big( D_1 \log^\mot(2) (1 \otimes \ttw^\mot(\{2\}^b) +  (1 \otimes \log^\mot(2)) \underbrace{D_1 \ttw^\mot(\{2\}^b)}_{=0} \Big) \\
& {} - \delta_{b=0} \Big( D_1 \log^\mot(2) (1 \otimes \ttw^\mot(\{2\}^a)) + (1 \otimes \log^\mot(2)) \underbrace{(D_1 \ttw^\mot(\{2\}^a))}_{=0} \Big) \,,
	\end{align*}
	So all terms vanish apart from the two terms involving \( D_1 \log^\mot(2) \), which leads to
	\begin{align*}
		D_1 R^{a,b} & {} = \delta_{a=0} 2 \cdot \log^\lmot(2) \otimes \ttw^\mot(\{2\}^b) - \delta_{b=0} \log^\lmot(2) \otimes \ttw^\mot(\{2\}^a)  \\
			& {} = \big( \delta_{a=0} 2 \cdot \log^\lmot(2) - \delta_{b=0} \log^\lmot(2) \big) \otimes \ttw^\mot(\{2\}^{a+b}) \,.
	\end{align*}
	The last simplification holds because \( R^{a,b} \) has total weight \( 2a + 2b + 1 \), so the right hand tensor factor of \( D_{1} \) must have weight \( 2a + 2b \), irrespective of checking the various cases of the Kronecker delta conditions. \medskip
	
	In particular, we have that \( D_1 L^{a,b} = D_1 R^{a,b} \) in this case. \medskip
	
	\paragraph{\bf Case \( r > 0 \):}  Now we turn to the case \( r > 0 \), which will have no extra \( \log^\lmot(2) \) contribution.  We find it helpful to separate out the terms where \( \beta = 0 \) or \( \beta > 0 \) in the sum involving \( \zeta_0^\lmot(\{2\}^\alpha, 1, \{2\}^\beta) \), and similarly for the one involving \( \zeta_0^\lmot(\{2\}^\beta, 1, \{2\}^\alpha) \).  This is on account of the different form of the coefficient of \( \zeta^\lmot(2r + 1 ) \) therein might take.  We have that
	\begin{align*}
		\widehat{\xi}_{a,b}^r = {} 	& \delta_{a\leq r}  \ttw^\mot(\{2\}^a, 1, \{2\}^{r-a}) + \sum_{\substack{\alpha \leq a-1 \\ 1 \leq \beta \leq b}} \zeta_0^\mot(\{2\}^\alpha, 1, \{2\}^\beta) - \sum_{\substack{1 \leq \alpha \leq a \\ \beta \leq b-1}} \zeta_0^\mot(\{2\}^\beta, 1, \{2\}^\alpha) \\
		& {} + \sum_{\substack{\alpha \leq a-1 \\ \beta = 0}} \zeta_0^\mot(\{2\}^\alpha, 1, \{2\}^\beta) - \sum_{\substack{\alpha = 0 \\ \beta \leq b-1}} \zeta_0^\mot(\{2\}^\beta, 1, \{2\}^\alpha) 
	\end{align*}
	Since we sum over \( \alpha + \beta = r \), the last two summations resolve to a Kronecker delta condition, namely \( \delta_{r \leq a-1} \) and \( \delta_{r \leq b-1} \) respectively.  Making the substitutions for the various \( \zeta_0^\lmot \) using \eqref{eqn:coeffzl2232} and for \( \ttw^\lmot \) from \eqref{eqn:coefftl2212} by induction, we find
	\begin{align*}
	\pi(\widehat{\xi}_{a,b}^r) = {} 	
	& \delta_{a\leq r} 2 \cdot (-1)^{r+1} \bigg( \binom{2r}{2a} + \frac{2^{2r}}{2^{2r}-1} \binom{2r}{2r-2a} \bigg) \zeta^\lmot(\overline{2r+1}) \\
	 & \raisebox{-1ex}{${} + \Bigg\{$} \begin{aligned}[t] 
	 & \sum_{\substack{\alpha \leq a-1 \\ 1 \leq \beta \leq b}} 2(-1)^{r} \big( A_{\beta-1,\alpha}^r - B_{\beta-1,\alpha}^r \big) - \sum_{\substack{1 \leq \alpha \leq a \\ \beta \leq b-1}} 2(-1)^{r} \big( A_{\alpha-1,\beta}^r - B_{\alpha-1,\beta}^r \big) \\
	& {} + \delta_{r \leq a-1} 2(-1)^r  - \delta_{r \leq b-1} 2(-1)^r  \bigg\} \zeta^\lmot(2r+1) \end{aligned}
	\end{align*}
	If we make the change of variables \( \beta = \beta' + 1 \), in the first sum, and \( \alpha = \alpha' + 1 \) in the second sum, then the (implicit) summation range \( \alpha + \beta = r\) is converted to \( \alpha + \beta' + 1 = r \) and \( \alpha' + 1 + \beta = r \) respectively.  (We shall write this explicitly from now on.)  Doing so, and simplifying the expression coming from \( \ttw^\lmot \) with \( \zeta^\mot(\overline{2r+1}) = -(1 - 2^{-2r}) \zeta^\mot(2r+1) \), gives
	\begin{align*}
	\pi(\widehat{\xi}_{a,b}^r) = {} 	
	& 2 (-1)^{r} \bigg\{  \begin{aligned}[t] 
	&  \delta_{a\leq r} \big( 2 - 2^{-2r} \big) \binom{2r}{2a} {} + \delta_{r \leq a-1}  - \delta_{r \leq b-1} \\
	& {} + \!\! \sum_{\substack{\alpha \leq a-1 \,, \beta \leq b-1 \\ \alpha + \beta = r-1}} \!\! \big( A_{\beta,\alpha}^r - B_{\beta,\alpha}^r \big) - \!\! \sum_{\substack{\alpha \leq a-1 \,, \beta \leq b-1 \\ \alpha + \beta = r-1}} \!\! \big( A_{\alpha,\beta}^r - B_{\alpha,\beta}^r \big) 
	 \bigg\} \zeta^\lmot(2r+1) \end{aligned}
	\end{align*}
	Now we may apply the following Lemma
	\begin{Lem}[Brown, {\cite[Lemma 4.2]{brown12}}]
		For any \( a, b \geq 0 \), and \( 1 \leq r \leq a+b+1 \) we have
		\begin{align*}
			& \sum_{\substack{\alpha < a, \beta \leq b \\ \alpha + \beta + 1 = r}} A_{\alpha,\beta}^r - \sum_{\substack{\alpha \leq a \,, \beta < b \\ \alpha + \beta + 1 = r}} A_{\beta,\alpha} + \delta_{b\geq r} - \delta_{a \geq r} = 0 \\
			& \sum_{\substack{\alpha \leq a \,, \beta \leq b \\ \alpha + \beta + 1 = r}} B_{\alpha,\beta}^r - \sum_{\substack{\alpha \leq a \,, \beta < b \\ \alpha + \beta + 1 = r}} B_{\beta, \alpha} = B_{a,b}^r
		\end{align*}
	\end{Lem}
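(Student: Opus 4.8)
The plan is to substitute the definitions $A^r_{a,b}=\binom{2r}{2a+2}$ and $B^r_{a,b}=(1-2^{-2r})\binom{2r}{2b+1}$ and reduce both statements to elementary binomial identities. Note that $A^r_{a,b}$ depends only on its \emph{first} index and $B^r_{a,b}$ only on its \emph{second}, so after parametrising the sums by the constraint $\alpha+\beta+1=r$ each one becomes a sum of $\binom{2r}{2k}$ (respectively $\binom{2r}{2k+1}$) over a contiguous range of $k$, and the task is purely to compare these ranges.

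For the $A$-identity I would introduce the even partial sums $E_n \coloneqq \sum_{j=0}^{n}\binom{2r}{2j}$, with the convention $E_n=0$ for $n<0$, and record two facts that follow at once from $\binom{2r}{2j}=\binom{2r}{2(r-j)}$: first, $E_n = 2^{2r-1}$ for all $n\geq r$, and second, the reflection relation $E_n + E_{r-1-n} = 2^{2r-1}$. Writing $k=\alpha+1$, the first sum on the left of the $A$-identity equals $E_{\min(a,r)}-E_{\max(0,\,r-1-b)}$ and the second equals $E_{\min(b,r)}-E_{\max(0,\,r-1-a)}$, so the left-hand side becomes
\[
 E_{\min(a,r)}-E_{\max(0,\,r-1-b)}-E_{\min(b,r)}+E_{\max(0,\,r-1-a)}+\delta_{b\geq r}-\delta_{a\geq r}.
\]
One then splits into the four cases according to whether $a\geq r$ or $a<r$ and likewise for $b$. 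If $a,b<r$ the reflection relation collapses the four $E$-terms to zero, and both deltas vanish; if exactly one of $a,b$ is $\geq r$, the saturated term $E_r=2^{2r-1}$ and the truncated lower term $E_0=1$ together leave a residual $\pm1$ that exactly cancels $\delta_{a\geq r}-\delta_{b\geq r}$; and if both are $\geq r$ all boundary contributions cancel. Each case is a two-line computation.

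For the $B$-identity I would parametrise both sums by $\alpha+\beta=r-1$; the first then runs $\binom{2r}{2\beta+1}$ over $\beta\in[\max(0,\,r-1-a),\,\min(b,r-1)]$ and the second runs $\binom{2r}{2\alpha+1}$ over $\alpha\in[\max(0,\,r-b),\,\min(a,r-1)]$. Applying the reflection $\binom{2r}{2\beta+1}=\binom{2r}{2(r-1-\beta)+1}$ to the first sum, i.e. substituting $\gamma=r-1-\beta$, rewrites it as $\sum_{\gamma\in[\max(0,\,r-1-b),\,\min(a,r-1)]}\binom{2r}{2\gamma+1}$. This has the same upper endpoint $\min(a,r-1)$ as the second sum and a lower endpoint differing from it by exactly one when $b\leq r-1$ (namely $r-1-b$ versus $r-b$), so the difference of the two sums is the single term $\binom{2r}{2(r-1-b)+1}=\binom{2r}{2b+1}$, which does lie in range since $r\leq a+b+1$; when $b\geq r$ the endpoints coincide, the sums are equal, and $\binom{2r}{2b+1}=0$ anyway. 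Multiplying back by $1-2^{-2r}$ gives $B^r_{a,b}$ in all cases.

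The underlying calculations are genuinely routine; the only real pitfall is the off-by-one bookkeeping of the summation ranges -- in particular the asymmetry between the conditions $\alpha<a$ and $\alpha\leq a$ (and $\beta\leq b$ versus $\beta<b$) in the two sums -- and checking that the boundary cases $a\geq r$ and $b\geq r$ feed precisely the Kronecker-delta corrections and nothing else. That edge-case verification is the step I would expect to be the main obstacle, the rest being a direct reflection-and-telescoping argument.
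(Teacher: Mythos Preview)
The paper does not give its own proof of this lemma; it is simply quoted from Brown \cite[Lemma 4.2]{brown12} and used as input to the computation of $D_{2r+1}$ on $\ttw^{\mot}(\{2\}^a,1,\{2\}^b)$. So there is no proof in the present paper to compare against.

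Your argument is correct. The reduction to the partial sums $E_n=\sum_{j=0}^n\binom{2r}{2j}$ together with the reflection $E_n+E_{r-1-n}=2^{2r-1}$ and the saturation $E_n=2^{2r-1}$ for $n\geq r$ handles the $A$-identity cleanly in all four cases, and the single-term telescoping after the substitution $\gamma=r-1-\beta$ gives the $B$-identity. The edge cases you flag (notably $b=0$ and $r=a+b+1$, where the second $B$-sum is empty) do work out, since $r\leq a+b+1$ is exactly what guarantees the surviving term $\gamma=r-1-b$ lies in the range of the first sum. This is essentially the same elementary verification Brown gives, phrased slightly differently.
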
	

	In the case \( a-1, b - 1 \), we may still apply the lemma whenever \( r \leq (a-1) + (b-1) + 1 = a + b - 1 \).  But we are computing \( D_{2r+1} \) with \( 3 \leq 2r + 1 < 2a + 2b + 1 \), i.e. \( 3 \leq 2r + 1 \leq 2a + 2b - 1 \) or equivalently \( 1 \leq r \leq a + b - 1 \).  Application of this lemma (taking care with the range of summation indices, some are \( < \) while others are \( \leq \)) gives
	\begin{align*}
	\pi(\widehat{\xi}_{a,b}^r) = {} 	
	& 2 (-1)^{r} \bigg\{  \begin{aligned}[t] 
	&  \delta_{a\leq r} \cdot \big( 2 - 2^{-2r} \big) \binom{2r}{2a} {} + \delta_{r \leq a-1}  - \delta_{r \leq b-1} \\
	& {} - \delta_{a \leq r} A_{a-1,r -a}^r  + \delta_{b \leq r} A_{b-1,r-b}^r - \!\! \sum_{\substack{\alpha < a-1 \,, \beta \leq b-1 \\ \alpha + \beta = r-1}} \!\! A_{\alpha,\beta}^r  
	 + \!\! \sum_{\substack{\alpha \leq a-1 \,, \beta < b-1 \\ \alpha + \beta = r-1}} \!\!  A_{\beta,\alpha}^r   \\
	& {} - \delta_{b \leq r} B_{b-1, r-b}^r + \!\! \sum_{\substack{\alpha \leq a-1 \,, \beta \leq b-1 \\ \alpha + \beta = r-1}} \!\!  B_{\alpha,\beta}^r 
	- \!\! \sum_{\substack{\alpha \leq a-1 \,, \beta < b-1 \\ \alpha + \beta = r-1}} \!\!  B_{\beta,\alpha}^r  
	\bigg\} \zeta^\lmot(2r+1) \end{aligned} \\[1ex]
	= {} 	
	& 2 (-1)^{r} \bigg\{  \begin{aligned}[t] 
	&  \delta_{a\leq r} \cdot \big( 2 - 2^{-2r} \big) \binom{2r}{2a} {} + \delta_{r \leq a-1}  - \delta_{r \leq b-1} \\
	& {} - \delta_{a \leq r} A_{a-1,r -a}^r  + \delta_{b \leq r} A_{b-1,r-b}^r + \delta_{b-1 \geq r} - \delta_{a-1 \geq r}   \\
	& {} - \delta_{b \leq r} B_{b-1, r-b}^r + B_{a-1,b-1}^r
	\bigg\} \zeta^\lmot(2r+1) \end{aligned} \\[1ex]
	= {} 	
	& 2 (-1)^{r} \bigg\{  \begin{aligned}[t] 
	&  \delta_{a\leq r} \cdot \big( 2 - 2^{-2r} \big) \binom{2r}{2a} {} - \delta_{a \leq r} A_{a-1,r -a}^r  + \delta_{b \leq r} A_{b-1,r-b}^r  \\
	& {} - \delta_{b \leq r} B_{b-1, r-b}^r + B_{a-1,b-1}^r
	\bigg\} \zeta^\lmot(2r+1) \end{aligned}
	\end{align*}
	We now make a number of straight forward simplifications.  Namely, \( A_{a,b}^r \) only depends on \( a,r \), and \( B_{a,b}^r \) only depends on \( b,r \).  Moreover since \( r \geq 1 \), if \( a \geq r \) or \( a < 0 \), then already \( A_{a,b}^r = 0 \), likewise if \( b \geq r \) or \( b < 0 \) then \( B_{a,b}^r = 0 \).  So we find
	\begin{align*}
	\pi(\widehat{\xi}_{a,b}^r) = {} 	
	& 2 (-1)^{r} \bigg\{  \begin{aligned}[t] 
		&   \big( 2 - 2^{-2r} \big) \binom{2r}{2a} {} -  A_{a-1,b-1}^r  + A_{b-1,a-1}^r  - B_{b-1, r-b}^r + B_{a-1,b-1}^r 	\bigg\} \zeta^\lmot(2r+1) \end{aligned}
	\end{align*}
	Note that \( - B_{b-1, r-b}^r + B_{a-1,b-1}^r = 0 \) just by their definitions, so overall we obtain
	\begin{align*}
	\pi(\widehat{\xi}_{a,b}^r) = {} 	
	& 2 (-1)^{r} \bigg\{  \begin{aligned}[t] 
	&   \big( 1 - 2^{-2r} \big) \binom{2r}{2a} + \binom{2r}{2b} 	\bigg\} \zeta^\lmot(2r+1) \end{aligned}
	\end{align*}
	Therefore
	\begin{align*}
		D_{2a+1} L^{a,b} & {} = \pi(\widehat{\xi}_{a,b}^r) \otimes \ttw^\mot(\{2\}^{a+b-r}) \\
		& {} = 2 (-1)^{r} \bigg\{  \big( 1 - 2^{-2r} \big) \binom{2r}{2a} + \binom{2r}{2b} 	\bigg\} \zeta^\lmot(2r+1) \otimes \ttw^\mot(\{2\}^{a+b-1})
	\end{align*}
	gives us the derivation of the left hand side of \eqref{eqn:mott2212ev}, for \( r > 0 \). \medskip

	On the other hand, a direct computation of \( D_{2r+1} R^{a,b} \) gives us that
	\begin{align*}
		D_{2a+1} R^{a,b} = {} & (-1)^{r+1} \cdot 2 \bigg[ \binom{2r}{2a} + \frac{2^{2r}}{2^{2r} - 1} \binom{2r}{2b} \bigg] \zeta^\lmot(\overline{2r+1}) \otimes \ttw^\mot(\{2\}^{a+b-r}) \\
		= {} & 2 (-1)^r \Bigg[ (1 - 2^{-2r}) \binom{2r}{2a} + \binom{2r}{2b} \Bigg] \zeta^\lmot(2r+1) \otimes \ttw^\mot(\{2\}^{a+b-r})
	\end{align*}
	is the derivation of the right hand side of \eqref{eqn:mott2212ev}, for \( r > 0 \). \medskip
	
	\paragraph{\bf Conclusion:} We have shown that \( D_{2r+1} L^{a,b} - R^{a,b} = 0 \) for \( 0 \leq r \leq a + b - 1 \), hence \( L^{a,b} - R^{a,b} \in \ker D_{<N} \).  Therefore by Glanois's theorem, we know that
	\[
		L^{a,b} - R^{a,b} = c \zeta^\mot(2a + 2b + 1) \,,
	\]
	for some \( c \in \Q \).  Then by applying the period map, we reduce to the numerically valid identity in \autoref{thm:sht2212ev} (with \( W = 0 \)), and hence see that \( c = 0 \).   Therefore the identity \( L^{a,b} = R^{a,b} \) is true on the motivic level, and this complete the proof.  \hfill \qedsymbol
	
	\section{Independence of Saha's elements}
	\label{sec:saha}
	
	We now turn to the first application of this motivic identity.  We show that the elements that Saha conjectured \cite{saha17} to be a basis for convergent MtV's are, at least, linearly independent. \medskip
	
	We recall briefly Saha's conjecture.
	
	\begin{Conj}[Saha, \cite{saha17}]
		Let
		\[
			\mathcal{B}^S \coloneqq \{ t(k_1,\ldots,k_{m-1}, k_m+1) \mid k_i \in \{1,2\} \} \,.
		\]
		Then \( \mathcal{B}^S \) is a basis for convergent MtV's.  Moreover, the weight \( w \) component of \( \mathcal{B}^S \) is
		\[
				\mathcal{B}_w^S = \{ t(k_1,\ldots,k_{m-1}, k_m+1) \mid k_i \in \{1,2\} \,, k_1 + \cdots + k_m = w-1 \} \,,
		\]
		which has cardinality \( \#{\mathcal{B}_N^S} = F_N \), for \( N > 1 \).  Here \( F_n = F_{n-1} + F_{n-2} \) is the \( n \)-th Fibonacci number, with \( F_1 = F_2 = 1 \).
	\end{Conj}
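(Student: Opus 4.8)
The plan is to argue by induction on the weight $N$, using the derivations $D_r$ (\autoref{prop:dkt}, \autoref{prop:d1}), Glanois's kernel theorem \autoref{thm:glanois:kerDN}, and the explicit evaluation \autoref{thm:mott2212} together with Murakami's \autoref{thm:mott2232}. Since the comodule is graded by weight, a linear relation among the elements of $S$ splits into one within each weight, so it suffices to fix $N$ and show that the $F_N$ Saha elements of that weight are independent; the base cases $N = 2, 3$ are immediate ($D_1$ separates $t^\mot(1,2)$ from $t^\mot(3)$, and $t^\mot(3)\neq0$). So let $N\geq4$ and suppose $\sum_{\vec{\kappa}}c_{\vec{\kappa}}\,\ttw^\mot(\vec{\kappa})=0$, the sum over Saha words $\vec{\kappa}=(k_1,\dots,k_{r-1},k_r+1)$, $k_i\in\{1,2\}$, of weight $N$ (I pass to the rescaled $\ttw^\mot$, which differs from $t^\mot$ by a nonzero scalar). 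First apply $D_1$: by \autoref{prop:d1} this equals $2\sum_{k_1=1}c_{\vec{\kappa}}\,\log^\lmot(2)\otimes\ttw^\mot(\vec{\kappa}')$, where $\vec{\kappa}'$ drops the leading $1$ (there is no trailing-$1$ contribution, every Saha word ending in an entry $\geq2$). As $\vec{\kappa}$ ranges over Saha words of weight $N$ beginning with $1$, the shifted word $\vec{\kappa}'$ ranges bijectively over all Saha words of weight $N-1$, which are independent by the inductive hypothesis; since $\log^\lmot(2)\neq0$ this forces $c_{\vec{\kappa}}=0$ for every $\vec{\kappa}$ with $k_1=1$.

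It remains to treat the $\vec{\kappa}$ with $k_1=2$. Each such index has exactly one of the shapes $(\{2\}^j,1,w)$ with $j\geq1$ and $w$ a Saha word of weight $N-2j-1$, or $(\{2\}^j,3)$ with $N=2j+3$, or the all-$2$'s word $(\{2\}^{N/2})$ (only when $N$ is even). I would peel off the leading run of $2$'s one block at a time by means of $D_{2j+1}$: for $\vec{\kappa}=(\{2\}^j,1,w)$ the unique prefix of weight $2j+1$ is $(\{2\}^j,1)$, so the deconcatenation term \eqref{eqn:dr:deconcat} of \autoref{prop:dkt} is exactly $\ttw^\lmot(\{2\}^j,1)\otimes\ttw^\mot(w)$, and \autoref{thm:mott2212} with $(a,b)=(j,0)$ evaluates $\ttw^\lmot(\{2\}^j,1)=(-1)^{j+1}\,2\,\tfrac{2^{2j+1}-1}{2^{2j}-1}\,\zeta^\lmot(\overline{2j+1})$, a nonzero multiple of $\zeta^\lmot(\overline{2j+1})=(2^{-2j}-1)\zeta^\lmot(2j+1)$; for $\vec{\kappa}=(\{2\}^j,3)$, Murakami's \autoref{thm:mott2232} gives the $\mathcal{L}_N$-class as a nonzero multiple of $\zeta^\lmot(N)$; and $\ttw^\mot(\{2\}^{N/2})$ is a rational multiple of $\zeta^\mot(2)^{N/2}$, hence lies in $\ker D_{<N}$ and survives every derivation. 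Ordering the indices with $k_1=2$ by appropriate ``block-structure'' data (the number of leading $2$'s, and then recursively the structure of $w$), I would show that $D_{2j+1}$, post-composed with the projection of its left tensor factor onto the $\zeta^\lmot(2j+1)$-line inside $\mathcal{L}^{(1)}_{2j+1}$, is triangular for this ordering: the diagonal entry is the nonzero scalar above, and the remaining contributions of $D_{2j+1}\ttw^\mot(\vec{\kappa})$ are $\ttw^\mot$ of indices of strictly smaller block-complexity, which by the inductive hypothesis (and by \autoref{thm:mott2212}, \autoref{thm:mott2232}, and Murakami's \cite[Theorem~8]{murakami21} for the indices with all entries $\geq2$) lie in the span of lower Saha elements. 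Peeling blocks successively then forces every $c_{\vec{\kappa}}$ with $\vec{\kappa}\neq(\{2\}^{N/2})$ to vanish; the leftover relation $c\,\ttw^\mot(\{2\}^{N/2})=0$ (only relevant for even $N$) gives $c=0$ because $\ttw^\mot(\{2\}^{N/2})$ has nonzero period, and equivalently any residual lies in $\ker D_{<N}\cap\mathcal{H}^{(2)}_N=\zeta^\mot(\overline N)\Q$ by \autoref{thm:glanois:kerDN} and is eliminated by the period map. This closes the induction.

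The hard part will be making the triangularity in the second step precise. Beyond the clean deconcatenation term, $D_{2j+1}\ttw^\mot(\{2\}^j,1,w)$ also contains the interior-cut contributions \eqref{eqn:dr:0eps}--\eqref{eqn:dr:eps0} of \autoref{prop:dkt}: their right tensor factors are $\ttw^\mot$ of indices that in general are not Saha words, and their left factors are non-alternating $\zeta^\lmot$'s of weight $2j+1$ which for $2j+1\geq11$ need not be scalar multiples of $\zeta^\lmot(2j+1)$, so both the projection step and the ``strictly smaller complexity'' claim must be handled with care. One must re-expand these error terms using the stuffle product, the evaluations \autoref{thm:mott2212} and \autoref{thm:mott2232}, and Brown's combinatorial identities (such as \cite[Lemma~4.2]{brown12}), and verify that the block-complexity genuinely decreases; this is precisely the point at which the arithmetic of the coefficients $\binom{2r}{2a}+\tfrac{2^{2r}}{2^{2r}-1}\binom{2r}{2b}$ — and in particular their non-vanishing — is used. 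As a cleaner alternative I would also try working throughout modulo the ideal generated by $\zeta^\mot(2)$, reducing everything to a triangularity statement about words in the $f$-alphabet description of the motivic alternating MZV's.
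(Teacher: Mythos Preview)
First a framing remark: the statement you are trying to prove is a \emph{conjecture}, and the paper does not prove it either. What the paper establishes (\autoref{cor:saha:indep}) is only the linear independence of the motivic Saha elements; the spanning statement, and hence the basis claim, remains open (see the remark after \autoref{cor:saha:indep}). Your proposal likewise only addresses independence, so on that point you are in line with what is actually achievable.

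Your first step is fine: $D_1$ cleanly strips the $k_1=1$ words by \autoref{prop:d1}, and the induction on weight there is valid. The gap is exactly where you say it is, and it is a real one. Your scheme is to order the remaining $k_1=2$ words by the length $j$ of the leading $2$-run and to use $D_{2j+1}$ to peel off $(\{2\}^j,1)$. But $D_{2j+1}$ applied to a word $(\{2\}^{j'},1,w)$ with $j'<j$ also produces a deconcatenation term $\ttw^\lmot(\{2\}^{j'},1,\{2\}^{j-j'})\otimes\ttw^\mot(w')$ whenever $w$ begins with $\{2\}^{j-j'}$, and by \autoref{thm:mott2212} this left factor is \emph{also} a nonzero multiple of $\zeta^\lmot(2j+1)$; after your projection these cross-terms land on exactly the same right-hand factors as the ``diagonal'' terms, so there is no triangularity in the leading-block ordering. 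Moreover the interior cuts \eqref{eqn:dr:0eps}--\eqref{eqn:dr:eps0} replace arbitrary odd-weight subindices by a single $2$, producing right-hand factors that are Saha words (this part is fine; see \autoref{lem:sahamotivic}) but whose ``block-complexity'' need not decrease in your sense. Your proposed projection onto a $\zeta^\lmot(2j+1)$-line in $\mathcal{L}^{(1)}_{2j+1}$ is also not canonically defined once $2j+1\geq 11$.

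The paper avoids all of this by inducting on a different quantity: the \emph{level} $\ell=\deg_1 w+\deg_3 w$, not the weight. The crucial payoff (\autoref{lem:saha:cases}) is that in the level-graded $\gr^S_\ell D_{2r+1}$ exactly one `$1$' or `$3$' is removed, which forces every left tensor factor to be one of $\ttw^\lmot(\{2\}^a,1,\{2\}^b)$, $\zeta^\lmot(\{2\}^a,1,\{2\}^b)$, $\zeta^\lmot_1(\{2\}^a)$ or $\zeta^\lmot(\{2\}^a,3)$ --- all rational multiples of $\zeta^\lmot(\overline{2r+1})$ by \autoref{thm:mott2212} and \eqref{eqn:coeffzl2232}. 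One then assembles all odd $r$ into a single map $\partial^S_{N,\ell}$ and writes down its matrix in reverse colex order with $3<1<2$: modulo $2$ it is upper triangular with diagonal entries $2^{2a-1}d_{2^a1}=(-1)^a(2^{2a+1}-1)\equiv 1$, hence invertible (\autoref{thm:matS:inj}). This $2$-adic arithmetic --- the analogue of Brown's argument for the Hoffman basis --- is the missing ingredient in your outline, and it is invisible from a weight-by-weight, leading-block point of view.
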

	
	We note that the arguments of such MtV's can be written as an arbitrary word in 1's and 2's, followed by either a 2 or a 3.  We can therefore schematically describe the set of arguments as follows
	\[
		w \in (\{1,2\}^\times \oplus 2) \cup (\{1,2\}^\times \oplus 3) \,,
	\]
	where \( \oplus \) denotes concatenation of words.

	\begin{Def}[Saha filtration]
		For, \(w \in (\{1,2\}^\times \oplus 2) \cup (\{1,2\}^\times \oplus 3) \), we define the level of \( w \) to be \( \deg_1 w + \deg_3 w \), i.e. the total number of \( 1 \)'s and \( 3 \)'s in the word.  We define \( \Q \)-subspace of \( \mathcal{H}^{(2)} \), and the level \( \leq \ell \) piece of the level filtration by
		\begin{align*}
			\mathcal{H}^S &\coloneqq \langle t^\mot(w) \mid w \in  (\{1,2\}^\times \oplus 2) \cup (\{1,2\}^\times \oplus 3) \rangle_\Q \,, \\
			S_\ell \mathcal{H}^S &\coloneqq \langle t^\mot(w) \mid w \in  (\{1,2\}^\times \oplus 2) \cup (\{1,2\}^\times \oplus 3) , \text{ s.t. } \deg_1 w + \deg_3 w \leq \ell \rangle_\Q 
		\end{align*}
		The associated graded to this filtration is then given by
		\[
			\gr_\ell^S \mathcal{H}^S \coloneqq S_\ell \mathcal{H}^S / S_{\ell-1} \mathcal{H}^S \,.
		\]
	\end{Def}

	\begin{Eg}
	The level \( \leq 1 \) part of this filtration is generated by the following elements
	\[
		S_1 \mathcal{H}^S = \langle t^\mot(\{2\}^a, 1, \{2\}^b), t^\mot(\{2\}^c, 3), t^\mot(\{2\}^d) \mid a,c,d \geq 0, b \geq 1\rangle_\Q \,,
	\]
	whereas the level \( \leq 0 \) part of this filtration is generated by 
	\[
		S_0 \mathcal{H}^S = \langle t^\mot(\{2\}^d) \mid d \geq 0 \rangle_\Q \,.
	\]
	\end{Eg}

	\begin{Lem}\label{lem:sahamotivic}
		The Saha-level is motivic.  More precisely, the following holds for all \( r' \geq 0 \)
		\[
			D_{2r'+1} S_\ell \mathcal{H}^S \subseteq \mathcal{L}_{2r+1}^{(2)} \otimes_\Q S_{\ell-1} \mathcal{H}^S \,.
		\]
		
		\begin{proof}
			Let \( r \geq 0 \) be odd, and \( \vec{k}= (k_1,\ldots,k_d) \in (\{1,2\}^\times \oplus 2) \cup (\{1,2\}^\times \oplus 3) \) with level \( \ell \).  We consider how to compute \( D_r \ttw^\mot(k_1,\ldots,k_d) \) via \autoref{prop:dkt}. 
			
			Firstly, if the deconcatenation term \eqref{eqn:dr:deconcat} \( \ttw^\mot(k_1,\ldots,k_j) \otimes \ttw^\mot(k_{j+1},\ldots,k_d) \) contributes, then the string \( (k_1,\ldots,k_j) \) of odd weight must contain a 1 or a 3.  Hence the level of \( (k_{j+1},\ldots,k_d) \) is reduced.
			
			Now, if the term \eqref{eqn:dr:0eps} contributes, we must satisfy the conditions \( \abs{\vec{k}_{i+1,j}} \leq r < \abs{\vec{k}_{i,j}} - 1 \).  This means that \( k_i \geq \abs{\vec{k}_{i,j}} - r > 1 \), so that \( \vec{k}_{i,j} - r = 2,3 \).  The case \( \abs{\vec{k}_{i,j}} = 3 \) could occurs if \( k_i = 3 \), and this can only occur if \( k_i = k_d \) with \( k_d = 3 \), so that \( i = j \).  But this is excluded from the sum, so \( \abs{\vec{k}_{i,j}} - r = 2 \).  Since \( r \) is odd, this implies \( \abs{\vec{k}_{i,j}} \) is  also odd, and so the subindex must contain (at least) one 1 or 3.  This is replaced by a 2, and so the level is reduced.  
			
			Likewise, if \eqref{eqn:dr:eps0} contributes, we must have \( \abs{\vec{k}_{i,j-1}} \leq r < \abs{\vec{k}_{i,j}} - 1 \).  This means \( k_j \geq \abs{\vec{k}_{i,j}} - r > 1 \), so that \( \abs{\vec{k}_{i,j}} - r = 2,3 \).  The case \( \abs{\vec{k}_{i,j}} - r = 2 \) is analogous to the previous: \( \abs{\vec{k}_{i,j}} \) is odd, so contains at least one 1 or 3.  This is replaced by a 2 and so the level is reduced.  Now, though, \( \abs{\vec{k}_{i,j}} - r = 3 \) occurs if \( j = d \) and \( k_d = 3 \).  But we see that \( \abs{\vec{k}_{i,j}} \) must be even, and already contains a three (from \( k_j = k_d = 3 \)).  Therefore it must also contain at least one 1.  Since a 3 and a 1 are replaced with a single 3, at the end of the string as \( j= d \), the element again is a Saha element, and of lower level.
		\end{proof}
	\end{Lem}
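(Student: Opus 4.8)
The plan is to pass to the rescaled values $\ttw^\mot(\vec k)$, which span the same subspaces of $\mathcal H^{(2)}$ as the $t^\mot(\vec k)$ up to nonzero rational scalars, and to apply the explicit formula for $D_r$ with $r = 2r'+1$ odd from \autoref{prop:dkt} to a single generator $\ttw^\mot(\vec k)$, where $\vec k = (k_1,\dots,k_d) \in (\{1,2\}^\times \oplus 2)\cup(\{1,2\}^\times\oplus 3)$ has level $\ell$. Since $D_r$ lands in $\mathcal L_r \otimes \mathcal H$ and every left-hand tensor factor occurring in \eqref{eqn:dr:deconcat}, \eqref{eqn:dr:0eps} and \eqref{eqn:dr:eps0} already lies in $\mathcal L^{(2)}_r$ --- the $\ttw^\lmot$ factors are signed sums of alternating motivic MZVs of weight $r$, the $\zeta^\lmot_{\bullet}$ factors reduce by unshuffling their leading zeros to motivic MZVs of weight $r$, and $\log^\lmot(2) = \zeta^\lmot(\overline 1)$ accounts for the weight-$1$ term that appears when $r = 1$ --- it suffices to check that in each term the right-hand factor $\ttw^\mot(w')$ has $w'$ again of the shape $\{1,2\}^\times \oplus \{2,3\}$ and of level strictly less than $\ell$. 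I would dispose of the degenerate inputs first: if $\ell = 0$ then $\vec k = \{2\}^{a+1}$, so $\ttw^\mot(\vec k)$ is a rational multiple of a power of $\zeta^\mot(2)$ and is annihilated by every $D_r$; and the deconcatenation term with $j = d$ produces the unit $\ttw^\mot(\emptyset)$ on the right, which lies in $S_0\mathcal H^S \subseteq S_{\ell-1}\mathcal H^S$ once $\ell \ge 1$.

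For the remaining deconcatenation terms \eqref{eqn:dr:deconcat}, the prefix $\vec k_{1,j}$ has odd weight $r$; since a $3$ can occur only as the last letter of a Saha word, that prefix is a word in $\{1,2\}$, and an odd-weight word in $\{1,2\}$ must contain a $1$. Hence the suffix $\vec k_{j+1,d}$ keeps its trailing $2$ or $3$, so is still a Saha word, but has lost at least one level-contributing letter. For \eqref{eqn:dr:0eps} and \eqref{eqn:dr:eps0} the key arithmetic observation is that the delta conditions force $k_i \ge \abs{\vec k_{i,j}} - r > 1$, respectively $k_j \ge \abs{\vec k_{i,j}} - r > 1$, so the single letter $\abs{\vec k_{i,j}} - r$ that replaces the collapsed subindex $\vec k_{i,j}$ is $2$ or $3$. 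A replacement letter $3$ can occur only when the relevant endpoint equals $k_d = 3$ (so $j = d$), and then forces $\abs{\vec k_{i,j}}$ even; but the collapsed subindex already contains that $3$, and its other letters form an odd-weight word in $\{1,2\}$, hence include a $1$. A replacement letter $2$ forces $\abs{\vec k_{i,j}}$ odd, so the collapsed subindex carried an odd entry, i.e.\ a $1$ or a $3$. In every case the collapsed subindex gave up strictly more level than the single letter put back, the ambient word remains of the form $\{1,2\}^\times \oplus \{2,3\}$, and the right-hand factor lies in $S_{\ell-1}\mathcal H^S$.

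The step I expect to require the most care is precisely this endpoint bookkeeping for \eqref{eqn:dr:0eps} and \eqref{eqn:dr:eps0}: separating $j < d$ from $j = d$, and within $j = d$ the subcases $k_d = 2$ and $k_d = 3$, is what guarantees that collapsing a subindex never manufactures a second $3$ and that the trailing letter of the Saha word is either preserved or exchanged between $2$ and $3$ only in an allowed way. Everything else is the routine unwinding of \autoref{prop:dkt}, together with the fact that the level $\deg_1 + \deg_3$ is additive under concatenation and strictly decreases whenever a $1$ or a $3$ is removed without being reintroduced.
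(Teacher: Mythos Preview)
Your proposal is correct and follows essentially the same approach as the paper: apply \autoref{prop:dkt} to a generator $\ttw^\mot(\vec k)$ and check, case by case over \eqref{eqn:dr:deconcat}, \eqref{eqn:dr:0eps}, \eqref{eqn:dr:eps0}, that the right-hand factor is again a Saha word of strictly smaller level, using the arithmetic constraint $k_i,k_j \ge \abs{\vec k_{i,j}} - r > 1$ to bound the replacement letter and the parity of $\abs{\vec k_{i,j}}$ to force an odd entry in the collapsed subindex. Your version is slightly more explicit than the paper's in a few harmless places (the $\ell=0$ base case, the $j=d$ deconcatenation, the shape of the suffix), and slightly more compressed in treating \eqref{eqn:dr:0eps} and \eqref{eqn:dr:eps0} together; the one point worth spelling out is that for \eqref{eqn:dr:0eps} the ``relevant endpoint'' is $k_i$, so a replacement letter $3$ would force $i=d$, contradicting $i<j\le d$ --- hence the $j=d$, $k_d=3$ subcase arises only from \eqref{eqn:dr:eps0}, exactly as the paper argues.
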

	
	From this lemma, we obtain a level-graded derivation
	\[
		\gr^S_\ell D_{2r+1} \colon \gr^S_\ell \mathcal{H}^S \to \mathcal{L}_{2r+1} \otimes_\Q \gr^S_{\ell-1} \mathcal{H}^S
	\]
	Moreover we claim, this map lands in the subspace of \( \mathcal{L}_{2r+1} \) generated by the single zeta element \( \zeta^\lmot(\overline{2r+1}) \).
	
	\begin{Lem}\label{lem:saha:cases}
		For \( \ell \geq 1 \), \( r' \geq 0 \), the level-graded derivation \( \gr^S_\ell D_{2r'+1} \) satisfies 
		\[
			\gr^S_\ell D_{2r'+1} \big( \gr^S_\ell \mathcal{H}^S \big) \subseteq \zeta^\lmot(\overline{2r'+1}) \Q \otimes_\Q \gr^S_{\ell-1} \mathcal{H}^S \,.
		\]
		
		\begin{proof}
			Let \( r \geq 1 \) be odd, and \( \vec{k}= (k_1,\ldots,k_d) \in (\{1,2\}^\times \oplus 2) \cup (\{1,2\}^\times \oplus 3) \) with level \( \ell \).  We consider how to compute \( D_r \ttw^\mot(k_1,\ldots,k_d) \) via \autoref{prop:dkt}, and more carefully track the contributions when we take elements of level \(\ell-1 \) in the right hand tensor factor.  For \( r = 1 \), this is clear, as \( \mathcal{H}^{(2)} = \langle \log^\mot(2) \rangle_\Q = \langle \zeta^\mot(\overline{1}) \rangle_\Q \).  I.e. the \( \mathcal{L} \)-factor of \( D_{1} \) can only be a multiple of \( \zeta^\lmot(\overline{1}) \), for dimensional reasons.  So we can assume \( r > 1 \)
			
			With \eqref{eqn:dr:deconcat}, the deconcatenation term \( \ttw^\lmot(k_1,\ldots,k_j) \otimes \ttw^\mot(k_{j+1}, \ldots,k_d) \), we see that for \( \vec{k}_{j+1,d} \) to have level \( \ell - 1 \), a single 1 or 3 must have been removed.  Therefore if \( \vec{k}_{1,j} = (\{2\}^a, 1, \{2\}^b) \), we know via \autoref{thm:mott2212}, that \( \ttw^\lmot(\vec{k}_{1,j}) \in \zeta^\lmot(\overline{2r+1}) \Q \).  Likewise, if \( \vec{k}_{1,j} = (\{2\}^a,3) \) (for if it contains a 3, it must be that \( j = d \), and \( k_d = 3 \).), one has from Murakami's evaluation in \autoref{thm:mott2232}, that \( \ttw^\lmot(\vec{k}_{1,j}) \in \zeta^\lmot(2r+1) \Q = \zeta^\lmot(\overline{2r+1}) \Q \).
			
			Then we turn to the contribution from \eqref{eqn:dr:0eps}, and recall the considerations in the proof of \autoref{lem:sahamotivic}.  Namely, \( \abs{\vec{k}_{i,j}} - r = 2 \), \( k_i = 2 \), which forces certain behaviour onto \( \vec{k}_{i,j} \).  If \( \abs{\vec{k}_{i,j}} - r = 2 \), then \( \vec{k}_{i,j} \) must contain an odd number of 1's and 3's.  But for level-grading reasons, it actually must contain exactly one such, which if it were a 3, must appear in the last position.  We have the following cases.
			\begin{center}
			\begin{tabular}{c|c|c}
			$\abs{\vec{k}_{i,j}}-r$ & $ \vec{k}_{i,j} $ & Contribution to \( D_r \) \\ \hline
			2 & $(2,\{2\}^a,1,\{2\}^b)$ & $ \zeta_{2-2}^\lmot(\{2\}^a,1,\{2\}^b) \otimes \ttw^\mot(\vec{k}_{1,i-1}, 2, \vec{k}_{j+1,d})$ \\
			2 & $(2,\{2\}^a,3)$ & $ \zeta_{2-2}^\lmot(\{2\}^a,3)  \otimes \ttw^\mot(\vec{k}_{1,i-1}, 2, \vec{k}_{j+1,d})$ \\
			\end{tabular}		
			\end{center}
			In either case, we see via \autoref{thm:brown:z2232}, or rather \eqref{eqn:coeffzl2232} thereafter, that each \( \zeta^\lmot_\alpha(\vec{k}_{i+1,j}) \in \zeta^\lmot(\overline{2r+1}) \Q \).
			
			Likewise, from \eqref{eqn:dr:eps0}, we have \( \abs{\vec{k}_{i,j}} - r = 2, 3 \).  If \( \abs{\vec{k}_{i,j}} - r = 2 \), then \( k_j = 2, 3 \) and  \( \abs{\vec{k}_{i,j}} \) contains an odd number of 1's and 3's.  In the level-graded, it therefore must contain exactly one 1 or one three (where a 3 would appear at the end).  Otherwise \( \abs{\vec{k}_{i,j}} - r = 3 \), so \( k_j = 3 \), and \( \abs{\vec{k}_{i,j}} \) contains an even number of 1's and 3's.  As it already must contain a 3 at the end (since \( k_j = 3 \) and so \( j = d \)), it must also contain 1 somewhere else.	
			
			Be aware that we must reverse \( \vec{k}_{i,j} \) when inserting it into \( \zeta^\lmot \) in term \eqref{eqn:dr:eps0}.  We have the following cases.
			\begin{center}
				\begin{tabular}{c|c|c}
					$\abs{\vec{k}_{i,j}}-r$ & $ \vec{k}_{i,j} $ & Contribution to \( D_r \) \\ \hline
					2 & $(\{2\}^a,1,\{2\}^b,2)$ & $ -\zeta^\lmot_{2-2}(\{2\}^b,1,\{2\}^a) \otimes \ttw^\mot(\vec{k}_{1,i-1}, 2, \vec{k}_{j+1,d})$  \\
					2 & $(\{2\}^a,3)$ & $ -\zeta^\lmot_{3-2}(\{2\}^a) \otimes \ttw^\mot(\vec{k}_{1,i-1}, 2, \vec{k}_{j+1,d})$ \\
					3 & $(\{2\}^a,1,\{2\}^b,3)$ & $ -\zeta^\lmot_{3-3}(\{2\}^b,1,\{2\}^a) \otimes \ttw^\mot(\vec{k}_{1,i-1}, 3)$  \\
				\end{tabular}	
			\end{center}
			Once again, we see via \autoref{thm:brown:z2232}, or rather \eqref{eqn:coeffzl2232} thereafter, that in every case the term \( \zeta^\lmot_\alpha(\vec{k}_{j-1,i}) \in \zeta^\lmot(\overline{2r+1}) \Q \).
		\end{proof}
	\end{Lem}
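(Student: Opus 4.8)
\subsection*{Proof proposal}

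The plan is to refine the case analysis already carried out in \autoref{lem:sahamotivic}. That lemma places $D_{2r'+1}$ of a Saha word into $\mathcal{L}_{2r'+1}^{(2)} \otimes_\Q S_{\ell-1}\mathcal{H}^S$; passing to the associated graded, only those terms whose right-hand tensor factor genuinely has level $\ell-1$ survive, so it suffices to show that each such surviving term has its left-hand $\mathcal{L}$-factor proportional to $\zeta^\lmot(\overline{2r'+1})$. The case $r'=0$ is immediate: $\mathcal{L}_1$ is one-dimensional, spanned by $\zeta^\lmot(\overline{1}) = {-}\log^\lmot(2)$, so there is nothing to prove and moreover the $\delta_{r=1}$ corrections in \eqref{eqn:dr:0eps} and \eqref{eqn:dr:eps0} cause no difficulty. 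So assume $r' \geq 1$, hence $2r'+1 \geq 3$ and $\zeta^\lmot(2r'+1)\,\Q = \zeta^\lmot(\overline{2r'+1})\,\Q$ (since $\zeta(\overline{2r'+1}) = -(1-2^{-2r'})\zeta(2r'+1)$); it therefore does no harm to land in the non-alternating single zeta.

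For the deconcatenation term \eqref{eqn:dr:deconcat}, $\ttw^\lmot(\vec{k}_{1,j}) \otimes \ttw^\mot(\vec{k}_{j+1,d})$, demanding that the right factor drop exactly one level forces the prefix $\vec{k}_{1,j}$ --- which has odd weight $2r'+1$ and is itself a prefix of a Saha word --- to be either $(\{2\}^a,1,\{2\}^b)$ or $(\{2\}^a,3)$. In the first case \autoref{thm:mott2212} gives $\ttw^\lmot(\vec{k}_{1,j}) \in \zeta^\lmot(\overline{2r'+1})\,\Q$, the $\log^\mot(2)$ terms there vanishing in $\mathcal{L}$ because (as $a+b = r' \geq 1$) they occur only as products of two positive-weight elements; in the second, Murakami's \autoref{thm:mott2232} gives $\ttw^\lmot(\vec{k}_{1,j}) \in \zeta^\lmot(2r'+1)\,\Q$. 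For the terms \eqref{eqn:dr:0eps} and \eqref{eqn:dr:eps0} I would reuse the constraints established in \autoref{lem:sahamotivic}: the excised subindex $\vec{k}_{i,j}$ satisfies $\abs{\vec{k}_{i,j}} - r' \in \{2,3\}$, and the level-grading condition (exactly one $1$ or $3$ removed) pins the inner index $\vec{k}_{i+1,j}$, respectively its reversal $\vec{k}_{j-1,i}$, down to one of $(\{2\}^a,1,\{2\}^b)$ or $(\{2\}^a,3)$, with possibly a single leading $0$ (so that the left factor is $\zeta^\lmot_0$ or $\zeta^\lmot_1$ of such an index) precisely in the cases involving a trailing $3$. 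In each of these finitely many cases --- which one tabulates exactly as in the statement's proof --- \eqref{eqn:coeffzl2232}, obtained from \autoref{thm:brown:z2232} by duality together with the elementary $\zeta_1^\lmot(\{2\}^a) = 2(-1)^a\zeta^\lmot(2a+1)$, exhibits the left factor as a rational multiple of $\zeta^\lmot(2r'+1)$, hence of $\zeta^\lmot(\overline{2r'+1})$. Assembling the three families yields the asserted containment.

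The main obstacle is the combinatorial bookkeeping rather than any new idea: one must enumerate precisely which subindices $\vec{k}_{i,j}$ occur under the simultaneous constraints $\abs{\vec{k}_{i,j}} - r' \in \{2,3\}$ and ``exactly one $1$ or $3$ removed'', being especially careful with the endpoint case $k_d = 3$ in \eqref{eqn:dr:eps0}, where $\abs{\vec{k}_{i,j}} - r'$ may equal $3$ and a leading $0$ (hence $\zeta^\lmot_1$ in place of $\zeta^\lmot_0$) appears. Organising this into a short table and checking each row against \eqref{eqn:coeffzl2232} is routine once the three quoted evaluations --- \autoref{thm:mott2212}, \autoref{thm:mott2232}, \autoref{thm:brown:z2232} --- are in hand.
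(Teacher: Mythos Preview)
Your proposal is correct and follows essentially the same approach as the paper: dispose of $r'=0$ by dimension, handle the deconcatenation term via \autoref{thm:mott2212} and \autoref{thm:mott2232}, and then tabulate the surviving subindex shapes in \eqref{eqn:dr:0eps} and \eqref{eqn:dr:eps0} using the constraints from \autoref{lem:sahamotivic} together with the level-grading condition, invoking \eqref{eqn:coeffzl2232} in each case. The paper's proof differs only in that it writes out the tables explicitly, whereas you describe them at one remove; your identification of the $k_d=3$ endpoint case (where $\zeta^\lmot_1$ rather than $\zeta^\lmot_0$ appears) as the delicate point is exactly right.
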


	We now look at the action of these derivations on elements of a given level, and package them together into the following linear map.
	
	\begin{Def}
		For all \( N ,\ell \geq 1 \), let \( \partial^S_{N,\ell} \) be the linear map
		\[
 			\partial_{N,\ell}^S \colon \gr_\ell^S \mathcal{H}_N^S \to \bigoplus_{1 \leq 2r+1 \leq N} \gr_{\ell-1}^S \mathcal{H}_{N-2r-1}^S \,,
		\]
		defined by first applying \( \bigoplus_{1\leq2r+1\leq N} \gr_\ell^S D_{2r+1} \big|_{\gr_\ell^S \mathcal{H}_N^S} \), and then sending all \( \log^\mot(2) \mapsto \tfrac{1}{2}, \zeta^\lmot(2r+1) \mapsto 2^{2r-1} \), \( r > 0 \) to by the projection \begin{align*}
			\widetilde{\pi}_{2r+1} & \colon \Q \zeta^\lmot(\overline{2r+1}) \to \Q  \\
				& \begin{cases} 
					\log^\mot(2) \mapsto \frac{1}{2} \,, & \text{if \( r = 0 \)\,,}\\
					\zeta^\lmot(2r+1) \mapsto 2^{2r-1} \,, & \text {if \( r > 0 \)}
					\end{cases}
		\end{align*}
	\end{Def}

	The goal is to show that the maps \( \partial_{N,\ell}^S \) are injective for \( \ell \geq 1 \).  Then by recursion, we will establish the elements of level \( \ell \) are linearly independent (otherwise \(\partial_{N,\ell}^S \) would construct a non-trivial relation of strictly smaller level).

	\begin{Def}[Matrix basis]
		Let \( \ell,N \geq 1 \), with \( N \equiv \ell \pmod*{2} \).  Define the following sets
		\begin{align*}
			B_{S,N,\ell} &\coloneqq \{ w \in (\{1,2\}^\times \oplus 2) \cup (\{1,2\}^\times \oplus 3) \mid \deg_1 w + \deg_3 w = \ell, \abs{w} = N \} \\
			B'_{S,N,\ell} &\coloneqq \{ w \in (\{1,2\}^\times \oplus 2) \cup (\{1,2\}^\times \oplus 3) \mid \deg_1 w + \deg_3 w = \ell-1, \abs{w} < N \}
		\end{align*}
		In the case \( \ell = 1 \), the set \( B'_{S,N,\ell} \) also includes the empty word (of weight 0 and level 0).  Sort both sides in reverse colexicographic order (i.e. reading right to left, and the largest first), with \( 3 < 1 < 2 \).  In this ordering, all terms ending with a 3 will appear last.
	\end{Def}

	A counting argument shows that \( \# B_{S,N,\ell} = \# B'_{S,N,\ell} \) for all such choices of \( \ell, N \).  These basis elements will be used to define the matrix form of the linear map \(	\partial_{N,\ell}^S \), and the claim of injectivity corresponds to non-zero determinant.

	\begin{Eg}
		For \( N = 8, \ell = 2 \), we have
		\begin{align*}
			& B_{S,N,\ell} = \{ 11222 \,, 12122\,, 21122\,, 12212\,, 21212\,, 22112\,, 1223\,, 2123\,, 2213 \} \, \\ 
			& B'_{S,N,\ell} = \{ 1222\,, 2122\,, 122\,, 2212\,, 212\,, 12\,, 223\,, 23\,, 3 \}
		\end{align*}
	\end{Eg}

	\begin{Def}[Matrix of \( \partial^S_{N,\ell} \)]
		For \( \ell \geq 1, N \geq 1 \), with \( N \equiv \ell \pmod*{2} \), let 
		\[
		 M_{S,N,\ell} \coloneqq \big(f_{w'}^w\big)_{w\in B_{S,N,\ell}, w' \in B_{S,N,\ell}'}
		 \]
		 be the matrix of \( \partial^S_{N,\ell} \) with respect to the bases \( B_{S,N,\ell} \) and \( B'_{S,N,\ell} \).  Here \( f_w'^w \) denotes the coefficient of \( \ttw^\mot(w') \) in \( \partial^S_{N,\ell} \ttw^\mot(w) \), and in the matrix \( w \) corresponds to rows, and \( w' \) to columns.
	\end{Def}

	It will be helpful to introduce some notation to talk more directly about there coefficients of \( \zeta^\lmot(2r+1) \) in various identities.

	\begin{Def}
		Write \( c_{2^a32^b} \), \( c_{2^a1} \), \( d_{2^a12^b} \), \( d_{2^a32^b} \) to be the coefficient such that
		\begin{align*}
			\zeta^\lmot(\{2\}^a,3,\{2\}^b) &= \zeta^\lmot(\{2\}^b,1,\{2\}^{a+1})= c_{2^a32^b} \zeta^\lmot(2a+2b+3)  \\
			\zeta^\lmot(\{2\}^a,1) &= \zeta_1^\lmot(\{2\}^a) = c_{2^a1} \zeta^\lmot(2a+1)  \\
			\ttw^\lmot(\{2\}^a,1,\{2\}^b) &= d_{2^a12^b} \zeta^\lmot(2a+2b+1) \\
			\ttw^\lmot(\{2\}^a,3,\{2\}^b) &= d_{2^a32^b} \zeta^\lmot(2a+2b+3) \,.
		\end{align*}
		Moreover note that \( d_1 = 2 \) so that \( \ttw^\lmot(1) = d_1 \cdot \frac{1}{2} \log^\lmot(2) \).  From the computations in \autoref{thm:brown:z2232} and \eqref{eqn:coeffzl2232} thereafter, and from \autoref{thm:mott2212} and \autoref{thm:mott2232}, we have the following explicit formulae.
		\begin{align*}
			c_{2^a32^b} &= 2(-1)^{a+b}\bigg({-}\binom{2a+2b+2}{2a+2} + (1 - 2^{-2a-2b-2})\binom{2a+2b+2}{2b+1} \bigg) \\
			c_{1} &= 0 \\
			c_{2^a1} &= 2(-1)^a \\
			d_{2^a12^b} &= 2(-1)^{a+b}\bigg((1 - 2^{-2a-2b})\binom{2a + 2b}{2a} + \binom{2a+2b}{2b}\bigg) \\
				&= 4(-1)^{a+b}(1 - 2^{-2a-2b-1})\binom{2a + 2b}{2a} \\
			d_{2^a32^b} &= 2(-1)^{a+b} \bigg( \binom{2a+2b+2}{2a+1} + (1 - 2^{-2a-2b-2}) \binom{2a+2b+2}{2b+1} \bigg) \\
			&= 4(-1)^{a+b} (1 - 2^{-2a-2b-3}) \binom{2a+2b+2}{2a+1}
		\end{align*}
	\end{Def}

	\begin{Eg}
		For \( N = 8, \ell = 2 \), the matrix \( M_{S,8,2} \) is as follows; the first row and column label the elements of \( B'_{S,8,2} \) and \( B_{S,8,2} \) respectively.
		\begin{center}
			\small
		\begin{tabular}{c||c|c|c|c|c|c|c|c|c}
		\!	& \!1222\! & \!2122\! & \!122\! & \!2212\! & \!212\! & \!12\! & \!223\! & \!23\! & \!3\! \\ \hline \hline
		\!11222\!	& 1 & 0 & ${-}2 c_{21}$ & 0 & 0 & ${-}8 c_{221}$ & 0 & 0 & 0 \\
		\!12122\!	& 0 & 1 & \!$2 d_{12}{-}2 c_{21}$\! & 0 & 0 & $8 c_{23}{-}8 c_{32}$ & 0 & 0 & 0 \\
		\!21122\!	& 0 & 0 & $2 d_{21}$ & 0 & ${-}2 c_{21}$ & 0 & 0 & 0 & 0 \\
		\!12212\!	& 0 & 0 & $2 c_{21}$ & 1 & $2 d_{12}{-}2 c_{21}$ & ${-}8 c_{23}{+}8 c_{32}{+}8 d_{122}$ & 0 & 0 & 0 \\
		\!21212\!	& 0 & 0 & 0 & 0 & $2 d_{21}$ & $8 d_{212}$ & 0 & 0 & 0 \\
		\!22112\!	& 0 & 0 & 0 & 0 & $2 c_{21}$ & $8 d_{221}$ & 0 & 0 & 0 \\
		\!1223\!	& 0 & 0 & $2 c_3{-}2 c_{21}$ & 0 & 0 & $8 c_{23}{-}8 c_{221}$ & 1 & $2 d_{12}{-}2 c_{21}$ & $8 d_{122}{-}8 c_{221}$ \\
		\!2123\!	& 0 & 0 & 0 & 0 & $2 c_3{-}2 c_{21}$ & 0 & 0 & $2 d_{21}{-}2 c_{21}$ & $8 d_{212}{-}8 c_{32}$ \\
		\!2213\!	& 0 & 0 & 0 & 0 & 0 & 0 & 0 & $2 c_{21}{-}2 c_3$ & $8 d_{221}{-}8 c_{23}$
		\end{tabular}
		\end{center}
		The entries \( 1 \) in the matrix arise from both the deconcatenation term \( 2 \ttw^\lmot(1) \) which appears in \( D_{1} \) as per \autoref{prop:d1}.  With the projection \( \log^\lmot(2) \to \frac{1}{2} \), this combination gives 1 above.
		
		After substituting the values for \( c_\bullet \) and \( d_\bullet \) using the formulae above, we obtain the matrix
		\begin{center}
		\begin{tabular}{c||c|c|c|c|c|c|c|c|c}
			\!	& \!1222\! & \!2122\! & \!122\! & \!2212\! & \!212\! & \!12\! & \!223\! & \!23\! & \!3\!  \\ \hline\hline
			\!11222\!	& 1 & 0 & 4 & 0 & 0 & ${-}16$ & 0 & 0 & 0 \\
			\!12122\!	& 0 & 1 & ${-}3$ & 0 & 0 & ${-}80$ & 0 & 0 & 0 \\
			\!21122\!	& 0 & 0 & ${-}7$ & 0 & 4 & 0 & 0 & 0 & 0 \\
			\!12212\!	& 0 & 0 & ${-}4$ & 1 & $-3$ & 111 & 0 & 0 & 0 \\
			\!21212\!	& 0 & 0 & 0 & 0 & ${-}7$ & 186 & 0 & 0 & 0 \\
			\!22112\!	& 0 & 0 & 0 & 0 & ${-}4$ & 31 & 0 & 0 & 0 \\
			\!1223\!	& 0 & 0 & 6 & 0 & 0 & ${-}60$ & 1 & ${-}3$ & 15 \\
			\!2123\!	& 0 & 0 & 0 & 0 & 6 & 0 & 0 & ${-}3$ & 150 \\
			\!2213\!	& 0 & 0 & 0 & 0 & 0 & 0 & 0 & ${-}6$ & 75 \\
		\end{tabular}
		\end{center}
		We notice already that the matrix has odd entries on the diagonal, and all entries below the diagonal are even.  Therefore the matrix is upper triangle modulo 2 with 1's on the whole diagonal.  So it has determinant \( \equiv 1 \pmod*{2} \) and is invertible.  We aim to show this is a general phenomenon for level \( \ell > 1 \).  In fact, we shall show that modulo 2, \( \partial^S_{N,\ell} \) acts by deconcatenation, so that the only entries \( d_{\bullet} \) occur above (or rather right) of the main diagonal inclusive.
	\end{Eg}

	\begin{Rem}
		In the case of level \( \ell = 1 \), the matrix actually has even determinant, and so the above considerations would fail.  However, we will show that the evenness of the determinant arises exactly from the single even entry in the last row.  This single entry, is the deconcatenation term from \( \partial_{N,1}^S \ttw^\mot(\{2\}^a,3) = d_{2^a3} \emptyset \), and so by expanding about the last row, we would reduce to \( d_{2^a3} \) times a determinant which is invertible modulo 2, at least once we prove this previous claim.
	\end{Rem}

	\begin{Lem}
		Let \( \ell > 1 \) and \( w \in B_{S,N,\ell}\).  Then every coefficient of \( \ttw^\mot(u) \), \( u \in B'_{S,N,\ell} \) in
		\[
			\partial^S_{N,\ell}\ttw^\mot(w) - \sum_{\substack{w = uv \\ \deg_3u + \deg_1u = 1 }} 2^{\abs{u}-2}d_u \ttw^\mot(v) \,,
		\]
		is an even integer.
		
		\begin{proof}
			Let \( \vec{k} \in (\{1,2\}^\times \oplus 2) \cup (\{1,2\}^\times \oplus 3) \), with \( \ell = \deg_1 \vec{k} + \deg_3 \vec{k} \).  We consider how to compute \( \gr_\ell D_r \ttw^\mot(\vec{k}) \) via \autoref{prop:dkt} and the simplification in \autoref{prop:d1} for \( D_1 \).  For \( r = 1 \), we immediately find
			\begin{align*}
				\gr_\ell D_1 \ttw^\mot(\vec{k}) &= \widetilde{\pi}_1(2 \log^\lmot(2)) \delta_{k_1 = 1} \ttw^\mot(k_2,\ldots,k_d) \\
				&= \delta_{k_1 = 1} 2^{-1} d_1 \ttw^\mot(k_2,\ldots,k_d)
			\end{align*}
			Now if we assume \( r > 1 \), we have 
			\begin{align}
		& D_r \big( \ttw^\mot(k_1,\ldots,k_d) \big) = \notag \\
		& \label{eqn:drgr:deconcat} \sum_{1 \leq j \leq d} \delta_{\abs{\vec{k}_{1,j}}=r} \widetilde{\pi}_r \big( \ttw^\lmot(k_1,\ldots,k_j) \big) \cdot \ttw^\mot(k_{j+1},\ldots,k_d) \\[1ex]
		& \label{eqn:drgr:0eps} + \!\! \sum_{1 \leq i < j \leq d} 
		\delta_{\abs{\vec{k}_{i+1,j}} \leq r < \abs{\vec{k}_{i,j}}-1} \begin{aligned}[t] \widetilde{\pi}_{r} \Big( \zeta^\lmot_{r-\abs{\vec{k}_{i+1,j}}}&(k_{i+1},\ldots,k_j) \Big) \\
		&  {} \cdot \ttw^\mot(k_1,\ldots,k_{i-1},\abs{\vec{k}_{i,j}} - r, k_{j+1},\ldots,k_d) \end{aligned} \\[1ex]
		& \label{eqn:drgr:eps0} - \!\! \sum_{1 \leq i < j \leq d} 
		\delta_{\abs{\vec{k}_{i,j-1}} \leq r < \abs{\vec{k}_{i,j}}-1} \begin{aligned}[t] \widetilde{\pi}_r \Big( \zeta^\lmot_{r-\abs{\vec{k}_{i,j-1}}}&(k_{j-1},\ldots,k_i)  \Big) \\
		& {} \cdot \ttw^\mot(k_1,\ldots,k_{i-1},\abs{\vec{k}_{i,j}} - r, k_{j+1},\ldots,k_d) \end{aligned}
		\end{align}	
		We note that since \( \ell > 1 \), \( \gr_\ell D_{\abs{\vec{k}}} \ttw^\mot(\vec{k}) = \widetilde{\pi}_{\abs{\vec{k}}} \big( \ttw^\mot(\vec{k}) \big) \ttw^\mot(\emptyset) = 0 \), since \( \emptyset \) -- the empty word -- has level \( 0 < \ell - 1 \).  (For this, consider how to compute \( D_{2r+1} \) via the graded parts of the coaction \( \Delta (x) = 1 \otimes x + x \otimes 1 + \Delta'(x) \); the part with full weight in the left hand factor is then clearly \( x \otimes 1 \).)  This means the deconcatenation part in \eqref{eqn:drgr:deconcat} can never involve \( \ttw^\lmot(\ldots, 3) \), so must be of the form \( \widetilde{\pi}_{2r+1} \big( \ttw^\lmot(\{2\}^a, 1, \{2\}^b)  \big) = 2^{2a + 2b -1} d_{2^a12^b} \).
		
		Now consider \eqref{eqn:drgr:0eps}.  According to the table of cases in \autoref{lem:saha:cases}, we have the following contributions.
		\begin{center}
			\begin{tabular}{c|c|c}
			$\abs{\vec{k}_{i,j}}-r$ & $ \vec{k}_{i,j} $ & Contribution to \( \gr_\ell D_r \) \\ \hline
			2, \( b = 0 \) & $(2,\{2\}^a,1,\{2\}^b)$ & $ 2^{2a-1} c_{2^a1} \ttw^\mot(\vec{k}_{1,i-1}, 2, \vec{k}_{j+1,d})$ \\
			2, \( b > 0 \) & $(2,\{2\}^a,1,\{2\}^b)$ & $ 2^{2a+2b-1} c_{2^{b-1}32^a} \ttw^\mot(\vec{k}_{1,i-1}, 2, \vec{k}_{j+1,d})$ \\
			2 & $(2,\{2\}^a,3)$ & $ 2^{2a+1} c_{2^a3} \ttw^\mot(\vec{k}_{1,i-1}, 2, \vec{k}_{j+1,d})$ \\
		\end{tabular}
		\end{center}	
		Likewise for \eqref{eqn:drgr:eps0}, we have the following contributions.
		\begin{center}
			\begin{tabular}{c|c|c}
				$\abs{\vec{k}_{i,j}}-r$ & $ \vec{k}_{i,j} $ & Contribution to \( \gr_\ell D_r \) \\ \hline
				2, \( a = 0 \) & $(\{2\}^a,1,\{2\}^b,2)$ & $ -2^{2b-1} c_{2^b1} \ttw^\mot(\vec{k}_{1,i-1}, 2, \vec{k}_{j+1,d})$  \\
				2, \( a > 0 \) & $(\{2\}^a,1,\{2\}^b,2)$ & $ -2^{2a+2b-1} c_{2^{a-1}32^b} \ttw^\mot(\vec{k}_{1,i-1}, 2, \vec{k}_{j+1,d})$  \\
				2 & $(\{2\}^a,3)$ & $ -2^{2a-1} c_{2^a1} \ttw^\mot(\vec{k}_{1,i-1}, 2, \vec{k}_{j+1,d})$ \\
				3, \( a = 0 \) & $(\{2\}^a,1,\{2\}^b,3)$ & $ -2^{2b-1}c_{2^b1}\ttw^\mot(\vec{k}_{1,i-1}, 3)$  \\
				3, \( a > 0 \) & $(\{2\}^a,1,\{2\}^b,3)$ & $ -2^{2a+2b-1}c_{2^{a-1}32^b} \ttw^\mot(\vec{k}_{1,i-1}, 3)$  \\
			\end{tabular}
		\end{center}
		The key points to observe now are that
		\begin{align}
			2^{2a-1} c_{2^a1} &= \begin{cases}
				0 & a = 0 \\
				2^{2a} (-1)^a & a  > 0
			\end{cases}\label{eqn:even:221} \\
			2^{2a+2b+1} c_{2^a32^b} &=  (-1)^{a+b}\bigg({-} 2^{2a+2b+2} \binom{2a+2b+2}{2a+2} + (2^{2a+2b+2} - 1)\binom{2a+2b+2}{2b+1} \bigg) \,, \label{eqn:even:232}
		\end{align}
		hence both coefficients are always even integers, and so \( \equiv 0 \pmod*{2} \).  For \(c_{2^a32^b} \) it follows by writing \( \binom{2a+2b+2}{2b+1} = \frac{2(a + b + 1)}{2a+1} \binom{2a + 2b + 1}{2b+1} \).  Compare Corollary 4.4 \cite{brown12} for a more precise statement about \( \nu_2(c_{2^a32^b}) \), the 2-adic valuation thereof, which was one of the key lemmas in Brown's proof of the linear independence of \( \zeta^\mot(\{2,3\}^\times) \).
		
		Since all terms arising from \( \zeta^\lmot \) have even coefficient in \( \gr_\ell D_{r} \), we see that the only remaining terms arise from the deconcatenation part, and so the lemma follows.
		\end{proof}
	\end{Lem}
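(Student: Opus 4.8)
The plan is to compute $\partial^S_{N,\ell}\ttw^\mot(w)$ directly from \autoref{prop:dkt} (using the simpler \autoref{prop:d1} when $r=1$), split the output into the deconcatenation terms \eqref{eqn:dr:deconcat} and the ``interior'' terms \eqref{eqn:dr:0eps}, \eqref{eqn:dr:eps0}, and then show that after the normalisation $\widetilde\pi$ the interior contributions all have even integer coefficients while the deconcatenation contributions reproduce exactly the sum $\sum_{w=uv,\ \deg_1u+\deg_3u=1}2^{\abs u-2}d_u\ttw^\mot(v)$ subtracted in the statement. Write $\vec k = (k_1,\dots,k_d)$ for the word $w$. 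Since $\ell>1$ and $\vec k$ ends in $2$ or $3$, the case $r=1$ is immediate: $D_1$ acts purely by deconcatenation of leading $1$'s, the trailing-$1$ term of \autoref{prop:d1} is absent, and $\widetilde\pi_1(2\log^\lmot(2))=1=2^{\abs u-2}d_u$ with $u=(1)$ and $d_1=2$.

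For $r>1$, the deconcatenation term $\ttw^\lmot(k_1,\dots,k_j)\otimes\ttw^\mot(k_{j+1},\dots,k_d)$ survives in $\gr^S_{\ell-1}\mathcal H^S$ only when the prefix $(k_1,\dots,k_j)$ carries Saha-level exactly $1$; as it has odd weight $r$ and, being a proper prefix of a word whose only possible $3$ is its last letter, contains no $3$, it must equal $(\{2\}^a,1,\{2\}^b)$ with $2a+2b+1=r$ --- in particular the full-weight term $j=d$ never contributes, since $\emptyset$ has level $0<\ell-1$. By \autoref{thm:mott2212} one has $\ttw^\lmot(\{2\}^a,1,\{2\}^b)=d_{2^a12^b}\zeta^\lmot(2a+2b+1)$, so after $\widetilde\pi_r$ this contribution is $2^{2a+2b-1}d_{2^a12^b}=2^{\abs u-2}d_u$ with $u=(k_1,\dots,k_j)$. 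These are precisely the terms removed in the statement, and they are the only deconcatenation contributions, so modulo $2$ it remains to bound the interior terms.

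For the interior terms I would invoke the analysis already carried out in \autoref{lem:sahamotivic} and \autoref{lem:saha:cases}: under the level-grading constraint the sub-index $\vec k_{i,j}$ is forced into one of the short list of shapes tabulated there, with replacement value $\abs{\vec k_{i,j}}-r\in\{2,3\}$, and the $\mathcal L$-factor is always obtained from a single $\zeta^\lmot_\alpha(\{2\}^\bullet,1,\{2\}^\bullet)$, $\zeta^\lmot_\alpha(\{2\}^\bullet,3)$ or $\zeta^\lmot_\alpha(\{2\}^\bullet)$; by \autoref{thm:brown:z2232} and \eqref{eqn:coeffzl2232} each of these is $c_{2^a1}$ or $c_{2^\alpha32^\beta}$ times $\zeta^\lmot$ of the appropriate odd weight. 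Applying $\widetilde\pi$ (which sends $\zeta^\lmot(2s+1)\mapsto 2^{2s-1}$) turns these coefficients into $2^{2a-1}c_{2^a1}$, $2^{2a+2b-1}c_{2^{b-1}32^a}$, etc., so the whole claim reduces to the two arithmetic facts $2^{2a-1}c_{2^a1}\in 2\Z$ and $2^{2a+2b+1}c_{2^a32^b}\in 2\Z$. The first is clear from $c_{2^a1}=2(-1)^a$ and $c_1=0$; for the second, starting from $c_{2^a32^b}=2(-1)^{a+b}\big({-}\binom{2a+2b+2}{2a+2}+(1-2^{-2a-2b-2})\binom{2a+2b+2}{2b+1}\big)$ one gets $2^{2a+2b+1}c_{2^a32^b}=(-1)^{a+b}\big({-}2^{2a+2b+2}\binom{2a+2b+2}{2a+2}+(2^{2a+2b+2}-1)\binom{2a+2b+2}{2b+1}\big)$, whose evenness reduces to the evenness of $\binom{2a+2b+2}{2b+1}$ --- and this holds because $\binom{2a+2b+2}{2b+1}=\frac{2(a+b+1)}{2a+1}\binom{2a+2b+1}{2b+1}$ with $2a+1$ odd (equivalently, by Kummer's theorem, adding the two odd numbers $2a+1$ and $2b+1$ in base $2$ forces a carry). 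Thus every interior contribution vanishes mod $2$, and $\partial^S_{N,\ell}\ttw^\mot(w)$ minus the stated deconcatenation sum has only even coefficients.

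The step I expect to be the real work is this $2$-adic control of the $\zeta^\lmot$-coefficients, i.e.\ the evenness of $2^{2a+2b+1}c_{2^a32^b}$; it is the multiple-$t$ analogue of the divisibility lemma (Corollary 4.4 of \cite{brown12}) underpinning Brown's independence proof of $\zeta^\mot(\{2,3\}^\times)$, and everything else is bookkeeping over the case tables of \autoref{lem:saha:cases}.
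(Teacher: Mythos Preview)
Your proposal is correct and follows essentially the same approach as the paper's proof: you split into deconcatenation versus interior terms via \autoref{prop:dkt}, use $\ell>1$ to rule out the full-weight deconcatenation (hence no $3$ in the prefix), identify the deconcatenation contribution as exactly the subtracted sum, and reduce the interior terms to the two $2$-adic facts $2^{2a-1}c_{2^a1}\in 2\Z$ and $2^{2a+2b+1}c_{2^a32^b}\in 2\Z$, the latter via $\binom{2a+2b+2}{2b+1}=\tfrac{2(a+b+1)}{2a+1}\binom{2a+2b+1}{2b+1}$. The only cosmetic addition is your mention of Kummer's theorem as an alternative justification for the binomial evenness.
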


	\begin{Thm}\label{thm:matS:inj}
		For \( N, \ell \geq 1 \), the matrix \( M_{S,N,\ell} \) is invertible.
		
		\begin{proof}
			We proceed in a similar way as to the proofs by Murakami \cite[Theorem 36]{murakami21}, and Brown \cite[Corollary 6.2]{brown12}.
			
			We assume initially that \( \ell > 1 \), so that \( d_{2^a3} \) does not appear as an entry.  For \( \ell > 1 \), consider the map
			\begin{align*}
				B'_{S,N,l} &\to B_{S,N,\ell} \\
				u &\mapsto 2^{r}1u \,,
			\end{align*}
			where \( r \) is the unique integer such that \( \abs{2^r1u} = N \).  This map is a bijection, and preserves the ordering of both \( B'_{S,N,\ell} \) and \( B_{S,N,\ell} \).  That is to say, \( u < v \) iff \( 2^r1u < 2^{r'}1v \), which holds as we are in the reverse colexicographic (reading right to left, largest first) order with \( 3 < 1 < 2 \).  The diagonal entries of \( M_{S,N,\ell} \) are of the form \( f_{u}^{2^r1u} = 2^{2a-1}d_{2^a1} + 2n \), \( n \Z\).  Moreover, the only other non-even entries in the column indexed by \( u \in B'_{S,N,\ell} \) occur for rows indexed by \( w = 2^a12^bu \), however since \( 2^r1u < 2^a12^bu \), these occur above the diagonal.  These entries are also integral since
			\begin{align*}
				2^{2a+2b-1} d_{2^a12^b} & {} = (-1)^{a+b} 2^{2a+2b+1} (1 - 2^{-2a-2b-1})\binom{2a + 2b}{2a} \\
				&{} = (-1)^{a+b} (2^{2a+2b+1} - 1)\binom{2a + 2b}{2a} \,.
 			\end{align*}
			We finally note that
			\[
				2^{2a-1}d_{2^a1} = (-1)^{a}(2^{2a+1} - 1) \equiv 1 \pmod*{2} \,,
			\]
			so that \( f_u^{2^r1u} \equiv 1 \pmod*{2} \).  This means the matrix is integral, and modulo 2 it reduces to an upper triangle matrix with leading diagonal equal to 1.  Hence \( M_{S,N,\ell} \) has determinant \( \equiv 1 \pmod*{2} \), and so is invertible. \medskip
			
			When \( \ell = 1 \), we note that all of the previous steps apply for all words \( w = 2^a12^b \), \( b \geq 0 \) indexing the rows.  However, we obtain as the last row corresponding to the word \( w = 2^a3 \), the row vector
			\[
				(0, \ldots, 0, 2^{2a+1}d_{2^a3}) \,.
			\]
			since one must deconcatenate the entire word to reduce the level by 1.  Expand the determinant out about the last row, and we reduce to the submatrix involving words \( w = 2^a12^b \), \( b > 0 \) indexing the rows, and \( 2^a \), \( a > 0 \) indexing the columns.  This submatrix is integral, and modulo 2 it is upper triangle with 1's on the diagonal.  Hence has non-zero determinant.  Since
			\begin{align*}
				2^{2a+1}d_{2^a3} &= 2^{2a+3}(-1)^{a} (1 - 2^{-2a-3}) \binom{2a+2}{2a+1} \\
				&= (-1)^{a} (2^{2a+3} - 1) (2a + 2)  \\
				&\neq 0 
			\end{align*}
			the determinant of \( M_{S,N,\ell} \) is still non-zero, and so \( M_{S,N,\ell} \) is also invertible when \( \ell = 1 \).
		\end{proof}
	\end{Thm}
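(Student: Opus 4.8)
The plan is to show that $M_{S,N,\ell}$ is an integer matrix which, once the two bases are ordered compatibly, becomes upper triangular modulo $2$ with \emph{odd} entries along the diagonal; this forces $\det M_{S,N,\ell}$ to be odd, hence nonzero and $M_{S,N,\ell}$ invertible. This follows the template of Brown's proof of the linear independence of $\zeta^\mot(\{2,3\}^\times)$ \cite[Corollary 6.2]{brown12} and Murakami's proof for $\ttw^\mot(\{2,3\}^\times)$ \cite[Theorem 36]{murakami21}. The combinatorial heart of the matter is the map $u \mapsto 2^r 1 u$, where $r \geq 0$ is the unique exponent making $\abs{2^r 1 u} = N$: this is a bijection $B'_{S,N,\ell} \to B_{S,N,\ell}$ (so in particular $\# B_{S,N,\ell} = \# B'_{S,N,\ell}$ and the matrix is square), and it preserves the reverse colexicographic order ($3 < 1 < 2$, read right to left), since prepending the fixed block $2^r 1$ and keeping the common suffix $u$ alters neither the relative order of two words nor which of them terminates in a $3$.

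First I would reduce everything modulo $2$ using the preceding lemma: $\partial^S_{N,\ell}\ttw^\mot(w)$ agrees, up to terms with even integer coefficients, with the single-block deconcatenation $\sum_{w = pv} 2^{\abs{p}-2}\, d_p\, \ttw^\mot(v)$, summed over prefixes $p$ of $w$ with $\deg_1 p + \deg_3 p = 1$. When $\ell > 1$ such a prefix cannot be $2^a3$ (that would force $v = \emptyset$, collapsing the level to $0$), so $p = 2^a 1 2^b$, and modulo $2$ the entry of $M_{S,N,\ell}$ in row $w = 2^a 1 2^b u$ and column $u \in B'_{S,N,\ell}$ is $2^{2a+2b-1}d_{2^a12^b} = (-1)^{a+b}(2^{2a+2b+1}-1)\binom{2a+2b}{2a}$, an integer. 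The case $b = 0$ recovers precisely the diagonal of $M_{S,N,\ell}$ under the bijection above, with value $2^{2a-1}d_{2^a1} = (-1)^a(2^{2a+1}-1) \equiv 1 \pmod*{2}$; and for $b \geq 1$ one has (reading right to left, with the common suffix $u$ followed by a $2$ rather than by a $1$) that the row $2^a 1 2^b u$ precedes the diagonal row $2^r 1 u$, so these odd off-diagonal entries all lie strictly above the diagonal. Hence $M_{S,N,\ell}$ is integral and upper triangular modulo $2$ with $1$'s on the diagonal, giving $\det M_{S,N,\ell} \equiv 1 \pmod*{2}$, which settles the case $\ell > 1$.

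For $\ell = 1$ the same analysis covers all rows indexed by $w = 2^a12^b$, $b \geq 0$, but there is the additional row $w = 2^a3$: here dropping the level to $0$ forces the whole word to be deconcatenated, so this row has the single nonzero entry $2^{2a+1}d_{2^a3} = (-1)^a(2^{2a+3}-1)(2a+2) \neq 0$ in the column of the empty word. Expanding $\det M_{S,N,1}$ along that row reduces the claim to invertibility of the minor with rows $2^a12^b$ ($b \geq 1$) and columns $2^a$ ($a \geq 1$), which by the previous step is integral and upper triangular modulo $2$ with unit diagonal, hence invertible; so $\det M_{S,N,1} \neq 0$ as well.

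The step I expect to be the real obstacle is the $2$-adic bookkeeping underlying the reduction modulo $2$: one must check that \emph{every} $\zeta^\lmot$-coefficient produced by $\gr_\ell^S D_{2r+1}$ --- all the $c_\bullet$ terms catalogued in \autoref{lem:saha:cases} --- contributes an even integer. This comes down to the identities $2^{2a-1}c_{2^a1} \in 2^{2a}\Z$ and $2^{2a+2b+1}c_{2^a32^b} = (-1)^{a+b}\big({-}2^{2a+2b+2}\binom{2a+2b+2}{2a+2} + (2^{2a+2b+2}-1)\binom{2a+2b+2}{2b+1}\big)$, the latter being even because $\binom{2a+2b+2}{2b+1} = \tfrac{2(a+b+1)}{2a+1}\binom{2a+2b+1}{2b+1}$ (compare \cite[Corollary 4.4]{brown12}, which even pins down the exact $2$-adic valuation), together with identifying the diagonal entry as the specific unit $(-1)^a(2^{2a+1}-1)$ rather than some other odd number. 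Everything else amounts to matching the orderings on $B_{S,N,\ell}$ and $B'_{S,N,\ell}$ against the combinatorics of the derivations $D_{2r+1}$ from \autoref{prop:dkt} and \autoref{prop:d1}, which is careful but routine.
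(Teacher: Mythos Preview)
Your proposal is correct and follows essentially the same approach as the paper's own proof: the bijection $u \mapsto 2^r1u$, the upper-triangularity modulo $2$ via the deconcatenation lemma with odd diagonal entries $2^{2a-1}d_{2^a1}$, and the separate treatment of the $\ell=1$ row $2^a3$ via a Laplace expansion are exactly the steps the paper carries out. Your identification of the $2$-adic evenness of the $c_\bullet$ contributions as the crux is also accurate --- in the paper this is packaged into the preceding lemma rather than the theorem itself, but the content is the same.
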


	\begin{Cor}\label{cor:saha:indep}
		The Saha elements 
		\[
			\{ \ttw^\mot(k_1, \ldots, k_{d-1}, k_d+1 ) \mid k_i \in \{ 1, 2 \}\}
		\]
		are linearly independent.
		
		\begin{proof}
			We proceed by induction on the level, as in \cite[Theorem 7.4]{brown12}, \cite[Corollary 38]{murakami21}.  The elements of level \( \ell = 0 \) are of the form \( \ttw^\mot(\{2\}^n) \), which are linearly independent because weight is a grading on \( \mathcal{H}^{(2)} \).  Now suppose the elements
			\[
				\{ \ttw^\mot(w) \mid w \in (\{1,2\}^\times \oplus 2) \cup (\{1,2\}^\times \oplus 3) \,, \deg_1 w + \deg_w 3 \leq \ell -1  \} \,,
			\]
			 of level \( \leq \ell - 1 \) are linearly independent.  Since weight is a grading on \( \mathcal{H}^{(2)} \), any non-trivial linear relation between elements of level \( \ell \) can be assumed as homogeneous of some weight \( N \).  By \autoref{thm:matS:inj}, the map \( \partial^S_{N,\ell} \) is injective as the matrix of the map is invertible.  Application of \( \partial_S^{N,\ell} \) to a non-trivial linear relation between level \( \ell \) elements produces a non-trivial linear relation of strictly smaller level, which does not exist by the induction assumption.  So the elements of level \( \ell \) are also linearly independent, which completes the proof by induction.
		\end{proof}
	\end{Cor}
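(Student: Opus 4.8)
The plan is to prove linear independence of the Saha elements $\ttw^\mot(w)$, $w \in (\{1,2\}^\times \oplus 2) \cup (\{1,2\}^\times \oplus 3)$, by induction on the Saha-level $\ell = \deg_1 w + \deg_3 w$, following the pattern of Brown's proof of the linear independence of $\zeta^\mot(\{2,3\}^\times)$ in \cite{brown12} and Murakami's argument in \cite{murakami21}. The base case $\ell = 0$ is immediate: the level-$0$ Saha elements are exactly the $\ttw^\mot(\{2\}^n)$, which lie in pairwise distinct weight-graded components of $\mathcal{H}^{(2)}$, and weight is a grading, so they are linearly independent.

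For the inductive step, assume the Saha elements of level $\leq \ell - 1$ are linearly independent. Since weight is a grading on $\mathcal{H}^{(2)}$, any hypothetical non-trivial linear relation among level-$\ell$ Saha elements may be taken homogeneous of some weight $N$. The engine of the descent is the linear map $\partial^S_{N,\ell} \colon \gr^S_\ell \mathcal{H}^S_N \to \bigoplus_{1 \leq 2r+1 \leq N} \gr^S_{\ell-1} \mathcal{H}^S_{N-2r-1}$, which is well-defined by \autoref{lem:sahamotivic} and \autoref{lem:saha:cases}. By \autoref{thm:matS:inj} the matrix $M_{S,N,\ell}$ of $\partial^S_{N,\ell}$ is invertible, hence $\partial^S_{N,\ell}$ is injective; applying it to a non-trivial level-$\ell$ relation produces a non-trivial linear relation among Saha elements of level $\leq \ell - 1$, contradicting the inductive hypothesis. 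So no level-$\ell$ relation exists and the induction closes.

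The step I expect to be the main obstacle is \autoref{thm:matS:inj} itself, the invertibility of $M_{S,N,\ell}$ (which I would simply cite here). Its proof shows that modulo $2$ the map $\partial^S_{N,\ell}$ acts purely by deconcatenation of a leading block $2^a 1$: computing $D_r \ttw^\mot(\vec{k})$ via \autoref{prop:dkt} and \autoref{prop:d1}, every contribution in which the left tensor factor is a genuine $\zeta^\lmot$, namely \eqref{eqn:drgr:0eps} and \eqref{eqn:drgr:eps0}, carries an even integer coefficient. This uses the explicit formulae for $c_{2^a1}, c_{2^a32^b}, d_{2^a12^b}, d_{2^a32^b}$ obtained from \autoref{thm:brown:z2232}, \eqref{eqn:coeffzl2232}, \autoref{thm:mott2212} and \autoref{thm:mott2232}, the evenness of $2^{2a+1}c_{2^a32^b}$ following from $\binom{2a+2b+2}{2b+1} = \tfrac{2(a+b+1)}{2a+1}\binom{2a+2b+1}{2b+1}$. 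The surviving deconcatenation entries, together with the order-preserving bijection $u \mapsto 2^r 1 u$ from $B'_{S,N,\ell}$ to $B_{S,N,\ell}$ in reverse colexicographic order with $3 < 1 < 2$, make $M_{S,N,\ell}$ upper-triangular modulo $2$ with diagonal entries $2^{2a-1}d_{2^a1} = (-1)^a(2^{2a+1}-1) \equiv 1 \pmod{2}$, so $\det M_{S,N,\ell}$ is odd when $\ell > 1$; when $\ell = 1$ the last row is the anomalous $(0,\ldots,0,2^{2a+1}d_{2^a3})$, and expanding about it reduces to a submatrix that is again triangular modulo $2$ with unit diagonal, with $2^{2a+1}d_{2^a3} = (-1)^a(2^{2a+3}-1)(2a+2) \neq 0$, so $M_{S,N,1}$ is still invertible.
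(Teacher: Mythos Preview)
Your proof is correct and follows essentially the same approach as the paper: induction on the Saha-level, with the base case handled by the weight grading and the inductive step by the injectivity of $\partial^S_{N,\ell}$ from \autoref{thm:matS:inj}. Your additional paragraph summarising the mechanism behind the invertibility of $M_{S,N,\ell}$ is accurate and matches the paper's argument, but is not needed for the corollary itself since that result is already established.
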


	\begin{Cor}
		The space \( \mathcal{T}_w^\mathrm{conv} \) of convergent motivic MtV's has dimension \( \geq F_{N} \) in weight \( N > 1\), where \( F_n = F_{n-1} + F_{n-2} \) are the Fibonacci numbers, with \( F_1 = F_2 = 1 \).
	\end{Cor}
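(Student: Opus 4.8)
The plan is to read this off directly from \autoref{cor:saha:indep} together with an elementary count. First I would note that every Saha element $\ttw^\mot(k_1,\ldots,k_{d-1},k_d+1)$ with all $k_i \in \{1,2\}$ has trailing argument $k_d+1 \geq 2$, and is therefore a \emph{convergent} motivic MtV in the sense of \autoref{def:mtvconv}; hence the set of weight-$N$ Saha elements is a subset of $\mathcal{T}_N^{\mathrm{conv}}$. By \autoref{cor:saha:indep} these elements are linearly independent over $\Q$, so $\dim_\Q \mathcal{T}_N^{\mathrm{conv}}$ is bounded below by the number of weight-$N$ Saha elements.

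It then remains to count them. The element $\ttw^\mot(k_1,\ldots,k_{d-1},k_d+1)$ has weight $N$ exactly when the word $k_1\cdots k_d \in \{1,2\}^\times$ has $k_1+\cdots+k_d = N-1$, and distinct such words index distinct (indeed linearly independent) elements. Conditioning on whether the last letter of the word is a $1$ or a $2$ shows that the number $c_{N-1}$ of such words satisfies $c_{n} = c_{n-1} + c_{n-2}$, with $c_0 = c_1 = 1$; hence $c_{N-1} = F_N$ under the convention $F_1 = F_2 = 1$. (This is precisely the cardinality $\#\mathcal{B}_N^S = F_N$ recorded in Saha's conjecture, consistent with the small cases $N=2,3,4$ listed in the introduction.) Combining with the previous paragraph gives $\dim_\Q \mathcal{T}_N^{\mathrm{conv}} \geq F_N$ for all $N > 1$.

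There is essentially no obstacle to overcome here: all of the genuine content has already been established in \autoref{thm:matS:inj} and \autoref{cor:saha:indep}, and the present statement is a bookkeeping corollary. The only point meriting a sentence of care is matching the Fibonacci index to the combinatorial count --- that is, confirming that the number of $\{1,2\}$-words of weight $N-1$ is $F_N$ rather than $F_{N-1}$ or $F_{N+1}$ --- which is pinned down by the base cases $N = 2$ (the single word $1$, giving $t^\mot(2)$) and $N = 3$ (the words $11$ and $2$, giving $\ttw^\mot(1,2)$ and $\ttw^\mot(3)$).
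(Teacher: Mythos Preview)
Your proposal is correct and matches the paper's (implicit) reasoning: the corollary is stated in the paper without proof, immediately following \autoref{cor:saha:indep}, precisely because it is just the combination of the linear independence of the Saha elements, the observation that they are convergent, and the count $\#\mathcal{B}_N^S = F_N$ already recorded in the statement of Saha's conjecture. There is nothing to add.
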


	\begin{Rem}
		For strictly convergent motivic MtV's, we do not appear yet to have the correct upper bound to show the Saha elements are a basis.  The motivic MtV's fit into the following inclusions
		\[
			\mathcal{H}^{(1)}_N \subseteq \mathcal{T}^\mathrm{conv}_N \subseteq \mathcal{T}^\mathrm{ext}_N \subseteq \mathcal{H}^{(2)}_N \,,
		\]
		where \( \mathcal{T}_N^\mathrm{conv} \) denotes the space of convergent motivic MtV's (with last argument \( \geq 2 \)) of weight \( N \), and \( \mathcal{T}_N^\mathrm{ext} \) denotes the space of all shuffle regularised motivic MtV's of weight \( N \).  (The first inclusion follows from Murakami's motivic Galois \cite[Theorem 8]{murakami21} descent showing \( \ttw^\mot(k_1, \ldots, k_d) \in \mathcal{H}^{(1)} \), whenever all \( k_i \geq 2 \).  The upper bound of \( \dim_\Q \mathcal{H}_N^{(2)} \leq F_{N+1} \) (established in \cite{deligneGoncharov}) only gives us the bound that \( F_{N} \leq \dim_{Q} \mathcal{T}_N^\mathrm{conv} \leq F_{N+1} \).  Below in \autoref{cor:text:eq:H2}, we will show however that \( \mathcal{T}^\mathrm{ext} = \mathcal{H}^{(2)} \)  using the independence of the Hoffman one-two elements.
	\end{Rem}

	\section{The Hoffman one-two elements as a basis}
	\label{sec:hoff}
	
	We now turn to the second application of the motivic identity.  We show that the elements whose arguments consist of only 1's and 2's are linearly independent as motivic MtV's (analogous to Hoffman's conjectured (motivically true) basis of MZV's as those with arguments 2's and 3's).  Dimension counting then shows that the elements -- \( F_{N+1} \) many in weight \( N \) -- must be a basis for motivic MtV's and alternating motivic MZV's, as these spaces have known dimensions \( \leq F_{N+1} \).
	
	\begin{Def}[Hoffman $t$ filtration]
		For, \(w \in \{1,2\}^\times \), we define the level of \( w \) to be \( \deg_1 w \), i.e. the total number of \( 1 \) in the word.  We define \( \Q \)-subspace of \( \mathcal{H}^{(2)} \), and the level \( \leq \ell \) piece of the level filtration by
		\begin{align*}
		\mathcal{H}^{H} &\coloneqq \langle t^\mot(w) \mid w \in  \{1,2\}^\times \rangle_\Q \,, \\
		H_\ell \mathcal{H}^H &\coloneqq \langle t^\mot(w) \mid w \in  \{1,2\}^\times \,, \text{ s.t. } \deg_1 w \leq \ell \rangle_\Q 
		\end{align*}
		The associated graded to this filtration is then given by
		\[
		\gr_\ell^H \mathcal{H}^H \coloneqq H_\ell \mathcal{H}^H / H_{\ell-1} \mathcal{H}^H \,.
		\]
	\end{Def}
	
	\begin{Eg}
		The level \( \leq 1 \) part of this filtration is generated by the following elements
		\[
		H_1 \mathcal{H}^H = \langle t^\mot(\{2\}^a, 1, \{2\}^b), t^\mot(\{2\}^c) \mid a,b,c \geq 0\rangle_\Q \,,
		\]
		whereas the level \( \leq 0 \) part of this filtration is generated by 
		\[
		H_0 \mathcal{H}^H = \langle t^\mot(\{2\}^c) \mid c \geq 0 \rangle_\Q \,.
		\]
	\end{Eg}
	
	\begin{Lem}\label{lem:hoffmanmotivic}
		The Hoffman-level is motivic.  More precisely, the following holds for all \( r' \geq 0 \)
		\[
		D_{2r'+1} S_\ell \mathcal{H}^H \subseteq \mathcal{L}_{2r+1}^{(2)} \otimes_\Q H_{\ell-1} \mathcal{H}^H \,.
		\]
		
		\begin{proof}
			The proof is essentially the same as for \autoref{lem:sahamotivic}, except the cases \( \vec{k}_{i,j} - r = 3 \) cannot occur, since every argument \( k_i \leq 2 \).
		\end{proof}
	\end{Lem}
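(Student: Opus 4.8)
The plan is to adapt the proof of \autoref{lem:sahamotivic} essentially verbatim, exploiting the fact that the Hoffman setting is a degenerate case of the Saha one in which the letter $3$ never appears. Fix $r \geq 0$ odd, and let $\vec{k} = (k_1,\ldots,k_d) \in \{1,2\}^\times$ have Hoffman level $\ell = \deg_1 \vec{k}$. I would compute $D_r \ttw^\mot(k_1,\ldots,k_d)$ via \autoref{prop:dkt} (using the simplification of \autoref{prop:d1} when $r = 1$) and check, term by term, that every summand has right-hand tensor factor lying in the level filtration piece $H_{\ell-1}\mathcal{H}^H$.

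For the deconcatenation term \eqref{eqn:dr:deconcat}, the left factor $\ttw^\lmot(k_1,\ldots,k_j)$ has odd weight $r$; since the only available letters are $1$ and $2$, this forces at least one letter $1$ among $k_1,\ldots,k_j$, so the complementary index $(k_{j+1},\ldots,k_d)$ has level $\leq \ell - 1$. For \eqref{eqn:dr:0eps}, the delta condition $\abs{\vec{k}_{i+1,j}} \leq r < \abs{\vec{k}_{i,j}} - 1$ gives $k_i \geq \abs{\vec{k}_{i,j}} - r > 1$, hence $\abs{\vec{k}_{i,j}} - r \geq 2$; here the value $3$ is impossible (in the Saha argument it arose only from a trailing letter $3$, which is absent now), so $\abs{\vec{k}_{i,j}} - r = 2$. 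As $r$ is odd, $\abs{\vec{k}_{i,j}}$ is then odd, so $\vec{k}_{i,j}$ contains at least one $1$; replacing this entire subindex by a single $2$ therefore strictly lowers the level. The term \eqref{eqn:dr:eps0} is handled identically, with $k_j$ in place of $k_i$. This establishes $D_{2r+1}\big(H_\ell \mathcal{H}^H\big) \subseteq \mathcal{L}^{(2)}_{2r+1} \otimes_\Q H_{\ell-1}\mathcal{H}^H$, as claimed, and consequently $D_{2r+1}$ passes to a well-defined map $\gr^H_\ell \mathcal{H}^H \to \mathcal{L}^{(2)}_{2r+1} \otimes_\Q \gr^H_{\ell-1}\mathcal{H}^H$.

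I do not expect any real obstacle here: the only cases in the proof of \autoref{lem:sahamotivic} that required genuine care were those with $\abs{\vec{k}_{i,j}} - r = 3$ and trailing letters $3$, and every such case is vacuous in the Hoffman setting. The sole point to double-check is that the level, defined as $\deg_1 \vec{k}$, behaves correctly under the replacement operations above — but this is immediate, since each replacement either deletes a $1$ (in the deconcatenation term) or turns a subindex containing a $1$ into a $2$, in both instances dropping $\deg_1$ by exactly one.
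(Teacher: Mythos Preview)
Your proof is correct and follows essentially the same approach as the paper: both simply run through the case analysis of \autoref{lem:sahamotivic} and observe that the troublesome case $\abs{\vec{k}_{i,j}} - r = 3$ is vacuous here since every $k_i \leq 2$. One tiny imprecision: in your final sentence the level can drop by more than one (e.g.\ the deconcatenated prefix may contain several $1$'s), but this is harmless since you only need the right-hand factor to land in the filtration piece $H_{\ell-1}\mathcal{H}^H$, i.e.\ level $\leq \ell-1$.
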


	From this lemma, we obtain a level-graded derivation
\[
\gr^H_\ell D_{2r+1} \colon \gr^H_\ell \mathcal{H}^H \to \mathcal{L}_{2r+1} \otimes_\Q \gr^H_{\ell-1} \mathcal{H}^H
\]
Moreover we claim, this map lands in the subspace of \( \mathcal{L}_{2r+1} \) generated by the single zeta element \( \zeta^\lmot(\overline{2r+1}) \).

\begin{Lem}\label{lem:hoffmot:form}
	For \( \ell \geq 1 \), \( r' \geq 0 \), the level-graded derivation \( \gr^H_\ell D_{2r'+1} \) satisfies 
	\[
	\gr^H_\ell D_{2r'+1} \big( \gr^H_\ell \mathcal{H}^H \big) \subseteq \zeta^\lmot(\overline{2r'+1}) \Q \otimes_\Q \gr^H_{\ell-1} \mathcal{H}^H \,.
	\]
	
	\begin{proof}
		With \( r = 0 \), the claim is clear as \( \mathcal{L}_{1}^{(2)} \) is generated by \( \log^\lmot(2) \).  So let \( r \geq 0 \) be odd, and \( \vec{k}= (k_1,\ldots,k_d) \in \{1,2\}^\times \) with level \( \ell \).  We consider how to compute \( D_r \ttw^\mot(k_1,\ldots,k_d) \) via \autoref{prop:dkt}, and more carefully track the contributions when we take elements of level \(\ell-1 \) in the right hand tensor factor. 
		
		With \eqref{eqn:dr:deconcat}, the deconcatenation term \( \ttw^\lmot(k_1,\ldots,k_j) \otimes \ttw^\mot(k_{j+1}, \ldots,k_d) \), we see that for \( \vec{k}_{j+1,d} \) to have level \( \ell - 1 \), a single 1 must have been removed.  Therefore if \( \vec{k}_{1,j} = (\{2\}^a, 1, \{2\}^b) \), we know via \autoref{thm:mott2212}, that \( \ttw^\lmot(\vec{k}_{1,j}) \in \zeta^\lmot(\overline{2r+1}) \Q \).
		
		Then we turn to the contribution from \eqref{eqn:dr:0eps}, and recall the considerations in the proof of \autoref{lem:hoffmanmotivic}.  Namely, \( \abs{\vec{k}_{i,j}} - r = 2 \), \( k_i = 2 \), which forces certain behaviour onto \( \vec{k}_{i,j} \).  If \( \abs{\vec{k}_{i,j}} - r = 2 \), then \( \vec{k}_{i,j} \) must contain an odd number of 1's.  But for level-grading reasons, it actually must contain exactly one such.  We have the following case.
		\begin{center}
			\begin{tabular}{c|c|c}
				$\vec{k}_{i,j}-r$ & $ \vec{k}_{i,j} $ & Contribution to \( D_r \) \\ \hline
				2 & $(2,\{2\}^a,1,\{2\}^b)$ & $ \zeta_{2-2}^\lmot(\{2\}^a,1,\{2\}^b) \otimes \ttw^\mot(\vec{k}_{1,i-1}, 2, \vec{k}_{j+1,d})$ 
			\end{tabular}		
		\end{center}
		So via \eqref{eqn:coeffzl2232}, we have \( \zeta^\lmot_\alpha(\vec{k}_{i+1,j}) \in \zeta^\lmot(\overline{2r+1}) \Q \).
		
		Likewise, from \eqref{eqn:dr:eps0}, we have \( \abs{\vec{k}_{i,j}} - r = 2 \), with \( k_j = 2 \) so \( \abs{\vec{k}_{i,j}} \) contains an odd number of 1's.  In the level-graded, it therefore must contain exactly one 1.  Be aware that we must reverse \( \vec{k}_{i,j} \) when inserting it into \( \zeta^\lmot \) in term \eqref{eqn:dr:eps0}.  We have the following case.
		\begin{center}
			\begin{tabular}{c|c|c}
				$\vec{k}_{i,j}-r$ & $ \vec{k}_{i,j} $ & Contribution to \( D_r \) \\ \hline
				2 & $(\{2\}^a,1,\{2\}^b,2)$ & $ -\zeta^\lmot_{2-2}(\{2\}^b,1,\{2\}^a) \otimes \ttw^\mot(\vec{k}_{1,i-1}, 2, \vec{k}_{j+1,d})$ 
			\end{tabular}		
		\end{center}
		Once again, the term \( \zeta^\lmot_\alpha(\vec{k}_{j-1,i}) \in \zeta^\lmot(\overline{2r+1}) \Q \) via \eqref{eqn:coeffzl2232}.
	\end{proof}
\end{Lem}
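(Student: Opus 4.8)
The plan is to compute $D_{2r'+1}\ttw^\mot(\vec{k})$ for an arbitrary Hoffman word $\vec{k}=(k_1,\ldots,k_d)\in\{1,2\}^\times$ of level $\ell$ via the explicit formula of \autoref{prop:dkt}, and to inspect the left-hand $\mathcal{L}$-tensor-factor of each term whose right-hand factor lands in $\gr^H_{\ell-1}\mathcal{H}^H$. First I would dispose of the case $r'=0$: here $\mathcal{L}^{(2)}_1$ is one-dimensional, spanned by $\log^\lmot(2)=-\zeta^\lmot(\overline{1})$, so the inclusion is automatic for dimensional reasons; indeed \autoref{prop:d1} already exhibits $D_1\ttw^\mot$ with only $\log^\lmot(2)$ on the left. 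So assume $r=2r'+1>1$ odd.

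By \autoref{lem:hoffmanmotivic} the Hoffman level never increases under $D_r$, so a term of $D_r\ttw^\mot(\vec{k})$ contributes to $\gr^H_{\ell-1}\mathcal{H}^H$ on the right only when exactly one argument $1$ has been deleted. I then walk through the three families of terms in \autoref{prop:dkt}. For the deconcatenation term \eqref{eqn:dr:deconcat}, the left factor $\ttw^\lmot(k_1,\ldots,k_j)$ has odd weight $r$; since the entries are $1$ or $2$ and the level must drop, it is of the shape $\ttw^\lmot(\{2\}^a,1,\{2\}^b)$, which by \autoref{thm:mott2212} -- projected to $\mathcal{L}$, where the $\log^\mot(2)$ contributions vanish as products in weight $>1$ -- lies in $\zeta^\lmot(\overline{2r+1})\Q$. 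For \eqref{eqn:dr:0eps}, exactly as in the proof of \autoref{lem:hoffmanmotivic} the constraints $\abs{\vec{k}_{i+1,j}}\le r<\abs{\vec{k}_{i,j}}-1$ together with $k_i\le2$ force $k_i=2$ and $\abs{\vec{k}_{i,j}}-r=2$; then $\abs{\vec{k}_{i,j}}=r+2$ is odd, so $\vec{k}_{i,j}$ carries an odd number of $1$'s, and in the level-graded it must carry exactly one, i.e.\ $\vec{k}_{i,j}=(2,\{2\}^a,1,\{2\}^b)$. The left factor is then $\zeta^\lmot_{r-\abs{\vec{k}_{i+1,j}}}(k_{i+1},\ldots,k_j)=\zeta^\lmot_0(\{2\}^a,1,\{2\}^b)$, which by \eqref{eqn:coeffzl2232} (duality together with Brown's \autoref{thm:brown:z2232}, and the case $\zeta^\lmot_1(\{2\}^a)$ when $b=0$) is a rational multiple of $\zeta^\lmot(2a+2b+1)=\zeta^\lmot(2r+1)$, hence of $\zeta^\lmot(\overline{2r+1})$, since the distribution relation modulo products (\autoref{cor:distreg:lmot}) makes these span the same line in $\mathcal{L}_{2r+1}$ for $r\ge1$. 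The term \eqref{eqn:dr:eps0} is symmetric: there $\vec{k}_{i,j}=(\{2\}^a,1,\{2\}^b,2)$, and the left factor (after reversing $\vec{k}_{i,j}$) is $\zeta^\lmot_0(\{2\}^b,1,\{2\}^a)$, again in $\zeta^\lmot(\overline{2r+1})\Q$ by \eqref{eqn:coeffzl2232}.

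Assembling these observations, every term of $D_{2r'+1}\ttw^\mot(\vec{k})$ surviving into $\gr^H_{\ell-1}$ on the right has left factor in the line $\zeta^\lmot(\overline{2r'+1})\Q$, which is precisely the claimed inclusion. The only place requiring care -- and the only genuine obstacle -- is the combinatorial bookkeeping in the middle two families of terms: one must check that the parity forced by $\abs{\vec{k}_{i,j}}=r+2$ being odd, combined with the level-graded rigidity, leaves no room for a higher-depth alternating MZV on the left, so that every $\zeta^\lmot_\bullet$ factor collapses to a scalar times $\zeta^\lmot(\overline{2r+1})$. This is the same analysis as in \autoref{lem:sahamotivic} and \autoref{lem:saha:cases}, but strictly simpler, since no entry can equal $3$ and hence the case $\abs{\vec{k}_{i,j}}-r=3$ does not arise; the rest is a routine application of \autoref{prop:dkt}, \autoref{thm:mott2212}, \autoref{thm:brown:z2232} and \eqref{eqn:coeffzl2232}.
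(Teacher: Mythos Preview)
Your proof is correct and follows essentially the same approach as the paper: dispose of $r'=0$ by dimension, then for $r>1$ run through the three families of terms in \autoref{prop:dkt}, identify the forced shapes $(\{2\}^a,1,\{2\}^b)$ of the left factors using the parity and level-drop constraints, and invoke \autoref{thm:mott2212} and \eqref{eqn:coeffzl2232} to place each left factor in $\zeta^\lmot(\overline{2r+1})\Q$. Your added remarks (the explicit identification of $\zeta^\lmot(2r+1)$ with $\zeta^\lmot(\overline{2r+1})$ in $\mathcal{L}$, and the comparison to the Saha analysis noting the $3$-case cannot arise) are accurate and make the argument slightly more self-contained, but do not change the structure.
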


	We now look at the action of these derivations on elements of a given level, and package them together into the following linear map.

\begin{Def}\label{def:partialH}
	For all \( N ,\ell \geq 1 \), let \( \partial^H_{N,\ell} \) be the linear map
	\[
	\partial_{N,\ell}^H \colon \gr_\ell^H \mathcal{H}_N^H \to \bigoplus_{1 \leq 2r+1 \leq N} \gr_{\ell-1}^H \mathcal{H}_{N-2r-1}^H \,,
	\]
	defined by first applying \( \bigoplus_{1\leq2r+1\leq N} \gr_\ell^H D_{2r+1} \big|_{\gr_\ell^H \mathcal{H}_N^H} \), and then sending all \( \log^\mot(2) \mapsto \tfrac{1}{2}, \zeta^\lmot(2r+1) \mapsto 2^{2r-1} \), \( r > 0 \) to by the projection \begin{align*}
	\widetilde{\pi}_{2r+1} & \colon \Q \zeta^\lmot(\overline{2r+1}) \to \Q  \\
	& \begin{cases} 
	\log^\mot(2) \mapsto \frac{1}{2} \,, & \text{if \( r = 0 \)\,,}\\
	\zeta^\lmot(2r+1) \mapsto 2^{2r-1} \,, & \text {if \( r > 0 \)}
	\end{cases}
	\end{align*}
\end{Def}

The goal is to show that the maps \( \partial_{N,\ell}^H \) are injective for \( \ell \geq 1 \).  Then by recursion, we will establish the elements of level \( \ell \) are linearly independent (otherwise \(\partial_{N,\ell}^H \) would construct a non-trivial relation of strictly smaller level).

\begin{Def}[Matrix basis]\label{def:Mbas}
	Let \( \ell,N \geq 1 \), with \( N \equiv \ell \pmod*{2} \).  Define the following sets
	\begin{align*}
	B_{H,N,\ell} &\coloneqq \{ w \in \{1,2\}^\times  \mid \deg_1 w , \abs{w} = N \} \\
	B'_{H,N,\ell} &\coloneqq \{ w \in \{1,2\}^\times \mid \deg_1 w  = \ell-1, \abs{w} < N \}
	\end{align*}
	In the case \( \ell = 1 \), the set \( B'_{H,N,\ell} \) also includes the empty word (of weight 0 and level 0).  Sort both sides in reverse colexicographic order (i.e. reading right to left, largest first), with \( 1 < 2 \).
\end{Def}

A counting argument shows that \( \# B_{H,N,\ell} = \# B'_{H,N,\ell} \) for all such choices of \( \ell, N \).  These basis elements will be used to define the matrix form of the linear map \(	\partial_{N,\ell}^H \), and the claim of injectivity corresponds to non-zero determinant.

\begin{Eg}
	For \( N = 8, \ell = 2 \), we have
	\begin{align*}
	& B_{H,N,\ell} = \{ 11222 \,, 12122 \,, 21122 \,, 12212 \,, 21212 \,, 22112 \,, 12221 \,, 21221 \,, 22121 \,, 22211 \} \, \\ 
	& B'_{H,N,\ell} = \{ 1222 \,, 2122 \,, 122 \,, 2212 \,, 212 \,, 12 \,, 2221 \,, 221 \,, 21, 1 \}
	\end{align*}
\end{Eg}

\begin{Def}[Matrix of \( \partial^H_{N,\ell} \)]\label{def:Mpar}
	For \( \ell \geq 1, N \geq 1 \), with \( N \equiv \ell \pmod*{2} \), let 
	\[
	M_{H,N,\ell} \coloneqq \big(f_{w'}^w\big)_{w\in B_{H,N,\ell}, w' \in B_{H,N,\ell}'}
	\]
	be the matrix of \( \partial^H_{N,\ell} \) with respect to the bases \( B_{H,N,\ell} \) and \( B'_{H,N,\ell} \).  Here \( f_w'^w \) denotes the coefficient of \( \ttw^\mot(w') \) in \( \partial^H_{N,\ell} \ttw^\mot(w) \), and in the matrix \( w \) corresponds to rows, and \( w' \) to columns.
\end{Def}
\begin{Eg}
	For \( N = 8, \ell = 2 \), the matrix \( M_{H,8,2} \) is as follows; the first row and column label the elements of \( B'_{H,8,2} \) and \( B_{H,8,2} \) respectively.
	\begin{center}
		\small
		\begin{adjustwidth}{-0.4cm}{}
		\begin{tabular}{c||c|c|c|c|c|c||c|c|c|c}
			& \!\!1222\!\! &  \!\!2122\!\!  & 122 & \!\!2212\!\! & 212 & 12 & \!\!2221\!\!\! & 221 & 21 & 1 \\ \hline\hline
\!\!11222\!\! & $ 1 $ & $ 0 $ & $ {-}2 c_{21} $ & $ 0 $ & $ 0 $ & $ {-}8 c_{221} $ & $ 0 $ & $ 0 $ & $ 0 $ & $ 0 $ \\
\!\!12122\!\! & $ 0 $ & $ 1 $ & $ \!\!2 d_{12}{-}2 c_{21}\!\!\! $ & $ 0 $ & $ 0 $ & $ 8 c_{23}{-}8 c_{32} $ & $ 0 $ & $ 0 $ & $ 0 $ & $ 0 $ \\
\!\!21122\!\! & $ 0 $ & $ 0 $ & $ 2 d_{21} $ & $ 0 $ & $ {-}2 c_{21} $ & $ 0 $ & $ 0 $ & $ 0 $ & $ 0 $ & $ 0 $ \\
\!\!12212\!\! & $ 0 $ & $ 0 $ & $ 2 c_{21} $ & $ 1 $ & $ \!\!2 d_{12}{-}2 c_{21}\!\!\! $ & $ \!\!{-}8 c_{23}+8 c_{32}{+}8 d_{122}\!\!\! $ & $ 0 $ & $ 0 $ & $ 0 $ & $ 0 $ \\
\!\!21212\!\! & $ 0 $ & $ 0 $ & $ 0 $ & $ 0 $ & $ 2 d_{21} $ & $ 8 d_{212} $ & $ 0 $ & $ 0 $ & $ 0 $ & $ 0 $ \\
\!\!22112\!\! & $ 0 $ & $ 0 $ & $ 0 $ & $ 0 $ & $ 2 c_{21} $ & $ 8 d_{221} $ & $ 0 $ & $ 0 $ & $ 0 $ & $ 0 $ \\ \hline \hline
\!\!12221\!\! & $ {-}\frac{1}{2} $ & $ 0 $ & $ 2 c_{21} $ & $ 0 $ & $ 0 $ & $ 8 c_{221} $ & $ 1 $ & $ \!\!2 d_{12}{-}2 c_{21}\!\! $ & $ 8 d_{122}{-}8 c_{221} $ & $ \!\!32 d_{1222}\!\! $ \\
\!\!21221\!\! & $ 0 $ & $ {-}\frac{1}{2} $ & $ 0 $ & $ 0 $ & $ 2 c_{21} $ & $ 0 $ & $ 0 $ & $ \!\!2 d_{21}{-}2 c_{21}\!\! $ & $ \!\! 8 c_{23}{-}8 c_{32}{+}8 d_{212} \!\!\! $ & $ \!\!32 d_{2122}\!\! $ \\
\!\!22121\!\! & $ 0 $ & $ 0 $ & $ 0 $ & $ {-}\frac{1}{2} $ & $ 0 $ & $ 0 $ & $ 0 $ & $ 2 c_{21} $ & $\!\! {-}8 c_{23}{+}8 c_{32}{+}8 d_{221} \!\!\!$ & $ \!\!32 d_{2212}\!\! $ \\
\!\!22211\!\! & $ 0 $ & $ 0 $ & $ 0 $ & $ 0 $ & $ 0 $ & $ 0 $ & $ {-}\frac{1}{2} $ & $ 2 c_{21} $ & $ 8 c_{221} $ & $ \!\!32 d_{2221}\!\! $ 
			\end{tabular}
		\end{adjustwidth}
	\end{center}
	The entries \( 1 \) in the matrix arise from both the deconcatenation term \( 2 \ttw^\lmot(1) \) removing a leading 1 which appears in \( D_{1} \) as per \autoref{prop:d1}, the entries \( -\tfrac{1}{2} \) correspond to the deconcatenation term \( -\ttw^\lmot(1) \) removing a trailing 1 which appear in \( D_{1} \).  With the projection \( \log^\lmot(2) \to \frac{1}{2} \), these combinations give 1 and \( -\frac{1}{2} \) respectively.
	
	After substituting the values for \( c_\bullet \) and \( d_\bullet \) using the formulae above, we obtain the matrix
	\begin{center}
		\begin{tabular}{c||c|c|c|c|c|c||c|c|c|c}
		& \!\!1222\!\! &  \!\!2122\!\!  & 122 & \!\!2212\!\! & 212 & 12 & \!\!2221\!\!\! & 221 & 21 & 1 \\ \hline\hline
		\!\!11222\!\! &	$ 1 $ & $ 0 $ & $ 4 $ & $ 0 $ & $ 0 $ & $ -16 $ & $ 0 $ & $ 0 $ & $ 0 $ & $ 0 $ \\
		\!\!12122\!\! & $ 0 $ & $ 1 $ & $ -3 $ & $ 0 $ & $ 0 $ & $ -80 $ & $ 0 $ & $ 0 $ & $ 0 $ & $ 0 $ \\
		\!\!21122\!\! &	$ 0 $ & $ 0 $ & $ -7 $ & $ 0 $ & $ 4 $ & $ 0 $ & $ 0 $ & $ 0 $ & $ 0 $ & $ 0 $ \\
		\!\!12212\!\! &	$ 0 $ & $ 0 $ & $ -4 $ & $ 1 $ & $ -3 $ & $ 111 $ & $ 0 $ & $ 0 $ & $ 0 $ & $ 0 $ \\
		\!\!21212\!\! & $ 0 $ & $ 0 $ & $ 0 $ & $ 0 $ & $ -7 $ & $ 186 $ & $ 0 $ & $ 0 $ & $ 0 $ & $ 0 $ \\
		\!\!22112\!\! & $ 0 $ & $ 0 $ & $ 0 $ & $ 0 $ & $ -4 $ & $ 31 $ & $ 0 $ & $ 0 $ & $ 0 $ & $ 0 $ \\ \hline\hline
		\!\!12221\!\! &	$ -\frac{1}{2} $ & $ 0 $ & $ -4 $ & $ 0 $ & $ 0 $ & $ 16 $ & $ 1 $ & $ -3 $ & $ 15 $ & $ -127 $ \\
		\!\!21221\!\! &	$ 0 $ & $ -\frac{1}{2} $ & $ 0 $ & $ 0 $ & $ -4 $ & $ 0 $ & $ 0 $ & $ -3 $ & $ 106 $ & $ -1905 $ \\
		\!\!22121\!\! &	$ 0 $ & $ 0 $ & $ 0 $ & $ -\frac{1}{2} $ & $ 0 $ & $ 0 $ & $ 0 $ & $ -4 $ & $ 111 $ & $ -1905 $ \\
		\!\!22211\!\! & $ 0 $ & $ 0 $ & $ 0 $ & $ 0 $ & $ 0 $ & $ 0 $ & $ -\frac{1}{2} $ & $ -4 $ & $ 16 $ & $ -127 $ \\
		\end{tabular}
	\end{center}
	We notice already that the matrix is block \emph{lower} triangular; the blocks correspond to the number of trailing ones in the quotients, which is also the number of trailing 1's when deconcatenating the maximal string \( 2^a1 \) from the start of the basis words.  Each diagonal block except the last (i.e. here only the first, but in general all further intermediate ones too) is \emph{upper} triangular modulo 2, so has determinant \( \neq 0 \).  Expanding the determinant of the last block about its first column produces two matrices with integer entries, which are equivalent to triangular matrices mod 2, so the first block has determinant \( \frac{\text{odd}}{2} \).  Overall the full matrix has the same property: the determinant is in \( \frac{1}{2} + \Z \).
	
	We aim to show this is a general phenomenon for level \( \ell \geq 1 \).  In fact, we shall show that \( \partial^H_{N,\ell} \) never increases the number of trailing 1's, explaining the block triangular appearance; moreover, for a fixed number of trailing 1's we show that \( \partial^H_{N,\ell} \) acts by deconcatenation, modulo 2, explaining the upper triangular appearance of each block.  (Special care must be given for the first row, where an extra \( -\tfrac{1}{2} \) is produced.)
\end{Eg}

	We formally introduce a partition of the basis set \( B'_{H,N,\ell} \) by the number of trailing 1's.

	\begin{Def}[Trailing 1's]
	Write
	\[
		T'_{\alpha,N,\ell} \coloneqq \{ w' \in B'_{H,N,\ell} \mid \text{\( w' \) has exactly \( \alpha \) trailing 1's} \} 
	\]
	More precisely, one can define ``exactly \( \alpha \) trailing 1's'' as a word of the form \( 1^\alpha \) or \( w21^\alpha \), where \( w \in \{1,2\}^\times \), to obtain
	\[
		T'_{\alpha,N,\ell} = B'_{S,N,\ell} \cap \Big( \{ 1^\alpha \} \cup \{ w21^\alpha \mid w \in \{ 1\,2\}^\times \} \Big) \,.
	\]
	\end{Def}

	We note that \( T'_{\alpha, N, \ell} = \emptyset \) if \( \alpha \geq \ell \) as a word of level \( < \ell \) cannot contain \( \ell \) trailing 1's.  Whereas \( T'_{\ell-1, N, \ell} = \{2^{\frac{1}{2}(N-\ell)}1^{\ell-1},\ldots,21^{\ell-1},1^{\ell-1}\} \).  So we certainly have as a disjoint union that
	\[
		B'_{S,N,\ell} = \bigcup_{0 \leq \alpha M \ell} T'_{\alpha,N,\ell} \,.
	\]
	
	Now consider the bijection
	\begin{align*}
		\phi \colon B'_{S,N,\ell} &\to B_{H,N,\ell} \\
		u &\mapsto 2^a1u \,,
	\end{align*}
	where \( a \) is the unique value such that \( 2^a1u \) has weight \( N \).  We pull back the partition \( T'_{\alpha,N,\ell} \) to define 
	\[
		T_{\alpha,N,\ell} = \{ w \in B'_{H,N,\ell} \mid \phi^{-1}(w) \in T'_{\alpha,N,\ell} \} \,.
	\]
	Note that the inverse \( \phi^{-1}(w) \) is obtained by taking the suffice when deconcatenating \( w \) after the first 1.
		
	\begin{Lem}\label{lem:trailing1:agrees}
		For \( w \in B_{H,N,\ell} \), with \( w \neq 2^{\frac{1}{2}(N-\ell)} 1^\ell \) then the following holds.  The word \( w \) has \( \alpha \) trailing 1's if and only if 
		\[
			w \in T_{\alpha,N,\ell} \,.
		\]
		However, the word \( w = 2^{\frac{1}{2}(N-\ell)}1^\ell \) lies in \( T_{\ell-1,N,\ell} \).
			
		\begin{proof}
			Firstly, we check the case \( w = 2^{\frac{1}{2}(N-\ell)}1^\ell \).  Deconcatenating at the first 1 tells us that \( \phi(w) = 1^{\ell-1} \) which has exactly \( \ell-1 \) trailing 1's.  So \( w \in T_{\ell-1,N,\ell} \) as claimed.
				
			Now take any other word \( v \) of level \( \ell \).  It cannot have \( 1^\ell \) trailing 1's, so is of necessarily of the form \( v'21^{\alpha} \), with \( \alpha < \ell \), for some \( v' \in \{1,2\}^\times \), where \( \deg_1 v' \geq \ell - \alpha \geq 1 \).  This means the first 1 in \( v \) occurs somewhere in \( v' \).  Deconcatenating after this, gives a suffice of the form \( v''21^{\alpha} \), so that \( \phi(v) \in T'_{\alpha,N,\ell} \), meaning \( v \in T_{\alpha,N,\ell} \) as claimed.
			
			Conversely, given \( v \in T_{\alpha,N,\ell} \), we know that \( \phi^{-1}(v) \in T'_{\alpha,N,\ell} \), so that \( \phi^{-1}(v) = v'21^\alpha \) or \( \phi^{-1}(v) = 1^\alpha \), with \( \alpha = \ell-1 \).  The former case leads to \( v = 2^a1v'21^\alpha \) which ends in exactly \( \alpha \) trailing 1's.  The latter case leads to \( v = 2^{\frac{1}{2}(N-\ell)}1^\ell \), which we already excluded.
		\end{proof}
	\end{Lem}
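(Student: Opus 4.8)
The plan is to push everything through the bijection $\phi$, under which $\phi^{-1}$ simply strips off the maximal prefix of the form $2^a1$ (equivalently: deconcatenates $w$ after its first letter $1$). The structural fact I would isolate first is this: for any word $w\in B_{H,N,\ell}$ with $\ell\geq1$ there is a first $1$, and everything strictly after that first $1$ — in particular the whole trailing block $1^\alpha$ together with the $2$ immediately preceding it — is carried unchanged into $\phi^{-1}(w)$, \emph{provided} the first $1$ of $w$ is not itself already inside the trailing block of $1$'s. That proviso fails precisely for $w=2^{\frac12(N-\ell)}1^\ell$, which explains why this single word must be treated apart.

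First I would dispose of the exceptional word. For $w=2^{\frac12(N-\ell)}1^\ell$, removing the prefix $2^{\frac12(N-\ell)}1$ leaves $1^{\ell-1}$, which has exactly $\ell-1$ trailing $1$'s; hence $\phi^{-1}(w)\in T'_{\ell-1,N,\ell}$ and so $w\in T_{\ell-1,N,\ell}$, even though $w$ genuinely has $\ell$ trailing $1$'s. This is the unique place the stated equivalence can break, and it is consistent since a word of level $\ell-1$ cannot carry $\ell$ trailing $1$'s, so $T_{\ell,N,\ell}$ is empty anyway.

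Next comes the generic case. Take $w$ of level $\ell$ with $w\neq 2^{\frac12(N-\ell)}1^\ell$. Then not all $1$'s of $w$ are at the end, so $w$ ends in $v'21^{\alpha}$ for some $\alpha<\ell$ and some $v'\in\{1,2\}^\times$ with $\deg_1 v'=\ell-\alpha\geq1$. Since $v'$ contains a $1$, the first $1$ of $w$ lies inside $v'$; writing $v'=2^b1v''$ gives $\phi^{-1}(w)=v''21^{\alpha}$, which still ends in exactly $\alpha$ trailing $1$'s, has $\deg_1=(\ell-\alpha-1)+\alpha=\ell-1$, and weight $<N$. Thus $\phi^{-1}(w)\in T'_{\alpha,N,\ell}$, i.e. $w\in T_{\alpha,N,\ell}$. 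For the converse I would run this backwards: if $v\in T_{\alpha,N,\ell}$ then $\phi^{-1}(v)\in T'_{\alpha,N,\ell}$ is either $1^\alpha$ (which forces $\alpha=\ell-1$, its level being $\ell-1$), giving $v=2^a1\cdot1^{\ell-1}$, the excluded word; or of the shape $v'21^\alpha$, giving $v=2^a1v'21^\alpha$, which ends in exactly $\alpha$ trailing $1$'s. Combining the two directions yields the equivalence for $w\neq 2^{\frac12(N-\ell)}1^\ell$, together with the stated membership of that special word.

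The only genuinely delicate point — and the closest thing to an obstacle — is pinning down exactly when the trailing-$1$ count can drop under $\phi^{-1}$: it drops precisely when the first $1$ of $w$ already belongs to the block of trailing $1$'s, which happens if and only if $w=2^{a}1^{\ell}$. Once that dichotomy is made explicit, the remainder is routine bookkeeping of weight and of $\deg_1$, and needs no further input beyond the definitions of $\phi$, $T_{\alpha,N,\ell}$ and $T'_{\alpha,N,\ell}$.
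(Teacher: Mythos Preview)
Your proof is correct and follows essentially the same approach as the paper's own argument: handle the exceptional word $2^{\frac12(N-\ell)}1^\ell$ separately, then for a generic word observe that the first $1$ lies strictly before the trailing block so that stripping the prefix $2^a1$ preserves the trailing $1$-count, and finally run the converse via the two possible shapes of a word in $T'_{\alpha,N,\ell}$. If anything, your version is slightly cleaner in consistently writing $\phi^{-1}$ (the paper has a couple of places where $\phi$ appears but $\phi^{-1}$ is meant) and in making explicit the $\deg_1$ and weight checks.
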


	Now we claim that the map \( \partial^H_{N,\ell} \) never increases the number of trailing 1's.
	
	\begin{Lem}\label{lem:mat:blocktri}
		For all words \( w \in T_{\alpha,N,\ell} \), the image under \( \partial^H_{N,\ell} \) satisfies
		\[
			\partial^H_{N,\ell} w = \sum_{\substack{w' \in T_{\beta, N, \ell}  \\ \beta \leq \alpha }} f_{w'}^w \ttw^\mot(w') \,.
		\]
		That is to say, \( \partial^H_{N,\ell} w \) only involves words with \( \leq \alpha \) trailing 1's.
		
		\begin{proof}
			By \autoref{lem:trailing1:agrees}, we know that the set \( T_{\alpha,N,\ell} \) is characterised as the words \( w \in B_{N,\ell} \) ending with \( \alpha \) many trailing 1's, except that when \( \alpha = \ell-1 \), where also include the word \( 2^{\frac{1}{2}(N-\ell)}1^\ell \in T_{\ell-1,N,\ell} \).  (Note that since \( T'_{\ell,N,\ell} = \emptyset \), the set \( T_{\ell,N,\ell} \) would also be empty.)
			
			Purely for level-filtration reasons (see \autoref{lem:hoffmanmotivic}), the word \( w = 2^{\frac{1}{2}(N-\ell)}1^\ell \) must map to a sum of words with \( < \ell \) trailing 1's.  We may therefore assume \( \alpha \leq \ell-1 \), and \( w \in T_{\alpha,N,\ell} \) genuinely ends in \( 1^\alpha \). \medskip
			
			We consider the terms which arise in \( \partial_{N,\ell}^H \ttw^\mot(w) \) via the cases in \autoref{lem:hoffmot:form} for \( \gr_\ell D_{2r'+1} \ttw^\mot(w) \), with \( w \) viewed as a tuple.  Take take the deconcatenation term
			\[
				\ttw^\mot(\vec{k}_{1,j}) \otimes \ttw^\mot(\vec{k}_{j+1,d})
			\]
			Since \( \vec{k}_{j+1,d} \) is a suffix of \( \vec{k} \) it clearly has \( \leq \alpha \) trailing 1's.  (Either the cut \( \vec{k}_{1,j} \) ends before the first trailing 1, in which case we have exactly as many trailing 1's.  Or the cut ends after this point, and we have even reduced the number of trailing 1's as \( \vec{k}_{j+1,d} = (\{1\}^{\beta}) \) with \( \beta < \alpha \)).
			
			On the other hand, if \( r = 1 \), and we take the deconcatenation term
			\[
				\delta_{k_d=1} \ttw^\lmot(1) \otimes \ttw^\mot(k_1,\ldots,k_{d-1})
			\]
			which occurs in \( D_1 \), then we have certainly removed a trailing 1 if this term is non-zero. \medskip
			
			Now consider the cases as is the tables in \autoref{lem:hoffmot:form}, which come from replacing a subindex \( \vec{k}_{i,j} \)  by \( \abs{\vec{k}_{i,j}} - r\).  In both cases we see the replacement is by a 2.  So if \( \vec{k}_{i,j} \) ends before the trailing 1's, we do not increase their number.  Otherwise \( \vec{k}_{i,j} \) ends within the string of trailing 1's, and some set \( 1^\beta \), \( \beta > 0 \) of them are replaced by a 2, leaving strictly fewer trailing 1's, namely \( 1^{\alpha - \beta} \). \medskip
			
			The block (lower) triangularity corresponds to the ordering, wherein we have the reverse colexicographic order (reading right to left, largest first), with \( 1 < 2 \).  So \( 1^\gamma > w_021^\alpha > w_0'21^\beta \), for any \( \beta < \alpha \leq \gamma\).
		\end{proof}
	\end{Lem}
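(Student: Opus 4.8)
The plan is to reduce the statement to a purely combinatorial bookkeeping claim about trailing $1$'s, and then to verify that none of the operations feeding into $\partial_{N,\ell}^H$ can ever increase their number. First I would invoke \autoref{lem:trailing1:agrees} to rephrase the target: for $w'$ of level $\ell-1$, lying in the block ``$\beta \le \alpha$'' is the same as ``$w'$ ends in at most $\alpha$ trailing $1$'s'', while the hypothesis $w \in T_{\alpha,N,\ell}$ says $w$ ends in exactly $\alpha$ trailing $1$'s, with the lone exception of $w = 2^{(N-\ell)/2}1^{\ell}$, which lies in $T_{\ell-1,N,\ell}$ even though it ends in $\ell$ trailing ones. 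That exceptional word I would dispose of at once by level considerations: by \autoref{lem:hoffmanmotivic} every word appearing on the right of $\partial_{N,\ell}^H \ttw^\mot(w)$ has Hoffman level $\le \ell-1$, hence contains at most $\ell-1 = \alpha$ letters equal to $1$ in total, a fortiori at most $\alpha$ trailing $1$'s.

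For the generic case, write $w = w_0 \oplus 1^{\alpha}$ where $w_0$ is either empty---in which case $w = 1^{\ell}$ is the exceptional word already treated---or ends in a $2$; so assume $w_0$ ends in $2$. I would then run through the term types contributing to $\gr_\ell^H D_{2r+1}\ttw^\mot(w)$ as catalogued in \autoref{prop:dkt}, \autoref{prop:d1} and \autoref{lem:hoffmot:form}. The prefix-deconcatenation term \eqref{eqn:dr:deconcat} puts $\ttw^\mot(k_{j+1},\ldots,k_d)$ on the right, and $(k_{j+1},\ldots,k_d)$ is a suffix of $w$, hence ends in $\le \alpha$ trailing $1$'s (exactly $\alpha$ when the cut lies strictly inside $w_0$, since $w_0$ ends in a $2$; strictly fewer when it cuts into the block $1^\alpha$). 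For $D_1$ there are also the two deconcatenation terms of \autoref{prop:d1}: removing a leading $1$ yields $\ttw^\mot(k_2,\ldots,k_d)$, which still ends in exactly $1^\alpha$ because $w_0$ ends in $2$, and removing a trailing $1$ via $-\delta_{k_d=1}\log^\lmot(2)\otimes\ttw^\mot(k_1,\ldots,k_{d-1})$ leaves $\alpha-1$. Finally, for the terms \eqref{eqn:dr:0eps} and \eqref{eqn:dr:eps0}, \autoref{lem:hoffmot:form} shows that in the Hoffman case the replaced subindex $\vec{k}_{i,j}$ always satisfies $\abs{\vec{k}_{i,j}} - r = 2$ and contains at least one $1$, and the right-hand word is obtained by replacing the whole block $\vec{k}_{i,j}$ by the single letter $2$; if $\vec{k}_{i,j}$ lies inside $w_0$ the trailing block is untouched, while if it meets $1^\alpha$ it absorbs $\beta \ge 1$ of those trailing ones into the new $2$, leaving $1^{\alpha-\beta}$. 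In every case the resulting word ends in at most $\alpha$ trailing $1$'s, which is the assertion.

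Finally I would record the order-theoretic remark: in the reverse colexicographic order with $1 < 2$, a word ending in fewer trailing $1$'s is larger (reading right to left, after $\beta < \alpha$ copies of $1$ one word already shows a $2$ while the other still shows a $1$), so sorting the bases $B_{H,N,\ell}$ and $B'_{H,N,\ell}$ by this rule groups words into consecutive blocks indexed by their number of trailing $1$'s, and the inclusion just proved is exactly block lower triangularity of $M_{H,N,\ell}$. The main obstacle is not conceptual but a matter of care: one must correctly handle the case where a cut point, or a replaced subindex $\vec{k}_{i,j}$, straddles the boundary between the body $w_0$ and the trailing block $1^\alpha$, and one must remember to peel off $w = 2^{(N-\ell)/2}1^{\ell}$ first, since it is precisely the member of $T_{\ell-1,N,\ell}$ whose genuine count of trailing $1$'s exceeds $\alpha$.
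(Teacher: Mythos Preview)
Your proposal is correct and follows essentially the same approach as the paper's own proof: both handle the exceptional word $2^{(N-\ell)/2}1^\ell$ first via the level filtration, then run through the deconcatenation and subindex-replacement terms from \autoref{prop:dkt}, \autoref{prop:d1}, and \autoref{lem:hoffmot:form} to check that none can increase the count of trailing $1$'s, and finish with the same order-theoretic remark. Your explicit decomposition $w = w_0 \oplus 1^\alpha$ with $w_0$ ending in $2$ is a slight notational refinement, but the argument is otherwise identical.
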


	This lemma has established that the matrix \( M_{H,N,\ell} \) of \( \partial^H_{N,\ell} \) is block (lower) triangular, and that the diagonal blocks are square.  (We constructed \( T_{\alpha,N,\ell} \) as the preimage of \( T'_{\alpha,N,\ell} \) under a bijection of the bases.)  We therefore reduction the question of injectivity of \( \partial^H_{N,\ell} \), equivalently the determinant of \( M_{H,N,\ell} \) being non-zero, to a question of understanding the determinants of these diagonal blocks.
	
	\begin{Def}
		For \( \ell \geq 1, N \geq 1, \alpha \leq \ell-1 \), with \( N \equiv \ell \pmod*{2} \), let 
		\[
		M_{\alpha,H,N,\ell} \coloneqq \big(f_{w'}^w\big)_{w\in T_{\alpha,N,\ell}, w' \in T_{\alpha,N,\ell}'}
		\]
		be the diagonal block of \( M_{H,N,\ell} \) corresponding to \( \alpha \) trailing 1's (after deconcatenating \( 2^a1 \) for \( T_{\alpha,N,\ell} \), or immediately for \( T'_{\alpha,N,\ell} \)).
	\end{Def}

	\begin{Lem}\label{lem:block:isdeconcat}
		For \( \alpha < \ell-1 \), the restriction of \( \partial_{N,\ell} \) to \( T_{\alpha,N,\ell} \), and projected to the \( T_{'\alpha,N,\ell} \) satisfies the following.  For \( w \in T_{\alpha,N,\ell} \), the coefficient of every word \( \ttw^\mot(u) \) with \( u \in T'_{\alpha,N,\ell} \), in
		\[
			\partial_{N,\ell}^H \ttw^\mot(w) - \sum_{\substack{w = uv \\ \deg_1 u = 1}} 2^{\abs{u}-2} d_u \ttw^\mot(v)
		\]
		is an even integer.
		
		\begin{proof}
			Since \( \alpha < \ell - 1 \) we know \( w \neq 2^{\frac{1}{2}(N-\ell)}1^\ell \), so we do not have to worry about the deconcatenation term \( \ttw^\mot(k_1,\ldots,k_{d-1}) \) contributing: it has strictly fewer than \( \ell - 1 \) trailing 1's.
			
			The terms in 
			\[
				\sum_{\substack{w = uv \\ \deg_1 u = 1}} 2^{\abs{u}-2} d_u \ttw^\mot(v)
			\]
			are the deconcatenation terms from \autoref{lem:hoffmot:form}, after using \( \widetilde{\pi}_{\abs{u}} = \widetilde{\pi}_{2a+2b+1}\) to project the factor \( \ttw^\lmot(u) = \ttw^\lmot(\{2\}^{a},1,\{2\}^b) = d_u \zeta^\lmot(2a+2b+1) \).
			
			The remaining terms (whether or not they have fewer trailing 1's), arise from the the \eqref{eqn:dr:0eps} and \eqref{eqn:dr:eps0} terms (or rather their images in \( \gr^H_\ell D_{2r+1} \)).  They are categorised by the cases listed in \autoref{lem:hoffmot:form}, namely
			\begin{center}
			\begin{tabular}{c|c|c}
				$\vec{k}_{i,j}-r$ & $ \vec{k}_{i,j} $ & Contribution to \( D_r \) \\ \hline
				2 & $(2,\{2\}^a,1,\{2\}^b)$ & $ \zeta_{2-2}^\lmot(\{2\}^a,1,\{2\}^b) \otimes \ttw^\mot(\vec{k}_{1,i-1}, 2, \vec{k}_{j+1,d})$  \\
				2 & $(\{2\}^a,1,\{2\}^b,2)$ & $ -\zeta^\lmot_{2-2}(\{2\}^b,1,\{2\}^a) \otimes \ttw^\mot(\vec{k}_{1,i-1}, 2, \vec{k}_{j+1,d})$ 
			\end{tabular}		
		\end{center}
		In each case, the coefficient of \( \ttw^\mot(v) \) after projecting via \( \widetilde{\pi}_{2r+1} \) has one of the following forms (depending on whether \( b = 0 \) or \( b > 0 \) in case 1, likewise case 2)
		\begin{align*}
		2^{2a-1} c_{2^a1} &= \begin{cases}
		0 & a = 0 \\
		2^{2a} (-1)^a & a  > 0
		\end{cases}\\
		2^{2a+2b+1} c_{2^a32^b} &=  (-1)^{a+b}\bigg({-} 2^{2a+2b+2} \binom{2a+2b+2}{2a+2} + (2^{2a+2b+2} - 1)\binom{2a+2b+2}{2b+1} \bigg) \,, 
		\end{align*}
		This is exactly the same claim as in \eqref{eqn:even:221} and \eqref{eqn:even:232}, so as before these coefficients are indeed even integers.
		\end{proof}
	\end{Lem}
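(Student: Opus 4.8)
The plan is to work directly from \autoref{prop:dkt} (and the \( D_1 \)-refinement \autoref{prop:d1}), sorting the terms of \( \partial^H_{N,\ell}\ttw^\mot(w) \) into genuine deconcatenation terms — which I will identify with the subtracted sum \( \sum_{w=uv,\ \deg_1 u=1}2^{\abs u-2}d_u\ttw^\mot(v) \) — and ``interior cut'' terms, whose coefficients I will show are even integers once the projections \( \widetilde\pi_{2r+1} \) have been applied.

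First, since \( \alpha<\ell-1 \) we have \( \ell\geq 2 \), so \( w\neq 2^{\frac12(N-\ell)}1^\ell \) and \( w \) genuinely ends in exactly \( \alpha \) ones; moreover the ``full'' deconcatenation \( \gr^H_\ell D_{\abs w}\ttw^\mot(w)=\widetilde\pi_{\abs w}(\ttw^\lmot(w))\,\ttw^\mot(\emptyset) \) vanishes in \( \gr^H_{\ell-1} \) because the empty word has level \( 0<\ell-1 \). Among the remaining deconcatenation terms \( \ttw^\lmot(\vec k_{1,j})\otimes\ttw^\mot(\vec k_{j+1,d}) \) and the trailing term \( \delta_{k_d=1}\ttw^\lmot(1)\otimes\ttw^\mot(\vec k_{1,d-1}) \) of \( D_1 \), only the first kind can contribute to a word in \( T'_{\alpha,N,\ell} \): the trailing term strictly reduces the number of trailing ones (and is zero when \( \alpha=0 \)), so it projects away. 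For a term of the first kind to land in \( \gr^H_{\ell-1} \) we need \( \deg_1\vec k_{1,j}=1 \), which forces \( \vec k_{1,j}=(\{2\}^a,1,\{2\}^b) \) (automatically of odd weight, as a \( D_r \) left factor must be), with coefficient \( \widetilde\pi_{2a+2b+1}\bigl(\ttw^\lmot(\{2\}^a,1,\{2\}^b)\bigr)=2^{2a+2b-1}d_{2^a12^b}=2^{\abs{\vec k_{1,j}}-2}d_{\vec k_{1,j}} \), and \( \vec k_{j+1,d} \) is exactly the suffix \( v \). Hence for every \( u\in T'_{\alpha,N,\ell} \) the coefficient of \( \ttw^\mot(u) \) in the subtracted sum equals the total deconcatenation contribution to \( \ttw^\mot(u) \) in \( \partial^H_{N,\ell}\ttw^\mot(w) \), and the difference retains only the contributions of the interior-cut terms \eqref{eqn:dr:0eps} and \eqref{eqn:dr:eps0}.

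Next I invoke \autoref{lem:hoffmot:form}: in the Hoffman case, and after imposing that the output sit in \( \gr^H_{\ell-1} \), the surviving interior-cut terms come only from \( \vec k_{i,j}=(2,\{2\}^a,1,\{2\}^b) \) (contributing \( \zeta^\lmot_0(\{2\}^a,1,\{2\}^b)\otimes\ttw^\mot(\vec k_{1,i-1},2,\vec k_{j+1,d}) \)) and from \( \vec k_{i,j}=(\{2\}^a,1,\{2\}^b,2) \) (contributing \( -\zeta^\lmot_0(\{2\}^b,1,\{2\}^a)\otimes\ttw^\mot(\vec k_{1,i-1},2,\vec k_{j+1,d}) \)). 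By \eqref{eqn:coeffzl2232}, after \( \widetilde\pi_{2r+1} \) the relevant scalar is \( 2^{2a-1}c_{2^a1} \) when \( b=0 \), and a power of two times \( c_{2^{b-1}32^a} \) when \( b>0 \), and symmetrically for \eqref{eqn:dr:eps0}; these are the quantities appearing in \eqref{eqn:even:221} and \eqref{eqn:even:232}, possibly with extra factors of \( 2 \). As recorded there, \( 2^{2a-1}c_{2^a1}\in 2\Z \) is immediate, and \( 2^{2p+2q+1}c_{2^p32^q}\in 2\Z \) because \( \binom{2p+2q+2}{2q+1} \) is even (an ``even choose odd'' binomial coefficient is always even; compare \cite[Corollary 4.4]{brown12} for the sharper \( 2 \)-adic statement). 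Since an even integer stays even when multiplied by further powers of \( 2 \), every surviving interior-cut coefficient is an even integer, and therefore so is the coefficient of each \( \ttw^\mot(u) \), \( u\in T'_{\alpha,N,\ell} \), in \( \partial^H_{N,\ell}\ttw^\mot(w)-\sum_{w=uv,\ \deg_1 u=1}2^{\abs u-2}d_u\ttw^\mot(v) \). This is the claim.

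The main obstacle is the combinatorial bookkeeping of \autoref{prop:dkt}/\autoref{lem:hoffmot:form} in the two-letter alphabet: one must check that, after restricting to contributions landing in \( \gr^H_{\ell-1} \), the only non-deconcatenation terms really are of the two displayed \( \zeta^\lmot_0(\{2\}^a,1,\{2\}^b) \)-shapes, and that every deconcatenation term is matched exactly by a summand of \( \sum_{w=uv,\ \deg_1 u=1}2^{\abs u-2}d_u\ttw^\mot(v) \). Once this matching is secured, the \( 2 \)-adic input is entirely contained in \eqref{eqn:even:221}--\eqref{eqn:even:232}, so no further computation is needed.
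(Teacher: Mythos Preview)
Your proof is correct and follows essentially the same approach as the paper's: identify the front-deconcatenation terms with the subtracted sum, discard the trailing-\(1\) deconcatenation as landing outside \(T'_{\alpha,N,\ell}\), and reduce the remaining interior-cut contributions to the evenness statements \eqref{eqn:even:221}--\eqref{eqn:even:232}. Your justification for why the trailing-\(1\) term projects away (it strictly reduces the number of trailing ones) is slightly crisper than the paper's phrasing, and your remark that \(\binom{2p+2q+2}{2q+1}\) is an ``even choose odd'' binomial makes explicit what the paper leaves implicit, but these are minor expository differences rather than a different route.
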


	In the same manner as \autoref{thm:matS:inj}, it follows that the diagonal block \( M_{\alpha,H,N,\ell} \) corresponding to \( T_{\alpha, N,\ell} \) and \( T'_{\alpha, N,\ell} \) is invertible, for \( \alpha < \ell-1 \).

	\begin{Prop}\label{prop:diagalpha:inv}
		The diagonal block \( M_{\alpha,H,N,\ell} \) is invertible for \( \alpha < \ell-1 \), in particular it has non-zero determinant.
		
		\begin{proof}
			Recall the bijection \( \phi \) between the bases, sending \( u \in B'_{H,N,\ell} \) to \( 2^a1u \), for the unique \( a \) such that \( 2^a1u \) has weight \( N \).  This map defined the set \( T_{\alpha,N,\ell} \) as the preimage of \( T'_{\alpha,N,\ell} \).  
			
			By the previous lemma \autoref{lem:block:isdeconcat}, we know that \( M_{\alpha,H,N,\ell} \) is upper triangular modulo 2, the terms above the diagonal arising from the deconcatenation terms.  That is to say: The diagonal entries of \( M_{\alpha,H,N,\ell} \) are of the form \( f_{u}^{2^r1u} = 2^{2a-1}d_{2^a1} + 2n \), \( n \Z\).  The only other non-even entries in the column indexed by \( u \in B'_{\alpha,H,N,\ell} \) occur for rows indexed by \( w = 2^a12^bu \), however since \( 2^r1u < 2^a12^bu \), these occur above the diagonal.
			
			As the diagonal terms are given by 
			\[
			2^{2a-1}d_{2^a1} = (-1)^{a}(2^{2a+1} - 1) \equiv 1 \pmod*{2} \,,
			\]
			we see that \( f_u^{2^r1u} \equiv 1 \pmod*{2} \).  Therefore \( M_{\alpha,H,N,\ell} \) is upper triangular, modulo 2, and has 1's on the diagonal.  It therefore has determinant \( \equiv 1 \pmod*{2} \), and so \( M_{\alpha,H,N,\ell} \) has non-zero determinant.
		\end{proof}
	\end{Prop}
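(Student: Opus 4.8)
The plan is to exhibit, after the order\nobreakdash-preserving reindexing \( \phi\colon T'_{\alpha,N,\ell}\to T_{\alpha,N,\ell} \), \( u\mapsto 2^{a}1u \) (with \( a \) the unique integer making the weight \( N \)) already used to set up \( M_{\alpha,H,N,\ell} \), that this matrix is integral and, modulo \( 2 \), upper triangular with every diagonal entry \( \equiv 1 \). That forces \( \det M_{\alpha,H,N,\ell}\equiv 1\pmod*{2} \), hence non-zero, which is the claim; this is exactly the shape of argument used in the proof of \autoref{thm:matS:inj}.

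First I would reduce modulo \( 2 \) via \autoref{lem:block:isdeconcat}. Since \( \alpha<\ell-1 \), the exceptional word \( 2^{(N-\ell)/2}1^{\ell} \) is not among the row indices, so for every row word \( w\in T_{\alpha,N,\ell} \) the expansion \( \partial^{H}_{N,\ell}\ttw^\mot(w) \) agrees modulo \( 2 \), on each coordinate \( \ttw^\mot(u) \) with \( u\in T'_{\alpha,N,\ell} \), with the pure deconcatenation sum \( \sum_{w=uv,\ \deg_{1}u=1} 2^{\abs{u}-2}d_{u}\,\ttw^\mot(v) \). The prefixes of \( w \) with exactly one \( 1 \) are the words \( 2^{j_{0}}12^{j} \) where \( j_{0} \) is the number of leading \( 2 \)'s of \( w \) and \( j\ge 0 \); the complementary suffix is \( v \), and since \( w \) has only \( \alpha<\ell-1 \) trailing \( 1 \)'s it keeps a further \( 1 \), so \( v\in T'_{\alpha,N,\ell} \). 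Using the closed form \( 2^{\abs{u}-2}d_{u}=2^{2j_{0}+2j-1}d_{2^{j_{0}}12^{j}}=(-1)^{j_{0}+j}(2^{2j_{0}+2j+1}-1)\binom{2j_{0}+2j}{2j_{0}} \), which is an integer (and equals \( 1 \) when \( j_{0}=j=0 \)), together with \autoref{lem:block:isdeconcat} the matrix \( M_{\alpha,H,N,\ell} \) has integer entries.

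Next I would read off the triangular shape of the deconcatenation matrix. In the column indexed by \( v\in T'_{\alpha,N,\ell} \) the diagonal sits in row \( \phi(v)=2^{a}1v \), and the unique deconcatenation of \( \phi(v) \) whose suffix is \( v \) deletes the prefix \( 2^{a}1 \), contributing \( 2^{2a-1}d_{2^{a}1}=(-1)^{a}(2^{2a+1}-1)\equiv 1\pmod*{2} \); so every diagonal entry is odd. A deconcatenation of a row word \( w=2^{a}12^{j}v' \) deleting the longer prefix \( 2^{a}12^{j} \) with \( j\ge 1 \) contributes to column \( v' \); reading \( w \) and \( \phi(v')=2^{a+j}1v' \) from the right, they agree along \( v' \) and then \( w \) has a \( 2 \) where \( \phi(v') \) has a \( 1 \), so \( w \) precedes \( \phi(v') \) in the ``largest first'' reverse colexicographic order, i.e. this entry lies strictly above the diagonal of its column. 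All remaining contributions to \( M_{\alpha,H,N,\ell} \) are even by \autoref{lem:block:isdeconcat}. Hence modulo \( 2 \) the matrix is upper triangular with \( 1 \)'s on the diagonal, so its determinant is odd, hence non-zero.

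The main obstacle is not the arithmetic: the integrality and parity of \( 2^{2a+2b-1}d_{2^{a}12^{b}} \) and of \( 2^{2a-1}d_{2^{a}1} \) are already recorded before \autoref{thm:matS:inj}. It is the combinatorial bookkeeping --- checking that every word produced by \( \partial^{H}_{N,\ell}\ttw^\mot(w) \) either has even coefficient (invisible mod \( 2 \), by \autoref{lem:block:isdeconcat}) or, arising from a longer deconcatenation \( 2^{a}12^{j} \) with \( j\ge 1 \), lands strictly above the diagonal. This is precisely where the choice of reverse colexicographic ordering on \( T_{\alpha,N,\ell} \) and \( T'_{\alpha,N,\ell} \), together with the order\nobreakdash-preservation of \( \phi \), is used, and it is the step most easily gotten wrong.
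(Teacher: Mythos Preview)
Your proposal is correct and follows essentially the same approach as the paper's own proof: reduce modulo \(2\) via \autoref{lem:block:isdeconcat}, identify the deconcatenation terms as the only odd contributions, check that the diagonal entries \(2^{2a-1}d_{2^a1}=(-1)^a(2^{2a+1}-1)\) are odd, and verify via the reverse colexicographic ordering that the remaining deconcatenation contributions \(2^{a}12^{j}v'\) with \(j\ge 1\) sit strictly above the diagonal. Your write-up is in fact somewhat more explicit than the paper's about the order comparison between \(w=2^{a}12^{j}v'\) and \(\phi(v')=2^{a+j}1v'\), which is the one combinatorial step where care is genuinely needed.
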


	We now turn to the case \( \alpha = \ell - 1 \).  In this case the following modification of \autoref{lem:block:isdeconcat} holds.

	\begin{Lem}\label{lem:block:isdeconat1}
				For \( \alpha = \ell-1 \), the restriction of \( \partial_{N,\ell} \) to \( T_{\alpha,N,\ell} \), and projected to the \( T_{'\alpha,N,\ell} \) satisfies the following.  For \( w \in T_{\alpha,N,\ell} \), the coefficient of every word \( \ttw^\mot(u) \) with \( u \in T'_{\alpha,N,\ell} \), in
		\[
		\partial_{N,\ell}^H \ttw^\mot(w) - \sum_{\substack{w = uv \\ \deg_1 u = 1}} 2^{\abs{u}-2} d_u \ttw^\mot(v) + \frac{1}{2} \ttw^\mot(2^{\frac{1}{2}(N-\ell)}1^{\ell-1})
		\]
		is an even integer.
		
		\begin{proof}
			The proof of \autoref{lem:block:isdeconcat} goes through unchanged, except that we must also consider the case \( w \neq 2^{\frac{1}{2}(N-\ell)}1^\ell \).  Even for this, the argument about the deconcatenation \eqref{eqn:dr:deconcat} and other terms in \autoref{lem:hoffmot:form} goes through unchanged.
			
			The only additional term we must consider is the term arising from deconcatenating a trailing 1, namely \( {-}\log^\lmot(1) \otimes \ttw^\mot(k_1,\ldots,k_{d-1}) \) which appears in \autoref{prop:d1}.  This is the additional term above, and so the proof is complete.
		\end{proof}
	\end{Lem}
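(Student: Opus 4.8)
The plan is to follow the proof of \autoref{lem:block:isdeconcat} essentially verbatim, the single new feature being the exceptional word that \autoref{lem:trailing1:agrees} places in $T_{\ell-1,N,\ell}$. Recall that for $\alpha=\ell-1$ the set $T_{\ell-1,N,\ell}$ consists of the words of $B_{H,N,\ell}$ ending in exactly $\ell-1$ trailing $1$'s, together with the word $w_\ast\coloneqq 2^{\frac12(N-\ell)}1^{\ell}$, which ends in $\ell$ trailing $1$'s. For a word $w\in T_{\ell-1,N,\ell}$ with $w\neq w_\ast$ one reruns the case analysis of \autoref{lem:hoffmot:form} exactly as in \autoref{lem:block:isdeconcat}: such a $w$ has the form $2^a12^c1^{\ell-1}$ with $c\geq1$, so the deconcatenation terms \eqref{eqn:dr:deconcat} whose right-hand factor lands in $T'_{\ell-1,N,\ell}$ are exactly the prefix contributions $2^{\abs{u}-2}d_u\ttw^\mot(v)$ subtracted off in the statement (using $\widetilde\pi_{2a'+2b'+1}(\ttw^\lmot(\{2\}^{a'},1,\{2\}^{b'}))=2^{2a'+2b'-1}d_{2^{a'}12^{b'}}$); the trailing-$1$ deconcatenation supplied by $D_1$ in \autoref{prop:d1}, when $w$ ends in a $1$, strips it and hence outputs a word with fewer than $\ell-1$ trailing $1$'s, which lies in a strictly lower block and contributes nothing after projecting onto $T'_{\ell-1,N,\ell}$; and every remaining \eqref{eqn:dr:0eps}, \eqref{eqn:dr:eps0} contribution carries, after $\widetilde\pi_{2r+1}$, a coefficient of the shape $2^{2a-1}c_{2^a1}$ or $2^{2a+2b+1}c_{2^a32^b}$, which is an even integer by \eqref{eqn:even:221} and \eqref{eqn:even:232}. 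Thus for $w\neq w_\ast$ the claim is exactly \autoref{lem:block:isdeconcat}, and the correction term plays no role.

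For the exceptional word $w=w_\ast=2^m1^\ell$, with $m=\tfrac12(N-\ell)$, one again traces through \autoref{lem:hoffmot:form}. The \eqref{eqn:dr:deconcat} terms reaching $T'_{\ell-1,N,\ell}$ arise only from cutting immediately after the first $1$, contributing $2^{2m-1}d_{2^m1}\ttw^\mot(1^{\ell-1})$, which is subtracted off. The \eqref{eqn:dr:0eps} and \eqref{eqn:dr:eps0} contributions reaching $T'_{\ell-1,N,\ell}$ arise only from replacing a subindex of shape $(\{2\}^p,1)$, straddling the $2$-block and the leading $1$, by a single $2$; after $\widetilde\pi$ these carry the coefficient $2^{2p-3}c_{2^{p-1}1}$, which vanishes for $p=1$ (since $c_1=0$) and is even for $p\geq2$ by \eqref{eqn:even:221}. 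A subindex lying entirely inside the $2$-block cannot contribute, because $\abs{\vec{k}_{i,j}}-r=2$ with $r$ odd forces $\abs{\vec{k}_{i,j}}$ odd. The one genuinely new term is the trailing-$1$ deconcatenation: since the last letter of $w_\ast$ is $1$, \autoref{prop:d1} produces $-\log^\lmot(2)\otimes\ttw^\mot(2^m1^{\ell-1})$, i.e.\ the contribution $-\tfrac12\ttw^\mot(2^m1^{\ell-1})$ after $\widetilde\pi_1$, and $2^m1^{\ell-1}$ does lie in $T'_{\ell-1,N,\ell}$. Adding the correction $\tfrac12\ttw^\mot(2^{\frac12(N-\ell)}1^{\ell-1})$ in the statement cancels this term exactly, leaving only the subtracted deconcatenation terms and terms with even integer coefficients.

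The step needing the most care is the bookkeeping for $w=w_\ast$: confirming that no \eqref{eqn:dr:0eps} or \eqref{eqn:dr:eps0} contribution other than the $(\{2\}^p,1)$-type reaches $T'_{\ell-1,N,\ell}$, and that those of $(\{2\}^p,1)$-type are even or vanish. Both reduce to the $2$-adic input \eqref{eqn:even:221}, \eqref{eqn:even:232} already used in \autoref{lem:block:isdeconcat}, together with the observations that $\abs{\vec{k}_{i,j}}-r=2$ with $r$ odd forces the replaced subindex to contain a $1$ (so a subindex inside the pure $2$-block cannot arise), and that by \autoref{prop:d1} the derivation $D_1$ yields no surviving \eqref{eqn:dr:0eps} or \eqref{eqn:dr:eps0} terms, so no spurious odd weight-$1$ contribution appears. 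Once this is checked the lemma follows; it is the analogue of \autoref{lem:block:isdeconcat} for the top block $\alpha=\ell-1$, and is what one feeds into the non-singularity of the diagonal block $M_{\ell-1,H,N,\ell}$ --- which, because of $w_\ast$, requires a cofactor expansion along the column of $\ttw^\mot(2^m1^{\ell-1})$ rather than a direct triangularity argument --- and hence into the invertibility of $M_{H,N,\ell}$.
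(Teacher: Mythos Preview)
Your proof is correct and follows essentially the same approach as the paper's. The paper's own proof is only a few lines --- it simply says that the argument of \autoref{lem:block:isdeconcat} carries over, with the single new ingredient being the trailing-$1$ deconcatenation term $-\log^\lmot(2)\otimes\ttw^\mot(k_1,\ldots,k_{d-1})$ from \autoref{prop:d1} for the exceptional word $w_\ast=2^{\frac12(N-\ell)}1^\ell$ --- whereas you spell out in detail the case analysis for $w_\ast$ (which subindices of shape $(\{2\}^p,1)$ can reach $T'_{\ell-1,N,\ell}$, and why their coefficients are even), and you correctly observe that for $w\neq w_\ast$ the trailing-$1$ deconcatenation lands in a strictly lower block so is discarded under the projection.
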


	We note now that this additional term occurs in the first column, last row of the matrix \( M_{\alpha,H,N,\ell} \) because the word indexing the column is \( 2^{\frac{1}{2}(N-\ell)}1^{\ell-1} > \cdots  21^{\ell-1} > 1^{\ell-1} \) while the word indexing the row is \( 2^{\frac{1}{2}(N-\ell)}1^{\ell} < w21^{\ell-1} \), for any \( w \).

	Finally we can show that the diagonal block \( M_{\ell-1, H, N, \ell} \) is also invertible, or equivalently has non-zero determinant.
	
	\begin{Prop}\label{prop:diagell:inv}
		The diagonal block \( M_{\alpha,H,N,\ell} \) is invertible for \( \alpha = \ell-1 \), in particular it has non-zero determinant.
		
		\begin{proof}
			The above observation tells us that the first column of the matrix \( M_{\alpha,H,N,\ell} \), \( \alpha = \ell - 1 \), consists of a single entry \( \frac{1}{2} \) at the bottom, a single entry \( \frac{1}{2} d_{1} = 1 \) at the top, and (potentially) a number of even entries.  However, since this column is indexed by \( 2^{\frac{1}{2}(N-\ell)}1^{\ell-1} \), this column corresponds to the computation of \( \gr_\ell D_{1} \).  Therefore there are no other entries in this column since \( \zeta_0^\lmot(1) = 0 \). Now expand out the determinant about this column.  \medskip
				
			The minor \( A_{1,1} \) corresponding to the \( (1,1) \) entry of \( M_{\alpha,H,N,\ell} \) is again an upper triangular matrix modulo 2, as it arises from deleting the first row and column of \( M_{\alpha, H, N, \ell} \).  That matrix is  itself integral and upper triangular modulo 2, after removing the entry \( -\frac{1}{2} \) in the first column (compare the argument in \autoref{lem:block:isdeconcat} and \autoref{lem:block:isdeconat1}).  So as entry \( -\frac{1}{2} \) plays no role in the \( (1,1) \) cofactor, the integrality and upper triangularity modulo 2 holds.  Likewise the diagonal entries are equal to 1, and so we find \( C_{1,1} = \det A_{1,1} \equiv 1 \pmod*{2} \).  This means \( C_{1,1} = 2x + 1 \in \Z \) is an odd integer.\medskip
			
			The minor \( A_{1,\frac{1}{2}(N-\ell)+1} \) corresponding to the bottom entry of the first column, is given by the an explicit formula modulo 2.  We note the rows are indexed by \( 2^{a-1}12^{\frac{1}{2}(N-\ell)+1-a}1^{\ell-1} \), for \( 1 \leq a \leq  \frac{1}{2}(N-\ell) \), so that \( 2^{\frac{1}{2}(N-\ell)}1^\ell \) is avoided.  Likewise the columns are indexed by \( 2^{\frac{1}{2}(N-\ell) - c}1^{\ell-1} \), for \( 1 \leq c \leq \frac{1}{2}(N-\ell) \), so that \( 2^{\frac{1}{2}(N-\ell)}1^{\ell-1} \) is avoided.  This means that modulo 2, the minor is given by
			\begin{align*}
				\big(A_{1,\frac{1}{2}(N-\ell)+1} \big)_{a,c=1}^{\frac{1}{2}(N-\ell)} &= 2^{2c - 1} d_{2^{a-1}12^{c+1-a}} \\
				&= (-1)^{c} (2^{2c+1} - 1) \binom{2c}{2a-2} \\
				&\equiv \binom{2c}{2a-2} \pmod{2}
			\end{align*}
			We notice the following: when \( a = c+1 \), 
			\[
				\binom{2c}{2a-2} = 1 \,
			\]
			so the minor has 1's on the subdiagonal.  When \( a = 1 \)
			\[
				\binom{2c}{2a-2} = 1 \,,
			\]
			so the minor has 1's in the first row.  Summing down each column we note that
			\[
				\sum_{a=2}^{\frac{1}{2}(N-\ell)} \binom{2c}{2a-2} = \begin{cases}
					2^{2c-1} - 1 & 1 \leq c < \frac{1}{2}(N-\ell) \\
					2^{2c-1} - 2 & c = \frac{1}{2}(N-\ell) \,,
				\end{cases}
			\]
			since in the latter case the term \( \binom{2c}{2c} \) on the subdiagonal is not part of the matrix.  This means that if we subtract the sum of the remaining rows of the minor from the first row, we obtain a single 1 in the final column.  This establishes that the modulo 2 the minor is equivalent a permutation of an upper triangular matrix with 1's on the diagonal (move the last column to the start).  Hence \( C_{1,\frac{1}{2}(N-\ell)+1} = \det(A_{1,\frac{1}{2}(N-\ell)+1}) \equiv 1 \pmod*{2} \), and since \( A_{1,\frac{1}{2}(N-\ell)+1} \) actually has integer entries, we have \( C_{1,\frac{1}{2}(N-\ell)+1} = 2y  +1 \in \Z \) is an odd integer. \medskip
			
			Finally, we assemble the determinant of \( M_{\alpha,H,N,\ell} \) to be
			\begin{align*}
				& (-1)^{1 + 1} \cdot C_{1,1} + (-1)^{(\frac{1}{2}(N-\ell)+1) + 1} C_{1,\frac{1}{2}(N-\ell)+1} \\
				&= (2x + 1) + \frac{1}{2} (2y + 1) \\
				&= (2x + y + 1) + \frac{1}{2} \,.
			\end{align*}
			In particular it is in \( \frac{1}{2} + \Z \), and so cannot be 0.
		\end{proof}		
	\end{Prop}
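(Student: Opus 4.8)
The plan is to establish the invertibility of $M_{\ell-1,H,N,\ell}$ by a Laplace expansion along its first column, reducing the problem to two integer minors each of which I will show has odd determinant; this yields $\det M_{\ell-1,H,N,\ell} \in \tfrac12 + \Z$, hence nonzero. Throughout I would freely use \autoref{prop:dkt} and \autoref{prop:d1} for the shape of the $D_r$, and \autoref{lem:block:isdeconcat}/\autoref{lem:block:isdeconat1} for the ``deconcatenation modulo $2$'' description of $\partial^H_{N,\ell}$.

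First I would determine the shape of the first column. It is indexed by the largest word $w' = 2^{(N-\ell)/2}1^{\ell-1}$ of $T'_{\ell-1,N,\ell}$, which has weight $N-1$, so a nonzero entry can only be produced by the weight-drop-by-one operator $D_1$. By \autoref{prop:d1}, together with $\zeta_0^\lmot(1) = 0$, the only surviving contributions of $D_1$ here are the deconcatenations of a leading or of a trailing $1$. This forces exactly two nonzero entries: a $\tfrac12 d_1 = 1$ at the top, from removing the leading $1$ of the row-word $\phi(w') = 1\,2^{(N-\ell)/2}1^{\ell-1}$, and a $\pm\tfrac12$ at the bottom, from removing the trailing $1$ of the special row-word $2^{(N-\ell)/2}1^{\ell}$; every other entry of the column is zero. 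Expanding the determinant along this column therefore gives $\det M_{\ell-1,H,N,\ell} = \pm C_{1,1} \pm \tfrac12\,C_{m,1}$, where $C_{1,1}$ and $C_{m,1}$ are the two relevant (signed) cofactors and $m = \#T_{\ell-1,N,\ell} = \tfrac12(N-\ell)+1$.

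Next I would control $C_{1,1}$: deleting the first row and first column removes the only half-integer entry (it sat in the first column, by \autoref{lem:block:isdeconat1}), so the remaining matrix is integral; and by the same reasoning as in \autoref{prop:diagalpha:inv} — the ``deconcatenation modulo $2$'' statement plus the bijection $u \mapsto 2^a1u$ lining the bases up in reverse colexicographic order — this minor is upper triangular modulo $2$ with diagonal entries $2^{2a-1}d_{2^a1} = (-1)^a(2^{2a+1}-1) \equiv 1 \pmod 2$, so $C_{1,1}$ is an odd integer. The remaining, and I expect hardest, step is to show the minor $C_{m,1}$ (delete the last row and the first column) is also an odd integer. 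Here the entries reduce modulo $2$ to an explicit binomial matrix: $2^{2c-1}d_{2^{a-1}12^{c+1-a}} = (-1)^c(2^{2c+1}-1)\binom{2c}{2a-2} \equiv \binom{2c}{2a-2} \pmod 2$, so one must prove that $\big(\binom{2c}{2a-2}\big)_{1 \le a,c \le (N-\ell)/2}$ is nonsingular modulo $2$. I would do this with the row-reduction already visible in the worked example: the matrix has $1$'s on the subdiagonal ($a = c+1$) and in the first row ($a = 1$), while the column sums $\sum_{a=2}^{(N-\ell)/2}\binom{2c}{2a-2}$ evaluate to $2^{2c-1}-1$ (or $2^{2c-1}-2$ in the last column), so subtracting the lower rows from the first row isolates a single $1$ in the last column and exhibits the matrix as row-equivalent modulo $2$ to a column-permutation of a unitriangular matrix. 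Hence $\det M_{\ell-1,H,N,\ell} = (\text{odd integer}) \pm \tfrac12(\text{odd integer}) \in \tfrac12 + \Z$, and the block is invertible.

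The main obstacle is precisely this modulo-$2$ nonsingularity of $\big(\binom{2c}{2a-2}\big)$: unlike the other pieces it does not reduce to plain triangularity, and it needs either the column-sum identity above or a Lucas/Kummer-type analysis of $\nu_2$ of binomial coefficients (in the spirit of Brown's \cite[Corollary 4.4]{brown12}). The bookkeeping of which word indexes which row and column under $\phi$ and under the reverse colexicographic order, and the careful isolation of the stray $\tfrac12$ in the bottom-left corner so that the two cofactor computations genuinely decouple, is routine but must be handled with care.
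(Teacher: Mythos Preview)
Your proposal is correct and follows essentially the same approach as the paper's proof: Laplace expansion along the first column (which has exactly the two entries $1$ and $-\tfrac12$ by the $D_1$ computation), the $(1,1)$ minor handled by upper-triangularity modulo $2$, and the bottom minor handled by the explicit binomial matrix $\binom{2c}{2a-2}$ with the same column-sum row reduction you describe. Your outline matches the paper's argument step for step, including the final assembly showing $\det M_{\ell-1,H,N,\ell} \in \tfrac12 + \Z$.
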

	
	From these two propositions follows immediately the invertibility of the whole matrix \( M_{H,N,\ell} \).
	
	\begin{Cor}\label{thm:matH:inj}
		The matrix \( M_{H,N,\ell} \) is invertible.
		
		\begin{proof}
			The matrix \( M_{H,N,\ell} \) is block upper triangular by \autoref{lem:mat:blocktri} and the discussion thereafter.  The diagonal blocks are invertible square matrices by \autoref{prop:diagalpha:inv} and \autoref{prop:diagell:inv}, hence \( M_{H,N,\ell} \) itself is invertible.
		\end{proof}
	\end{Cor}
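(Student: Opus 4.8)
The plan is to deduce the invertibility of \( M_{H,N,\ell} \) from the structural facts about the derivation map \( \partial^H_{N,\ell} \) established above, via a block‑triangular reduction along the diagonal. First I would invoke \autoref{lem:mat:blocktri} (together with \autoref{lem:trailing1:agrees}): the map \( \partial^H_{N,\ell} \) never increases the number of trailing \( 1 \)'s of a word, and in the reverse colexicographic order (read right to left, largest first, with \( 1 < 2 \)) the words of \( B_{H,N,\ell} \) fall into consecutive blocks \( T_{\alpha,N,\ell} \) sorted by trailing‑\( 1 \) count, matched via \( \phi\colon u \mapsto 2^a1u \) with the analogous blocks \( T'_{\alpha,N,\ell} \) of \( B'_{H,N,\ell} \). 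Hence \( M_{H,N,\ell} \) is block lower triangular with \emph{square} diagonal blocks \( M_{\alpha,H,N,\ell} \) for \( 0 \le \alpha \le \ell-1 \), so \( \det M_{H,N,\ell} = \prod_{\alpha} \det M_{\alpha,H,N,\ell} \) and it suffices to show each diagonal block is invertible.

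For the blocks with \( \alpha < \ell-1 \) I would argue exactly as in \autoref{prop:diagalpha:inv}: by \autoref{lem:block:isdeconcat} every entry of \( M_{\alpha,H,N,\ell} \) is an even integer once the deconcatenation terms \( \sum_{w=uv,\,\deg_1 u = 1} 2^{\abs{u}-2} d_u \ttw^\mot(v) \) are subtracted off, the non‑deconcatenation contributions all carrying the provably even coefficients \( 2^{2a-1}c_{2^a1} \) and \( 2^{2a+2b+1}c_{2^a32^b} \) from \eqref{eqn:even:221} and \eqref{eqn:even:232}. Since deconcatenating the maximal leading block \( 2^a1 \) places the odd integer \( 2^{2a-1}d_{2^a1} = (-1)^a(2^{2a+1}-1) \) on the diagonal, and all remaining odd‑mod‑\( 2 \) entries lie strictly above it (they arise from rows \( w = 2^a12^bu \), which precede \( \phi(u) \) in the ordering), the block is unitriangular modulo \( 2 \) and \( \det M_{\alpha,H,N,\ell} \equiv 1 \pmod*{2} \), hence nonzero.

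The case \( \alpha = \ell-1 \) is the step I expect to be the main obstacle: the block \( T_{\ell-1,N,\ell} \) additionally contains the exceptional word \( 2^{\frac{1}{2}(N-\ell)}1^\ell \), whose \( D_1 \)‑action deconcatenates a \emph{trailing} \( 1 \) and contributes a stray \( -\tfrac{1}{2} \) (\autoref{prop:d1}), breaking integrality and the naive mod‑\( 2 \) argument. The remedy, following \autoref{lem:block:isdeconat1} and \autoref{prop:diagell:inv}, is to expand \( \det M_{\ell-1,H,N,\ell} \) along the column indexed by \( 2^{\frac{1}{2}(N-\ell)}1^{\ell-1} \); because that column records \( \gr_\ell D_1 \) and \( \zeta^\lmot_0(1) = 0 \), it contains only two nonzero entries, the diagonal \( 1 \) (from deconcatenating a leading \( 1 \)) and the stray \( \pm\tfrac{1}{2} \) in the last row. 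The cofactor of the former is an integer matrix which, with the \( \tfrac{1}{2} \) now deleted, is again unitriangular mod \( 2 \) and thus has odd determinant; the cofactor of the latter is governed mod \( 2 \) by the explicit binomial matrix \( \big(\binom{2c}{2a-2}\big) \), which after subtracting the remaining rows from the first row becomes a permuted unitriangular matrix and so is likewise odd. Assembling the two cofactors yields \( \det M_{\ell-1,H,N,\ell} \in \tfrac{1}{2} + \Z \), in particular nonzero. Combining the two cases gives \( \det M_{H,N,\ell} \ne 0 \), so \( M_{H,N,\ell} \) is invertible as claimed.
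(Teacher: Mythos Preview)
Your proposal is correct and follows essentially the same approach as the paper: you reduce to the block (lower) triangular structure via \autoref{lem:mat:blocktri}, handle the generic diagonal blocks \( \alpha < \ell-1 \) by the mod-\(2\) unitriangularity argument of \autoref{prop:diagalpha:inv}, and treat the exceptional block \( \alpha = \ell-1 \) exactly as in \autoref{prop:diagell:inv}, expanding along the first column and showing the two nonzero cofactors are both odd integers so that the determinant lies in \( \tfrac{1}{2} + \Z \). (Your identification of the structure as block \emph{lower} triangular is the right one; the word ``upper'' in the paper's short proof is a slip.)
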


	\begin{Cor}\label{cor:hoff:indep}
		The Hoffman one-two elements
		\[
			\{ \ttw^\mot(k_1,\ldots,k_d) \mid k_i \in \{1,2\} \} 
		\]
		are linearly independent.

		\begin{proof}
			We proceed by induction on the level, as in \cite[Theorem 7.4]{brown12}, \cite[Corollary 38]{murakami21}.  The elements of level \( \ell = 0 \) are of the form \( \ttw^\mot(\{2\}^n) \), which are linearly independent because weight is a grading on \( \mathcal{H}^{(2)} \).  Now suppose the elements
			\[
			\{ \ttw^\mot(w) \mid w \in \{1,2\}^\times \,, \deg_1 w \leq \ell -1  \} \,,
			\]
			of level \( \leq \ell - 1 \) are linearly independent.  Since weight is a grading on \( \mathcal{H}^{(2)} \), any non-trivial linear relation between elements of level \( \ell \) can be assumed as homogeneous of some weight \( N \).  By \autoref{thm:matH:inj}, the map \( \partial^H_{N,\ell} \) is injective as the matrix of the map is invertible.  Application of \( \partial_H^{N,\ell} \) to a non-trivial linear relation between level \( \ell \) elements produces a non-trivial linear relation of strictly smaller level, which does not exist by the induction assumption.  So the elements of level \( \ell \) are also linearly independent, which completes the proof by induction.
		\end{proof}
	\end{Cor}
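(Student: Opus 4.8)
The plan is to prove linear independence by induction on the \emph{Hoffman level} $\ell = \deg_1 w$, in exact parallel with Brown's treatment of $\zeta^\mot(\{2,3\}^\times)$ in \cite{brown12} and Murakami's of $\ttw^\mot(\vec{k})$ with all $k_i \geq 2$ in \cite{murakami21}. The base case $\ell = 0$ is immediate: the level-$0$ one-two elements are the $\ttw^\mot(\{2\}^n)$, which lie in distinct weight-graded pieces of $\mathcal{H}^{(2)}$, and weight is a grading. For the inductive step, assume the one-two elements of level $\leq \ell - 1$ are linearly independent; since the weight grading is compatible with the level filtration, a hypothetical nontrivial relation among level-$\ell$ elements may be taken homogeneous of some weight $N$, i.e.\ it is a nonzero element of $\gr_\ell^H \mathcal{H}_N^H$.

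That element must be killed by the map $\partial^H_{N,\ell}$ of \autoref{def:partialH}, assembled from the derivations $D_{2r+1}$. Its very construction relies on two structural facts proved in the preceding lemmas: the Hoffman level filtration is motivic, so each $D_{2r+1}$ induces a level-lowering map on the associated graded (\autoref{lem:hoffmanmotivic}); and the $\mathcal{L}$-component of $\gr_\ell^H D_{2r+1}$ always lies in the line $\Q\,\zeta^\lmot(\overline{2r+1})$ (\autoref{lem:hoffmot:form}) --- this is exactly the point at which the motivic evaluation \autoref{thm:mott2212}, together with \autoref{thm:brown:z2232} and \autoref{thm:mott2232}, enters, both to identify which coefficients appear and to control their $2$-adic behaviour. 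After applying the projections $\widetilde{\pi}_{2r+1}$ one lands in a space of the same dimension as the source (the counting identity $\# B_{H,N,\ell} = \# B'_{H,N,\ell}$), so injectivity of $\partial^H_{N,\ell}$ forces any level-$\ell$ relation down to level $\ell - 1$, contradicting the induction hypothesis. It therefore suffices to prove that the matrix $M_{H,N,\ell}$ of \autoref{def:Mpar} is invertible (\autoref{thm:matH:inj}).

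Establishing invertibility of $M_{H,N,\ell}$ is the heart of the matter, and I expect it to be the main obstacle. The approach is a two-layer triangularity argument. First, since replacing an odd-length subindex $\vec{k}_{i,j}$ by $\abs{\vec{k}_{i,j}} - r = 2$ (and likewise deconcatenation) can only leave alone or destroy the trailing $1$'s of a word, $\partial^H_{N,\ell}$ never \emph{increases} the number of trailing $1$'s; hence, with respect to the partition of the bases by trailing-$1$ count --- transported across the bijection $u \mapsto 2^a 1 u$ --- the matrix is block lower-triangular with square diagonal blocks $M_{\alpha,H,N,\ell}$ (\autoref{lem:mat:blocktri}). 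Second, inside a fixed block one shows that modulo $2$ only the deconcatenation entries $2^{\abs{u}-2} d_u$ survive: this uses the congruences $2^{2a-1} c_{2^a 1} \equiv 0$ and $2^{2a+2b+1} c_{2^a 3 2^b} \equiv 0 \pmod{2}$ (the second via $\binom{2a+2b+2}{2b+1} = \tfrac{2(a+b+1)}{2a+1}\binom{2a+2b+1}{2b+1}$, in the spirit of \cite[Corollary 4.4]{brown12}), together with the diagonal value $2^{2a-1} d_{2^a 1} = (-1)^a(2^{2a+1}-1) \equiv 1 \pmod{2}$, so each block is unipotent upper-triangular modulo $2$ (\autoref{lem:block:isdeconcat}, \autoref{prop:diagalpha:inv}). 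The genuinely delicate case is the block $\alpha = \ell - 1$, which contains the word $2^{(N-\ell)/2} 1^\ell$: here deconcatenating a \emph{trailing} $1$ through $D_1$ contributes an extra $-\tfrac12$ in one column, forcing a cofactor expansion along it --- the $(1,1)$ cofactor is an odd integer by the block argument, while the cofactor of the $-\tfrac12$ entry reduces modulo $2$ to the binomial matrix $\big(\binom{2c}{2a-2}\big)$, which after subtracting the remaining rows from the first is a permutation of a unipotent upper-triangular matrix and hence has odd determinant; the total determinant then lies in $\tfrac12 + \Z$ and so is nonzero (\autoref{lem:block:isdeconat1}, \autoref{prop:diagell:inv}, \autoref{thm:matH:inj}). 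With that the inductive step closes, and with it the whole statement.
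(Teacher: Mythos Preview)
Your proposal is correct and follows essentially the same approach as the paper: induction on the Hoffman level, reducing to injectivity of $\partial^H_{N,\ell}$, which in turn is proved via the block lower-triangular structure by trailing-$1$ count together with the mod-$2$ unipotent upper-triangularity of the generic blocks and the cofactor/binomial-matrix analysis of the exceptional $\alpha=\ell-1$ block. Every ingredient you cite matches the paper's own sequence of lemmas and propositions leading up to \autoref{thm:matH:inj}.
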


	\begin{Cor}\label{cor:text:eq:H2}
		The elements 
		\[
		\{ \ttw^\mot(w) \mid w \in \{1,2\}^\times \} \,,
		\]
		form a basis for the space of:
		\begin{itemize} 
			\item[i)] motivic extended shuffle-regularised multiple \( t \) values,
			\item[ii)] alternating (shuffle-regularised) motivic multiple zeta values
		\end{itemize}
		In particular these spaces agree, and extended shuffle-regularised motivic multiple \( t \) values have dimension \( F_{N+1} \) in weight \( N \), where \( F_{k} = F_{k-1} + F_{k-2} \) with \( F_0 = F_1 = 1 \) is the sequence of Fibonacci numbers.
		
		\begin{proof}
			From their definition as sums of alternating motivic MZV's, we know the following inclusion holds
			\[
				\mathcal{T}^\mathrm{ext}_N \subset \mathcal{H}_{N}^{(2)} \,,
			\]
			where \( \mathcal{T}_N^\mathrm{ext} \) denote the space of all shuffle-regularised motivic MtV's of weight \( N \).  However the upper bound \( \dim_\Q 	\mathcal{H}^{(2)} \leq F_{N+1} \) is established in \cite{deligneGoncharov} (in fact already \( = \)), and the lower bound \(  F_{N+1} \leq \mathcal{T}_N^\mathrm{ext}  \) from the explicit collection of independent elements shows that all of these inclusions are equalities and the dimensions is exactly \( F_{N+1} \) in weight \( N \).
		\end{proof}
	\end{Cor}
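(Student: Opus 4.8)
The plan is to prove the corollary by a short dimension count, \emph{sandwiching} the span of the weight-\(N\) one-two elements between two facts already available: their linear independence (\autoref{cor:hoff:indep}) on the one hand, and the known upper bound for the dimension of alternating motivic MZV's on the other. No new motivic input is needed beyond what has been established; the argument is purely one of counting and inclusions, in the same spirit as the deductions by Brown and Murakami of their basis theorems from the corresponding independence statements.

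First I would set up the chain of inclusions. By \emph{definition} each \(\ttw^\mot(k_1,\dots,k_d)\) is a \(\Q\)-linear combination of alternating motivic MZV's of the same weight (namely the \(\zeta^\mot\sgnarg{\epsilon_1,\dots,\epsilon_d}{k_1,\dots,k_d}\)), and under the period map it maps to the shuffle-regularised \(t^{\shuffle,0}\); hence, writing \(V_N\) for the \(\Q\)-span of the \(\ttw^\mot(w)\) with \(w\in\{1,2\}^\times\) and \(\abs{w}=N\), we have \(V_N \subseteq \mathcal{T}^{\mathrm{ext}}_N \subseteq \mathcal{H}^{(2)}_N\), where \(\mathcal{T}^{\mathrm{ext}}_N\) denotes the space of all weight-\(N\) shuffle-regularised motivic MtV's and \(\mathcal{H}^{(2)}_N\) the space of weight-\(N\) alternating motivic MZV's. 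It therefore suffices to show \(\dim_\Q V_N = \dim_\Q \mathcal{H}^{(2)}_N = F_{N+1}\).

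For the lower bound, a routine counting argument identifies the set of words \(w\in\{1,2\}^\times\) with \(\abs{w}=N\) with the set of compositions of \(N\) into parts \(1\) and \(2\), of which there are exactly \(F_{N+1}\); by \autoref{cor:hoff:indep} the corresponding \(\ttw^\mot(w)\) are \(\Q\)-linearly independent, so \(\dim_\Q V_N \geq F_{N+1}\). For the upper bound I would invoke the result of \cite{deligneGoncharov} that \(\dim_\Q \mathcal{H}^{(2)}_N \leq F_{N+1}\) (in fact an equality). Combining these with the inclusions above gives \(F_{N+1} \leq \dim_\Q V_N \leq \dim_\Q \mathcal{T}^{\mathrm{ext}}_N \leq \dim_\Q \mathcal{H}^{(2)}_N \leq F_{N+1}\), forcing every inequality — hence every inclusion — to be an equality. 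Thus the \(F_{N+1}\) independent elements \(\ttw^\mot(w)\) span each of \(V_N = \mathcal{T}^{\mathrm{ext}}_N = \mathcal{H}^{(2)}_N\), i.e.\ form a basis of both spaces; this also yields the claimed identification of the two spaces and their common dimension.

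The substantive work of the whole development is already done: it is \autoref{cor:hoff:indep}, which itself rests on the matrix invertibility of \autoref{thm:matH:inj}. For this corollary there is essentially no obstacle, only two bookkeeping points to keep honest: (a) one must be sure the shuffle regularisation built into \(\ttw^\mot\) (induced by \(\zeta^{\shuffle,0}(1)=0\)) is exactly the one with respect to which \(\mathcal{T}^{\mathrm{ext}}\) and \(\mathcal{H}^{(2)}\) are defined, so that the first inclusion holds on the nose with no regularisation discrepancy; and (b) one must check that the Fibonacci indexing in the count of weight-\(N\) one-two words matches the convention in which \(\dim_\Q\mathcal{H}^{(2)}_N \leq F_{N+1}\). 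Once these are settled the sandwich closes immediately.
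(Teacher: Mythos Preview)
Your proof is correct and follows essentially the same approach as the paper: set up the inclusions \(V_N \subseteq \mathcal{T}^{\mathrm{ext}}_N \subseteq \mathcal{H}^{(2)}_N\), use the linear independence from \autoref{cor:hoff:indep} to get the lower bound \(F_{N+1}\), invoke the Deligne--Goncharov upper bound \(\dim_\Q \mathcal{H}^{(2)}_N \leq F_{N+1}\), and close the sandwich. Your version is slightly more explicit about the intermediate space \(V_N\) and the counting of compositions, but the argument is the same dimension count the paper gives.
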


	\subsection{Stuffle regularised Hoffman one-two elements}

	We now wish to extend the independence result on the Hoffman one-two elements from the case of shuffle regularised MtV's to the more natural case of stuffle regularised MtV's.  On the motivic level, we shall do this by viewing \autoref{prop:tstVtotshandz111} as a definition.
	
	\begin{Def}[Motivic \( \ttw^{\mot,\ast,V} \)]\label{def:tst:mot}
		Let \( \vec{k} = (k_1,\ldots,k_d) \), such that \( k_d \neq 1 \).  Then the stuffle regularised motivic MtV with \( \ttw^{\mot,\ast,V}(1) = 2V \in \mathcal{H}^{(2)} \) is defined by
		\begin{equation}
		\ttw^{\mot,\ast,V}(\vec{k}, \{1\}^\alpha) \coloneqq \sum_{i=0}^\alpha \ttw^{\mot}(\vec{k},\{1\}^{\alpha-i}) \cdot \zeta^{\mot,\ast,2V-\log^\mot(2)}(\{1\}^{i}) \,,
		\end{equation}
		where
		\(
			\zeta^{\mot,\ast,U}(\{1\}^i) 
		\)
		is given by the coefficient of \( u^i \) in
		\[
			\exp\Big(U u - \sum_{n=2}^\infty \frac{(-1)^n}{n} \zeta^\mot(n) u^n \Big) \,.
		\]
	\end{Def}

	Regardless then of the technicalities of defining a stuffle-regularisation on the motivic level, one knows that
	\[
		\per 	\ttw^{\mot,\ast,V}(\vec{k}, \{1\}^\alpha) = 	\ttw^{\ast,V}(\vec{k}, \{1\}^\alpha) \,.
	\]
	In particular
	\begin{align*}
		\per \ttw^{\mot,\ast,V}(1) &= \ttw^{\ast,V}(1) + \zeta^{\ast,2V-\log(2)}(1)  = \log(2) + (2V - \log(2)) \\
		& = 2 V = 2 t^{V,\ast}(1) = \ttw^{V,\ast}(1) \,,
	\end{align*}
	as per the definition of \( \ttw \).  So \( \ttw^{\mot,\ast,V} \) corresponds to the regularisation of \( t^{\ast,V}(1) = V \).  Therefore any linear independence and basis results will successfully translate over to the classical real valued versions as spanning set results, along with whatever identities we establish motivically. \medskip

	Naturally the question of how to compute \( D_{2r+1} \ttw^{\mot,\ast,V}(\vec{k}, \{1\}^\alpha) \) now arises, but for this we appeal again to the derivation property of \( D_{2r+1} \), namely
	\[
		D_{2r+1} (X Y) = (1 \otimes Y) D_{2r+1} X + (1 \otimes X) D_{2r+1} Y \,.
	\]

	We first give a lemma about the action of \( D_{2r+1} \) on \( \zeta^{\mot,\ast,U}(\{1\}^i) \).
	
	\begin{Lem}
		The action of \( D_{2r+1} \) on \( \zeta^{\mot,\ast,U}(\{1\}^i) \), with \( U \in \mathcal{H}^{(2)} \), is given by
		\[
			D_{2r+1} \zeta^{\mot,\ast,U}(\{1\}^i) = \begin{cases}
				\zeta^{\lmot,\ast,U}(\{1\}^{2r+1}) \otimes \zeta^{\mot,\ast,U}(\{1\}^{i - (2r+1)}) & \text{if \( i \geq 2r+1 \) \,, } \\
				0 & \text{otherwise} \,.
			\end{cases}
		\]
		
		\begin{proof}
			The case \( 2r+1 > i \) is clear, as in this case there can be no (non-zero) weight \( i - (2r+1) \) factor in the right hand factor of \( D_{2r+1} \).  So we assume \( 2r+1 \leq i \).
			
			We know that \( D_{2r+1} \zeta^\mot(s) = \delta_{s = 2r+1} \zeta^\lmot(2r+1) \otimes 1 \) since \( \zeta^\mot(s) \) is primitive for the coaction \( \Delta \).  More generally, if \( k_i \neq 2r+1 \), for any \( 1 \leq i \leq n \), then
			\begin{align*}
				& D_{2r+1} \zeta^\mot(2r+1)^\ell \zeta^\mot(k_1) \cdots \zeta^\mot(k_n) \\
				& {}  =  \zeta^\lmot(2r+1) \otimes \ell \zeta^\mot(2r+1)^{\ell-1}  \zeta^\mot(k_1) \cdots \zeta^\mot(k_n) \,.
			\end{align*}
			So, when acting on a polynomial \( p(\zeta^\mot(2r+1), \zeta^\mot(k_1),\ldots,\zeta^\mot(k_n)) \) in single motivic zeta values, the right hand tensor factor is (formally) the derivative of \( p(\zeta^\mot(2r+1), \zeta^\mot(k_1),\ldots,\zeta^\mot(k_n)) \) with respect \( \zeta^\mot(2r+1) \), and the left hand tensor factor is simply \( \zeta^\lmot(2r+1) \).
			
			More rigorously, the right hand factor of action of \( D_{2r+1} \) mimics the action of \( \frac{\mathrm{d}}{\mathrm{d}z_{2r+1}} \) on the polynomial \( p(z_{2r+1}, z_{k_1}, \ldots, z_{k_n}) \), under the correspondence \( \zeta^\mot(m) \leftrightarrow z_m \).  So we are justified now in proceeding via this formal derivative with respect to \( \zeta^\mot(2r+1) \). \medskip
			
			If \( 2r+1 > 1 \), applying this formal differentiation operation to
			\begin{equation}\label{eqn:z111:genseries}
				\sum_{i=0}^\infty \zeta^{\mot,\ast,U}(\{1\}^i) u^i = \exp\Big(U u - \sum_{n=2}^\infty \frac{(-1)^n}{n} \zeta^\mot(n) u^n \Big) \,,
			\end{equation}
			viewed as a generating series, leads to the following (extending \( D_{2r+1} \) by linearity to the coefficients of a power series):
			\begin{align*}
				& D_{2r+1} \sum_{i=0}^\infty \zeta^{\mot,\ast,U}(\{1\}^i) u^i \\
				& = \zeta^\lmot(2r+1) \otimes \frac{\mathrm{d}}{\mathrm{d}\zeta^\mot(2r+1)} \exp\Big(U u - \sum_{n=2}^\infty \frac{(-1)^n}{n} \zeta^\mot(n) u^n \Big) \\
				& = \zeta^\lmot(2r+1) \otimes \frac{(-1)^{2r}}{2r+1} u^{2r+1} \exp\Big(U u - \sum_{n=2}^\infty \frac{(-1)^n}{n} \zeta^\mot(n) u^n \Big)
			\end{align*}
			So by comparing the coefficient of \( u^i \) on both sides, we obtain
			\[
				D_{2r+1} \zeta^{\mot,\ast,U}(\{1\}^i) = \frac{1}{2r+1}\zeta^\lmot(2r+1) \otimes \zeta^{\mot,\ast,U}(\{1\}^{i-(2r+1)}) \,.
			\]
			It remains to note that
			\[
				\zeta^{\lmot,\ast,U}(\{1\}^{2r+1}) = \frac{1}{2r+1} \zeta^\lmot(2r+1) \,,
			\]
			by extracting the irreducible contribution in \eqref{eqn:z111:genseries}. \medskip
			
			The corresponding result holds for \( 2r+1 = 1 \), mutatis mutandis, by the view that \( \zeta^{\mot,\ast,U}(1) = U \in \mathcal{H}^{(2)} \).  So in particular \( \zeta^{\mot,\ast,U}(1) \) is some rational multiple \( \lambda \zeta^\mot(\overline{1}) \) of \( \zeta^\mot(\overline{1}) = \log^\mot(2) \), and so primitive for the coaction.  Namely \( D_1 U = D_1 \lambda \log^\mot(2) = \lambda (\log^\lmot(2) \otimes 1) = (U)^\lmot \).  Likewise, \( \zeta^{\lmot,\ast,U}(1) = (U)^\lmot \), so the left hand tensor factor is also just \( \zeta^{\lmot,\ast,U}(1) \) in this case.
		\end{proof}
	\end{Lem}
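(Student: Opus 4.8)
The plan is to reduce everything to a manipulation of the exponential generating series defining $\zeta^{\mot,\ast,U}(\{1\}^i)$ in \autoref{def:tst:mot}. Writing $F(u) = \sum_{i\geq0}\zeta^{\mot,\ast,U}(\{1\}^i)u^i = \exp\bigl(Uu - \sum_{n\geq2}\tfrac{(-1)^n}{n}\zeta^\mot(n)u^n\bigr)$, each coefficient $\zeta^{\mot,\ast,U}(\{1\}^i)$ is a polynomial in $U$ and the single motivic zeta values $\zeta^\mot(n)$, $n\geq2$; in all cases of interest $U$ is a rational multiple of $\zeta^\mot(\overline1)=\log^\mot(2)$. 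The key structural input is that $U$ and every $\zeta^\mot(n)$ is primitive for the coaction, so that $D_{2r+1}$ annihilates it unless its weight is exactly $2r+1$. Combined with the derivation property $D_{2r+1}(XY) = (1\otimes Y)D_{2r+1}X + (1\otimes X)D_{2r+1}Y$, this means that on any polynomial in these generators $D_{2r+1}$ acts as $\zeta^\lmot(2r+1)$ (or $U^\lmot$, when $2r+1=1$) tensored with the formal partial derivative with respect to the unique weight-$(2r+1)$ generator. The ``otherwise'' case of the claimed formula needs no computation: $D_{2r+1}$ has image in $\mathcal{L}_{2r+1}\otimes\mathcal{H}$ whose right-hand factor has weight $i-(2r+1)$, which must vanish when $i<2r+1$.

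For $2r+1\geq3$ I would then simply differentiate $F(u)$ with respect to the formal variable $\zeta^\mot(2r+1)$, obtaining $\partial F/\partial\zeta^\mot(2r+1) = -\tfrac{(-1)^{2r+1}}{2r+1}u^{2r+1}F(u) = \tfrac{1}{2r+1}u^{2r+1}F(u)$, hence $D_{2r+1}F(u) = \zeta^\lmot(2r+1)\otimes\tfrac{1}{2r+1}u^{2r+1}F(u)$; comparing coefficients of $u^i$ yields $D_{2r+1}\zeta^{\mot,\ast,U}(\{1\}^i) = \tfrac{1}{2r+1}\zeta^\lmot(2r+1)\otimes\zeta^{\mot,\ast,U}(\{1\}^{i-(2r+1)})$ for $i\geq2r+1$. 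To put this in the stated form it remains to identify the left factor: passing the $u^{2r+1}$-coefficient of $F$ to the Lie coalgebra $\mathcal{L}$ kills all product terms and leaves only the linear term $-\tfrac{(-1)^{2r+1}}{2r+1}\zeta^\mot(2r+1) = \tfrac{1}{2r+1}\zeta^\mot(2r+1)$, so $\zeta^{\lmot,\ast,U}(\{1\}^{2r+1}) = \tfrac{1}{2r+1}\zeta^\lmot(2r+1)$, and the formula follows.

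For $2r+1=1$ the same argument goes through with $U$ in place of $\zeta^\mot(2r+1)$: here $\zeta^{\mot,\ast,U}(1)=U$ is primitive because it is a rational multiple of $\log^\mot(2)$, so $D_1U = U^\lmot\otimes1$, and $\partial F/\partial U = uF(u)$ gives $D_1\zeta^{\mot,\ast,U}(\{1\}^i) = U^\lmot\otimes\zeta^{\mot,\ast,U}(\{1\}^{i-1}) = \zeta^{\lmot,\ast,U}(\{1\})\otimes\zeta^{\mot,\ast,U}(\{1\}^{i-1})$. The only step requiring genuine care --- and the main (mild) obstacle --- is making the ``formal partial derivative'' description of $D_{2r+1}$ precise: this I would establish by checking it directly on monomials $\zeta^\mot(2r+1)^\ell\zeta^\mot(k_1)\cdots\zeta^\mot(k_n)$ with $k_j\neq2r+1$ using the derivation property and primitivity, and then extending $D_{2r+1}$ coefficient-wise to the power series $F(u)$; once that is in place the rest is routine.
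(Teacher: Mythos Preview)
Your proposal is correct and follows essentially the same approach as the paper: both argue via the generating series $F(u)=\exp(Uu-\sum_{n\geq2}\tfrac{(-1)^n}{n}\zeta^\mot(n)u^n)$, use primitivity of the single zetas together with the derivation property to reduce $D_{2r+1}$ to formal partial differentiation with respect to $\zeta^\mot(2r+1)$ (or $U$ when $2r+1=1$), differentiate the exponential, compare coefficients of $u^i$, and identify the left tensor factor as $\zeta^{\lmot,\ast,U}(\{1\}^{2r+1})$ by extracting the irreducible part. Your handling of the $2r+1=1$ case and of the ``otherwise'' case by weight also matches the paper's.
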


	Then we compute the derivation \( D_{2r+1} \) on the stuffle-regularised motivic MtV's as follows.  We claim it is given by the essentially same formula as in \autoref{prop:dkt}, with \( \ttw^\bullet \) replaced by \( \ttw^{\bullet,\ast,V} \), and a (potential) additional term deconcatenating 1's from the end.
	
	\begin{Prop}[Derivation \( D_r \) on \( \ttw^{\mot,\ast,V} \)]\label{prop:dktstuffle}
	Let \( \vec{k} = (k_1,\ldots,k_d) \in (\Z_{\geq1})^d \) be an index.  Write \( \vec{k}_{i,j} = (k_i, \ldots, k_j) \) for a subindex of \( \vec{k} \) and \( \abs{(a_1,\ldots,a_r)} = a_1 + \cdots + a_r \) for the total (weight) of an index.  Then the derivation \( D_r \), \( r \) odd, is computed on the stuffle regularised \( \ttw^{\mot,\ast,V} \) as follows
	\begin{align}
	& D_r \big( \ttw^{\mot,\ast,V}(k_1,\ldots,k_d) \big) = \notag \\
	& \label{eqn:drst:deconcat} \sum_{1 \leq j \leq d} \delta_{\abs{\vec{k}_{1,j}}=r} \ttw^{\lmot}(k_1,\ldots,k_j) \otimes \ttw^{\mot,\ast,V}(k_{j+1},\ldots,k_d) \\[1ex]
	& \label{eqn:drst:0eps} + \!\! \sum_{1 \leq i < j \leq d} \begin{aligned}[t] 
	\delta_{\abs{\vec{k}_{i+1,j}} \leq r < \abs{\vec{k}_{i,j}}-1} \Big( \zeta^\lmot_{r-\abs{\vec{k}_{i+1,j}}}&(k_{i+1},\ldots,k_j) - \delta_{r=1} \log^\lmot(2) \Big) \\
	&  {} \otimes \ttw^{\mot,\ast,V}(k_1,\ldots,k_{i-1},\abs{\vec{k}_{i,j}} - r, k_{j+1},\ldots,k_d) \end{aligned} \\[1ex]
	& \label{eqn:drst:eps0} - \!\! \sum_{1 \leq i < j \leq d} \begin{aligned}[t]
	\delta_{\abs{\vec{k}_{i,j-1}} \leq r < \abs{\vec{k}_{i,j}}-1} \Big( \zeta^\lmot_{r-\abs{\vec{k}_{i,j-1}}}&(k_{j-1},\ldots,k_i) - \delta_{r=1} \log^\lmot(2) \Big) \\
	& {} \otimes \ttw^{\mot,\ast,V}(k_1,\ldots,k_{i-1},\abs{\vec{k}_{i,j}} - r, k_{j+1},\ldots,k_d) \end{aligned} \\[0.5ex]
	& \label{eqn:drst:deconcatend} {} + \delta_{(k_{d-r},\ldots,k_d) = (1,\ldots,1)} \,\cdot\, \zeta^{\lmot,\ast,2V-\log^\mot(2)}(\{1\}^{r}) \otimes \ttw^{\mot,\ast,V}(k_1,\ldots,k_{d-r})
	\end{align}		
	
	\begin{proof}
		We treat this based on the number of trailing 1's in the \( \vec{k} \).  Write \( \vec{k} = (k_1,\ldots,k_{d-\alpha}, \{1\}^\alpha) \), with \( k_{d-\alpha} \neq 1 \), and apply the derivation property to
		\begin{align*}
			D_{r} \ttw^{\mot,\ast,V}(k_1,\ldots,k_{d-\alpha}, \{1\}^\alpha) 
			& {} =  \sum_{\ell=0}^\alpha D_r \ttw^{\mot}(k_1,\ldots,k_{d-\alpha},\{1\}^{\alpha-\ell}) \cdot \zeta^{\mot,\ast,2V-\log^\mot(2)}(\{1\}^{\ell}) \\
			& {} =  \sum_{\ell=0}^\alpha \begin{aligned}[t] \Big\{
				 & \big( 1 \otimes \zeta^{\mot,\ast,2V-\log^\mot(2)}(\{1\}^{\ell}) \big) \cdot D_r \ttw^{\mot}(k_1,\ldots,k_{d-\alpha},\{1\}^{\alpha-\ell}) \\[-1ex]
				& {} + \big( 1 \otimes \ttw^{\mot}(k_1,\ldots,k_{d-\alpha},\{1\}^{\alpha-\ell}) \big) \cdot D_r \zeta^{\mot,\ast,2V-\log^\mot(2)}(\{1\}^{\ell}) \Big\} \,. \end{aligned}
		\end{align*}
		
		We compute the second term of the sum to be
		\begin{align*}
			& (1 \otimes \ttw^{\mot}(k_1,\ldots,k_{d-\alpha},\{1\}^{\alpha-\ell})) \cdot D_r \zeta^{\mot,\ast,2V-\log^\mot(2)}(\{1\}^{\ell}) \\
			& {} = \big( 1 \otimes \ttw^{\mot}(k_1,\ldots,k_{d-\alpha},\{1\}^{\alpha-\ell}) \big) \cdot \big( \delta_{r \leq \ell} \zeta^{\mot,\ast,2V-\log^\mot(2)}(\{1\}^{r}) \otimes \zeta^{\mot,\ast,2V-\log^\mot(2)}(\{1\}^{\ell-r})\big)  \\
			& {} = \zeta^{\mot,\ast,2V-\log^\mot(2)}(\{1\}^{r}) \otimes \big( \delta_{r \leq \ell}\ttw^{\mot}(k_1,\ldots,k_{d-\alpha},\{1\}^{\alpha-\ell}) \zeta^{\mot,\ast,2V-\log^\mot(2)}(\{1\}^{\ell-r}) \big) \,.
		\end{align*}
		The sum \( \sum_{\ell=0}^\alpha \) then restricts to \( \sum_{\ell=r}^\alpha \) because of the Kronecker delta, so we find
		\begin{align*}
		& \sum_{\ell=0}^\alpha (1 \otimes \ttw^{\mot}(k_1,\ldots,k_{d-\alpha},\{1\}^{\alpha-\ell})) \cdot D_r \zeta^{\mot,\ast,2V-\log^\mot(2)}(\{1\}^{\ell}) \Big) \\
		& = \zeta^{\mot,\ast,2V-\log^\mot(2)}(\{1\}^{r}) \otimes \sum_{\ell=r}^\alpha \ttw^{\mot}(k_1,\ldots,k_{d-\alpha},\{1\}^{\alpha-\ell}) \zeta^{\mot,\ast,2V-\log^\mot(2)}(\{1\}^{\ell-r}) \big) \\
		&= \delta_{r \leq \alpha} \zeta^{\mot,\ast,2V-\log^\mot(2)}(\{1\}^{r}) \otimes \ttw^{\mot,V,\ast}(k_1,\ldots,k_{d-\alpha},\{1\}^{\alpha-r})
		\end{align*}
		This gives the last term \eqref{eqn:drst:deconcatend}.
		
		Now consider the first term of the sum.  We need to apply the previous formula from \autoref{prop:dkt} for \( D_r \ttw^\mot(k_1,\ldots,k_{d-\alpha},\{1\}^\alpha) \).  We obtain
		\begin{equation}\label{eqn:drst:aslsum}
		\begin{aligned}
		 & \sum_{\ell=0}^\alpha  \big( 1 \otimes \zeta^{\mot,\ast,2V-\log^\mot(2)}(\{1\}^{\ell}) \big) \cdot D_r \ttw^{\mot}(k_1,\ldots,k_{d-\alpha},\{1\}^{\alpha-\ell}) = {} \\
			& \sum_{\ell=0}^\alpha \bigg\{  
			\begin{aligned}[t] & \sum_{1 \leq j \leq d-\ell} \delta_{\abs{\vec{k}_{1,j}}=r} \ttw^{\lmot}(k_1,\ldots,k_j) \otimes \ttw^{\mot}(k_{j+1},\ldots,k_{d-\ell}) \zeta^{\mot,\ast,2V-\log^\mot(2)}(\{1\}^{\ell}) \\[1ex]
			&  + \!\! \sum_{1 \leq i < j \leq d-\ell} \begin{aligned}[t] 
			& \delta_{\abs{\vec{k}_{i+1,j}} \leq r < \abs{\vec{k}_{i,j}}-1} \Big( \zeta^\lmot_{r-\abs{\vec{k}_{i+1,j}}}(k_{i+1},\ldots,k_j) - \delta_{r=1} \log^\lmot(2) \Big) \\
			&  \hspace{1em} {} \otimes \ttw^{\mot}(k_1,\ldots,k_{i-1},\abs{\vec{k}_{i,j}} - r, k_{j+1},\ldots,k_{d-\ell})  \zeta^{\mot,\ast,2V-\log^\mot(2)}(\{1\}^{\ell}) \end{aligned} \\[1ex]
			&  - \!\! \sum_{1 \leq i < j \leq d-\ell} \begin{aligned}[t]
			& \delta_{\abs{\vec{k}_{i,j-1}} \leq r < \abs{\vec{k}_{i,j}}-1}  \Big( \zeta^\lmot_{r-\abs{\vec{k}_{i,j-1}}}(k_{j-1},\ldots,k_i) - \delta_{r=1} \log^\lmot(2) \Big) \\
			& \hspace{1em} {} \otimes \ttw^{\mot}(k_1,\ldots,k_{i-1},\abs{\vec{k}_{i,j}} - r, k_{j+1},\ldots,k_{d-\ell}) \zeta^{\mot,\ast,2V-\log^\mot(2)}(\{1\}^{\ell}) \bigg\} \end{aligned}
			\end{aligned}
		\end{aligned}
		\end{equation}
		The sum over \( \ell \) and over \( i < j \) (respectively \( j \)) interchange as follows
		\begin{align*}
			\sum_{\ell = 0}^\alpha \sum_{1 \leq i < j \leq d-\ell} & = \sum_{1 \leq i < j \leq d} \sum_{\ell = 0}^{\min(\alpha,d-j)} \\
			\sum_{\ell = 0}^\alpha \sum_{1 \leq j \leq d-\ell} & = \sum_{1 \leq j \leq d} \sum_{\ell = 0}^{\min(\alpha,d-j)} \,.
		\end{align*}
		We than note that the upper bound of the \( \ell \)-summation is given exactly by the total number of trailing 1's contained across the \( \ttw^\mot \) and \( \zeta^{\mot,\ast,2V-\log^\mot(2)} \) arguments.  No further 1's can be introduced, as \( \abs{\vec{k}_{i,j}} - r > 1 \) which follows immediately from the inequality \( r < \abs{\vec{k}_{i,j}} - 1 \) in the Kronecker deltas.  Then if, for example, \( j = d - \alpha \), then \( k_{j+1} = 1 \) while \( k_{j} = k_{d-\alpha} \neq 1 \), so the subindex \( (k_{j+1}, \ldots, k_{d-\ell}) = (k_{d-\alpha+1}, \ldots, k_{d-\ell}) \) consists of \( \alpha - \ell \)  many 1's.  And indeed \( \alpha - \ell \) from \( \ttw^\mot \) and \( \ell \) from \( \zeta^{\mot,\ast,2V-\log^\mot(2)} \) give \( \alpha \) overall, equal to \( \min(\alpha, d-j) = \alpha \).  Whereas if \( j = d-\alpha+1 \), we already remove the first \( 1 = k_{j} \) from the subindex, leaving \( \alpha - 1 - \ell \)  many 1's in \( \ttw^\mot \) and \( \alpha-1 \) overall, agreeing with \( \min(\alpha, d-j) = \alpha - 1 \).
		
		This means that after summing over \( \ell \) we obtain the corresponding \( \ttw^{\mot,\ast,V} \) value in each case, namely
		\[
			\sum_{\ell=0}^{\min(\alpha,d-j)} \ttw^{\mot}(k_{j+1},\ldots,k_{d-\ell}) \zeta^{\mot,\ast,2V-\log^\mot(2)}(\{1\}^{\ell}) = \ttw^{\mot,\ast,V}(k_{j+1},\ldots,k_{d-\ell}) \,,
		\]
		and likewise for the other terms, as per \autoref{def:tst:mot}.  The sum over \( \ell \) only affects the right hand tensor factors, as the left hand ones are independent of \( \ell \), so we readily obtain the remaining terms \eqref{eqn:drst:deconcat}, \eqref{eqn:drst:0eps} and \eqref{eqn:drst:eps0} from the three summands in \eqref{eqn:drst:aslsum}.  This completes the proof.
	\end{proof}
	\end{Prop}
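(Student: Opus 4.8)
The plan is to obtain the formula directly from \autoref{prop:dkt} — the computation of $D_r$ on the shuffle-regularised $\ttw^\mot$ — together with the Leibniz rule for $D_r$ and the action of $D_r$ on the periodic elements $\zeta^{\mot,\ast,U}(\{1\}^i)$ established in the lemma above. Concretely, I would write $\vec{k}=(k_1,\ldots,k_{d-\alpha},\{1\}^\alpha)$ with $k_{d-\alpha}\neq1$, so the trailing $1$'s are exactly the last $\alpha$ entries, and expand via \autoref{def:tst:mot}:
\[
  \ttw^{\mot,\ast,V}(\vec{k}) = \sum_{\ell=0}^\alpha \ttw^\mot(k_1,\ldots,k_{d-\alpha},\{1\}^{\alpha-\ell})\,\zeta^{\mot,\ast,2V-\log^\mot(2)}(\{1\}^\ell)\,.
\]
Applying $D_r$ and the derivation property $D_r(XY)=(1\otimes Y)D_rX+(1\otimes X)D_rY$ splits each summand into a part where $D_r$ hits the $\ttw^\mot$ factor (passing $\zeta^{\mot,\ast,2V-\log^\mot(2)}(\{1\}^\ell)$ into the right-hand tensor slot) and a part where $D_r$ hits the periodic factor.

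For the part where $D_r$ hits $\zeta^{\mot,\ast,2V-\log^\mot(2)}(\{1\}^\ell)$, I would substitute the preceding lemma, $D_r\zeta^{\mot,\ast,U}(\{1\}^\ell)=\delta_{r\le\ell}\,\zeta^{\lmot,\ast,U}(\{1\}^r)\otimes\zeta^{\mot,\ast,U}(\{1\}^{\ell-r})$. The left-hand factor $\zeta^{\lmot,\ast,U}(\{1\}^r)$ does not depend on $\ell$, so summing over $\ell$ and recognising $\sum_{\ell\ge r}\ttw^\mot(\ldots,\{1\}^{\alpha-\ell})\,\zeta^{\mot,\ast,U}(\{1\}^{\ell-r})=\ttw^{\mot,\ast,V}(\ldots,\{1\}^{\alpha-r})$ produces exactly the new term \eqref{eqn:drst:deconcatend}, which is active precisely when the last $r$ entries of $\vec{k}$ are $1$, i.e.\ when $r\le\alpha$.

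For the part where $D_r$ hits $\ttw^\mot(k_1,\ldots,k_{d-\alpha},\{1\}^{\alpha-\ell})$, I would plug in the formula of \autoref{prop:dkt}. The left-hand tensor factors of its terms are independent of $\ell$, while the right-hand factor carries the extra $\zeta^{\mot,\ast,2V-\log^\mot(2)}(\{1\}^\ell)$, so I would interchange the $\ell$-sum with the sums over cut endpoints. The key point is that no cut in \autoref{prop:dkt} ever creates a new trailing $1$: the Kronecker-delta constraint forces $\abs{\vec{k}_{i,j}}-r>1$, so the entry inserted in the right-hand factor is never a $1$. Hence, for each fixed cut, the inner $\ell$-sum runs over exactly the trailing $1$'s still present, and $\sum_\ell \ttw^\mot(\cdots,\{1\}^{\alpha-\ell})\,\zeta^{\mot,\ast,2V-\log^\mot(2)}(\{1\}^\ell)$ collapses into the corresponding $\ttw^{\mot,\ast,V}$ value by \autoref{def:tst:mot}; this reproduces terms \eqref{eqn:drst:deconcat}, \eqref{eqn:drst:0eps} and \eqref{eqn:drst:eps0}.

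I expect the main obstacle to be the combinatorial bookkeeping in the last step: one must check precisely that, for a cut whose removed subindex straddles or lies before the block of trailing $1$'s, the upper limit of the inner $\ell$-summation is $\min(\alpha,d-j)$ — the total number of trailing $1$'s available across the $\ttw^\mot$ and $\zeta^{\mot,\ast,2V-\log^\mot(2)}$ arguments — so that the partial sums genuinely reassemble into stuffle-regularised values with no leftover or missing $\{1\}^\ell$-tail and no double counting. Once that is verified, the remaining manipulations are routine rearrangements of finite sums.
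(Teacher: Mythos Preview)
Your proposal is correct and follows essentially the same approach as the paper's own proof: split via Leibniz, use the preceding lemma on $D_r\zeta^{\mot,\ast,U}(\{1\}^\ell)$ to produce \eqref{eqn:drst:deconcatend}, and for the other piece plug in \autoref{prop:dkt}, interchange the $\ell$-sum with the cut sums, and observe that since $\abs{\vec{k}_{i,j}}-r>1$ no new trailing $1$ is introduced, so the $\ell$-sum (with upper limit $\min(\alpha,d-j)$) reassembles each right-hand factor into the corresponding $\ttw^{\mot,\ast,V}$. The bookkeeping point you flag as the main obstacle is exactly the one the paper spells out.
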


	Now fix \( V = \lambda \log^\mot(2) \), \( \lambda \in \Q \).  One can then proceed in the same way as \autoref{lem:hoffmanmotivic} and \autoref{lem:hoffmot:form} to conclude that the Hoffman-stuffle filtration
	\begin{align*}
		\mathcal{H}^{H,\ast} &\coloneqq \langle t^{\mot,\ast,\lambda \log^\mot(2)}(w) \mid w \in \{ 1,2 \}^\times \rangle_\Q \\
		H_{\ell,\ast} \mathcal{H}^{H,\ast} &\coloneqq \langle t^{\mot,\ast,\lambda \log^\mot(2)}(w) \mid w \in \{ 1,2 \}^\times \,, \text{ s.t. \( \deg_1 w \leq \ell \) } \rangle_\Q \,.
	\end{align*}
	is motivic, of a particular form.  Namely 
	\[
		\gr_\ell^{H,\ast} D_{2r'+1}( \gr_\ell^{H,\ast} \mathcal{H}^{H,\ast} ) \subseteq \zeta^\lmot(\overline{2r'+1}) \Q \otimes_\Q \gr_{\ell-1}^{H,\ast} \mathcal{H}^{H,\ast} 
	\]
	In particular: terms \eqref{eqn:drst:deconcat},\eqref{eqn:drst:0eps},\eqref{eqn:drst:eps0} give exactly the same contributions as previously, except for replacing \( \ttw^\mot \) with \( \ttw^{\mot,\ast,V} \) in the \emph{right} hand tensor factor.  Finally
	\[
		\zeta^{\lmot,\ast,2V-\log^\mot(2)}(\{1\}^{2r'+1}) = \begin{cases}
			(2\lambda - 1) \log^\lmot(2)  & \text{ if \( 2r'+1 = 1 \)} \\
			\frac{1}{2r'+1} \zeta^\lmot(2r'+1) & \text{ if \( 2r'+1 > 1 \) \,,} \\
		\end{cases}
	\]
	and the right hand factor \( \delta_{(k_{d-r},\ldots,k_d) = (1,\ldots,1)}  \ttw^{\mot,\ast,V}(k_1,\ldots,k_{d-r}) \) is obtained by removing \( 2r'+1 \) ones from the end of the \( t \) value.
	So the contribution from \eqref{eqn:drst:deconcatend} lands in the space \( \zeta^\lmot(\overline{2r'+1}) \Q \otimes_{\Q} \gr_{\ell-1}^{H,\ast} \mathcal{H}^{H,\ast} \).  In fact, if \( 2r'+1 > 1 \), we remove at least 3 ones, and so reduce the level by 3, which vanishes in \( \gr_{\ell-1}^{H,\ast} \mathcal{H}^{H,\ast} \).  Whereas, if \( 2r'+1 = 1 \), this term contributes
	\[
		\delta_{k_d=1} (2\lambda - 1) \log^\lmot(2)  \otimes \ttw^{\mot,\ast,V}(k_1,\ldots,k_{d-1}) \,.
	\]
	Note also, this is the only place the regularisation parameter \( \lambda \) enters the calculation.
	
	In particular, the additional \( \delta_{k_d=1}  (2\lambda - 1) \widetilde{\pi}^{1} \big( \log^\lmot(2) \big) \cdot \ttw^{\mot,\ast,V}(k_1,\ldots,k_{d-1}) \) term  combines with the original term
	\[
		- \delta_{k_d = 1} \widetilde{\pi}_1 \big( \log^\lmot(2)  \big)  \ttw^{\mot,\ast,V}(k_1,\ldots,k_{d-1}) 
	\]
	coming from deconcatenating at the end.  (This arises in \eqref{eqn:drst:eps0}; by the same argument as in \autoref{prop:d1}, one knows that only the two extremal terms, removing initial or terminal 1's, actually contribute to \( D_1 \).)  One then introduces the linear map \( \partial^{H,\ast}_{N,\ell} \) as
	\[
	\partial_{N,\ell}^{H,\ast} \colon \gr_\ell^{H,\ast} \mathcal{H}_N^{H,\ast} \to \bigoplus_{1 \leq 2r+1 \leq N} \gr_{\ell-1}^{H,\ast} \mathcal{H}_{N-2r-1}^{H,\ast} \,,
	\]
	by first applying \( \bigoplus_{1\leq2r+1\leq N} \gr_\ell^{H,\ast} D_{2r+1} \big|_{\gr_\ell^{H,\ast} \mathcal{H}_N^{H,\ast}} \), then projecting \( \zeta^\lmot(2r'+1) \,, \log^\lmot(2) \) to \( \Q \) via \( \widetilde{\pi}_{2r'+1} \) as in \autoref{def:partialH}.
	
	Then define the matrix \( M_{H,\ast,N,\ell} \) as the matrix of \( \partial^{H,\ast}_{N,\ell} \) as in \autoref{def:Mpar}, with respect to the bases \( B_{H,N,\ell}, B'_{H,N,\ell} \) given in \autoref{def:Mbas}, after replacing \( \ttw^\mot \) with \( \ttw^{\mot,\ast,V} \).
	
	Now, one notes that the to compute the matrix \( M_{H,\ast,N,\ell} \) one replaces each term \( -\frac{1}{2} \) in the matrix \( M_{H,N,\ell} \) arising from \( \widetilde{\pi}_1 \big( \log^\lmot(2) \big) = \frac{1}{2} \)  by the coefficient \( \frac{1}{2} ( \lambda - 1) - \frac{1}{2} = \lambda - 1 \).
	
	\begin{Eg}
		For \( N = 8, \ell = 2 \), the matrix \( M_{H,\ast,8,2} \) is as follows; the first row and column label the elements of \( B'_{H,8,2} \) and \( B_{H,8,2} \) respectively.
		\begin{center}
			\small
			\begin{adjustwidth}{-0.4cm}{}
				\begin{tabular}{c||c|c|c|c|c|c||c|c|c|c}
					& \!\!1222\!\! &  \!\!2122\!\!  & 122 & \!\!2212\!\! & 212 & 12 & \!\!2221\!\!\! & 221 & 21 & 1 \\ \hline\hline
					\!\!11222\!\! & $ 1 $ & $ 0 $ & $ {-}2 c_{21} $ & $ 0 $ & $ 0 $ & $ {-}8 c_{221} $ & $ 0 $ & $ 0 $ & $ 0 $ & $ 0 $ \\
					\!\!12122\!\! & $ 0 $ & $ 1 $ & $ \!\!2 d_{12}{-}2 c_{21}\!\!\! $ & $ 0 $ & $ 0 $ & $ 8 c_{23}{-}8 c_{32} $ & $ 0 $ & $ 0 $ & $ 0 $ & $ 0 $ \\
					\!\!21122\!\! & $ 0 $ & $ 0 $ & $ 2 d_{21} $ & $ 0 $ & $ {-}2 c_{21} $ & $ 0 $ & $ 0 $ & $ 0 $ & $ 0 $ & $ 0 $ \\
					\!\!12212\!\! & $ 0 $ & $ 0 $ & $ 2 c_{21} $ & $ 1 $ & $ \!\!2 d_{12}{-}2 c_{21}\!\!\! $ & $ \!\!{-}8 c_{23}+8 c_{32}{+}8 d_{122}\!\!\! $ & $ 0 $ & $ 0 $ & $ 0 $ & $ 0 $ \\
					\!\!21212\!\! & $ 0 $ & $ 0 $ & $ 0 $ & $ 0 $ & $ 2 d_{21} $ & $ 8 d_{212} $ & $ 0 $ & $ 0 $ & $ 0 $ & $ 0 $ \\
					\!\!22112\!\! & $ 0 $ & $ 0 $ & $ 0 $ & $ 0 $ & $ 2 c_{21} $ & $ 8 d_{221} $ & $ 0 $ & $ 0 $ & $ 0 $ & $ 0 $ \\ \hline \hline
					\!\!12221\!\! & $ \!\!\lambda {-} 1\!\! $ & $ 0 $ & $ 2 c_{21} $ & $ 0 $ & $ 0 $ & $ 8 c_{221} $ & $ 1 $ & $ \!\!2 d_{12}{-}2 c_{21}\!\! $ & $ 8 d_{122}{-}8 c_{221} $ & $ \!\!32 d_{1222}\!\! $ \\
					\!\!21221\!\! & $ 0 $ & $ \!\!\lambda {-} 1\!\! $ & $ 0 $ & $ 0 $ & $ 2 c_{21} $ & $ 0 $ & $ 0 $ & $ \!\!2 d_{21}{-}2 c_{21}\!\! $ & $ \!\! 8 c_{23}{-}8 c_{32}{+}8 d_{212} \!\!\! $ & $ \!\!32 d_{2122}\!\! $ \\
					\!\!22121\!\! & $ 0 $ & $ 0 $ & $ 0 $ & $ \!\!\lambda {-} 1\!\! $ & $ 0 $ & $ 0 $ & $ 0 $ & $ 2 c_{21} $ & $\!\! {-}8 c_{23}{+}8 c_{32}{+}8 d_{221} \!\!\!$ & $ \!\!32 d_{2212}\!\! $ \\
					\!\!22211\!\! & $ 0 $ & $ 0 $ & $ 0 $ & $ 0 $ & $ 0 $ & $ 0 $ & $ \!\!\lambda {-} 1\!\! $ & $ 2 c_{21} $ & $ 8 c_{221} $ & $ \!\!32 d_{2221}\!\! $ \\
				\end{tabular}
			\end{adjustwidth}
		\end{center}
	\end{Eg}
	
	We note now that when \( \lambda = \frac{1}{2} \), the matrices \( M_{H,\ast,N,\ell} \) and \( M_{H,N,\ell} \) are identical, and therefore the stuffle-regularised matrix is also invertible.  Moreover, when \( \lambda = 1 \) the last diagonal block (corresponding to the original block \( M_{\ell-1,H,N,\ell} \) ending in \( \ell-1 \) trailing 1's) is now upper triangular modulo 2 (compare \autoref{lem:block:isdeconat1}), and so also again establishes that \( M_{H,\ast,N,\ell} \) is an invertible matrix.  More generally we have the following.
	
	\begin{Prop}\label{prop:mats:inv}
		Suppose \( \lambda \) has the form \( \frac{2a+1}{b} \in \Q \), with \( a, b \in \Z \).  Then the matrix \( M_{H,\ast,N,\ell} \) is invertible.
		
		\begin{proof}
			The previous result \autoref{lem:mat:blocktri} carries through to show the matrix is block triangular.  The result \autoref{lem:block:isdeconcat} also carries over to show the diagonal blocks corresponding to \( < \ell-1 \) trailing 1's are upper triangular modulo 2, and are therefore invertible.  The proof of \autoref{lem:block:isdeconat1} is adapted to show that the determinant of the last block has the form
			\[
				(2x + 1) + (\lambda - 1) (2y + 1) \,,
			\]
			with \( x, y \in \Z \).  For \( \lambda \) of the above form, this is
			\[
				2(x-y) + \frac{(2a+1) (2y+1)}{b} \,,
			\]
			which cannot be 0, as the numerator of the fraction is odd.
		\end{proof}
	\end{Prop}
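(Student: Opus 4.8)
The plan is to reduce invertibility of $M_{H,\ast,N,\ell}$ to the two ingredients already assembled for the shuffle-regularised matrix $M_{H,N,\ell}$, re-examining only the single entry in which the regularisation parameter $\lambda$ appears. First I would observe that none of the structural statements depend on $\lambda$: in \autoref{prop:dktstuffle} the extra term \eqref{eqn:drst:deconcatend} only ever strips trailing $1$'s, so it cannot increase the number of trailing $1$'s, and therefore the block-triangularity argument of \autoref{lem:mat:blocktri} — together with the bijection $\phi$ that partitions the bases by the number of trailing $1$'s — applies verbatim. Thus $M_{H,\ast,N,\ell}$ is block lower-triangular with square diagonal blocks $M_{\alpha,H,\ast,N,\ell}$ indexed by $\alpha$, and it suffices to prove each diagonal block invertible.

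For a block with $\alpha < \ell-1$ the parameter $\lambda$ does not enter at all: as explained after \autoref{lem:block:isdeconat1}, the only coefficient altered by the stuffle regularisation is the one in the first column, last row of the block $M_{\ell-1,H,\ast,N,\ell}$, coming from combining the $D_1$-deconcatenation of a trailing $1$ with the contribution of \eqref{eqn:drst:deconcatend}. Hence \autoref{lem:block:isdeconcat} carries over unchanged, and exactly as in \autoref{prop:diagalpha:inv} the block $M_{\alpha,H,\ast,N,\ell}$ is upper-triangular modulo $2$ with $1$'s on the diagonal, so it is invertible.

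It remains to treat $M_{\ell-1,H,\ast,N,\ell}$. I would re-run the proof of \autoref{prop:diagell:inv}, the only change being that the entry $-\tfrac12$ in the first column is replaced by $\lambda-1$. This entry sits at the bottom of the column indexed by $2^{\frac12(N-\ell)}1^{\ell-1}$, a column that computes $\gr_\ell D_1$, and whose only other entry is the $\tfrac12 d_1 = 1$ at the top (the rest vanish because $\zeta_0^\lmot(1)=0$). Expanding the determinant along this column, the two relevant cofactors $C_{1,1}$ and $C_{1,\frac12(N-\ell)+1}$ are computed from minors that both delete column $1$, hence have integer entries independent of $\lambda$; the arguments of \autoref{prop:diagell:inv} (the minor $A_{1,1}$ being upper-triangular mod $2$ with $1$'s on the diagonal, and the minor $A_{1,\frac12(N-\ell)+1}$ reducing mod $2$ to the matrix $\binom{2c}{2a-2}$ and then to a permuted unitriangular matrix after subtracting the other rows from the first) show both cofactors are $\equiv 1\pmod 2$, i.e.\ odd integers; crucially these arguments never used the value $-\tfrac12$, only the integrality and mod-$2$ reductions of the surviving entries. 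Therefore
\[
	\det M_{\ell-1,H,\ast,N,\ell} = (2x+1) + (\lambda-1)(2y+1) \qquad\text{for some }x,y\in\Z .
\]
Substituting $\lambda = \frac{2a+1}{b}$ rewrites this as $2(x-y) + \frac{(2a+1)(2y+1)}{b}$; were it zero we would get $(2a+1)(2y+1) = -2b(x-y)$, impossible since the left-hand side is odd while the right-hand side is even. Hence the last block, and so $M_{H,\ast,N,\ell}$, is invertible.

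The main obstacle is bookkeeping rather than conceptual: one must confirm that every step of \autoref{prop:diagell:inv} being reused is genuinely insensitive to replacing $-\tfrac12$ by the (possibly non-integer) value $\lambda-1$ — concretely, that the $\lambda-1$ entry always lies in a deleted row or column of the minors involved, so that all cofactors remain integral and their mod-$2$ reductions are unchanged — after which the proof closes with the one-line parity computation above.
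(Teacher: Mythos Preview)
Your proposal is correct and follows essentially the same approach as the paper: both reduce to block triangularity, observe that the blocks with $\alpha<\ell-1$ are unaffected by $\lambda$, and for the last block expand along the first column to obtain the determinant in the form $(2x+1)+(\lambda-1)(2y+1)$, finishing with the same parity argument. You supply more detail than the paper on why the cofactors remain integral and independent of $\lambda$ (namely that the $\lambda-1$ entry is always in a deleted row or column), but the strategy is identical.
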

	
	The proofs of \autoref{cor:hoff:indep} and \autoref{cor:text:eq:H2} now directly generalise to this case, giving
	\begin{Cor}\label{cor:hoff:st:indep}
		Let \( V = \lambda \log^\mot(2) \), with \( \lambda = \frac{2a+1}{b} \in \mathbb{Q} \) and \( a, b \in \Z \).  Then the elements 
		\[
		\{ \ttw^{\mot,\ast,V}(w) \mid w \in \{1,2\}^\times  \} \,,
		\]
		are linearly independent.  Moreover, they form a basis for the space of:
		\begin{itemize} 
			\item[i)] motivic extended stuffle-regularised multiple \( t \) values with \( \ttw^{\mot,V,\ast}(1) = 2V \),
			\item[i$'$)] motivic extended shuffle-regularised multiple \( t \) values,
			\item[ii)] alternating (shuffle-regularised) motivic multiple zeta values
		\end{itemize}
		In particular all of these spaces agree, and extended stuffle-regularised motivic multiple \( t \) values with \( \ttw^{\mot,\ast,V}(1) = 2V\) have dimension \( F_{N+1} \) in weight \( N \), where \( F_{k} = F_{k-1} + F_{k-2} \) with \( F_0 = F_1 = 1 \) is the sequence of Fibonacci numbers.
	\end{Cor}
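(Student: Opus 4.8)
The plan is to derive this corollary formally from the invertibility of the matrices \( M_{H,\ast,N,\ell} \) proved in \autoref{prop:mats:inv}, following verbatim the structure of the proofs of \autoref{cor:hoff:indep} and \autoref{cor:text:eq:H2}.

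First I would prove linear independence by induction on the Hoffman--stuffle level \( \ell = \deg_1 w \). The level-\( 0 \) case is clear: the elements \( \ttw^{\mot,\ast,V}(\{2\}^n) \), for which no regularisation occurs so that \( \ttw^{\mot,\ast,V}(\{2\}^n) = \ttw^\mot(\{2\}^n) \), are separated by weight, which grades \( \mathcal{H}^{(2)} \). For the inductive step I would use the fact --- recorded in the discussion following \autoref{prop:dktstuffle}, itself the stuffle analogue of \autoref{lem:hoffmanmotivic} and \autoref{lem:hoffmot:form} --- that \( \gr_\ell^{H,\ast} D_{2r'+1} \) sends \( \gr_\ell^{H,\ast}\mathcal{H}^{H,\ast} \) into \( \zeta^\lmot(\overline{2r'+1})\,\Q \otimes_\Q \gr_{\ell-1}^{H,\ast}\mathcal{H}^{H,\ast} \); this makes the map \( \partial^{H,\ast}_{N,\ell} \) well defined, and \autoref{prop:mats:inv} says it is injective precisely when \( \lambda = \tfrac{2a+1}{b} \). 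Any nontrivial linear relation among level-\( \ell \) elements can be taken homogeneous of some weight \( N \) (again since weight grades \( \mathcal{H}^{(2)} \)); applying \( \partial^{H,\ast}_{N,\ell} \) to it would produce a nontrivial relation of level \( \leq \ell-1 \), contradicting the induction hypothesis. Hence the level-\( \ell \) elements are independent, completing the induction.

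Next I would settle the basis statements by a dimension squeeze, exactly as in \autoref{cor:text:eq:H2}. By \autoref{def:tst:mot}, each \( \ttw^{\mot,\ast,V}(w) \) is a \( \Q \)-linear combination of ordinary values \( \ttw^\mot(\cdot) \) --- which lie in \( \mathcal{H}^{(2)} \) by their definition as sums of alternating motivic MZV's --- and of the periodic values \( \zeta^{\mot,\ast,(2\lambda-1)\log^\mot(2)}(\{1\}^i) \), which are polynomials in \( \log^\mot(2) = \zeta^\mot(\overline 1) \) and the \( \zeta^\mot(n) \), hence also in \( \mathcal{H}^{(2)} \). So the Hoffman one-two elements \( \ttw^{\mot,\ast,V}(w) \), of which there are \( F_{N+1} \) in weight \( N \), span an \( F_{N+1} \)-dimensional subspace of \( \mathcal{H}^{(2)}_N \) by the independence just established; since \( \dim_\Q \mathcal{H}^{(2)}_N \leq F_{N+1} \) (in fact with equality, see \cite{deligneGoncharov}) this subspace is all of \( \mathcal{H}^{(2)}_N \), which proves part (ii). The same expansion shows that the space of extended stuffle-regularised motivic MtV's with \( \ttw^{\mot,\ast,V}(1)=2V \) sits inside \( \mathcal{H}^{(2)}_N \), while it contains the \( \ttw^{\mot,\ast,V}(w) \); sandwiching forces it to equal \( \mathcal{H}^{(2)}_N \), giving part (i), and part (i\( ' \)) follows since \( \mathcal{T}^{\mathrm{ext}}=\mathcal{H}^{(2)} \) by \autoref{cor:text:eq:H2}. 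The dimension count \( F_{N+1} \) is then immediate.

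The main obstacle is \emph{not} in this corollary but already discharged: it is \autoref{prop:mats:inv}, i.e.\ the \( 2 \)-adic bookkeeping of the coefficients \( c_\bullet, d_\bullet \) together with the new \( \lambda-1 \) entry produced by deconcatenating a string of trailing \( 1 \)'s via term \eqref{eqn:drst:deconcatend} of \autoref{prop:dktstuffle}, which makes the determinant of the last diagonal block equal to \( (2x+1) + (\lambda-1)(2y+1) \), nonzero for \( \lambda \) of the stated form. The one subtlety worth double-checking in the present argument is that term \eqref{eqn:drst:deconcatend} does not disturb the block-triangular structure of \autoref{lem:mat:blocktri}: it only ever removes trailing \( 1 \)'s, and for \( 2r'+1>1 \) it removes at least three of them, hence lands in strictly lower level and so contributes nothing to \( \gr_{\ell-1}^{H,\ast} \) --- which is exactly what the discussion before the corollary verifies.
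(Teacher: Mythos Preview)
Your proposal is correct and follows essentially the same approach as the paper, which simply states that the proofs of \autoref{cor:hoff:indep} and \autoref{cor:text:eq:H2} generalise directly to this setting; you have spelled out precisely how that generalisation goes, including the check that the stuffle-regularised values lie in \( \mathcal{H}^{(2)} \) and that the extra deconcatenation term \eqref{eqn:drst:deconcatend} does not disturb the block-triangular structure.
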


	\subsection{Singular regularisation parameters} \label{rem:singular:reg}
	
		The proof of \autoref{prop:mats:inv} breaks down irrevocably in certain cases, in a way that is unavoidable.  For example, for \( N = 8, \ell = 2 \) as above, one can see that \( \lambda = \frac{242}{91} \) leads to determinant 0 in the last diagonal block.  This corresponds to the a linear dependence between regularised elements of level \( \ell \leq 2 \) in weight \( 8 \).
		
		For \( V = \frac{242}{91} \log(2) \), one has the following identity between stuffle-regularised MtV's (and a corresponding identity of stuffle-regularised motivic MtV's), as verified via the Data Mine \cite{mzvDM}
		\begin{align*}
			 t^{\ast,V}(2,2,2,1,1) = {} 
			& \tfrac{345998 }{24843}t^{\ast,V}(2,2,2,2) \\
			& -\tfrac{22801 }{8281}t^{\ast,V}(1,1,2,2,2)
			-\tfrac{11023 }{8281}t^{\ast,V}(1,2,1,2,2)
			+\tfrac{1661}{1183}  t^{\ast,V}(1,2,2,1,2) \\
			&-\tfrac{22801}{8281}  t^{\ast,V}(2,1,1,2,2)
			-\tfrac{919}{637} t^{\ast,V}(2,1,2,1,2)
			-\tfrac{17257 }{8281}t^{\ast,V}(2,2,1,1,2) \\
			& +\tfrac{151}{91} t^{\ast,V}(1,2,2,2,1) 
			+\tfrac{73}{91}	t^{\ast,V}(2,1,2,2,1)
			-\tfrac{11}{13} t^{\ast,V}(2,2,1,2,1) \,.
		\end{align*}
		
		Such a regularisation parameter should be termed \emph{singular}, as the matrix of \( \partial^{H,\ast}_{N,\ell} \) is singular.  The following \( V = \lambda \log^\mot(2) \) are singular regularisation parameters, first appearing at the indicated weight. \medskip
		\begin{center}
			\begin{tabular}{c|cccccccccc}
				$\!\!N\!\!$ & 1 & 3 & 5 & 7 & 9 & 11 & 13 & 15 & 17 & 19\\ \hline
				$ \!\!\lambda\!\!$ $\phantom{\mathllap{\displaystyle\frac{1}{1}}}$
				 & 0 & 2 
				& \!\!$\frac{28}{11}$\!\! 
				& \!\!$\frac{242}{91}$\!\! 
				& \!\!$\frac{64472}{23479}$\!\! 
				& \!$\frac{712586}{252913}$\!
				& \!$\frac{8156772916}{2873825507}$\!
				& \!$\frac{1002618956134}{348754372637}$\!
				& \!$\frac{6597362406922672}{2270331930729959}$\!
				& \!$\frac{91024278619403627042}{31196250956544565801}$\!
		\end{tabular}
		\end{center}\medskip
		A (potential) new parameter \( \lambda \) appears in level 1, odd weight, corresponding to last diagonal block with 0 trailing \( 1 \)'s, and a reduction of 
		\[
		\ttw^{\mot,\ast,V}(\{2\}^n,1) = \sum_{i=0}^{n+1} c_i \ttw^{\mot}(\{2\}^i, 1, \{2\}^{n-i}) \,,
		\]
		for some \( c_i \in \Q \).  In weight \( 2N+1 \) this reduction can also be obtained more directly from the identity in \autoref{thm:mott2212}, when written in matrix form with rows indexed by \( c_i \) and columns by \( \zeta^\mot(\overline{2r'+1}) \ttw^\mot(\{2\}^{N-r}) \), which essentially encodes (the last block of) such \( M_{H,\ast,2N+1,\ell=1} \).
		
		Once such a parameter appears, it renders nonsensical the matrices \( M_{H,\ast,N,\ell} \) of higher weight in that level, as the basis of lower level elements \( B'_{H,N,\ell} \)  is no longer linearly independent.  One can strip trailing 1's from the last diagonal block, without changing the combinatorial of the matrix entries, to see that every singular regularisation parameter arises from the level 1 relation. \medskip
	
		The sequence \( (\lambda_i)_{i=1}^\infty \) of singular regularisation parameters appears to satisfy a number of properties.  We end with the following conjecture.
	
		\begin{Conj}
			The sequence \( (\lambda_i)_{i=1}^\infty = (0, 2, \frac{28}{11}, \frac{242}{91}, \frac{64472}{23479}, \ldots) \) of singular regularisation parameters satisfies the following:
			\begin{itemize}
				\item[i)] the sequence is increasing \( \lambda_{i+1} > \lambda_i \),
				\item[ii)] the sequence is bounded \( \lambda_i < 3 \), for all \( i \),
				\item[ii$'$)] the sequence has limit \( \lim_{i\to\infty} \lambda_i = 3 \)
			\end{itemize}
		\end{Conj}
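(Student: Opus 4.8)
The plan is to reduce the conjecture to a statement about a single sequence of rationals and then to control that sequence. \emph{Step 1 (reduction to a monic polynomial).} As the paragraphs preceding the conjecture explain, every singular parameter comes from the level-\(1\), odd-weight case: \(\lambda_i\) occurs at weight \(N=2i-1\) and is the unique \(\lambda\) making \(M_{H,\ast,2i-1,1}\) singular. Put \(m=i-1\). By \autoref{thm:mott2212} — equivalently, by reading off the real identity \eqref{eqn:t2212ev} — the matrix \(M_{H,\ast,2m+1,1}\) is an explicit \((m+1)\times(m+1)\) matrix with a single column depending affinely on \(\lambda\) (entry \(1\) in the row of \(\ttw^\mot(1,\{2\}^m)\), entry \(\lambda-1\) in the row of \(\ttw^{\mot,\ast,V}(\{2\}^m,1)\), and \(0\) elsewhere). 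Expanding the determinant along that column yields
\[
\lambda_{m+1}=1-(-1)^m\,\frac{\det\Phi^{>}_m}{\det\Phi^{<}_m},\qquad
\Phi_m=\Big(\tbinom{2r}{2a}+\tfrac{2^{2r}}{2^{2r}-1}\tbinom{2r}{2m-2a}\Big)_{1\le r\le m,\ 0\le a\le m},
\]
where \(\Phi^{>}_m\) (resp.\ \(\Phi^{<}_m\)) is the \(m\times m\) minor of \(\Phi_m\) keeping columns \(a=1,\dots,m\) (resp.\ \(a=0,\dots,m-1\)). Equivalently, if \(\nu=(\nu_0,\dots,\nu_m)\) spans \(\ker\Phi_m\) and \(S_m(w)=\sum_a\nu_a w^{2a}\) is normalised monic (\(\nu_m=1\)), then \(\lambda_{m+1}=1-S_m(0)=1-\nu_0\). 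A short computation gives \(S_0=1\), \(S_1=w^2-1\), \(S_2=w^4+\tfrac1{11}w^2-\tfrac{17}{11}\), recovering \(\lambda_1=0,\lambda_2=2,\lambda_3=\tfrac{28}{11},\lambda_4=\tfrac{242}{91}\). This step is routine given \autoref{sec:hoff}; it also shows \(1-\lambda_i=S_{i-1}(0)\), so (i)--(ii\('\)) become the assertions that \((S_m(0))_m\) is strictly decreasing, bounded below by \(-2\), and tends to \(-2\).

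\emph{Step 2 (the analytic/recursive core).} Two routes suggest themselves. First, the kernel vector \(\nu^{(m)}\) is, up to scale, independent of \(\lambda\): it annihilates, in homogeneous degree \(2m\), the \(\zeta\)-part \(G^\zeta(x,y)=\tfrac12\cos(\pi x)(A(x-y)+A(x+y))+\tfrac12\cos(\pi y)(B(x-y)+B(x+y))\) of the generating series of \autoref{thm:t2212ev}, while the only \(\lambda\)-dependence of the full series is the entire term \((V-\log2)\cos(\pi x)+\log(2)\cos(\pi y)\); this is exactly why \(\lambda_{m+1}\) is the unique value making \(\sum_a\nu_a\,t^{\ast,V}(\{2\}^a,1,\{2\}^b)=0\). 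One observes that \(G^\zeta\) is in fact entire: along \(x\pm y\in2\Z+1\) the factor \(\tfrac12(\cos\pi x+\cos\pi y)\) multiplying \(A(x\pm y)\) vanishes, killing the poles of \(A\) at odd integers, and the remaining \(A\)-poles at even integers cancel against the poles of \(A(\tfrac{\cdot}2)\) hidden in \(B(z)=A(z)-A(z/2)\). Hence the asymptotics of \(S_m(0)\) must be extracted not from a dominant pole but from the residual ``distance-\(2\)'' scale in \(B(z)=A(z)-A(z/2)\) together with the factors \(\tfrac{2^{2r}}{2^{2r}-1}\to1\); I expect the limiting value of \((-1)^m\det\Phi^>_m/\det\Phi^<_m\) to be \(-2\), i.e.\ \(\lambda_i\to3\), with the \(2\) the ratio of the two scales. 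Concretely I would either derive a closed form for \(\sum_{m\ge0}S_m(w)t^{2m}\) in terms of \(A,B\) (the contour identity \([w^{2r}]((1+w)^{2r}g(w))=[v^{2r}]\big(\tfrac1{1-v}g(\tfrac{v}{1-v})\big)\) used implicitly above is encouraging), and read off the limit at \(w=0\); or analyse the two binomial determinants directly via a Lindström--Gessel--Viennot / contour representation and steepest descent. For monotonicity (i) and the bound (ii) I would seek a three-term recursion: the ratio \(\det\Phi^>_m/\det\Phi^<_m\) is a continuant-type quantity, and Dodgson condensation / Jacobi's identity on the \(\Phi_m\) (in the spirit of Brown's \cite[Lemmas 4.2 and 4.4]{brown12}) should give \(S_{m+1}(0)=f(m)+g(m)/S_m(0)\); from \(S_0(0)=1>S_1(0)=-1>S_2(0)=-\tfrac{17}{11}>\cdots\) one would then prove by induction that \(S_m(0)\) stays strictly inside \((-2,\,S_{m-1}(0))\), with \(-2\) the relevant one-sided fixed point, consistent with the limit computation.

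\emph{Main obstacle.} The hard part is precisely Step 2: producing either the explicit three-term recursion or the generating-series identity for \(S_m\). Because \(\Phi_m\) depends on \(m\) through the reflected binomials \(\binom{2r}{2m-2a}\), the determinants do not telescope by adjoining a row and a column, and bordering/LU tricks with \(m\)-independent pivots fail; the arithmetic of the computed values (e.g.\ \(\lambda_5=\tfrac{64472}{23479}\), whose numerator has the prime factor \(3727\)) shows the individual determinants have no elementary product formula, so the argument must work with the ratio throughout. Pinning the limit to exactly \(3\), and — more delicately — establishing the \emph{uniform} bound \(\lambda_i<3\) rather than merely the limit, will require genuine estimates: control of the sign of the subleading term in the asymptotic expansion of the determinant ratio, or a monotone invariant propagated along the recursion.
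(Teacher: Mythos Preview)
The statement you are attempting to prove is not proved in the paper: it is explicitly labelled a \emph{Conjecture} and is presented in the final subsection as an open problem with which the paper concludes (``We end with the following conjecture''). There is therefore no proof in the paper to compare against.

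Your Step~1 reduction is sound and is precisely what the paper itself suggests in the paragraphs preceding the conjecture: the singular parameter in weight $2m+1$ is determined by the level-$1$ last diagonal block, which is encoded by the matrix of coefficients from the evaluation in \autoref{thm:mott2212}. Writing $\lambda_{m+1}$ as a ratio of minors of the explicit binomial matrix $\Phi_m$ is the natural first move, and your numerical checks against the listed values confirm you have the correct formulation.

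However, your proposal is not a proof: you yourself identify Step~2 as the ``hard part'' and do not carry it out. Both suggested routes --- a generating-series identity for $\sum_m S_m(w)t^{2m}$, or a three-term recursion via Dodgson condensation --- remain programmatic. The obstacles you name are genuine: the $m$-dependence of $\Phi_m$ through the reflected binomials $\binom{2r}{2m-2a}$ prevents a simple bordering argument, and the prime factorisations of the computed values rule out a closed product form for the individual determinants. Your heuristic that the limit $3$ should emerge from the ``scale-$2$'' structure of $B(z)=A(z)-A(z/2)$ is plausible but is not an argument. In short, you have correctly set up the problem in the form the paper anticipates, but the conjecture remains open and your proposal does not close it.
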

	
	\bibliographystyle{habbrv}
	\bibliography{mot_t2212}
	
\end{document}